\def\TLp#1{\mathrm{TL}^{#1}}
\def\Lp#1{\mathrm{L}^{#1}}
\def\Ck#1{\mathrm{C}^{#1}}
\def\Ckc#1{\Ck{#1}_{\mathrm{c}}}
\def\Wkp#1#2{\mathrm{W}^{#1,#2}}
\def\spaceBar{\, | \,}
\def\uni{\mathrm{uni}}
\DeclareMathOperator{\graph}{gph}
\DeclareMathOperator{\range}{ran}
\DeclareMathOperator{\support}{supp}
\DeclareMathOperator{\domain}{dom}
\DeclareMathOperator{\signFunc}{sign}
\DeclareMathOperator{\diver}{div}
\DeclareMathOperator{\closure}{cl}
\DeclareMathOperator{\uTimeInter}{u_{\mathrm{TimeInt}}}
\DeclareMathOperator{\uTimeInject}{u_{\mathrm{TimeInj}}}
\DeclareMathOperator{\vTimeInter}{v_{\text{TimeInt}}}
\DeclareMathOperator{\vTimeInject}{v_{\text{TimeInj}}}
\newcommand{\bs}[1]{{\boldsymbol{#1}}}
\newcommand{\bi}{{\bs{i}}}
\newcommand{\bx}{{\bs{x}}}
\newcommand{\bz}{{\bs{z}}}
\newcommand{\pa}[1]{\left(#1\right)}
\newcommand{\abs}[1]{\left|#1\right|}
\DeclareMathOperator{\expW}{expW}
\DeclareMathOperator{\llSgt}{\overset{\mathrm{sgt}}{\ll}}
\def\commentOut#1{}
\title{Discrete-to-Continuum Rates of Convergence for $p$-Laplacian Regularization}
\author[1]{Adrien Weihs}
\author[2]{Jalal Fadili}
\author[3]{Matthew Thorpe}
\date{\add{July} \removetest{March} 2023}
\affil[1]{School of Mathematics,\protect\\ University of Manchester,\protect\\ Manchester, M13 9PL, UK. \vspace{\baselineskip}}
\affil[2]{Normandie Universit\'e, ENSICAEN, UNICAEN, CNRS, GREYC,\protect\\ Caen, France \vspace{\baselineskip}}
\affil[3]{Department of Statistics,\protect\\ University of Warwick,\protect\\ Coventry, CV4 7AL, UK.}
\date{\today}
\begin{document}

\maketitle

\begin{abstract}
Higher-order regularization problem formulations are popular frameworks used in machine learning, inverse problems and image/signal processing. In this paper, we consider the computational problem of finding the minimizer of the Sobolev $\mathrm{W}^{1,p}$ semi-norm with a data-fidelity term. We propose a discretization procedure and prove convergence rates between our numerical solution and the target function. Our approach consists of discretizing an appropriate gradient flow problem in space and time. The space discretization is a nonlocal approximation of the $p$-Laplacian operator and our rates directly depend on the localization parameter $\eps_n$ and the time mesh-size $\tau_n$. We precisely characterize the asymptotic behaviour of $\eps_n$ and $\tau_n$ in order to ensure convergence to the considered minimizer. Finally, we apply our results to the setting of random graph models.
\end{abstract}

\keywords{convergence rates, evolution problems, nonlocal variational problems, $p$-Laplacian, random graph models, asymptotic consistency, regularization}

\subjclass{65N12, 34G20, 65M06, 35R02}

\section{Introduction} 

In machine learning, inverse problems and image/signal processing, one is often confronted with finding smooth solutions to regression problems. This leads to the introduction of regularization terms in the problem formulation (see for example \cite{groetsch1984theory}, \cite{benning_burger_2018},  \cite{engl2000regularization}). In this paper, we investigate the computation of the solution to the following regularization problem:
\begin{equation}\label{eq:intro:regularizationProblem}
    u_{\infty} \in \argmin_{v \in \Wkp{1}{p}(\Omega)} \cF(v) :=  \frac{\mu}{p}\Vert \nabla v \Vert_{\Lp{p}(\Omega)}^p + \frac{1}{2}\Vert \cA v - \ell \Vert_{\Lp{2}(\Omega)}^2
\end{equation}
for some $\mu > 0$, linear operator $\cA$, data function $\ell$, $\Omega \subseteq \bbR^d$ and $p \geq 1$. 
The first term on the right-hand side of the latter enforces some regularization upon the functions $v$ while the second term is a data-fidelity term. For $\cA = \Id$, this is also called a fully supervised machine learning problem or a denoising problem in image processing. More precisely, in this paper, we prove convergence rates between our numerical discrete solution and $u_\infty$. This type of regularization has been considered for example in \cite{1333694}, \cite{liu2018p}, \cite{doi:10.1137/15M1022793} and \cite{10.1007/11550518_45}; for examples involving nonlocal versions of the $\Wkp{1}{p}$ semi-norm we refer to \cite{aubert2009nonlocal}, \cite{HafieneEvolution2} and references therein; for other higher-order regularization problems we refer for example to \cite{https://doi.org/10.48550/arxiv.2209.02305} and references therein.

The aim of this paper can be seen as the converse to a large part of the discrete-to-continuum work in recent years: indeed, in \cite{Slepcev}, \cite{Stuart}, \cite{Trillos3}, \cite{https://doi.org/10.48550/arxiv.2303.07818} and \cite{weihs2023consistency} for example, one starts with a discrete problem and analyses the large data limit of the latter -- a point of interest in machine learning settings.
Similarly to \cite{ElBouchairi}, \cite{HafieneEvolution} and \cite{HafieneEvolution2}, we start from the continuum and explain how the appropriate discretization should be designed as is usually the case in numerical analysis.  In particular, this means that in the former case, emphasis is partly placed on having as few constraints as possible on different parameters as the latter are usually inferred from the data at hand: in our case, as we are choosing all the parameters, this concern is less relevant. 

The approach we choose in this paper is to discretize the gradient flow \eqref{eq:main:notation:localProblem:localProblem} associated to \eqref{eq:intro:regularizationProblem}
which contains the $p$-Laplacian operator $\Delta_p u = \diver(\vert \nabla u \vert^{p-2} \nabla u )$ (see \cite{lindqvist} and references therein). We note two important consequences from this reformulation of the task at hand: adding a time dependence to our problem will allow us to leverage the theory of semigroups in Banach spaces (see Section \ref{sec:background:nonlinear}) in order to derive rates; the main problem is to obtain convergence rates between the $p$-Laplacian operator and its discrete counterpart. 

The plan for the discretization follows loosely the strategy in \cite{Trillos3} and \cite{Slepcev}: after choosing an appropriate kernel $K$, we start by introducing a nonlocal version of the $p$-Laplacian operator in the continuum and then discretize the latter. The former is inspired by \cite{Bourgain01anotherlook}, \cite{ponce2004}, \cite{leoni2009first} where finite-difference approximations of Sobolev norms are discussed in the continuum, while the latter step allows us to use existing results on convergence results in the nonlocal setting as in \cite{ElBouchairi}, \cite{HafieneEvolution}, \cite{HafieneEvolution2}. 

Passing from the nonlocal setting to the local one requires a kernel that is appropriately scaled. For some length-scale $\eps_n \to 0$, it is shown for example in \cite{Slepcev}, \cite{Calder_2018} and \cite{ANDREU2008201} that, ignoring regularity assumptions,  
\[
\frac{1}{\eps_n^{d+p}} \int K\l\frac{\vert y - x\vert}{\eps_n} \r \vert u(y) - u(x) \vert^{p-2} (u(y) - u(x) ) \, \dd y \to \Delta_{p}u (x).
\]
The finite-difference structure of the above nonlocal approximation is essential for obtaining rates in the continuum: indeed, as in \cite{Calder_2018}, for smooth enough functions, we pass from finite-differences to derivatives and the rates follow from a conceptually basic Taylor expansion.

The discrete-to-continuum step requires more subtle techniques. In fact, one needs a way to compare functions $\bar{u}_n \in \bbR^{D_n}$ and $u:\Omega \mapsto \bbR$. We discuss various alternatives in Section \ref{subsec:relatedWorks}. In this paper, we choose to partition our space $\Omega$ into $D_n$ cells and elements of $\bbR^{D_n}$ are injected through the operator $\cI:\bbR^{D_n} \to \Lp{1}(\Omega)$ in the continuum through piecewise constant approximations while continuum functions are projected through the operator $\cP:\Lp{1}(\Omega) \to \bbR^{D_n}$ onto our cells by averaging on each cell. Using this method, establishing rates between a continuum function and its injected discrete approximation relies on tools from approximation theory, depending on the partition chosen and the regularity assumption of the continuum function. This topic is discussed in greater detail in Section \ref{subsec:approximations}. The other central tool for the discrete-to-continuum rates, where the discretization is both in space and time, is the semigroup structure of our solutions to the nonlocal gradient flows. Indeed, relying on some favourable properties of the nonlocal $p$-Laplacian, we are able to deduce strong contraction properties as discussed in Section \ref{sec:background:nonlinear}.

Combining the discrete-to-continuum rates in the nonlocal setting to the nonlocal-to-local rates in the continuum, we obtain in Corollary \ref{cor:proofs:rates:discreteNonlocalContinuum:final} that for $p \geq 3$,
\begin{align} 
        \Vert \cI_n \bar{u}_n^N - u_\infty \Vert_{\Lp{2}}& \leq C \Bigg( \eps_n\log(\eps_n^{-\kappa}) + \eps_n^{\kappa/4}(\cF(u_0)-\cF(u_\infty))^{1/2} \label{eq:intro:rates} \\
  &+ \eps_n^{-\kappa}\ls n^{-\alpha_1} + n^{-\alpha_2} + \frac{\log(\eps_n^{-\kappa})^{(p-1)}}{\eps_n^{d+p + \alpha_3} n^{\alpha_3}} + \tau_n \frac{\log(\eps_n^{-\kappa})^{2p-3}}{\eps_n^{2(d+p)}} \rs  \Biggr) \notag
\end{align}
where $u_\infty$ is the solution to \eqref{eq:intro:regularizationProblem}, $\cI_n \bar{u}_n^N$ is the injected discretized solution on a partition indexed by $n$ in space and $N$ in time, $\tau_n$ is the maximum step-size of the forward Euler time-discretization (we pick $0 = t^0 < t^1 < \dots < t^N \approx \log(\eps_n^{-\kappa})$ so that $N \geq \log(\eps_n^{-\kappa})/\tau_n $), $C>0$ is a constant that is independent of $n$, $\kappa >0$ and $\alpha_i >0$ are chosen numerical constants depending on the regularity of the initial condition of the gradient flow problem \eqref{eq:main:notation:localProblem:localProblem} $u_0$, the kernel $K$ and the data $\ell$. 

First, we note that each term in the right-hand side of \eqref{eq:intro:rates} corresponds to a specific error source, namely (from left to right) the continuum nonlocal-to-local approximation, the gradient flow convergence, the discrete-to-continuum approximation of $u_0$, $\cA^* \ell$ and $K$ as well as a general discretization error. Furthermore, as the choices of $\alpha_i$ and $\kappa$ are left to the practitioner, the rates can be enhanced upon implementation. 

Second, while we give the precise statement of this result in Section \ref{sec:main:main}, we note that the right-hand side of \eqref{eq:intro:rates} tending to $0$ involves finding the right interplay between $\tau_n \to 0$ and $\eps_n \to 0$: this is similar to Courant-Friedrichs-Lewy (CFL) conditions \cite{5391985} and \cite{de2012courant}. In particular, while the classical CFL conditions correspond to $\tau_n \ll \eps_n^2$ for the heat equation with the forward Euler time-discretization,
we will show in Corollary \ref{cor:proofs:rates:discreteNonlocalContinuumLocal:simplified} that our requirement is roughly $\tau_n \ll \eps_n^{2(d+p)}$.
We also find that $\eps_n$ admits a lower bound and this is analogous to results in semi-supervised learning discrete-to-continuum phenomena in \cite{Slepcev}, \cite{Stuart} and \cite{weihs2023consistency}. 

Third, we note that \eqref{eq:intro:rates} does not cover the linear case of $p = 2$. This is due to a technicality and indeed, our well-posedness results both in the nonlocal case (Theorem \ref{thm:proofs:wellPosedness:nonlocalProblem:existenceUniqueness}) and the local case (Theorem \ref{thm:proofs:wellPosedness:localProblem:existenceUniqueness}) only require $p \geq 2$. The $p \geq 2$ assumption is particularly helpful since it allows one to have $\Lp{p/(p-1)}(\Omega) \subseteq \Lp{p}(\Omega)$ and $\Wkp{1}{p}(\Omega) \subseteq \Lp{2}(\Omega)$. When establishing continuum rates in Theorem \ref{thm:proofs:continuumRates:continuumNonlocalLocal} however, similarly to what is presented in \cite{Calder_2018}, we will have to consider a third-order Taylor expansion of the function $x \mapsto \vert x \vert^{p-2} x$, hence requiring $p \geq 3$. We nevertheless remark that the choice of $p$ is left to the practitioner and, as explained in Remark \ref{rem:proofs:rates:continuumRates:embeddings}, should be made in accordance with the dimension of $\Omega$ in order to have a small ratio $d/p$.

Lastly, coming back to a more data-centric approach, we apply the above-mentioned results to a random graph model. The graph models appear in several applications and we obtain results equivalent to the ones displayed above in Corollary \ref{cor:application:final}: the main difference is an additional term accounting for the discrete random-to-deterministic approximation error.

\subsection{Contributions}

Our main contributions in the paper which we discuss in greater detail in Section \ref{sec:main} are: \begin{enumerate}
    \vspace{-2mm}
    \setlength\itemsep{-1mm}
    \item A rigourous proof of the well-posedness of the nonlocal continuum gradient flow defined in \eqref{eq:main:notation:nonlocal:nonlocalProblem};
    \item The establishment of rates of convergence between the discrete gradient flow and $u_\infty$ through a precise characterization of the discretization parameters in Corollary \ref{cor:proofs:rates:discreteNonlocalContinuum:final};
    \item An application to random graph models in Section \ref{subsec:application}.
\end{enumerate}

\subsection{Related works} \label{subsec:relatedWorks}

\paragraph{$p$-Laplacian operator approximations.}
Approximating the $p$-Laplacian operator has been explored in \cite{10.2307/2158008},\cite{M2AN_1975__9_2_41_0},\cite{10.2307/2158136},\cite{10.2307/2153239} but always in the context of finite elements. This simplifies some part of the analysis as described in Remark \ref{rem:proofs;rates:continuumRates:increasedRegularity} but has the disadvantage of being difficult to apply in higher dimensions. 

In the continuum setting, nonlocal-to-local convergence of gradient flows involving the $p$-Laplacian operator is shown in \cite{ANDREU2008201}. This relies heavily on \cite{Bourgain01anotherlook} and is a consistency result, without rates. Some rates are established between the nonlocal and local operators in \cite{Calder_2018}.

Much of the discrete-to-continuum work in recent years has dealt with similar problems in many ways. The closest results are to be found in \cite{ElBouchairi}, \cite{HafieneEvolution} and \cite{HafieneEvolution2} where rates are established for some discrete-to-continuum problems involving the $p$-Laplacian in the nonlocal case using the same discretization procedures.

\paragraph{Discrete-to-continuum work.}
Other energies and operators have also been studied under the discrete-to-continuum lens, for example the eikonal equation \cite{eikonalJalal}, total variation \cite{Trillos3}, the Ginzburg-Landau functional \cite{diffuseInterfaceBertozzi}, \cite{Gennip}, \cite{cristoferi_thorpe_2020}, \cite{thorpe2019asymptotic}, the Mumford-Shah functional \cite{Caroccia_2020}, an application in empirical risk minimization \cite{garcia_trillos_murray_2017}, various Sobolev semi-norms \cite{Slepcev}, \cite{GARCIATRILLOS2018239}, \cite{Stuart}, \cite{weihs2023consistency} or variants of the Laplacian operator \cite{calderLipschitzLearning}, \cite{bungertRates}, \cite{Bungert}, \cite{calder2020rates}.

For alternatives approaches to the discrete-to-continuum, we refer to \cite{convergenceEigenmaps}, \cite{COIFMAN20065}, \cite{Gine}, \cite{hein1}, \cite{hein2}, \cite{singer}, \cite{ting} for pointwise convergence where one restricts a continuum function to a discrete domain or to \cite{Trillos}, \cite{CALDER2022123}, \cite{GARCIATRILLOS2018239}, \cite{consistencySpectral}, \cite{Pelletier}, \cite{wang}, \cite{Davydov} for spectral convergence where one considers the convergence of eigenvalues and eigenfunctions of the operators. Of course, as eigenfunctions are themselves functions, the papers on eigenfunctions rely on pointwise convergence or $\TLp{p}$-convergence which was introduced in \cite{Trillos3}.

\paragraph{}
The rest of the paper is organized as follows: in Section \ref{sec:background} we introduce the main theoretical tools required for the proofs; in Section \ref{sec:main} we present our main results; in Section \ref{sec:proofs} we prove our results.

\section{Background} \label{sec:background}

\subsection{General notation}

For $\Omega \subseteq \bbR^d$, we denote by $\overrightarrow{n}$ the outward normal vector to the boundary $\partial \Omega$ of $\Omega$. We denote the identity operator by $\Id$ and the indicator function of a set $A$ by $\chi_A$. We will write $\closure(\Omega)$ to denote the closure of $\Omega$. For $T > 0$, let $\lambda_t$ and $\lambda_x$ respectively be the Lebesgue measure on $[0,T]$ and $\Omega$. Elements of a discrete space will be over-lined, for example $\bar{u} \in \bbR^d$. The $i$-th component of $\bar{u}$ is denoted by $(\bar{u})_i$. We will denote Lebesgue spaces by $\Lp{p}$, the space of functions with $k$-th continuous derivatives by $\Ck{k}$, H\"older spaces by $\mathrm{C}^{k,\alpha}$ and Sobolev spaces by $\Wkp{k}{p}$. We write $\Vert \cdot \Vert_{\Lp{p}(A)}$ for the $\Lp{p}$-norm over a space $A$ and $\Vert \cdot \Vert_{\Lp{p}}$ when we take the norm over the whole space $\Omega$ or $\Omega \times \Omega$ depending on the domain of the function. We also write that $u_n \rightharpoonup u$ if $u_n$ converges weakly to $u$.

\paragraph{Functions}
We define Lambert's $W:[0,\infty) \mapsto [0,\infty)$ function \cite{lambertBook} as the inverse of the function $x e^x$: for every $y > 0$, we have $W(y)e^{W(y)} = y$. It is clear that $W$ is increasing on $[0,\infty)$. From this, it follows that the function $\expW:[0,\infty) \mapsto [1,\infty)$ defined as $\expW(y) = e^{W(y)}$ is also increasing and solves $\expW(y) \log(\expW(y)) = y$ for every $y > 0$. 

\paragraph{Asymptotics}
For two functions $f:\bbN \mapsto [0,\infty)$ and $g:\bbN \mapsto [0,\infty)$, we will write $f(n) \gg g(n)$ if
$\lim_{n \to \infty} \frac{g(n)}{f(n)} = 0.$
Therefore, $1 \gg f$ means that $\lim_{n \to \infty} f(n) = 0$.

\subsection{Nonlinear semigroup theory} \label{sec:background:nonlinear}

We now introduce a few elements of nonlinear semigroup theory that will be useful in the rest of the paper. However, we stress that a thorough and proper treatment of the subject can, for example, be found in \cite{andreu2004parabolic}, \cite{barbuNonlinear}, \cite{CRANDALL1976131}, \cite{Brezis:1663074}, \cite{BenilanCrandall},  \cite{pavel1987nonlinear} and references therein. 

For a Banach space $V$ with dual space $V^*$ and norm $\Vert \cdot \Vert$, we call a map $A:V \mapsto 2^{V}$ an operator. The domain and range of $A$, respectively denoted by $\domain(A)$ and $\range(A)$ are defined as $\domain(A) = \{ v \in V \spaceBar Av \neq \emptyset  \}$ and $\range(A) = \{y \spaceBar y\in Av \text{ for some } v \in \domain(A) \}$. The graph of an operator is defined as $\graph(A) = \{ (v,w) \in V \times V \spaceBar w \in Av \}$. 

Let us consider the general Cauchy problem for an operator $A$ on a Banach space $V$:
\begin{equation}
    \begin{cases} \label{eq:background:orderRelation:abstractCauchyProblem}
    u'(t) + Au(t) \ni f(t) & \text{on $t \in (0,T)$,} \\
    u(0) = u_0
    \end{cases} \tag{$\mathrm{CP}_{f,u_0}$}
\end{equation}
for some $f:(0,T) \mapsto V$ and $u_0 \in V$. 
Various concepts of solution have been designed for the problem \eqref{eq:main:notation:nonlocal:nonlocalProblem} and we refer to \cite{andreu2004parabolic} for a brief review of abstract Cauchy problems. We will rely on the following notion of solution as in \cite[Definition A.3]{andreu2004parabolic}. 
We note that the following type of solutions are called strong solutions in the nonlinear semigroup literature.

\begin{mydef}[Solution to \eqref{eq:background:orderRelation:abstractCauchyProblem}] \label{def:main:notation:nonlocal:strongSolutionCP}
A function $u$ is called a strong solution to \eqref{eq:background:orderRelation:abstractCauchyProblem} if $u(t,x) \in C([0,T];V) \cap \mathrm{W}_{\mathrm{loc}}^{1,1}((0,T);V)$, $u(0,\cdot) = u_0$ and $u'(t) + Au(t) \ni f(t)$ $\lambda_t$-a.e.. 
\end{mydef}

Our aim is to introduce a class of operators that will be of particular interest when solving the abstract Cauchy problem \eqref{eq:background:orderRelation:abstractCauchyProblem}.

\begin{mydef}[Accretive operator] \label{def:background:accretive}
We say that an operator A on $V$ is accretive if
\[
\Vert v - w \Vert \leq \Vert v - w + \lambda(\hat{v} - \hat{w} ) \Vert 
\]
for all $\lambda > 0$ and $(v,\hat{v}),(w,\hat{w}) \in \graph(A)$.
\end{mydef}

If we assume that $A$ is single valued (so we can write $A(u)$ without any ambiguity) then $A$ being accretive implies 
\[ \| v-w \| \leq \lambda \lda \l A+\frac{1}{\lambda} \Id\r(v) - \l A+\frac{1}{\lambda} \Id\r(v) \rda \]
which implies that $\l A+\frac{1}{\lambda}\Id\r^{-1}$ exists (and is Lipschitz with constant $\lambda$).

\begin{mydef}[$m$-accretive operators] \label{def:background:maccretive}
We say that an operator $A$ on $V$ is $m$-accretive if $A$ is accretive and $\range(\Id + \lambda A) = V$ for all $\lambda >0$.
\end{mydef}

The $m$-accretivity property of operators is a sufficient condition for the existence of a solution to \eqref{eq:background:orderRelation:abstractCauchyProblem} (see \cite[Chapter 2, Theorem 10.2]{pavel1987nonlinear}). 

\begin{theorem}[Existence of a solution to \eqref{eq:background:orderRelation:abstractCauchyProblem}]\label{thm:background:solution}
    Let $V$ be a reflexive Banach space, $A$ a $m$-accretive operator, $x_0 \in \domain(A)$ and $f \in \Wkp{1}{1}(0,T;V)$, then there exists a unique solution as in Definition \ref{def:main:notation:nonlocal:strongSolutionCP} to \eqref{eq:background:orderRelation:abstractCauchyProblem}. 
\end{theorem}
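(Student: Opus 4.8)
The plan is to invoke the general existence theory for $m$-accretive operators on reflexive Banach spaces, following the classical route that combines the Crandall--Liggett generation theorem with regularity estimates. Actually, since the statement is attributed to \cite[Chapter 2, Theorem 10.2]{pavel1987nonlinear}, the cleanest approach is to cite it directly; but to make the proof self-contained I would sketch the three ingredients behind it.

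\textbf{Step 1: Construct a mild solution via implicit time discretization.} Since $A$ is $m$-accretive, for every $\lambda>0$ the resolvent $J_\lambda := (\Id + \lambda A)^{-1}$ is a well-defined single-valued contraction on all of $V$ (this is exactly what $m$-accretivity buys us, as already observed in the excerpt after Definition \ref{def:background:accretive}). Partitioning $[0,T]$ into steps of size $h$, one solves the implicit Euler scheme $u_h^{k} = J_{h}\bigl(u_h^{k-1} + h f_h^k\bigr)$ with $u_h^0 = u_0$, where $f_h^k$ is a local average of $f$. The Crandall--Liggett-type convergence theorem then shows the piecewise-constant (or piecewise-linear) interpolants $u_h$ converge uniformly on $[0,T]$ to a limit $u \in C([0,T];V)$, the unique \emph{mild} solution of \eqref{eq:background:orderRelation:abstractCauchyProblem}. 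Uniqueness at the mild level follows from accretivity via the standard contraction estimate $\|u(t)-\tilde u(t)\| \le \|u_0 - \tilde u_0\| + \int_0^t \|f - \tilde f\|$.

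\textbf{Step 2: Upgrade the mild solution to a strong solution.} This is the step where the hypotheses $u_0 \in \domain(A)$, $f \in \Wkp{1}{1}(0,T;V)$, and reflexivity of $V$ are all used, and it is the main obstacle. The argument is: because $u_0 \in \domain(A)$ and $f \in \Wkp{1}{1}$, one derives an a priori bound on the difference quotients, roughly $\|u(t+h) - u(t)\| \le C\bigl(h\,\|Au_0\|_{\mathrm{some\ sense}} + \int_t^{t+h}\|f'\| + \dots\bigr)$, showing $u$ is Lipschitz in $t$, hence $u \in \Wkp{1}{\infty}(0,T;V) \subseteq \mathrm{W}^{1,1}_{\mathrm{loc}}$. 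Then reflexivity of $V$ guarantees that the Lipschitz function $u$ is differentiable $\lambda_t$-a.e.\ with $u'\in L^\infty(0,T;V)$ (Lipschitz curves into reflexive spaces are a.e.\ differentiable --- this is exactly why reflexivity is assumed). Finally, passing to the limit in the discrete inclusion $\frac{u_h^k - u_h^{k-1}}{h} + Au_h^k \ni f_h^k$ using the closedness of the graph of $A$ (which holds for accretive operators, and one exploits weak compactness from reflexivity to pass to the limit in the $Au$ term) yields $u'(t) + Au(t) \ni f(t)$ for a.e.\ $t$. Together with $u(0) = u_0$ this gives precisely Definition \ref{def:main:notation:nonlocal:strongSolutionCP}.

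\textbf{Step 3: Uniqueness of the strong solution.} Any strong solution is in particular a mild solution (a standard fact: integrate the inclusion and use accretivity), so uniqueness is inherited from Step 1. Alternatively, if $u,\tilde u$ are two strong solutions, then $t\mapsto \|u(t)-\tilde u(t)\|$ is absolutely continuous and, using accretivity of $A$ together with the bracket $[\cdot,\cdot]$ inequality $\frac{d}{dt}\|u-\tilde u\| \le \|f-\tilde f\|$ a.e., Gr\"onwall (with $f=\tilde f$ and $u_0 = \tilde u_0$) forces $u\equiv\tilde u$. I expect the delicate point to be Step 2 --- specifically justifying a.e.\ differentiability of $u$ and the closedness/weak-limit argument for the inclusion --- so in the write-up I would lean on \cite{andreu2004parabolic} and \cite{pavel1987nonlinear} for those technical lemmas rather than reproving them, and present the above as the skeleton.
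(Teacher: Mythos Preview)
The paper does not prove this theorem; it is stated as a background result with the parenthetical reference ``(see \cite[Chapter 2, Theorem 10.2]{pavel1987nonlinear})'' and no proof is given. Your sketch is the standard route (Crandall--Liggett discretization, upgrade from mild to strong via reflexivity and the regularity hypotheses on $u_0$ and $f$, uniqueness from accretivity), and it is correct as an outline of what one finds in \cite{pavel1987nonlinear} or \cite{barbuNonlinear}; since the paper simply cites the result, there is nothing to compare against.
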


Accretivity of operators can be viewed as a generalization of monotony in Hilbert spaces (see \cite[Corollary A.13, Example A.14]{andreu2004parabolic}) -- the equivalence between $m$-accretivity and maximal monotony is called the Minty Theorem \cite[p.284]{andreu2004parabolic}. 
We recall that an operator $A$ on a Hilbert space $H$ is monotone if $\langle v-w, \hat{v}-\hat{w} \rangle_H \geq 0$ for all $(v,\hat{v}),(w,\hat{w}) \in \graph(A)$. 

\begin{proposition}[Equivalence of accretivity and monotony] \label{prop:background:monotony}
    Let $A$ be an operator in a Hilbert space $H$. Then, A is accretive in $H$ if and only if $A$ is monotone.
\end{proposition}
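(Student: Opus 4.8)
The plan is to prove the equivalence $A$ accretive $\iff$ $A$ monotone in a Hilbert space $H$ by a direct computation on the defining inequalities, exploiting that in a Hilbert space the norm is induced by the inner product, so the accretivity inequality can be squared and expanded.

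First I would unwind the accretivity condition: $A$ is accretive iff $\Vert v - w \Vert \leq \Vert (v-w) + \lambda(\hat v - \hat w)\Vert$ for all $\lambda > 0$ and all $(v,\hat v),(w,\hat w)\in\graph(A)$. Since both sides are nonnegative, this is equivalent to squaring, and in a Hilbert space
\[
\Vert (v-w) + \lambda(\hat v - \hat w)\Vert^2 = \Vert v-w\Vert^2 + 2\lambda\,\langle v-w,\hat v - \hat w\rangle_H + \lambda^2 \Vert \hat v - \hat w\Vert^2.
\]
Hence accretivity is equivalent to requiring, for all $\lambda > 0$,
\[
0 \leq 2\lambda\,\langle v-w,\hat v - \hat w\rangle_H + \lambda^2 \Vert \hat v - \hat w\Vert^2,
\]
i.e. after dividing by $\lambda > 0$, $0 \leq 2\langle v-w,\hat v - \hat w\rangle_H + \lambda \Vert \hat v - \hat w\Vert^2$ for all $\lambda > 0$.

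For the direction ``monotone $\Rightarrow$ accretive'': if $\langle v-w,\hat v-\hat w\rangle_H \geq 0$, then both terms on the right-hand side above are nonnegative for every $\lambda > 0$, so the inequality holds and $A$ is accretive. For the converse ``accretive $\Rightarrow$ monotone'': fix a pair of graph points and let $\lambda \to 0^+$ in $0 \leq 2\langle v-w,\hat v-\hat w\rangle_H + \lambda \Vert \hat v-\hat w\Vert^2$; the second term vanishes in the limit, yielding $\langle v-w,\hat v-\hat w\rangle_H \geq 0$, which is exactly monotonicity. This completes both implications.

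There is no real obstacle here — the statement is essentially the Hilbert-space polarization identity combined with a limiting argument; the only mild care needed is to note that the accretivity inequality is stated for \emph{all} $\lambda > 0$ (not just one), which is precisely what makes the $\lambda \to 0^+$ limit legitimate and what forces the linear-in-$\lambda$ coefficient to be nonnegative. One should also remark that this argument does not require $A$ to be single-valued: it is carried out pointwise on arbitrary pairs in $\graph(A)$.
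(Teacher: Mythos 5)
Your proof is correct and is exactly the standard argument: expand $\Vert (v-w)+\lambda(\hat v-\hat w)\Vert^2$ via the inner product, divide by $\lambda>0$, and let $\lambda\to 0^+$ for the forward implication, while the reverse implication is immediate since both remaining terms are nonnegative. The paper does not prove this proposition itself but cites \cite[Corollary A.13, Example A.14]{andreu2004parabolic}, where the same computation appears, so your argument matches the intended one (the only implicit point worth noting is that $H$ is taken to be a real Hilbert space, as elsewhere in the paper).
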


We specialize our discussion by considering Banach spaces that are spaces of functions defined on some $\Omega \subseteq \bbR^d$. Let $\Lp{0}(\Omega)$ be the set of measurable functions on $\Omega$ that map to $\bbR$ and define the two set of functions
\[
\cH = \{h \in \Ck{\infty}(\bbR) \spaceBar 0 \leq h' \leq 1, \text{ $\support(h')$ is compact, } h(0) = 0 \}
\]
and 
\[
\cJ = \{j:\bbR \mapsto [0,\infty] \spaceBar \text{$j$ is convex, lower semi-continuous and satisfies $j(0) = 0$}\}.
\]
Then, we can use the following notation for two functions $v,w \in \Lp{0}(\Omega)$: 
\[
v \llSgt w \quad \text{if and only if } \int_\Omega j(v(x)) \, \dd x \leq \int_\Omega j(w(x)) \, \dd x \text{ for all $j \in \cJ$}. 
\]

\begin{mydef}[Completely accretive operators] \label{def:background:completelyAccretive}
We say that an operator $A$ on $\Lp{0}(\Omega)$ is completely accretive if 
$v - w \llSgt v - w + \lambda (\hat{v} - \hat{w})$
for all $\lambda > 0$ and $(v,\hat{v}),(w,\hat{w}) \in \graph(A)$.
\end{mydef}

In Section \ref{subsec:wellPosedness}, we will consider operators $A$ such that $\graph(A) \subseteq \Lp{1}(\Omega) \times \Lp{1}(\Omega)$ with $\lambda_x(\Omega) < \infty$. In this case, another useful characterization of completely accretive operators on $\Lp{1}(\Omega)$ uses $\cH$ (see \cite[Corollary A.38]{andreu2004parabolic}).

\begin{proposition}[Characterization of completely accretive operators] \label{prop:background:completelyAccretiveCharacterization}
Let $A$ be an operator with $\graph(A) \subseteq \Lp{1}(\Omega) \times \Lp{1}(\Omega)$ and $\lambda_x(\Omega) < \infty$. Then, $A$ is completely accretive if and only if
\[
\int_\Omega h(v(x) - w(x))(\hat{v}(x) - \hat{w}(x) \, \dd x \geq 0.
\]
for any $h \in \cH$ and $(v,\hat{v}),(w,\hat{w}) \in \graph(A)$.
\end{proposition}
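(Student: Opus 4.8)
The claim is the equivalence, for an operator $A$ with $\graph(A) \subseteq \Lp{1}(\Omega) \times \Lp{1}(\Omega)$ and $\lambda_x(\Omega) < \infty$, between complete accretivity and the inequality $\int_\Omega h(v-w)(\hat v - \hat w) \, \dd x \geq 0$ for all $h \in \cH$ and graph pairs. Since this is cited from \cite[Corollary A.38]{andreu2004parabolic}, the natural plan is to give a self-contained proof from the definition of $\llSgt$. The plan is to work with a fixed pair $(v,\hat v),(w,\hat w) \in \graph(A)$, set $u = v - w$ and $z = \hat v - \hat w$, and compare the two conditions: complete accretivity says $\int_\Omega j(u) \, \dd x \leq \int_\Omega j(u + \lambda z)\, \dd x$ for all $\lambda > 0$ and all $j \in \cJ$, while the target says $\int_\Omega h(u) z \, \dd x \geq 0$ for all $h \in \cH$.

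For the direction \emph{complete accretivity $\Rightarrow$ integral inequality}: fix $h \in \cH$ and build a convex function $j \in \cJ$ whose derivative is $h$, namely $j(t) = \int_0^t h(s)\, \dd s$; this is convex (as $h$ is nondecreasing), $\Ck{\infty}$, lower semi-continuous, and $j(0) = 0$, so $j \in \cJ$. Complete accretivity gives $\Phi(\lambda) := \int_\Omega j(u + \lambda z)\, \dd x \geq \int_\Omega j(u)\, \dd x = \Phi(0)$ for every $\lambda > 0$. Then differentiate at $\lambda = 0^+$: since $h$ is bounded ($0 \leq h' \leq 1$ and $h(0)=0$ force $h$ Lipschitz with slope $1$, hence $|h(t)| \leq |t|$) and $u, z \in \Lp{1}$, a dominated-convergence / mean-value argument justifies $\Phi'(0^+) = \int_\Omega h(u) z \, \dd x$, and $\Phi'(0^+) \geq 0$ since $\Phi$ attains its minimum over $[0,\infty)$ at $0$. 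That yields the inequality. The care needed here is the differentiation under the integral sign — one uses $|j(u+\lambda z) - j(u)| \leq \lambda |z| \sup |h| $ on the relevant range, but $\sup|h|$ may be infinite in general; instead bound $|j(u+\lambda z)-j(u)|/\lambda \leq |z| \cdot \big(|u| + \lambda|z|\big)$ for $\lambda \leq 1$, which is in $\Lp{1}$, and apply dominated convergence.

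For the converse, \emph{integral inequality $\Rightarrow$ complete accretivity}: fix $j \in \cJ$ and $\lambda > 0$; we must show $\int_\Omega \big(j(u+\lambda z) - j(u)\big)\, \dd x \geq 0$. The idea is to write this increment as an integral of $j'$ along the segment and recognize the integrand $j'$-against-$z$ shape, using that $\cH$ is rich enough to approximate (truncated, smoothed) pieces of $j'$. Concretely, by convexity $j(u+\lambda z) - j(u) \geq \lambda\, p(u)\, z$ for any selection $p(u)$ of the subdifferential $\partial j(u)$; but integrating this crude bound is not enough since $p(u)$ need not be an admissible $h$. The standard route is: approximate $j$ by $j_\delta \in \cJ$ that is $\Ck{\infty}$ with $j_\delta'$ bounded, write $j_\delta(u+\lambda z) - j_\delta(u) = \int_0^\lambda j_\delta'(u + s z)\, z \, \dd s$, and for each fixed $s$ approximate $t \mapsto j_\delta'(u(x) + s z(x))$ in a way that lets one insert functions of the form $h(u(x) - w(x))$. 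The cleaner classical argument (following Bénilan–Crandall) is to reduce to showing: if $\int_\Omega h(u) z\, \dd x \geq 0$ for all $h \in \cH$, then $u \llSgt u + \lambda z$, by testing with $h$ close to $h_{a,b} = (\,\cdot\, \wedge b \vee a)$-type truncations to conclude that the distribution function of $u+\lambda z$ dominates that of $u$ in the Hardy–Littlewood sense, which is exactly $\llSgt$. Approximating such truncations from within $\cH$ (mollifying the piecewise-linear clamp, noting $0 \leq h' \leq 1$) and passing to the limit via monotone/dominated convergence completes this direction.

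The main obstacle I expect is the converse direction: getting from the ``one-parameter'' test inequality against $h \in \cH$ to the full family of inequalities against all $j \in \cJ$ and all $\lambda$. This requires (i) a density/approximation argument showing the smooth clamps in $\cH$ generate enough information to control all convex $j$, and (ii) careful limiting arguments, since $j$ may take the value $+\infty$ and need not be differentiable. The forward direction, by contrast, is essentially one clean differentiation-under-the-integral computation. Given that the result is quoted verbatim from \cite{andreu2004parabolic}, an acceptable alternative is simply to cite it and sketch only the forward direction as a sanity check, but the plan above indicates how a complete proof would proceed.
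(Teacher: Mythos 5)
The paper does not prove this proposition: it is stated with a direct citation to \cite[Corollary A.38]{andreu2004parabolic} and then used as given, so there is no internal argument of the paper to compare against. Your forward direction is essentially correct, and slightly simpler than you make it: every $h\in\cH$ is bounded (since $h(0)=0$, $0\leq h'\leq 1$, and $\support(h')$ is compact), so $|j(u+\lambda z)-j(u)|\leq\|h\|_{\Lp{\infty}}\,\lambda|z|$ dominates the difference quotients with a fixed $\Lp{1}$ majorant and the differentiation under the integral sign is routine.

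Your converse sketch, however, has a genuine gap. The ``standard route'' you outline writes $j_\delta(u+\lambda z)-j_\delta(u)=\int_0^\lambda j_\delta'(u+sz)\,z\,\dd s$ and proposes to bound each $\int_\Omega j_\delta'(u+sz)\,z\,\dd x$ from the hypothesis. But the hypothesis only controls integrands of the form $h(v-w)(\hat v-\hat w)=h(u)z$, i.e.\ functions of the \emph{unshifted} argument $u=v-w$; the shifted $u+sz$ for $s>0$ is not the difference of two graph elements, so the test inequality cannot be invoked there. The idea that rescues the converse, which your outline does not contain, is that $\lambda\mapsto\int_\Omega j(u+\lambda z)\,\dd x$ is convex on $[0,\infty)$ (pointwise convexity of $j$ composed with the affine map $\lambda\mapsto u(x)+\lambda z(x)$), and a convex function on $[0,\infty)$ is nondecreasing once its right derivative at $0$ is nonnegative. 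One therefore only needs the one-sided derivative at $\lambda=0^+$ to be $\geq 0$, and that is exactly what the $\cH$-test supplies after an approximation step: the condition $\int_\Omega h(u)z\,\dd x\geq0$ is invariant under positive scaling of $h$, hence holds on the conic hull of $\cH$, from which smooth truncations of a subgradient selection of $j$ can be built, and monotone/dominated limits conclude. Your alternative ``Hardy--Littlewood'' phrasing is also imprecise: $\llSgt$ is a convex-order comparison, not dominance of distribution functions. Since the paper merely cites the result this gap has no downstream effect, but if a self-contained proof is wanted the convexity-in-$\lambda$ observation is the hinge.
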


One can combine Definition \ref{def:background:maccretive} and Definition \ref{def:background:completelyAccretive} to define $m$-completely accretive operators. We now present a variant of \cite[Theorem A.20]{andreu2004parabolic} whose proof can be found in Section \ref{sec:supplementary:background}.

\begin{lemma}[Contraction property for completely accretive operators] \label{lem:background:orderRelation:contraction} 
Let $\Omega \subseteq \bbR^d$ be bounded, $p\geq 1$, $A$ be a completely accretive operator with $\graph(A) \subseteq \Lp{p}(\Omega) \times \Lp{p}(\Omega)$
and $u$ and $v$ be solutions as in Definition \ref{def:main:notation:nonlocal:strongSolutionCP} that respectively solve $(\mathrm{CP}_{f,u_0})$ and $(\mathrm{CP}_{g,v_0})$. Then, for any $1 \leq r \leq \infty$ and $0 \leq t \leq T$, we have:
\begin{equation} \label{eq:background:orderRelation:contraction}
\Vert u(t) - v(t) \Vert_{\Lp{r}} \leq \Vert u_0 - v_0 \Vert_{\Lp{r}} + \int_{0}^t \Vert f(s) - g(s) \Vert_{\Lp{r}} \, \dd s.
\end{equation}
In particular, we have that this type of solutions to $(\mathrm{CP}_{f,u_0})$ are unique. 
\end{lemma}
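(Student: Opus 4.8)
The plan is to deduce the $\Lp{r}$-contraction estimate \eqref{eq:background:orderRelation:contraction} from the complete accretivity of $A$ via the characterization in Proposition \ref{prop:background:completelyAccretiveCharacterization}, following the template of \cite[Theorem A.20]{andreu2004parabolic}. First I would fix $h \in \cH$ and consider the function $t \mapsto \int_\Omega j_h(u(t,x) - v(t,x)) \, \dd x$, where $j_h$ is a primitive of $h$ (so $j_h \geq 0$, convex, $j_h(0) = 0$, since $0 \leq h' \leq 1$ and $h(0)=0$). Because $u, v \in \mathrm{W}^{1,1}_{\mathrm{loc}}((0,T);V)$ with $V \subseteq \Lp{p}(\Omega) \subseteq \Lp{1}(\Omega)$ (using $\lambda_x(\Omega) < \infty$), the map $t \mapsto u(t) - v(t)$ is absolutely continuous into $\Lp{1}(\Omega)$, and one can differentiate under the integral to get, for a.e. $t$,
\[
\frac{\dd}{\dd t} \int_\Omega j_h(u(t) - v(t)) \, \dd x = \int_\Omega h(u(t,x) - v(t,x))\,(u'(t,x) - v'(t,x)) \, \dd x.
\]
Substituting $u'(t) = f(t) - \hat u(t)$ and $v'(t) = g(t) - \hat v(t)$ with $\hat u(t) \in Au(t)$, $\hat v(t) \in Av(t)$ (valid a.e. by Definition \ref{def:main:notation:nonlocal:strongSolutionCP}), the right-hand side splits as $\int_\Omega h(u-v)(f - g)\,\dd x - \int_\Omega h(u-v)(\hat u - \hat v)\,\dd x$. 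The second integral is $\geq 0$ by Proposition \ref{prop:background:completelyAccretiveCharacterization}, and the first is bounded by $\Vert f(t) - g(t)\Vert_{\Lp{1}}$ since $\vert h \vert \leq 1$ (as $0 \le h' \le 1$ and $h(0)=0$ force $\vert h(s)\vert \le \vert s\vert$, but more simply $h$ is bounded with $\sup \vert h \vert \le 1$ because $\support(h')$ is compact). Hence
\[
\int_\Omega j_h(u(t) - v(t)) \, \dd x \leq \int_\Omega j_h(u_0 - v_0) \, \dd x + \int_0^t \Vert f(s) - g(s) \Vert_{\Lp{1}} \, \dd s.
\]

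Next I would upgrade this to a genuine $\Lp{r}$ bound by an approximation argument. Choosing $h = h_\delta$ approximating $\signFunc$ (smoothed), $j_{h_\delta}$ converges pointwise to $s \mapsto \vert s \vert$; passing $\delta \to 0$ with monotone/dominated convergence gives the $r=1$ case $\Vert u(t) - v(t)\Vert_{\Lp{1}} \leq \Vert u_0 - v_0\Vert_{\Lp{1}} + \int_0^t \Vert f - g\Vert_{\Lp{1}}$. For general $1 \le r < \infty$, I would instead take $h_\delta$ approximating $s \mapsto \signFunc(s)\min(\vert s\vert, M)^{r-1}$ truncated, or more cleanly use the standard fact (e.g. \cite[Prop. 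A.??]{andreu2004parabolic}) that complete accretivity implies the relation $v - w \llSgt v - w + \lambda(\hat v - \hat w)$ is preserved under the flow, i.e. $u(t) - v(t) - \int_0^t(f-g) \llSgt u_0 - v_0$ in the appropriate sense after handling the forcing term; then apply the functional $j(s) = \vert s\vert^r$ (which lies in $\cJ$ for $r \geq 1$) together with the triangle inequality in $\Lp{r}$ to peel off $\int_0^t \Vert f - g\Vert_{\Lp{r}}$. The case $r = \infty$ follows by taking $r \to \infty$ in the finite-$r$ estimates, or directly from $j(s) = (\vert s\vert - k)^+$ for $k > 0$. Finally, uniqueness is immediate: if $u, v$ both solve $(\mathrm{CP}_{f,u_0})$ then $f = g$ and $u_0 = v_0$, so \eqref{eq:background:orderRelation:contraction} forces $\Vert u(t) - v(t)\Vert_{\Lp{r}} = 0$ for all $t$.

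The main obstacle I anticipate is the interchange of differentiation and integration, i.e. rigorously justifying $\frac{\dd}{\dd t}\int_\Omega j_h(u - v) = \int_\Omega h(u-v)(u'-v')$ for a.e. $t$. This requires knowing that $w(t) := u(t) - v(t)$ is strongly absolutely continuous as an $\Lp{1}(\Omega)$-valued map with $w'(t) = u'(t) - v'(t) \in \Lp{1}(\Omega)$ for a.e. $t$ (which follows from $\mathrm{W}^{1,1}_{\mathrm{loc}}((0,T);V) \hookrightarrow \mathrm{W}^{1,1}_{\mathrm{loc}}((0,T);\Lp{1}(\Omega))$ since $V \hookrightarrow \Lp{1}(\Omega)$ continuously on a finite-measure domain), combined with the fact that $j_h$ is $C^1$ with globally Lipschitz derivative $h$ (so $s \mapsto j_h(s)$ is $C^{1,1}$ and the chain rule holds for Sobolev-in-time compositions). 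Dealing with the local-in-time nature of $\mathrm{W}^{1,1}_{\mathrm{loc}}$ near $t = 0$ is a minor technical point, resolved by first proving the estimate on $[\eta, T]$ and letting $\eta \downarrow 0$ using $u, v \in C([0,T];V)$ and $j_h$ continuous.
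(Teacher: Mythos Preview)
Your proposal is correct in spirit and ultimately reaches the same conclusion, but the paper's proof is far more economical: it simply observes that complete accretivity together with Lemma~\ref{lem:background:orderRelation:properties} (part 2) implies $\|u-v\|_{\Lp{r}} \le \|u-v+\lambda(\hat u-\hat v)\|_{\Lp{r}}$ for every $1\le r\le\infty$, i.e.\ $A$ is accretive in each $\Lp{r}$, and then directly invokes \cite[Theorem~A.20]{andreu2004parabolic} to obtain \eqref{eq:background:orderRelation:contraction}. Uniqueness is handled identically to your last sentence.

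What you are doing, by contrast, is essentially \emph{reproving} (a special case of) Theorem~A.20 from scratch via the $\cH$-characterization and a differentiation-under-the-integral argument. This works cleanly for $r=1$, and your identification of the chain-rule/absolute-continuity issue as the only technical obstacle is accurate. However, your passage to general $r$ via ``$h_\delta$ approximating $\signFunc(s)\min(|s|,M)^{r-1}$'' is awkward: such functions do not lie in $\cH$ for $r>2$ without additional rescaling (the constraint $0\le h'\le 1$ fails), so the bookkeeping becomes delicate. The cleaner route---which you gesture at but do not quite execute---is exactly the paper's: fix $r$, note that $j(s)=|s|^r\in\cJ$ so complete accretivity gives ordinary accretivity in the Banach space $\Lp{r}(\Omega)$, and then run the standard Benilan-type estimate (differentiate $\|u(t)-v(t)\|_{\Lp{r}}$, use accretivity to drop the operator term, integrate) in that fixed space. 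This avoids any approximation in $h$ and handles all $r$ uniformly.
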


Our motivation for the use of nonlinear semigroup theory really stems from Lemma \ref{lem:background:orderRelation:contraction}. Indeed, we will consider Cauchy problems of the form \eqref{eq:background:orderRelation:abstractCauchyProblem} for our gradient flows. One could be tempted to use direct results for the existence of gradient flows such as described in \cite[Section 8]{santambrogio2015optimal} for example. However, we would not obtain the very strong contraction property \eqref{eq:background:orderRelation:contraction} which will be essential to establish our rates in Section \ref{sec:proofs}.

\subsection{Nonlinear problems}

In this subsection, we will introduce several results related to nonlinear problems which will be used in Section~\ref{subsec:wellPosedness}.

\begin{mydef}[Hemicontinuity]
    Let $A:V \mapsto V^*$ be an operator. We say that $A$ is hemicontinous if for all $v,w,z \in V$, the function $\lambda \mapsto \langle A(v + \lambda w), z \rangle_{V^*}$ is continuous from $\bbR$ to $\bbR$.
\end{mydef}

A weak version of \cite[Lemma 6.2]{fracu1990} is as follows and will be used for the well-posedness result of the solution to the problem \eqref{eq:main:notation:localProblem:localProblem} in the proof of Theorem  \ref{thm:proofs:wellPosedness:localProblem:existenceUniqueness}.

\begin{lemma}[Continuity implies hemicontinuity] \label{lem:background:hemicontinuity}
    Let $A:V\mapsto V^*$ be an operator. If $A$ is continuous, then $A$ is hemicontinuous.
\end{lemma}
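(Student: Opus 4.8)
The plan is to prove hemicontinuity directly from the definition by reducing it to a one-variable continuity statement for a composition of continuous maps. Fix $v, w, z \in V$. I want to show that $\lambda \mapsto \langle A(v + \lambda w), z \rangle_{V^*}$ is continuous from $\bbR$ to $\bbR$. First I would introduce the affine curve $\gamma : \bbR \to V$ defined by $\gamma(\lambda) = v + \lambda w$, which is continuous (indeed Lipschitz) since $\|\gamma(\lambda_1) - \gamma(\lambda_2)\| = |\lambda_1 - \lambda_2|\, \|w\|$. By hypothesis $A : V \to V^*$ is continuous, so the composition $A \circ \gamma : \bbR \to V^*$ is continuous.

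Next I would use the fact that the duality pairing $\langle \cdot, z \rangle_{V^*} : V^* \to \bbR$ is a bounded (hence continuous) linear functional on $V^*$ for each fixed $z$: indeed $|\langle \phi, z \rangle_{V^*}| \leq \|\phi\|_{V^*} \|z\|$. Composing this continuous functional with the continuous map $A \circ \gamma$ yields that $\lambda \mapsto \langle A(\gamma(\lambda)), z \rangle_{V^*} = \langle A(v + \lambda w), z \rangle_{V^*}$ is continuous as a map $\bbR \to \bbR$. Since $v, w, z$ were arbitrary, $A$ is hemicontinuous.

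The argument is essentially a triviality once one unwinds the definitions: the only genuine point of care is that ``continuous'' for $A : V \to V^*$ is understood with respect to the strong (norm) topologies on both $V$ and $V^*$, so that the composition with the norm-continuous pairing functional is legitimate; if instead one only had weak-$*$ continuity of $A$ the conclusion would still hold but via a slightly different route. I do not anticipate a substantive obstacle here — this lemma is a ``weak version'' of \cite[Lemma 6.2]{fracu1990} precisely because it drops the demicontinuity/weaker-topology refinements and keeps only what is needed later, and the stated form follows immediately from continuity of the curve $\lambda \mapsto v + \lambda w$ and continuity of the evaluation pairing.
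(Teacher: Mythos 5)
Your argument is correct: the map $\lambda\mapsto v+\lambda w$ is (Lipschitz) continuous from $\bbR$ to $V$, $A$ is norm-continuous from $V$ to $V^*$, and for fixed $z$ the evaluation $\phi\mapsto\langle\phi,z\rangle_{V^*}$ is a bounded linear (hence continuous) map from $V^*$ to $\bbR$, so the composition is continuous and hemicontinuity follows. The paper does not actually write out a proof of this lemma — it simply records it as a weak version of the cited result in \cite{fracu1990} — and your composition-of-continuous-maps argument is exactly the standard justification one would supply.
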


The next result deals with the existence of a solution to a nonlinear stationary problem \cite[Chapter 2, Theorem 2.1]{lions1969quelques}. This will be an essential step in the proof of the range condition of our operators in Proposition~\ref{prop:proofs:wellPosedness:nonlocalProblem:completeAccretivityRangeCondition}.

\begin{theorem}[Solutions to nonlinear stationary problems] \label{thm:background:nonlinearStationary}
    Let $V$ be a separable reflexive Banach space and $A:V\mapsto V^*$ an operator which is bounded, hemicontinuous and 
    satisfies 
    \[
    \langle A(v) - A(w), v-w \rangle_{V^*} \geq 0 \qquad \text{for all $v,w \in V$ as well as}
    \]
    \[
    \frac{\langle A(v), v \rangle_{V^*}}{\Vert v \Vert_V} \to \infty \qquad \text{when $\Vert v \Vert_V \to \infty$.}
    \]
Then, for every $f \in V^*$, there exists $u \in V$ such that 
\[
A(u) = f.
\]
\end{theorem}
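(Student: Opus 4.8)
The statement to prove is the Browder–Minty surjectivity theorem: a bounded, hemicontinuous, monotone, coercive operator $A$ on a separable reflexive Banach space $V$ is surjective onto $V^*$. I would proceed by the Galerkin method.

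The plan is as follows. First, since $V$ is separable, fix a countable dense set $\{w_1, w_2, \dots\}$ whose span is dense in $V$, and let $V_m = \mathrm{span}\{w_1,\dots,w_m\}$. For fixed $f \in V^*$, define the finite-dimensional problem of finding $u_m \in V_m$ with $\langle A(u_m) - f, w_j \rangle_{V^*} = 0$ for $j = 1,\dots,m$. To solve this, I would identify $V_m$ with $\bbR^m$ and define a continuous map $\Phi_m : \bbR^m \to \bbR^m$ via $(\Phi_m(\xi))_j = \langle A(\sum_i \xi_i w_i) - f, w_j \rangle_{V^*}$; continuity of $\Phi_m$ follows from hemicontinuity and boundedness of $A$ on the finite-dimensional space together with a standard argument (monotone plus hemicontinuous implies demicontinuous). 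The coercivity hypothesis gives $\langle \Phi_m(\xi), \xi \rangle = \langle A(u_m), u_m\rangle - \langle f, u_m \rangle \ge \Vert u_m \Vert_V \big( \tfrac{\langle A(u_m), u_m\rangle_{V^*}}{\Vert u_m \Vert_V} - \Vert f \Vert_{V^*}\big) > 0$ once $\Vert u_m \Vert_V = \rho$ is large enough. A standard corollary of Brouwer's fixed point theorem (the "acute map" / Poincaré–Miranda argument) then yields $\xi^{(m)}$ with $\vert \xi^{(m)}\vert \le \rho$ and $\Phi_m(\xi^{(m)}) = 0$, i.e. a Galerkin solution $u_m$.

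Second, I would pass to the limit. Coercivity bounds $\Vert u_m \Vert_V$ uniformly (the bound $\rho$ above does not depend on $m$), and boundedness of $A$ bounds $\Vert A(u_m)\Vert_{V^*}$ uniformly. Since $V$ is reflexive, extract a subsequence with $u_m \rightharpoonup u$ in $V$ and $A(u_m) \rightharpoonup \chi$ in $V^*$. Testing the Galerkin relations against a fixed $w_j$ and letting $m \to \infty$, then using density of $\bigcup_m V_m$, gives $\langle \chi - f, v\rangle_{V^*} = 0$ for all $v \in V$, so $\chi = f$. It remains to identify $\chi = A(u)$, and this is the main obstacle: we only have weak convergence, so we cannot plug $u_m$ into the nonlinear $A$ directly. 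This is resolved by Minty's trick, which is exactly where monotonicity and hemicontinuity are used. From monotonicity, $0 \le \langle A(u_m) - A(v), u_m - v\rangle_{V^*}$ for every $v \in V$; using $\langle A(u_m), u_m\rangle_{V^*} = \langle f, u_m \rangle_{V^*} \to \langle f, u\rangle_{V^*} = \langle \chi, u\rangle_{V^*}$ (the crucial step, valid because $u_m \in V_m$ and $f$ tested against $u_m$ is exactly the Galerkin identity extended by density), one passes to the limit to obtain $0 \le \langle \chi - A(v), u - v\rangle_{V^*}$ for all $v \in V$. Finally set $v = u - \lambda z$ for $\lambda > 0$ and arbitrary $z \in V$, divide by $\lambda$, and let $\lambda \downarrow 0$; hemicontinuity gives $\langle A(u - \lambda z), z\rangle_{V^*} \to \langle A(u), z\rangle_{V^*}$, yielding $\langle \chi - A(u), z\rangle_{V^*} \ge 0$ for all $z$, hence $\chi = A(u)$. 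Therefore $A(u) = f$, completing the proof.

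The step I expect to be the genuine crux is the identification $\chi = A(u)$ via Minty's monotonicity trick, and within it the passage $\langle A(u_m), u_m\rangle_{V^*} \to \langle \chi, u\rangle_{V^*}$; every other step (Galerkin solvability via Brouwer, a priori estimates from coercivity and boundedness, weak compactness from reflexivity) is routine. One should also double-check that the continuity of $\Phi_m$ needed for the Brouwer argument really follows from the stated hypotheses — here one uses that a monotone hemicontinuous operator is demicontinuous, and on the finite-dimensional subspace $V_m$ weak and strong convergence coincide for the test directions, so $\Phi_m$ is continuous.
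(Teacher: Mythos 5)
Your proposal is correct: it is the classical Galerkin-plus-Minty's-trick proof of the Browder--Minty surjectivity theorem, and all the key steps (finite-dimensional solvability via the acute-angle corollary of Brouwer, uniform bounds from coercivity and boundedness, weak compactness from reflexivity, and the identification $\chi = A(u)$ via monotonicity and hemicontinuity) are carried out correctly. The paper does not prove this statement itself --- it cites it directly as \cite[Chapter 2, Theorem 2.1]{lions1969quelques} --- and the argument you give is essentially the proof in that reference, so there is nothing to compare beyond noting that your reconstruction matches the standard source.
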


For the existence of the local problem, we will use the following result on nonlinear evolution problems which can be found in \cite[Chapter 2, Theorem 1.4 and Remark 1.13]{lions1969quelques}.

\begin{theorem}[Solutions to nonlinear evolution problems] \label{thm:background:nonlinearEvolution}
    Let $H$ be an Hilbert space and $V_i$ reflexive Banach spaces with $V_i \subseteq H$ and $V_i$ dense in $H$ for $1 \leq i \leq q$. Assume that $\cap_{i=1}^q V_i$ is seperable and dense in $H$. For $1 \leq i \leq q$, let $A_i:V_i \mapsto V_i^*$ be an operator which is bounded, hemicontinuous and, for some $1 < p_i < \infty$ and $c_i > 0$, satisfies: 
    \begin{enumerate}
        \item $\Vert A_i(v) \Vert_{V_i^*} \leq c_i \Vert v \Vert_{V_i}^{p_i-1}$;
        \item $\langle A_i(v) - A_i(w), v-w \rangle_{V_i^*} \geq 0$ for all $v,w \in V_i$;
        \item there exists a seminorm $[\cdot]_i$ on $V_i$ with constants $\alpha_i,\lambda_i,\beta_i > 0$ such that $[\cdot]_i + \lambda_i \Vert \cdot \Vert_{H} \geq \beta_i \Vert \cdot \Vert_{V_i}$ on $V_i$ and $\langle A_i(v) , v \rangle_{V_i^*} \geq \alpha_i [v]_i^{p_i}$ for all $v \in V_i$.
    \end{enumerate}
    Then, for $u_0 \in H$ and $f \in \sum_{i=1}^q \Lp{p_i^*}(0,T;V_i^*)$ where $1/p_i + 1/p_i^* = 1$, there exists a unique function $u$ such that 
    \[
    u \in \bigcap_{i=1}^q \Lp{p_i}(0,T;V_i), \quad u \in \Lp{\infty}(0,T;H)
    \]
    that satisfies
    \[
    \frac{\partial u}{\partial t} + \sum_{i=1}^q A_i(u) = f \quad \text{and} \quad u(0) = u_0.
    \]
\end{theorem}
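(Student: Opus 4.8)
The statement is the classical Lions existence/uniqueness theorem for parabolic problems governed by a sum of bounded, hemicontinuous, monotone, coercive operators, and the natural route is the Galerkin/monotonicity method, avoiding any appeal to compact embeddings (which are not available under these hypotheses). The plan is as follows. First I would fix a sequence $(w_k)_{k\geq 1} \subseteq \bigcap_{i=1}^q V_i$ that is linearly independent and whose span is dense in $\bigcap_i V_i$ (possible by the separability/density hypothesis), and hence dense in $H$ and in each $V_i$. For each $m$ I would seek $u_m(t) = \sum_{k=1}^m g_{k}^m(t) w_k$ solving the finite-dimensional system $\langle u_m'(t), w_k\rangle + \sum_{i=1}^q \langle A_i(u_m(t)), w_k\rangle = \langle f(t), w_k\rangle$ for $1\leq k\leq m$, with $u_m(0)$ the projection of $u_0$ onto $\mathrm{span}(w_1,\dots,w_m)$ chosen so that $u_m(0)\to u_0$ in $H$. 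On the finite-dimensional span the maps $A_i$ are continuous (bounded hemicontinuous monotone operators are demicontinuous, and in finite dimensions this is continuity), so Carath\'eodory's theorem gives a local solution, and the a priori bound below promotes it to $[0,T]$.

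The second step is the a priori estimates. Testing the Galerkin system with $u_m(t)$ itself and integrating in time gives, using $\langle u_m', u_m\rangle = \tfrac12 \tfrac{d}{dt}\|u_m\|_H^2$ and coercivity hypothesis~(3),
\[
\tfrac12\|u_m(t)\|_H^2 + \sum_{i=1}^q \alpha_i \int_0^t [u_m(s)]_i^{p_i}\,\dd s \leq \tfrac12\|u_m(0)\|_H^2 + \int_0^t \langle f(s), u_m(s)\rangle\,\dd s,
\]
and after absorbing the right-hand side with Young's inequality together with the seminorm comparison $[\cdot]_i + \lambda_i\|\cdot\|_H \geq \beta_i\|\cdot\|_{V_i}$ and a Gr\"onwall argument in the $\|u_m\|_H^2$ term, one obtains that $(u_m)$ is bounded in $\Lp{\infty}(0,T;H)$ and in each $\Lp{p_i}(0,T;V_i)$. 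The growth bound~(1) then gives that $(A_i(u_m))$ is bounded in $\Lp{p_i^*}(0,T;V_i^*)$, and the Galerkin equation, tested against the $w_k$, shows $(u_m')$ is bounded in $\sum_i \Lp{p_i^*}(0,T;V_i^*)$. Passing to a subsequence, $u_m \rightharpoonup u$ in each $\Lp{p_i}(0,T;V_i)$ and weakly-$*$ in $\Lp{\infty}(0,T;H)$, $u_m' \rightharpoonup u'$, and $A_i(u_m)\rightharpoonup \chi_i$ in $\Lp{p_i^*}(0,T;V_i^*)$; letting $m\to\infty$ in the Galerkin identity (first for fixed $k$, then using density of the $w_k$) yields $u' + \sum_{i=1}^q \chi_i = f$ in $\sum_i \Lp{p_i^*}(0,T;V_i^*)$, and a standard integration-by-parts argument, comparing $\int_0^T \langle u_m', w\rangle$ with the limit for $w$ time-dependent, identifies $u(0) = u_0$.

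The crux is the identification $\sum_i \chi_i = \sum_i A_i(u)$ via Minty's trick, adapted to the sum of operators. From the energy identity one gets $\limsup_m \sum_i \int_0^T \langle A_i(u_m), u_m\rangle\,\dd t \leq \int_0^T \langle f,u\rangle\,\dd t - \tfrac12\|u(T)\|_H^2 + \tfrac12\|u_0\|_H^2 = \sum_i \int_0^T \langle \chi_i, u\rangle\,\dd t$ (using weak lower semicontinuity of $\|u_m(T)\|_H$, which requires first establishing $u_m(T)\rightharpoonup u(T)$ in $H$). Then summing the monotonicity inequalities $\int_0^T \langle A_i(u_m) - A_i(v), u_m - v\rangle\,\dd t \geq 0$ over $i$ for an arbitrary $v \in \bigcap_i \Lp{p_i}(0,T;V_i)$, passing to the limit, and using the $\limsup$ bound gives $\sum_{i=1}^q \int_0^T \langle \chi_i - A_i(v), u - v\rangle\,\dd t \geq 0$; choosing $v = u - \lambda w$ with $w \in \Lp{\infty}(0,T;\bigcap_i V_i)$, dividing by $\lambda>0$, sending $\lambda\downarrow 0$, and invoking hemicontinuity of each $A_i$ yields $\sum_i \int_0^T \langle \chi_i - A_i(u), w\rangle\,\dd t \geq 0$ for all such $w$, hence $\sum_i \chi_i = \sum_i A_i(u)$, which is exactly what the equation requires (the individual $\chi_i$ need not be identified). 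The main obstacle is precisely this step: handling the \emph{sum} of operators on distinct spaces $V_i$ in the monotonicity argument, and in particular making the energy-balance $\limsup$ rigorous without compactness — this forces careful bookkeeping of the duality pairings $\sum_i \Lp{p_i^*}(0,T;V_i^*)$ and the weak convergence $u_m(T)\rightharpoonup u(T)$. Finally, uniqueness follows by subtracting two solutions, testing the difference equation with $u-\tilde u$ (legitimate since $u-\tilde u$ has a time derivative in the appropriate dual sum and hence $\|u-\tilde u\|_H^2$ is absolutely continuous), using monotonicity of each $A_i$ to drop the nonlinear terms, and concluding with $(u-\tilde u)(0)=0$.
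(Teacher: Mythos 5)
The paper does not give its own proof of this theorem: it is quoted directly from \cite[Chapter 2, Theorem 1.4 and Remark 1.13]{lions1969quelques}, and the Galerkin/monotonicity (Minty) scheme you describe is exactly Lions' argument, so in that sense your approach coincides with the source the paper relies on. The structure is right: Galerkin basis in $\bigcap_i V_i$, a priori bounds from testing with $u_m$ and using coercivity, weak limits of $u_m$ and of each $A_i(u_m)$, limit passage in the equation, identification of the nonlinearity via the energy-balance $\limsup$ and Minty's trick applied to the summed monotone form, and uniqueness by subtraction and monotonicity.

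One claim, though, is not justified and is not actually used in Lions' proof: you assert that the Galerkin equation shows $(u_m')$ is bounded in $\sum_i \Lp{p_i^*}(0,T;V_i^*)$ and hence $u_m'\rightharpoonup u'$. In the Galerkin system one only has $u_m'=P_m\bigl(f-\sum_i A_i(u_m)\bigr)$, where $P_m$ is the $H$-orthogonal projection onto $\mathrm{span}(w_1,\dots,w_m)$; this projection need not be uniformly bounded on $V_i$ or $V_i^*$ for an arbitrary Galerkin basis, so a uniform bound on $u_m'$ in $\sum_i\Lp{p_i^*}(0,T;V_i^*)$ is not available in general. Lions avoids this: he passes to the limit in the weak form $\int_0^T\bigl(-\langle u_m,w_k\rangle\phi'(t)+\sum_i\langle A_i(u_m),w_k\rangle\phi(t)\bigr)\,\dd t = \int_0^T\langle f,w_k\rangle\phi(t)\,\dd t + \langle u_m(0),w_k\rangle\phi(0)$ for fixed $k$ and scalar $\phi\in C^1([0,T])$ with $\phi(T)=0$, and deduces \emph{a posteriori} from the limit equation that $u'=f-\sum_i\chi_i\in\sum_i\Lp{p_i^*}(0,T;V_i^*)$ and $u(0)=u_0$; the needed facts $u_m(T)\rightharpoonup u(T)$ and the energy $\limsup$ inequality come from the $H$-boundedness of $u_m(T)$ and the discrete energy identity, not from convergence of $u_m'$. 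Your proof would go through if you replace the $u_m'$ weak-convergence step with this distributional limit argument (or if you fix a Galerkin basis for which $P_m$ is uniformly bounded on each $V_i$, e.g. a spectral basis when that is available); as written, that step is a gap.
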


\subsection{Piecewise constant approximations} \label{subsec:approximations}

We start by introducing a space of functions which will be relevant for our discrete-to-continuum approximations. We refer to \cite{triebel2006theory} for a detailed discussion. 

\begin{mydef}[Lipschitz spaces]
Let $\Omega$ be an open bounded subset of $\bbR^d$. For $g \in \Lp{q}(\Omega)$ with $q \in [1,+\infty)$, we define the (first-order) $\Lp{q}(\Omega)$ modulus of smoothness by
\[
\omega(g,h)_q =
 \sup_{z \in \bbR^d, |z| < h} \l \int_{x, x + z \in \Omega}|g(x + z)-g(x)|^q \, \dd x \r^{1/q}.
\]
For $0 <s \leq 1$, the Lipschitz spaces $\Lip(s,\Lp{q}(\Omega))$ consist of all functions $g \in \Lp{q}(\Omega)$ for which
\[
|g|_{\Lip(s,\Lp{q}(\Omega))} = \sup_{h > 0} h^{-s} \omega(g,h)_q < +\infty .
\]
\end{mydef}


Lipschitz spaces contain functions with, roughly speaking, $s$ "derivatives" in $\Lp{q}(\Omega)$. We also note that we restrict ourselves to $0<s \leq 1$ since for $s \geq 1$, the only functions in $\Lip(s,\Lp{q}(\Omega))$ are constants by \cite[Chapter 2, Proposition 7.1]{devore1993constructive}. Lipschitz spaces allow for a broad range of functions and namely, $\Lip(1,\Lp{1}(\Omega))$ contains functions of bounded variation, for example \cite[Chapter 2, Lemma 9.2]{devore1993constructive}.

We will be considering the error rate between a function and its piecewise constant approximation. The results presented below are part of the broader literature on approximation theory and in particular, spline approximations (we refer to \cite{devore1993constructive} for a review of such topics). We begin by defining operators that will allow us to connect discrete and continuum spaces.

Let $\Omega \subseteq \bbR^d$ be some bounded set and let $\Pi = \{\pi_i\}_{i=1}^{\vert \Pi \vert}$ be a disjoint partition of $\Omega$ with cardinality $\vert \Pi \vert$. We define the projection operator $\cP_{\Pi}:\Lp{1}(\Omega) \mapsto \bbR^{\vert \Pi \vert}$ and the injection operator $\cI_\Pi:\mathbb{R}^{\vert \Pi \vert} \mapsto \Lp{1}(\Omega)$ as
\[
(\cP_\Pi u)_i =  \frac{1}{\lambda_x(\pi_i)} \int_{\pi_i} u(x) \, \dd x \quad \text{and} \quad (\cI_\Pi \bar{u} )(x) = \sum_{i=1}^{\vert \Pi \vert} \bar{u}_i \chi_{\pi_i}(x) 
\]
respectively for $u \in \Lp{1}(\Omega)$ and $i =1, \dots, \vert \Pi \vert$, $\bar{u} \in \mathbb{R}^{\vert \Pi \vert}$ and $x \in \Omega$. When using the partition $\{\pi_i \times \pi_j \}_{i,j=1}^{\vert \Pi \vert}$ of $\Omega \times \Omega$, by an abuse of notation, we have 
\[
(\cP_\Pi v)_{i,j} = \frac{1}{\lambda_x(\pi_i)\lambda_x(\pi_j)}  \int_{\pi_i} \int_{\pi_j} v(x,y) \, \dd y \dd x \quad \text{and} \quad (\cI_\Pi \bar{v})(x,y) = \sum_{i,j=1}^{\vert \Pi \vert} \bar{v}_{i,j} \chi_{\pi_i}(x) \chi_{\pi_j}(y)
\]
for $v \in \Lp{1}(\Omega \times \Omega)$, $\bar{v} \in \bbR^{\vert \Pi \vert \times \vert \Pi \vert}$ and $(x,y) \in \Omega \times \Omega$.

For a function $u$, in order to obtain quantitative rates for $\Vert u - \cI_\Pi \cP_\Pi u \Vert_{\Lp{p}}$ the choice of $\Pi$ and the regularity of $u$ are essential. We now describe one construction of a partition $\Pi$ but refer to \cite{Davydov} and references therein for more examples. In order to simplify the discussion, we consider $\Omega = (0,1)^d$ although some of the results described below hold in more general cases.

For $n \in \bbN$, define $[n] = {1,\dots,n}$ and let ${\bi} = (i_1,i_2,\ldots,i_d) \in [n]^d$ be a multi-index. We partition $\Omega$ into $n^d$ hypercubes with sides of length $n^{-1}$ and denote this partition by $\Pi_{\uni,n} = \{\Omega_{n,\bi}\}_{\bi \in [n]^d}$. We have the following approximation lemma whose proof can be found in Section \ref{sec:supplementary:background}.

\begin{lemma}[Approximation on the uniform partition] \label{lem:background:piecewise:equiRates}
Let $\Omega = (0,1)^d$, $g\in\Lip(s,\Lp{q}(\Omega))$ and assume the partition $\Pi_{\uni,n}$ on $\Omega$. For $0<s\leq 1$ and $1 \leq q \leq \infty$, we have
\[
\|g-\cI_{\Pi_{\uni,n}}\cP_{\Pi_{\uni,n}}g\|_{\Lp{q}} \leq C |g|_{\Lip(s,\Lp{q}(\Omega))} n^{-s},
\]
for some constant $C > 0$ depending only on $d$. 

Furthermore, if $g$ is in
$\mathrm{C}^{0,\alpha}$ for $0 < \alpha \leq 1$ and $\eps > 0$, then:
        \[
        \|g(\cdot/\eps)-\cI_{\Pi_{\uni,n}}\cP_{\Pi_{\uni,n}}g(\cdot/\eps)\|_{\Lp{q}} \leq C \eps^{-\alpha} n^{-\alpha}
        \]
        for some $C > 0$ dependant on $d$ and $\Omega$.
\end{lemma}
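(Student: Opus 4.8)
The plan is to reduce the global estimate to a sum of local (cell-by-cell) estimates and then bound the local error by the modulus of smoothness. First I would observe that on each hypercube $\Omega_{n,\bi}$, the value $(\cP_{\Pi_{\uni,n}}g)_{\bi}$ is the average of $g$ over $\Omega_{n,\bi}$, so $\cI_{\Pi_{\uni,n}}\cP_{\Pi_{\uni,n}}g$ restricted to $\Omega_{n,\bi}$ is that constant. The key elementary fact is that the average minimizes the $\Lp{q}$ distance to constants only for $q=2$; for general $q$ one instead uses that for any constant $c$,
\[
\int_{\Omega_{n,\bi}} |g(x) - \overline{g}_{\bi}|^q \, \dd x \leq 2^q \int_{\Omega_{n,\bi}} |g(x) - c|^q \, \dd x + 2^q \lambda_x(\Omega_{n,\bi})|c - \overline{g}_{\bi}|^q,
\]
and by Jensen $|c-\overline{g}_{\bi}|^q \leq \lambda_x(\Omega_{n,\bi})^{-1}\int_{\Omega_{n,\bi}}|g(x)-c|^q\,\dd x$. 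Alternatively, and more cleanly, I would write the local error directly as a double average:
\[
\int_{\Omega_{n,\bi}} |g(x) - \overline{g}_{\bi}|^q \, \dd x = \int_{\Omega_{n,\bi}} \left| \frac{1}{\lambda_x(\Omega_{n,\bi})}\int_{\Omega_{n,\bi}} (g(x) - g(y))\, \dd y \right|^q \dd x \leq \frac{1}{\lambda_x(\Omega_{n,\bi})}\int_{\Omega_{n,\bi}}\int_{\Omega_{n,\bi}} |g(x)-g(y)|^q \, \dd y\, \dd x,
\]
using Jensen's inequality in the inner average.

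Next I would sum over $\bi \in [n]^d$. Since each cube has diameter $\sqrt{d}\, n^{-1}$, every pair $(x,y)$ with $x,y \in \Omega_{n,\bi}$ satisfies $|x-y| < \sqrt{d}\, n^{-1}$, so writing $z = y-x$ and extending the $y$-integral we get, for each $\bi$,
\[
\frac{1}{\lambda_x(\Omega_{n,\bi})}\int_{\Omega_{n,\bi}}\int_{\Omega_{n,\bi}} |g(x)-g(y)|^q \, \dd y\, \dd x \leq n^d \sup_{|z| < \sqrt{d}\, n^{-1}} \int_{x,\, x+z \in \Omega_{n,\bi}} |g(x+z) - g(x)|^q \, \dd x.
\]
Summing over the (disjoint) cubes and bounding each inner integral by the integral over all of $\Omega$ (the cubes partition $\Omega$, so no overcounting in $x$), the factor $n^d = \lambda_x(\Omega_{n,\bi})^{-1}$ cancels the volume, and I obtain
\[
\|g-\cI_{\Pi_{\uni,n}}\cP_{\Pi_{\uni,n}}g\|_{\Lp{q}}^q \leq \omega(g, \sqrt{d}\, n^{-1})_q^q \leq C_{d,q}\, \omega(g, n^{-1})_q^q,
\]
where the last step uses the standard fact that $\omega(g, \lambda h)_q \leq (1 + \lambda)^d \omega(g,h)_q$ (or simply $\omega(g,\sqrt d\, h)_q \le \lceil \sqrt d\,\rceil^{?}\omega(g,h)_q$ via subadditivity along a chain of $\lceil\sqrt d\rceil$ steps in each coordinate). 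Taking $q$-th roots and invoking the definition of the Lipschitz seminorm, $\omega(g,n^{-1})_q \leq |g|_{\Lip(s,\Lp{q}(\Omega))} n^{-s}$, gives the first claim; the case $q = \infty$ is handled by the same argument with essential suprema replacing integrals.

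For the second claim, with $g \in \mathrm{C}^{0,\alpha}$, I would note that $\omega(g(\cdot/\eps), h)_q \leq \|g\|_{\mathrm{C}^{0,\alpha}} \lambda_x(\Omega)^{1/q} (h/\eps)^{\alpha}$ directly from the Hölder bound $|g(x/\eps) - g((x+z)/\eps)| \leq \|g\|_{\mathrm{C}^{0,\alpha}} (|z|/\eps)^{\alpha}$, and then feed $h = \sqrt d\, n^{-1}$ into the local-to-global estimate above to get the $\eps^{-\alpha} n^{-\alpha}$ rate. The only mild subtlety is that $g(\cdot/\eps)$ need not be globally defined or have the right regularity if $g$'s domain is too small, but since we only need its restriction to $\Omega = (0,1)^d$ and we may assume $g$ is defined (or extended) on a neighbourhood of $(0,1/\eps)^d$, this is not an issue; the constant $C$ absorbs $\|g\|_{\mathrm{C}^{0,\alpha}}$, $\lambda_x(\Omega)$ and the dimensional factors. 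I expect the only real bookkeeping obstacle to be the scaling constant in $\omega(g,\lambda h)_q \lesssim \omega(g,h)_q$ and keeping track that the sum over cubes genuinely reassembles an integral over $\Omega$ without overcounting — both of which are routine once the double-average representation is in place.
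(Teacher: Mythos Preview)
Your overall strategy matches the paper's: write the local error as a double average, apply Jensen, shift to the difference variable $z=y-x$, and bound by the modulus of smoothness. The H\"older part is also handled the same way.

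There is, however, a genuine slip in the displayed cell-wise bound. As written,
\[
n^d\int_{\Omega_{n,\bi}}\int_{\Omega_{n,\bi}} |g(x)-g(y)|^q \, \dd y\, \dd x \;\leq\; n^d \sup_{|z| < \sqrt{d}\, n^{-1}} \int_{x,\, x+z \in \Omega_{n,\bi}} |g(x+z) - g(x)|^q \, \dd x
\]
is false (take $d=q=n=1$, $g(x)=x$: the left side is $1/3$, the right side $1/4$); you have dropped the volume factor $V_d(\sqrt d/n)^d$ that must appear when a $z$-integral is replaced by a supremum. More importantly, even with that factor restored, taking the supremum over $z$ \emph{cell by cell} and then summing over $\bi$ does not recover $\omega(g,\sqrt d/n)_q^q$: each cell may select a different maximising $z$, and $\sum_{\bi}\sup_z\ge\sup_z\sum_{\bi}$ in general, so the ``no overcounting in $x$'' claim does not go through. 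The fix, which is exactly what the paper does, is to postpone the supremum: keep the $z$-integral, sum over $\bi$ so that $\sum_{\bi}\int_{\Omega_{n,\bi}}\,\dd x$ reassembles into $\int_\Omega\,\dd x$, apply Fubini, and only then bound $\int_{|z|\le\sqrt d/n}\,\dd z$ by $V_d(\sqrt d/n)^d$ times the supremum over $z$. This yields $\|g-\cI\cP g\|_{\Lp{q}}^q\le V_d\, d^{d/2}\,\omega(g,\sqrt d/n)_q^q$ directly, after which one uses $\omega(g,\sqrt d/n)_q\le(\sqrt d/n)^s|g|_{\Lip(s,\Lp{q})}$ without needing the subadditivity step $\omega(g,\lambda h)_q\lesssim\omega(g,h)_q$ you invoked.
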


\begin{remark}[Rates for functions on the product space]
We note that the conclusions of Lemma \ref{lem:background:piecewise:equiRates} 
also apply to functions $g$ defined on $\Omega \times \Omega$.
\end{remark}

\subsection{Random graph models} \label{subsec:randomGraphs}

Whenever we deal with random graph models, we will assume that we have the uniform partition $\Pi_{\uni,n} = \{\Omega_{n,i}\}_{i=1}^n$ of $(0,1)$ and shall therefore write $\cP_n = \cP_{\Pi_{\uni,n}}$ and $\cI_n = \cI_{\Pi_{\uni,n}}$.

In some applications, data is represented in the form of an undirected graph. One approach to understanding the underlying structure is to analyse the convergence properties of the graph as the number of vertices goes to infinity. We therefore consider graphs through their weight function defined on $[0,1]^2$: for a graph $G$ with vertices labelled by $[n]$ and weight matrix $\{\bar{K}_{n,ij}\}_{i,j=1}^n$ with $\bar{K}_{n,ii} = 0$ and $\bar{K}_{n,ji} = \bar{K}_{n,ij} \geq 0$ for $i\neq j$, for $(x,y) \in \Omega_{n,i} \times \Omega_{n,j}$, we define $\tilde{K}_n(x,y) = \bar{K}_{n,ij}$. The objective is now to analyze the limit of the step-functions $\tilde{K}_n$ as $n \to \infty$: this is well-known to be a graphon, i.e. a symmetric kernel function in $\Lp{1}([0,1]^2)$ (see \cite{borgs2014an} and references therein). Given a sequence of graphs from a certain graph model, i.e. a sequence of graphs for which there is a systematic way to  determine the graph weights $\{\bar{K}_{n,ij}\}_{i,j=1}^n$, we can establish the convergence of the sequence to the corresponding graphon in an appropriate metric. 
In the data-centric approach, one tries to fit graph models to data therewith estimating the underlying graphon
(see for example \cite{pmid19934050}, \cite{wolfe2013nonparametric}): the intuition here is that graph sequences that have related graphons as limiting points should share similarities.

In this paper, we consider a general sparse random graph model originally introduced and studied by \cite{borgs2014an}.

\begin{mydef}[Random graph models]\label{def:randomGraphModels}
For $n \in \bbN$, let $\rho_n > 0$ and $\bar{K} \in \bbR^{n \times n}$ be a symmetric matrix with non-negative entries.
Assume $\rho_n\bar{K}_{ij}\leq 1$ for all $i,j\in [n]$ and $\bar{K}_{ii}=0$ for all $i\in [n]$.
Let
\[ \l\bar{\Lambda}_{n}\r_{ij} = \lb \begin{array}{ll} \frac{1}{\rho_n} & \text{with probability } \rho_n\bar{K}_{ij} \\ 0 & \text{else.} \end{array} \rd \]
We define the random graph $G(n,\bar{\Lambda}_n)$ to be the random graph with vertices $[n]$ and weight matrix $\bar{\Lambda}_n$. 
\end{mydef}

As a simple example, consider the case where $\rho_n = 1$ and the entries of $\bar{K}$ are $\bar{K}_{ij} = p \in (0,1)$ for $1\leq i,j \leq n$. Then, vertices $i$ and $j$ are connected with probability $p$ and $G(n,\bar{\Lambda}_n)$ is the Erd\H{o}s-R\'enyi graph model $G(n,p)$.

Note that in previous papers (for example \cite[Section 3.2]{ElBouchairi}), the weight matrix only has 0-1 weights but, when the graphs are used in evolution problems, the quantities are normalized by $\rho_n^{-1}$ which is related to the average degree of the graph. Altering the definition as we did in Definition \ref{def:randomGraphModels} leads to more a straight-forward and clearer problem setting.

In our paper, we will be considering the limit to a local problem and are therefore less concerned with statements related to the convergence of our random graph models. We refer to \cite{borgs2014an} and \cite[Proposition 3.1]{ElBouchairi} for more details on the topic however.

\begin{remark}[Extension of graph models]
We mention at the beginning of Section \ref{subsec:randomGraphs} that we consider the uniform partition $\Pi_{\uni,n} = \{\Omega_{n,i}\}_{i=1}^n$ of $ \Omega = (0,1)$ in the setting of random graph models. The reason for choosing $d = 1$ follows from a standard data-centric approach (see \cite[Section 2.7]{borgs2014an}): to model the relationships between $n$ data points, given a graphon $\tilde{K}$, we usually sample $n$ points $x_i \iid \text{Unif}([0,1])$ and set graph weights $\bar{K}_{ij} = \tilde{K}(x_i,x_j)$ for $i \neq j$ and $\bar{K}_{ii} = 0$. We then create the appropriate graph models (for example as in Definition \ref{def:randomGraphModels}) with the goal of showing that the step-functions associated to these models converge to the initial graphon $\tilde{K}$. We note that the first part of the procedure, i.e. setting the weights $\bar{K}_{ij}$, could work equally well with a function $\tilde{K}:[0,1]^{2d} \mapsto [0,\infty)$ with $d \geq 1$; however, the second part of the procedure only makes sense for $d = 1$ since the convergence of step-functions associated to graphs 
supposes that the limiting function is also defined on $[0,1]^2$ (and is therefore a graphon).

As we are not concerned with showing the convergence of our graph models to graphons, we could therefore formulate everything for $d \geq 1$. This would however not be consistent with the standard data-centric approach where random graph models are generated from graphons. An interesting research direction would be to generate random hypergraph models (a hypergraph is a graph that also includes non-pairwise relationships, for example faces connecting three vertices instead of pairwise edges) from functions $\tilde{K}:[0,1]^{2d} \to [0,\infty)$, and show the convergence of step-functions associated to these hypergraphs
to $\tilde{K}$. Our results in Section \ref{subsec:application} should naturally extend to such models.
\end{remark}

\section{Main results} \label{sec:main}

\subsection{Assumptions}

In this section, we enumerate all the assumptions on the space, operators, kernels and length-scale that we will be using throughout the paper.

\begin{assumptions}[Assumptions on the space]\leavevmode

\begin{enumerate}[label=\textbf{S.\arabic*}]
    \setlength\itemsep{-1mm}
\item The space $\Omega$ is a bounded open subset of $\bbR^d$.\label{ass:main:assumptions:S1}
\item The space $\Omega$ has Lipschitz boundary. \label{ass:main:assumptions:S2}
\end{enumerate}
\end{assumptions}

\begin{assumptions}[Assumptions on the operator $\cA$]\leavevmode
\begin{enumerate}[label=\textbf{O.\arabic*}]
    \setlength\itemsep{-1mm}
\item The operator $\cA:\Lp{2}(\Omega)\mapsto\Lp{2}(\Omega)$ is bounded and linear. We write $C_{\mathrm{op}} = \Vert \cA \Vert_{\mathrm{op}}$, the operator norm of $\cA$. \label{ass:main:assumptions:O1}
\item The operator $\cA^* \cA$ is order-preserving: for all $u\in \Lp{2}(\Omega)$ and $x \in \Omega$, $\signFunc(\cA^*\cA u(x)) = \signFunc(u(x))$ where $\signFunc$ is the sign function. \label{ass:main:assumptions:O2}
\item The operator $\cA^*\cA$ satisfies the following: $\cA^*\cA (0) = 0$. \label{ass:main:assumptions:O3}
\item For $n \in \bbN$ and a partition $\Pi_n$, there exists a positive semi-definite linear operator $\bar{G}_n:\bbR^{\vert \Pi_n \vert} \mapsto \bbR^{\vert \Pi_n \vert}$ such that $\cI_n(\bar{G}_n(\bar{u})) = \cA^*\cA(\cI_n\bar{u}).$
\label{ass:main:assumptions:O4}
\end{enumerate}
\end{assumptions}

From \cite[Theorem VI.2 and Theorem VI.3]{reed1981functional}), if Assumption \ref{ass:main:assumptions:O1} holds, it is easy to see that
    $\Vert \cA^* \cA v \Vert_{\Lp{2}} \leq C_{\mathrm{op}}^2 \Vert v \Vert_{\Lp{2}}$
for all $v \in \Lp{2}(\Omega)$.

\begin{assumptions}[Assumptions on the kernel $K$]\leavevmode
\begin{enumerate}[label=\textbf{K.\arabic*}]
    \setlength\itemsep{-1mm}
\item The kernel $K:[0,\infty) \mapsto [0,\infty)$ is bounded in $\Lp{\infty}$. \label{ass:main:assumptions:K1}
\item The kernel $K:[0,\infty) \mapsto [0,\infty)$ 
has $\support(K) = [0,1]$. \label{ass:main:assumptions:K2}
\end{enumerate}
\end{assumptions}

\begin{assumptions}[Assumptions on regularity]\leavevmode
\begin{enumerate}[label=\textbf{R.\arabic*}]
    \setlength\itemsep{-1mm}
\item The function $u \in \Ck{0}([0,T];\Lp{p}(\Omega))$ solving~\eqref{eq:main:notation:nonlocal:nonlocalProblem} satisfies $u(t) \in \Wkp{1}{p}(\Omega)$ for $0 < t <T$. \label{ass:main:assumptions:R1}
\item For $h \geq 1$, $s>3+d/p$  and $r>2+d/p$, the function $u \in \Lp{p}(0,T;\Wkp{1}{p}(\Omega)) \cap \Lp{2}(0,T;\Lp{2}(\Omega))$ solving~\eqref{eq:main:notation:localProblem:localProblem} satisfies $u\in \Lp{h}(0,T;\Wkp{s}{p}(\Omega)) \cap \Lp{\infty}(0,T;\Wkp{r}{p}(\Omega))$. \label{ass:main:assumptions:R2}
\item 
For $h \geq 1$ and  $s > 3 + d/p$, there exists $\tilde{C}$ independent of $T$ such that the function \[
u \in \Lp{p}(0,T;\Wkp{1}{p}(\Omega)) \cap \Lp{2}(0,T;\Lp{2}(\Omega))\]
solving~\eqref{eq:main:notation:localProblem:localProblem} satisfies
 \[
        u\in \Lp{h}(0,T;\Wkp{s}{p}(\Omega)) \quad \text{and} \quad \max \left\{ \sup_{t \in (0,T)} \Vert \nabla u \Vert_{\Lp{\infty}}, \sup_{t \in (0,T)} \Vert \nabla^2 u \Vert_{\Lp{\infty}}\right\} \leq \tilde{C}
\]
for all $T > 0$.\label{ass:main:assumptions:R3}

\end{enumerate}
\end{assumptions}

\begin{assumptions}[Assumptions on the length-scale]\leavevmode
\begin{enumerate}[label=\textbf{L.\arabic*}]
    \vspace{-2mm}
    \setlength\itemsep{-1mm}
\item The length scale $\eps=\eps_n$ is positive and converges to 0, i.e. $0<\eps_n \to 0$ as $n \to \infty$.\label{ass:main:assumptions:L1}
\end{enumerate}
\end{assumptions}

\subsection{Setting}

\subsubsection{Discrete problem}

Given a partition $\Pi$ of $\Omega$ and a symmetric $\bar{K} \in \bbR^{\vert \Pi \vert \times \vert \Pi \vert}$, we define the discrete nonlocal $p$-Laplacian operator for $\bar{u} \in \bbR^{\vert \Pi \vert}$ and $1 \leq i \leq \vert \Pi \vert$ as follows:
\[
(\Delta_{p,\Pi}^{\bar{K}} \bar{u})_i = - \sum_{j=1}^{\vert \Pi \vert} \lambda_x(\pi_j) \bar{K}_{i,j} \vert (\bar{u})_j - (\bar{u})_i \vert ^{p-2}((\bar{u})_j - (\bar{u})_i).   
\]

Let $\bar{f} \in \bbR^{\vert \Pi \vert}$ and $\bar{u}_0 \in \bbR^{\vert \Pi \vert}$. We will consider the fully discrete nonlocal evolution problem
\begin{equation} \label{eq:main:notation:discrete:nonlocalProblemFully}
    \begin{cases}
    \frac{\bar{u}^k - \bar{u}^{k-1}}{ \tau^{k-1} } + \mu \Delta_{p,\Pi}^{\bar{K}} \bar{u}^k + \bar{G}(\bar{u}^k) =  \bar{f}, & \text{for} \quad 1 \leq k \leq N  \\
    \bar{u}(0) = \bar{u}_0
    \end{cases}
\end{equation}
for some $\mu > 0$, a positive sequence $\{\tau^k\}_{k=1}^{N-1}$ (with $\sum_{k=0}^{N-1}\tau^k = T$) 
and linear operator $\bar{G}:\bbR^{\vert \Pi \vert} \to \bbR^{\vert \Pi \vert}$. 
We also define $\tau = \max_{k=1,\dots,N} \tau^k$ and will write $\tau_n$ for $\tau$ when 
our sequence $\{\tau^k\}_{k=1}^{N-1}$ will be indexed by $n$.
We say that $\bar{u}$ solves \eqref{eq:main:notation:discrete:nonlocalProblemFully} with parameters $\bar{K}$, $\bar{f}$ and $\bar{u}_0$.

Later we choose $\tau^k = t^{k+1}-t^k$ for a discretisation $0=t^0<t^1<\dots<t^N=T$ of $[0,T]$ and we will need the following two quantities. First, we define the time interpolated version of the injected vectors $\{\bar{u}^{k}\}_{k=1}^N$:
\[
\uTimeInter(t,x) = \frac{t^k - t}{\tau^{k-1}} \l \cI_{\Pi_n} \bar{u}^{k-1}\r(x) + \frac{t - t^{k-1}}{\tau^{k-1}} \l \cI_{\Pi_n} \bar{u}^k\r (x) \text{ for } (t,x) \in (t^{k-1},t^k] \times \Omega
\]
and $\uTimeInter(0,x) = \cI_{\Pi_n} \bar{u}_0$.
Second, we define the time injected version of the injected vectors $\{\bar{u}^{k}\}_{k=1}^N$:
\[
\uTimeInject(t,x) = \sum_{i=1}^N \l \cI_{\Pi_n} \bar{u}^{k} \r (x)\chi_{(t^{k-1},t^k]}(t).
\]

Most of our results will holds for any partition $\Pi$ but, keeping in mind that we are ultimately interested in convergence results, meaning the case where $\max_{j \leq \vert \Pi \vert} \lambda_x(\pi_j) \to 0$, we will index our partition with a parameter $n$. We denote the latter by $\Pi_n = \{\pi_j^n\}_{j=1}^{\vert \Pi_n \vert}$. As an example, one can consider $\Pi_{\uni,n}$. For ease of notation we will write $\Delta^{\bar{K}}_{p,n} = \Delta_{p,\Pi_n}^{\bar{K}}$, $\cI_n = \cI_{\Pi_n}, \cP_n = \cP_{\Pi_n}$ and similarly for other quantities.

\subsubsection{Nonlocal problem} 
Given a function a kernel function $K:[0,\infty) \to [0,\infty)$, we define the nonlocal $p$-Laplacian operator $\Delta_p^K$ for a function $u\in \Lp{1}(\Omega)$ and $x \in \Omega$ as follows:
\[
\Delta_p^K u(x) = - \int_\Omega K(\vert x - y \vert) \vert u(y) - u(x) \vert^{p-2} (u(y) - u(x)) \, \dd y.
\]
For a kernel $K$, we define the function $\Tilde{K}:\Omega \times \Omega \mapsto [0,\infty)$ by $\Tilde{K}(x,y) = K(\vert x-y \vert)$. For a general function $v:\Omega \times \Omega \mapsto [0,\infty)$, we naturally have:
\[
\Delta_p^v u(x) = - \int_\Omega v(x,y) \vert u(y) - u(x) \vert^{p-2} (u(y) - u(x)) \, \dd y.
\]
Furthermore, for $f \in \Lp{2}(\Omega)$, $p \geq 2$ and $q$ such that $p^{-1} + q^{-1} = 1$, we define the following evolution operator $\cE^K_{\cA,f}:\Lp{p}(\Omega) \mapsto \Lp{q}(\Omega)$ for $\mu > 0$, a function $u \in \Lp{p}(\Omega)$ and some linear operator $\cA:\Lp{2}(\Omega) \mapsto \Lp{2}(\Omega)$:
\[ 
\cE^K_{\cA,f}(u) = \mu \Delta_p^Ku + \cA^* \cA u - f.
\]
The well-posedness of this evolution operator will be discussed in Theorem \ref{thm:proofs:wellPosedness:nonlocalProblem:existenceUniqueness}.

We will consider the following nonlocal evolution problem:
\begin{equation} \label{eq:main:notation:nonlocal:nonlocalProblem}
    \begin{cases}
    \frac{\partial}{\partial t }u(t,x) + \cE^K_{\cA,f}(u(t,x)) = 0 & \text{on} \quad (0,T) \times \Omega, \\
    u(0,x) = u_0(x).
    \end{cases}
\end{equation}
We say that $u$ solves \eqref{eq:main:notation:nonlocal:nonlocalProblem} with parameters $K$, $f$ and $u_0$. In particular, in order to link the above problem with \eqref{eq:intro:regularizationProblem}, we will be interested in \eqref{eq:main:notation:nonlocal:nonlocalProblem} when $\cE^K_{\cA,f}(u) = \cE^K_{\cA,\cA^*\ell}(u)$ for some $\ell \in \Lp{2}(\Omega)$.

We are interested in the following solution which is just of Definition \ref{def:main:notation:nonlocal:strongSolutionCP} using the evolution operator appearing in \eqref{eq:main:notation:nonlocal:nonlocalProblem}.

\begin{mydef}[Nonlocal problem solution] \label{def:main:notation:nonlocal:strongSolution}
Assume that Assumptions \ref{ass:main:assumptions:S1}, \ref{ass:main:assumptions:O1}, \ref{ass:main:assumptions:O2} and \ref{ass:main:assumptions:K1} hold.
For $p \geq 2$, $T > 0$, $\mu > 0$, $f \in \Lp{p}(\Omega)$, $u_0 \in \Lp{p}(\Omega)$, a function $u(t,x)$ is a solution to the nonlocal problem \eqref{eq:main:notation:nonlocal:nonlocalProblem} if $u(t,x) \in C([0,T];\Lp{p}(\Omega)) \cap W_{\mathrm{loc}}^{1,1}((0,T);\Lp{p}(\Omega))$, $u(0,x) = u_0(x)$ $\lambda_x$-a.e. on $\Omega$ and $\lambda_t$-a.e.:
\[
\frac{\partial}{\partial t }u(t,x) + \mu \Delta_p^K u(t,x) + \cA^*\cA u(t,x) = f(x) \quad \text{$\lambda_x$-a.e..}
\]
\end{mydef}

If we are further given a positive sequence $\{\eps_n\}_{n=1}^\infty$, we will write 
$K_{\eps_n} = \frac{2}{c(p,d)\eps_n^{d+p}}K(\cdot / \eps_n)$ 
where

\begin{equation} \label{eq:main:notation:nonlocal:cpd}
    c(p,d) = \int_{\bbR^d} K(\vert x \vert) \vert x_d \vert^p \, \dd x.
\end{equation}

\commentOut{
By \cite[Lemma 5]{https://doi.org/10.48550/arxiv.1711.10144}, with $\Gamma(t) = \int_{0}^\infty e^{-x}x^{t-1} \, \dd x$, we have 
\[
\sigma(p,d) = \frac{\Gamma\l \frac{1}{2} \r^{d-2} \Gamma \l \frac{p-1}{2} \r \Gamma \l \frac{3}{2} \r}{\Gamma \l \frac{d+p}{2} \r} \int_0^\infty K(\sqrt{t}) t^{\frac{d+p}{2}-1} \, \dd t = \frac{\Gamma\l \frac{1}{2} \r^{d-2} \Gamma \l \frac{p-1}{2} \r \Gamma \l \frac{3}{2} \r}{\Gamma \l \frac{d+p}{2} \r} \int_0^\infty K(r) r^{p + d - 1} \, \dd r
\]
and 
\[
\c(p,d) = \frac{\Gamma\l \frac{1}{2} \r^{d-1} \Gamma \l \frac{p+1}{2} \r }{\Gamma \l \frac{d+p}{2} \r} \int_0^\infty K(\sqrt{t}) t^{\frac{d+p}{2}-1} \, \dd t = \frac{\Gamma\l \frac{1}{2} \r^{d-1} \Gamma \l \frac{p+1}{2} \r }{\Gamma \l \frac{d+p}{2} \r} \int_0^\infty K(r) r^{p + d - 1} \, \dd r
\]
which implies that if $K$ has finite $p$-moment, both constants are finite.
}

\subsubsection{Local problem}

The local $p$-Laplacian operator is defined as 
\[
\Delta_p u = - \diver(\vert \nabla u \vert^{p-2} \nabla u ).
\]

For $p \geq 2$, $\mu >0$, $\ell \in \Lp{2}(\Omega)$ and $u_0 \in \Lp{p}(\Omega)$, we will consider the following local evolution problem: 
\begin{equation} \label{eq:main:notation:localProblem:localProblem}
    \begin{cases}
    \frac{\partial}{\partial t }u(t,x) + \mu \Delta_p u(t,x) + \cA^*\cA u(t,x) =  \cA^*\ell(x), & \text{on} \quad (0,T) \times \Omega \\
    \vert \nabla u(t,x) \vert^{p-2} \nabla u(t,x) \cdot \overrightarrow{n} = 0, &\text{on} \quad (0,T)\times \partial\Omega \\
    u(0,x) = u_0(x)
    \end{cases}
\end{equation}
for some linear operator $\cA:\Lp{2}(\Omega) \mapsto \Lp{2}(\Omega)$.

The following definition is inspired by the results of \cite[Chapter 2, Theorem 1.4]{lions1969quelques}.

\begin{mydef}[Local weak solution] \label{def:main:notation:localProblem:weakSolution}
Assume that Assumptions \ref{ass:main:assumptions:S1} and \ref{ass:main:assumptions:O1} hold. For $p \geq 2$, $T > 0$, $\ell \in \Lp{2}(\Omega)$, $u_0 \in \Lp{p}(\Omega)$, a function $u(t,x)$ is a weak solution to \eqref{eq:main:notation:localProblem:localProblem} if $u(t,x) \in \Lp{p}(0,T;\Wkp{1}{p}(\Omega)) \cap \Lp{2}(0,T;\Lp{2}(\Omega))$ and $u \in \Lp{\infty}(0,T;\Lp{2}(\Omega))$ with $u(0,\cdot) = u_0$ $\lambda_x$-a.e. on $\Omega$ and if $\lambda_t$-a.e.: 
\[
    \int_\Omega \frac{\partial}{\partial t} u(t,x) \zeta(x) \, \dd x + \mu \int_\Omega \vert \nabla u(t,x) \vert^{p-2} \nabla u(t,x) \cdot \nabla \zeta(x) \, \dd x + \int_\Omega \cA^* \cA u(t,x) \zeta(x) \, \dd x = \int_\Omega \cA^* \ell(x) \zeta(x) \, \dd x 
\]
for all $\zeta \in \Wkp{1}{p}(\Omega)$.
\end{mydef}

\subsubsection{Random graphs problem}

Given a graph $G(n,\bar{\Lambda}_n)$, 
$\bar{u}_0 \in \bbR^n$ and $\bar{f} \in \bbR^n$, we can also consider the following evolution problem:
\begin{equation} \label{eq:main:notation:random:evolutionProblem}
    \begin{cases}
    \frac{\bar{u}^k - \bar{u}^{k-1}}{ \tau^{k-1} } + \mu \Delta_{p,n}^{\bar{\Lambda}_n} \bar{u}^k + \bar{G}(\bar{u}^k) =  \bar{f}, & \text{for} \quad 1 \leq k \leq N  \\
    \bar{u}(0) = \bar{u}_0
    \end{cases}
\end{equation}
for some $\mu > 0$, linear operator $\bar{G}:\bbR^{n} \to \bbR^{n}$, partition $0 = t^0 < t^1 < \dots < t^N = T$ and where $\tau^{k-1} = t^k - t^{k-1}$. We say that $\bar{u}$ solves \eqref{eq:main:notation:discrete:nonlocalProblemFully} with parameters $\bar{\Lambda}_n$, $\bar{f}$ and $\bar{u}_0$.

\commentOut{
\begin{assumptions}
Assumptions on the solution of the nonlocal evolution problem \eqref{eq:main:notation:nonlocal:nonlocalProblem} $u_K$.
\begin{enumerate}[label=\textbf{U.\arabic*}]
\item The solution $u_K$ to \eqref{eq:main:notation:nonlocal:nonlocalProblem} is in $C([0,T];\Wkp{1}{p}(\Omega)) \cap W_{\text{loc}}^{1,1}((0,T);\Wkp{1}{p}(\Omega))$. \label{ass:main:assumptions:U1}
\end{enumerate}
\end{assumptions}
}

\subsection{Main results} \label{sec:main:main}

\subsubsection{Well-posedness of the nonlocal continuum gradient flow}

In order to prove well-posedness, we loosely follow the strategy in \cite{ANDREU2008201}. Our main contribution lies in the verification of the range condition in Proposition \ref{prop:proofs:wellPosedness:nonlocalProblem:completeAccretivityRangeCondition}: we propose an alternative proof for a generalization of the commonly used \cite[Theorem 2.4]{ANDREU2008201} based on principles related to $\Gamma$-convergence (see \cite{gammaConvergence} or \cite{maso2012introduction}). In particular, showing the range condition boils down to solving a PDE which is unsolvable by direct methods. We therefore modify the latter by adding a term that will make the operators involved coercive. We then use a compactness argument to show that the solutions to the PDE approximations converge to a limiting function which solves the initial PDE by the $\liminf$-inequality of $\Gamma$-convergence.

\begin{proposition}[Complete accretivity and range condition] \label{prop:proofs:wellPosedness:nonlocalProblem:completeAccretivityRangeCondition}

Assume that Assumptions \ref{ass:main:assumptions:S1}, \ref{ass:main:assumptions:O1}, \ref{ass:main:assumptions:O2} and \ref{ass:main:assumptions:K1} hold. Let $p \geq 2$ and assume that $f \in \Lp{p}(\Omega)$. Then, the evolution operator $\cE^K_{\cA,f}$ is completely accretive and satisfies the range condition $\Lp{p}(\Omega) \subseteq \range(\Id + \lambda \cE^K_{\cA,f})$ for $\lambda > 0$.
\end{proposition}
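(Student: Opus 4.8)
Since $\Omega$ is bounded (\ref{ass:main:assumptions:S1}), $p\ge 2$ and $K$ is bounded (\ref{ass:main:assumptions:K1}), one has $\Delta_p^Ku\in\Lp\infty(\Omega)$ and $\cA^*\cA u\in\Lp 2(\Omega)$ for $u\in\Lp p(\Omega)$, so $\graph(\cE^K_{\cA,f})\subseteq\Lp 1(\Omega)\times\Lp 1(\Omega)$ and Proposition~\ref{prop:background:completelyAccretiveCharacterization} applies: it suffices to show $\int_\Omega h(v-w)\left(\cE^K_{\cA,f}(v)-\cE^K_{\cA,f}(w)\right)\dd x\ge 0$ for all $h\in\cH$ and $v,w\in\Lp p(\Omega)$. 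The term $f$ cancels, leaving $\mu(\Delta_p^Kv-\Delta_p^Kw)+\cA^*\cA(v-w)$. Since $h$ is nondecreasing with $h(0)=0$, $h(v(x)-w(x))$ has the sign of $v(x)-w(x)$, which by \ref{ass:main:assumptions:O2} is the sign of $\cA^*\cA(v-w)(x)$; hence the $\cA^*\cA$-part of the integrand is pointwise nonnegative. For the $p$-Laplacian part I write $\int_\Omega h(v-w)(\Delta_p^Kv-\Delta_p^Kw)\,\dd x$ as an integral over $\Omega\times\Omega$ and symmetrise in $x,y$ (using that $t\mapsto|t|^{p-2}t$ is odd), which reduces the claim to the elementary inequality
\[
\left(|b-a|^{p-2}(b-a)-|d-c|^{p-2}(d-c)\right)\left(h(b-d)-h(a-c)\right)\ge 0,
\]
valid because $t\mapsto|t|^{p-2}t$ is increasing and $h$ is nondecreasing, so both factors carry the sign of $(b-a)-(d-c)=(b-d)-(a-c)$. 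This proves complete accretivity.

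\textbf{Range condition: the regularised problem.} Fix $\lambda>0$ and $g\in\Lp p(\Omega)$; I must find $u\in\Lp p(\Omega)$ with $(\Id+\lambda\cE^K_{\cA,f})(u)=g$, i.e.
\[
u+\lambda\mu\,\Delta_p^Ku+\lambda\,\cA^*\cA u=g+\lambda f=:\tilde g\in\Lp p(\Omega).
\]
This is the Euler--Lagrange equation of the convex functional $J(u)=\tfrac12\|u\|_{\Lp 2}^2+\tfrac{\lambda\mu}{2p}\iint_{\Omega\times\Omega}K(|x-y|)|u(y)-u(x)|^p\,\dd y\,\dd x+\tfrac\lambda2\|\cA u\|_{\Lp 2}^2-\int_\Omega\tilde g\,u$ on $\Lp p(\Omega)$ (the Dirichlet term is Gâteaux-differentiable with derivative $\Delta_p^Ku$, by dominated convergence and symmetrisation). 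The obstruction is that $J$ need not be coercive on $\Lp p(\Omega)$: under \ref{ass:main:assumptions:K1} alone there is no nonlocal Poincaré inequality, so the Dirichlet term does not control $\|u\|_{\Lp p}$, and the direct method fails. Following the strategy announced before the statement, I regularise: for $\delta>0$ put $A_\delta(u)=u+\lambda\mu\,\Delta_p^Ku+\lambda\,\cA^*\cA u+\delta|u|^{p-2}u$, viewed as a map $\Lp p(\Omega)\to\Lp q(\Omega)$, $q=p/(p-1)$ (well defined since $\Delta_p^Ku\in\Lp\infty$ and, as $p\ge2$, $\cA^*\cA u\in\Lp 2\subseteq\Lp q$). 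Then $A_\delta$ is bounded and continuous, hence hemicontinuous by Lemma~\ref{lem:background:hemicontinuity}; it is monotone (each summand is: the $\Delta_p^K$ one after symmetrisation, the $\cA^*\cA$ one since $\cA$ is linear, the last one pointwise); and it is coercive, because $\langle A_\delta(u),u\rangle\ge\delta\|u\|_{\Lp p}^p$ (the $\Delta_p^K$ term equals $\tfrac{\lambda\mu}{2}\iint K|u(y)-u(x)|^p\ge0$). Theorem~\ref{thm:background:nonlinearStationary} then yields $u_\delta\in\Lp p(\Omega)$ with $A_\delta(u_\delta)=\tilde g$; by convexity $u_\delta$ is also the minimiser of $J_\delta:=J+\tfrac\delta p\|\cdot\|_{\Lp p}^p$.

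\textbf{Range condition: a priori bound and passage to the limit.} The key point is a $\delta$-uniform bound in $\Lp p(\Omega)$. Testing $A_\delta(u_\delta)=\tilde g$ against $|u_\delta|^{p-2}u_\delta\in\Lp q(\Omega)$: the $\Delta_p^K$ contribution is nonnegative (symmetrise; $|u_\delta(y)-u_\delta(x)|^{p-2}(u_\delta(y)-u_\delta(x))$ and $|u_\delta(y)|^{p-2}u_\delta(y)-|u_\delta(x)|^{p-2}u_\delta(x)$ share the sign of $u_\delta(y)-u_\delta(x)$), the $\cA^*\cA$ contribution is nonnegative by \ref{ass:main:assumptions:O2}, and the $\delta$-contribution is nonnegative; hence $\|u_\delta\|_{\Lp p}^p\le\langle\tilde g,|u_\delta|^{p-2}u_\delta\rangle\le\|\tilde g\|_{\Lp p}\|u_\delta\|_{\Lp p}^{p-1}$, i.e.\ $\|u_\delta\|_{\Lp p}\le\|\tilde g\|_{\Lp p}$ uniformly in $\delta$. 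By reflexivity of $\Lp p(\Omega)$, along a subsequence $u_\delta\rightharpoonup u$ in $\Lp p(\Omega)$, so in particular $u\in\Lp p(\Omega)$. I then pass to the limit via the $\liminf$-inequality of $\Gamma$-convergence: $J$ is convex and continuous, hence weakly lower semicontinuous, on $\Lp p(\Omega)$, and $J_\delta\ge J$, so $J(u)\le\liminf_{\delta\to0}J_\delta(u_\delta)$; moreover, for any fixed $w\in\Lp p(\Omega)$, minimality of $u_\delta$ gives $J_\delta(u_\delta)\le J_\delta(w)=J(w)+\tfrac\delta p\|w\|_{\Lp p}^p\to J(w)$. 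Hence $J(u)\le J(w)$ for every $w$, so $u$ minimises $J$ over $\Lp p(\Omega)$; as $J$ is convex and Gâteaux-differentiable there, $J'(u)=u+\lambda\mu\Delta_p^Ku+\lambda\cA^*\cA u-\tilde g=0$, and since this function lies in $\Lp q(\Omega)=(\Lp p(\Omega))^*$ and annihilates all of $\Lp p(\Omega)$ it vanishes a.e., giving $(\Id+\lambda\cE^K_{\cA,f})(u)=g$. Thus $\Lp p(\Omega)\subseteq\range(\Id+\lambda\cE^K_{\cA,f})$.

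\textbf{Where the difficulty lies.} Complete accretivity is a routine symmetrisation argument. The substance is entirely in the range condition: the obstacle is the failure of coercivity of the natural functional (no nonlocal Poincaré inequality under \ref{ass:main:assumptions:K1} alone), which forces the $\delta$-regularisation; the genuinely delicate step is then to recover a limit that lies in $\Lp p(\Omega)$ rather than merely $\Lp 2(\Omega)$, which is exactly what the uniform estimate $\|u_\delta\|_{\Lp p}\le\|\tilde g\|_{\Lp p}$ secures, after which the $\Gamma$-$\liminf$ inequality closes the argument.
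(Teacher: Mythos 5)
Your proposal follows essentially the same route as the paper: the same symmetrisation argument for complete accretivity, and for the range condition the same regularisation $\delta|u|^{p-2}u$ (the paper writes $\tfrac1n|u|^{p-2}u$), solved via Theorem~\ref{thm:background:nonlinearStationary}, followed by a uniform $\Lp{p}$ bound, weak compactness, transfer of minimality through the $\liminf$ inequality, and the Euler--Lagrange equation of the limiting functional (the paper packages the last step as its ``Dirichlet principle'', Proposition~\ref{prop:proofs:wellPosedness:nonLocalProblem:dirichlet}).

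The one place where you cut a corner is the a priori bound: you test $A_\delta(u_\delta)=\tilde g$ against $|u_\delta|^{p-2}u_\delta$, but this test function is unbounded, and for $p>2$ the individual pairings $\int \cA^*\cA u_\delta\,|u_\delta|^{p-2}u_\delta\,\dd x$, $\delta\int|u_\delta|^{2(p-1)}\dd x$ and $\int\Delta_p^K u_\delta\,|u_\delta|^{p-2}u_\delta\,\dd x$ need $u_\delta\in\Lp{2(p-1)}(\Omega)$ to be known absolutely convergent, which you do not have a priori; so splitting the tested equation into its nonnegative pieces is not yet justified. This is repaired by testing against bounded truncations and passing to the limit by monotone convergence (using $|h(t)|\le|t|^{p-1}$ for the right-hand side), which is exactly what the paper does by testing against $h(u_n)$ for $h\in\cH$ and invoking the order relation $u_n\llSgt\phi+\lambda f$ together with Lemma~\ref{lem:background:orderRelation:properties} to conclude $\|u_n\|_{\Lp{p}}\le\|\phi+\lambda f\|_{\Lp{p}}$. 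With that patch your argument is complete and coincides with the paper's.
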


The proof of the proposition is given in Section~\ref{subsubsec:wellPosedness:nonLocalProblem}.
From the proposition one easily deduces the existence of solutions to the nonlocal problem.
The proof of the theorem is also given in Section~\ref{subsubsec:wellPosedness:nonLocalProblem}.

\begin{theorem}[Existence and uniqueness of a solution for the nonlocal problem] \label{thm:proofs:wellPosedness:nonlocalProblem:existenceUniqueness}
 
Assume that Assumptions \ref{ass:main:assumptions:S1}, \ref{ass:main:assumptions:O1}, \ref{ass:main:assumptions:O2} and \ref{ass:main:assumptions:K1} hold. Let $p \geq 2$, $T>0$, $u_0 \in \Lp{p}(\Omega)$ and assume that $f \in \Lp{p}(\Omega)$. Then, there exists a unique solution $u$ as in Definition \ref{def:main:notation:nonlocal:strongSolution} to the evolution problem \eqref{eq:main:notation:nonlocal:nonlocalProblem} with the operator $\cE^K_{\cA,f}$ and initial value $u_0$.

Furthermore, if $v$ is a solution as above solving \eqref{eq:main:notation:nonlocal:nonlocalProblem} with the operator $\cE^K_{\cA,g}$ and initial value $v_0$, we have
\begin{equation} \label{eq:proofs:wellPosedness:nonlocalProblem:existenceUniqueness:contraction}
    \Vert u(t) - v(t) \Vert_{\Lp{r}} \leq \Vert u_0 - v_0 \Vert_{\Lp{r}} + t \Vert f - g \Vert_{\Lp{r}}
\end{equation}
for $1 \leq r \leq \infty $ and $0 \leq t \leq T$.

In addition, if Assumption \ref{ass:main:assumptions:O3} holds, then 
\begin{equation} \label{eq:proofs:wellPosedness:nonlocalProblem:existenceUniqueness:bound}
    \Vert u(t) \Vert_{\Lp{r}} \leq \Vert u_0 \Vert_{\Lp{r}} + t \Vert f \Vert_{\Lp{r}}
\end{equation}
for $1 \leq r \leq \infty $ and $0 \leq t \leq T$.

\end{theorem}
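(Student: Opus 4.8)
The plan is to derive Theorem~\ref{thm:proofs:wellPosedness:nonlocalProblem:existenceUniqueness} from the abstract semigroup machinery already in place, chiefly Proposition~\ref{prop:proofs:wellPosedness:nonlocalProblem:completeAccretivityRangeCondition}, Theorem~\ref{thm:background:solution}, and Lemma~\ref{lem:background:orderRelation:contraction}. First I would observe that Proposition~\ref{prop:proofs:wellPosedness:nonlocalProblem:completeAccretivityRangeCondition} tells us that $\cE^K_{\cA,f}$, viewed as an operator on $\Lp{p}(\Omega)$ with $\graph(\cE^K_{\cA,f}) \subseteq \Lp{p}(\Omega) \times \Lp{q}(\Omega) \subseteq \Lp{p}(\Omega) \times \Lp{p}(\Omega)$ (using $p \geq 2$ so that $\Lp{q}(\Omega) \subseteq \Lp{p}(\Omega)$ on the bounded domain $\Omega$), is completely accretive — hence in particular accretive in the $\Lp{p}$-norm by Definition~\ref{def:background:completelyAccretive} — and satisfies the range condition $\range(\Id + \lambda \cE^K_{\cA,f}) = \Lp{p}(\Omega)$ for every $\lambda > 0$. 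Thus $\cE^K_{\cA,f}$ is $m$-accretive on the reflexive Banach space $\Lp{p}(\Omega)$ (reflexivity uses $1 < p < \infty$; for $p = 2$ it is Hilbert, for $p > 2$ standard). Since $f$ is time-independent, $t \mapsto f \in \Wkp{1}{1}(0,T;\Lp{p}(\Omega))$ trivially, and $u_0 \in \Lp{p}(\Omega) = \domain(\cE^K_{\cA,f})$ because the operator is everywhere defined (the nonlocal integral defining $\Delta_p^K u$ makes sense for any $u \in \Lp{p}(\Omega)$ by Assumption~\ref{ass:main:assumptions:K1} and $\cA^*\cA u \in \Lp{2}(\Omega) \subseteq \Lp{p}(\Omega)$ is fine). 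Applying Theorem~\ref{thm:background:solution} then yields a unique strong solution in the sense of Definition~\ref{def:main:notation:nonlocal:strongSolutionCP}, which is exactly Definition~\ref{def:main:notation:nonlocal:strongSolution} specialized to this operator.

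Next I would establish the contraction estimate \eqref{eq:proofs:wellPosedness:nonlocalProblem:existenceUniqueness:contraction}. Write the nonlocal problem for $u$ as $(\mathrm{CP}_{\tilde f, u_0})$ and for $v$ as $(\mathrm{CP}_{\tilde g, v_0})$ where $\tilde f(t) \equiv f$ and $\tilde g(t) \equiv g$, both with the \emph{same} completely accretive operator $A = \cE^K_{\cA,0} = \mu \Delta_p^K + \cA^*\cA$ — note one should absorb the data term into the forcing so that the operator on the left is identical for both solutions, i.e. rewrite $u' + A u \ni f$ and $v' + A v \ni g$. Then Lemma~\ref{lem:background:orderRelation:contraction}, applied with this $A$ (which has $\graph(A) \subseteq \Lp{p}(\Omega)\times\Lp{p}(\Omega)$ as above) and with $f(s) = f$, $g(s) = g$ constant, gives directly
\[
\Vert u(t) - v(t) \Vert_{\Lp{r}} \leq \Vert u_0 - v_0 \Vert_{\Lp{r}} + \int_0^t \Vert f - g \Vert_{\Lp{r}} \, \dd s = \Vert u_0 - v_0 \Vert_{\Lp{r}} + t \Vert f - g \Vert_{\Lp{r}}
\]
for all $1 \leq r \leq \infty$ and $0 \leq t \leq T$. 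The only subtlety is checking that $A = \mu \Delta_p^K + \cA^*\cA$ is still completely accretive: this follows because $\mu \Delta_p^K$ is completely accretive (it is the accretive part underlying $\cE^K_{\cA,f}$, independent of $f$) and $\cA^*\cA$ is completely accretive by Assumption~\ref{ass:main:assumptions:O2} via the characterization in Proposition~\ref{prop:background:completelyAccretiveCharacterization} — for $h \in \cH$ and $v, w$ we need $\int_\Omega h(v-w)(\cA^*\cA v - \cA^*\cA w)\,\dd x \geq 0$, which holds since $\cA^*\cA$ is order-preserving and linear so $\signFunc(\cA^*\cA(v-w)) = \signFunc(v-w)$, making the integrand nonnegative pointwise as $h$ is nondecreasing through $0$; and the sum of completely accretive operators with the same domain is completely accretive. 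Uniqueness of the solution is then the final clause of Lemma~\ref{lem:background:orderRelation:contraction}, or alternatively the $u_0 = v_0$, $f = g$ case of \eqref{eq:proofs:wellPosedness:nonlocalProblem:existenceUniqueness:contraction}.

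Finally, for the bound \eqref{eq:proofs:wellPosedness:nonlocalProblem:existenceUniqueness:bound} under Assumption~\ref{ass:main:assumptions:O3}, I would compare $u$ against the trivial solution $v \equiv 0$. Indeed, when $\cA^*\cA(0) = 0$, the constant function $v(t,x) = 0$ solves \eqref{eq:main:notation:nonlocal:nonlocalProblem} with forcing $g = 0$ and initial datum $v_0 = 0$: one checks $\partial_t 0 + \mu \Delta_p^K 0 + \cA^*\cA 0 = 0 = g$, using that $\Delta_p^K 0 = 0$ from the definition of the nonlocal $p$-Laplacian. Substituting $v \equiv 0$, $v_0 = 0$, $g = 0$ into \eqref{eq:proofs:wellPosedness:nonlocalProblem:existenceUniqueness:contraction} immediately gives $\Vert u(t) \Vert_{\Lp{r}} \leq \Vert u_0 \Vert_{\Lp{r}} + t \Vert f \Vert_{\Lp{r}}$.

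The main obstacle — aside from the bookkeeping of which results require $p \geq 2$ versus $1 < p < \infty$, and confirming the operator is single-valued and everywhere-defined so that $\domain(A) = \Lp{p}(\Omega)$ and $u_0 \in \domain(A)$ is automatic — is really the verification that the sum $\mu\Delta_p^K + \cA^*\cA$ remains completely accretive (and $m$-accretive), i.e. that adding the bounded linear perturbation $\cA^*\cA$ does not destroy the range condition. Since $\cE^K_{\cA,f}$ already \emph{is} this sum minus the constant $f$, Proposition~\ref{prop:proofs:wellPosedness:nonlocalProblem:completeAccretivityRangeCondition} has in effect already done this work — shifting by a constant $f$ preserves accretivity and the range condition trivially — so the theorem is essentially a corollary. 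I would state it as such.
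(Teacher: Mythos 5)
Your proposal is correct and follows essentially the same route as the paper: reduce to the abstract Cauchy problem for the $m$-completely accretive operator $\cE^K_{\cA,0}$ (the paper packages the needed accretivity and range condition for the $f=0$ case in Corollary \ref{cor:proofs:wellPosedness:nonlocalProblem:specialCases}), invoke Theorem \ref{thm:background:solution} for existence, Lemma \ref{lem:background:orderRelation:contraction} for the contraction estimate, and the comparison with $v\equiv 0$ under Assumption \ref{ass:main:assumptions:O3} for the final bound. The extra verifications you spell out (complete accretivity of the sum, $\domain = \Lp{p}(\Omega)$, constant forcing in $\Wkp{1}{1}$) are exactly what the paper's cited corollary and lemma encapsulate, so nothing is missing.
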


\subsubsection{Rates of convergence}

The rates are established by combining two intermediate results. On one hand, we obtain nonlocal-to-local continuum rates by switching from finite-differences to derivatives which allows one to prove the convergence of the nonlocal $p$-Laplacian operator to the local one. On the other hand, the central point of the proofs is to leverage the fact that the injected discrete gradient flow solution solves a nonlocal continuum gradient flow with particular parameters. We then rely on the contraction properties \eqref{eq:proofs:wellPosedness:nonlocalProblem:existenceUniqueness:contraction} and \eqref{eq:proofs:wellPosedness:nonlocalProblem:existenceUniqueness:bound} to express the rates of convergence of discrete-to-continuum solutions in the nonlocal setting in terms of the discretization error of the initial condition, data and kernel functions.

The next result precisely describes the interplay between our space-localization parameter $\eps_n$ and our time-discretization parameter $\tau_n$ in order to ensure convergence of the discrete solution to the solution of \eqref{eq:main:notation:localProblem:localProblem}.

\begin{corollary}[Discrete-to-continuum local rates] \label{cor:proofs:rates:discreteNonlocalContinuumLocal:simplified}

Assume that assumptions \ref{ass:main:assumptions:S1}, \ref{ass:main:assumptions:O1}, \ref{ass:main:assumptions:O2}, \ref{ass:main:assumptions:O3}, \ref{ass:main:assumptions:O4}, \ref{ass:main:assumptions:K1} and \ref{ass:main:assumptions:L1} hold. Let $p \geq 2$, $\mu > 0$, $T>0$, $u_0 \in \Lp{p}(\Omega)$, $\ell \in \Lp{2}(\Omega)$, $\Omega'$ be compactly contained in $\Omega$ and assume that $\cA^* \ell \in \Lp{p}(\Omega)$.
Furthermore, let $n \in \bbN$ and define $\bar{K}_{\eps_n} = \cP_n \tilde{K}_{\eps_n}$, $\bar{f} = \cP_n \cA^*\ell $, $\bar{u}_0 = \cP_n u_0$.

Then, for any partition $0 = t^0 < t^1 < \dots < t^N = T$, there exists a sequence $\{\bar{u}_n^k\}_{k=0}^N$ satisfying \eqref{eq:main:notation:discrete:nonlocalProblemFully} with $\bar{K}_{\eps_n}$, $\bar{f}$, $\bar{u}_0$ and $\bar{G}_n$ chosen as above, a solution $u_{\eps_n}$ to \eqref{eq:main:notation:nonlocal:nonlocalProblem} with kernel $K_{\eps_n}$ and a solution $u$ to \eqref{eq:main:notation:localProblem:localProblem}.

In addition, assume that Assumptions \ref{ass:main:assumptions:S2} and \ref{ass:main:assumptions:K2} hold, $p \geq 3$, that we are using the partition $\Pi_{\uni,n}$, that $u_{\eps_n}$ satisfies Assumption \ref{ass:main:assumptions:R1} for all $T >0$ and that $u$ satisfies Assumption \ref{ass:main:assumptions:R3}.
For some $1 \leq q_1 < \infty$ and $ 1 \leq q_2 < \infty$ and $0 < \alpha_i \leq 1$ for $1 \leq i \leq 3$, assume furthermore that $u_0 \in \Lip(\alpha_1,\Lp{q_1}(\Omega)) \cap \Lp{\infty}(\Omega)$, $\cA^*\ell \in \Lip(\alpha_2,\Lp{q_2}(\Omega)) \cap \Lp{\infty}(\Omega)$ and $K \in \text{C}^{0,\alpha_3}(\Omega)$. Then, if for some $\kappa > 0$ we set $T(n) = \l \frac{1}{1 +  C^4_{\mathrm{op}} } \r \log(\eps_n^{-\kappa})$ and assume that 
\begin{equation} \label{eq:main:taun}
\tau_n \ll \frac{\eps_n^{2(d+p) + \kappa }}{\log(\eps_n^{-\kappa})^{(2p-3)}}
\end{equation}
as well as
\begin{align*}
\eps_n &\gg \max \Biggl\{ n^{-\alpha_1/\kappa} , n^{-\alpha_2/\kappa}, \ls \expW \l n^{\frac{\alpha_3}{\max\l 1 + (d+p+\alpha_3)/\kappa , p - 1 \r}}\r \rs^{-1/\kappa}\Biggr\},
\end{align*}
for $n$ large enough, we have:
\begin{align}
  \sup_{1 \leq k \leq N } \sup_{t \in (t^{k-1},t^k]} &\Vert \cI_n \bar{u}_{n}^k - u(t,\cdot) \Vert_{\Lp{2}(\Omega')} \leq C \Bigg( \eps_n\log(\eps_n^{-\kappa}) \notag \\
  &+ \eps_n^{-\kappa}\ls n^{-\alpha_1} + n^{-\alpha_2} + \frac{\log(\eps_n^{-\kappa})^{(p-1)}}{\eps_n^{d+p + \alpha_3} n^{\alpha_3}} + \tau_n \frac{\log(\eps_n^{-\kappa})^{2p-3}}{\eps_n^{2(d+p)}} \rs  \Biggr)  \label{eq:proofs:rates:simplified:final}
\end{align}
for some $C > 0$ that might be dependent on $\Omega$ (and $d$), $u_0$ and $\cA^* \ell$ and the latter right-hand side tends to 0 as $n \to \infty$.

\end{corollary}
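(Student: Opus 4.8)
The plan is to obtain \eqref{eq:proofs:rates:simplified:final} by chaining a triangle inequality through two intermediate quantities: the solution $u_{\eps_n}$ of the nonlocal continuum gradient flow \eqref{eq:main:notation:nonlocal:nonlocalProblem} with kernel $K_{\eps_n}$, and the solution $u$ of the local problem \eqref{eq:main:notation:localProblem:localProblem}. That is, I would write
\[
\Vert \cI_n \bar u_n^k - u(t,\cdot)\Vert_{\Lp{2}(\Omega')} \leq \Vert \cI_n \bar u_n^k - u_{\eps_n}(t,\cdot)\Vert_{\Lp{2}(\Omega')} + \Vert u_{\eps_n}(t,\cdot) - u(t,\cdot)\Vert_{\Lp{2}(\Omega')},
\]
and control the two terms separately using results already available in the paper. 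The second term is the continuum nonlocal-to-local rate: by Theorem~\ref{thm:proofs:continuumRates:continuumNonlocalLocal} (invoked under Assumptions \ref{ass:main:assumptions:S2}, \ref{ass:main:assumptions:K2}, $p\geq 3$, and the regularity Assumption \ref{ass:main:assumptions:R3} on $u$ together with \ref{ass:main:assumptions:R1} on $u_{\eps_n}$), this is bounded by a term of order $\eps_n \log(\eps_n^{-\kappa})$ on the time horizon $T(n) \approx \log(\eps_n^{-\kappa})$ — here the Grönwall-type growth in the contraction estimate is what converts the time horizon into the logarithmic factor, and the factor $1/(1+C_{\mathrm{op}}^4)$ in $T(n)$ is exactly tuned to absorb the $\cA^*\cA$ contribution. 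The first term is the discrete-to-continuum rate in the nonlocal setting, which is the content of the (earlier) discrete-to-continuum theorem/corollary: the key observation is that $\uTimeInject$ built from $\{\cI_n\bar u_n^k\}$ is close to a genuine nonlocal continuum solution, so applying the contraction property \eqref{eq:proofs:wellPosedness:nonlocalProblem:existenceUniqueness:contraction} reduces everything to $\Lp{r}$-distances between the discretized and exact initial data, source term, and kernel, plus a time-discretization (forward Euler) error.

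The second step is to bound each of those data-discretization errors using the approximation theory of Section~\ref{subsec:approximations}. Since $u_0 \in \Lip(\alpha_1,\Lp{q_1}(\Omega))$, Lemma~\ref{lem:background:piecewise:equiRates} gives $\Vert u_0 - \cI_n\cP_n u_0\Vert \lesssim n^{-\alpha_1}$; similarly $\Vert \cA^*\ell - \cI_n\cP_n\cA^*\ell\Vert \lesssim n^{-\alpha_2}$ from $\cA^*\ell \in \Lip(\alpha_2,\Lp{q_2}(\Omega))$; and for the kernel, since $K\in\mathrm{C}^{0,\alpha_3}$ and $\tilde K_{\eps_n}(x,y) = \tfrac{2}{c(p,d)\eps_n^{d+p}}K(|x-y|/\eps_n)$, the scaled version of Lemma~\ref{lem:background:piecewise:equiRates} yields an error of order $\eps_n^{-(d+p+\alpha_3)} n^{-\alpha_3}$ (the $\eps_n^{-(d+p)}$ from the normalization constant and $\eps_n^{-\alpha_3}$ from the Hölder rescaling). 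Each of these enters the nonlocal operator $\Delta_p^{K_{\eps_n}}$ and gets amplified by the Lipschitz constant of the map $x\mapsto |x|^{p-2}x$ on the relevant bounded range, which — because the solutions are bounded in $\Lp{\infty}$, controlled via \eqref{eq:proofs:wellPosedness:nonlocalProblem:existenceUniqueness:bound} in terms of $\Vert u_0\Vert_\infty$ and $T\Vert \cA^*\ell\Vert_\infty$ — produces the $\log(\eps_n^{-\kappa})^{p-1}$ and $\log(\eps_n^{-\kappa})^{2p-3}$ powers. Collecting these, together with an overall prefactor $\eps_n^{-\kappa}$ coming from the exponential-in-time contraction on the horizon $T(n)$, reproduces exactly the bracketed sum in \eqref{eq:proofs:rates:simplified:final}.

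The third and final step is the asymptotic bookkeeping: one must verify that with $T(n) = \tfrac{1}{1+C_{\mathrm{op}}^4}\log(\eps_n^{-\kappa})$ and the stated constraints \eqref{eq:main:taun} on $\tau_n$ and the lower bounds on $\eps_n$, every term on the right-hand side tends to $0$. The term $\eps_n\log(\eps_n^{-\kappa})\to 0$ since $\eps_n\to 0$; the terms $\eps_n^{-\kappa}n^{-\alpha_1}$ and $\eps_n^{-\kappa}n^{-\alpha_2}$ vanish precisely because $\eps_n \gg n^{-\alpha_i/\kappa}$; the kernel term $\eps_n^{-\kappa-(d+p+\alpha_3)}\log(\eps_n^{-\kappa})^{p-1}n^{-\alpha_3}$ vanishes because of the $\expW$-based lower bound on $\eps_n$, where the $\expW$ appears exactly to swallow the logarithmic factor (this is why Lambert's $W$ was introduced in the preliminaries); and the Euler term $\eps_n^{-\kappa-2(d+p)}\log(\eps_n^{-\kappa})^{2p-3}\tau_n\to 0$ by \eqref{eq:main:taun}. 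I would also check that the constraints are mutually compatible, i.e. there exist sequences $(\eps_n,\tau_n)$ satisfying all of them simultaneously — which holds since the lower bounds on $\eps_n$ all go to $0$ and, given any admissible $\eps_n$, \eqref{eq:main:taun} leaves a nonempty range for $\tau_n$. I expect the main obstacle to be not any single inequality but the careful tracking of how each $\eps_n$-power and each $\log$-power propagates through the composition of the nonlocal operator bound, the $\Lp{\infty}$-a-priori bound on the solutions, the contraction estimate over the growing time horizon, and the Euler discretization error — in other words, making sure the exponents $d+p$, $2(d+p)$, $p-1$, $2p-3$ and the prefactor $\eps_n^{-\kappa}$ come out with exactly the constants claimed, which is really a matter of assembling the earlier quantitative lemmas in the right order rather than proving anything genuinely new here.
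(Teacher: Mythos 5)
Your proposal is correct and follows essentially the same route as the paper: a triangle inequality through $u_{\eps_n}$, with the first term controlled by the $\Lp{\infty}$ case \eqref{eq:proofs:rates:NonFullyDiscreteLocal:L2rates:Linftyrates} of Proposition \ref{prop:proofs:rates:NonFullyDiscreteLocal:L2rates}, the second by Theorem \ref{thm:proofs:continuumRates:continuumNonlocalLocal}, the data/kernel discretization errors by Lemma \ref{lem:background:piecewise:equiRates} (including the $\eps$-scaled H\"older version), and the same choice of $T(n)$ turning $e^{(1+C_{\mathrm{op}}^4)T}$ into $\eps_n^{-\kappa}$ before the term-by-term asymptotic checks. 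Your accounting of where each power of $\eps_n$ and $\log(\eps_n^{-\kappa})$ originates matches the paper's computation.
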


The proof of the corollary can be found in Section \ref{subsec:ratesDiscreteLocal}.

\begin{remark}[Asymptotics of $N$]
    It is a natural requirement that $ N \to \infty$ as $T(n) \to \infty$ as in Corollary \ref{cor:proofs:rates:discreteNonlocalContinuumLocal:simplified}. This is indeed the case (and also will be for Corollary \ref{cor:application:simplified} by the same argument) since by definition $N \geq T(n)/\tau_n$ and~\eqref{eq:main:taun}
    ensures that $\tau_n \to 0$ with $\eps_n \to 0$. 
\end{remark}

\begin{remark}[Parameters in Corollary \ref{cor:proofs:rates:discreteNonlocalContinuumLocal:simplified}]
    Despite their appearances, the conditions on $\eps_n$ and $\tau_n$ in Corollary \ref{cor:proofs:rates:discreteNonlocalContinuumLocal:simplified} are not constraining. Indeed, all parameters involved are chosen by the practitioner prior to the implementation of the numerical procedure. 
\end{remark}

The regularity requirements of both the nonlocal solution and the local solution are discussed in greater detail in Remarks \ref{rem:proofs;rates:continuumRates:increasedRegularity} and \ref{rem:proofs:rates:continuumRates:embeddings}. The nonlocal regularity assumption is linked to our approximation through finite differences. Indeed, the expression \eqref{eq:main:notation:nonlocal:nonlocalProblem} does not contain any differential term in the space component leading our solution to be in $\Lp{p}(\Omega)$ as opposed to $\Wkp{1}{p}(\Omega)$: when using finite elements, the approximations are in the same space as the solutions to original problem, which in our case is $\Wkp{1}{p}$ by Theorem \ref{thm:proofs:wellPosedness:localProblem:existenceUniqueness}. The local regularity assumption is a typical one and actually induces an interesting relationship between the regularity requirement and the regularization parameter $p$. 

The rates are formulated for $n$ large enough. This is discussed in Remark \ref{rem:proofs:rates:continuumRates:asymptotic} and is essentially dependent on the choice of our compactly contained set $\Omega' \subseteq \Omega$.

A simple additional step allows one to deduce the convergence of the discrete gradient flow to $u_\infty$. The proof is given in Section \ref{subsec:ratesDiscreteLocal}.
\begin{corollary}[Rates for $u_\infty$]
\label{cor:proofs:rates:discreteNonlocalContinuum:final}

Assume the same setting as in Corollary \ref{cor:proofs:rates:discreteNonlocalContinuumLocal:simplified} and furthermore that $\cA = \Id$. Then, for $n$ large enough, we have
\begin{align}
\Vert \cI_n \bar{u}_n^N - u_\infty \Vert_{\Lp{2}(\Omega')}& \leq C \Bigg( \eps_n\log(\eps_n^{-\kappa}) + \eps_n^{\kappa/4}(\cF(u_0)-\cF(u_\infty))^{1/2} \notag \\
  &+ \eps_n^{-\kappa}\ls n^{-\alpha_1} + n^{-\alpha_2} + \frac{\log(\eps_n^{-\kappa})^{(p-1)}}{\eps_n^{d+p + \alpha_3} n^{\alpha_3}} + \tau_n \frac{\log(\eps_n^{-\kappa})^{2p-3}}{\eps_n^{2(d+p)}} \rs  \Biggr)\label{eq:proofs:rates:simplified:fullRates}
\end{align}
for some $C > 0$ that might be dependent on $\Omega$ (and $d$), $u_0$ and $\cA^* \ell$ and the latter right-hand side tends to 0 as $n \to \infty$.
\end{corollary}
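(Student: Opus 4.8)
The plan is to reduce the statement to Corollary~\ref{cor:proofs:rates:discreteNonlocalContinuumLocal:simplified} by a triangle inequality around the gradient flow evaluated at time $T(n)$, the only genuinely new ingredient being a quantitative relaxation estimate for the flow \eqref{eq:main:notation:localProblem:localProblem} towards its equilibrium. First I would record the structural facts. With $\cA = \Id$, problem \eqref{eq:main:notation:localProblem:localProblem} is exactly the $\Lp{2}(\Omega)$-gradient flow of the functional $\cF$ from \eqref{eq:intro:regularizationProblem}; its minimiser over $\Wkp{1}{p}(\Omega)$ is $u_\infty$, which exists and is unique since $\cF$ is coercive on $\Wkp{1}{p}(\Omega)$ (using $\Wkp{1}{p}(\Omega)\subseteq\Lp{2}(\Omega)$ for $p\geq 2$, a Poincar\'e inequality, and the $\Lp{2}$ data term) and strictly convex, and $u_\infty$ satisfies the weak Euler--Lagrange identity, i.e. it is a stationary weak solution in the sense of Definition~\ref{def:main:notation:localProblem:weakSolution}. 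Crucially, $v\mapsto\frac{\mu}{p}\Vert\nabla v\Vert_{\Lp{p}(\Omega)}^p$ is convex while $v\mapsto\frac12\Vert v-\ell\Vert_{\Lp{2}(\Omega)}^2$ is $1$-strongly convex in $\Lp{2}(\Omega)$, so $\cF$ is $1$-strongly convex in $\Lp{2}(\Omega)$.

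Second, I would derive the relaxation estimate
\[
\Vert u(T,\cdot)-u_\infty\Vert_{\Lp{2}(\Omega)} \leq \sqrt{2}\,e^{-T}\bigl(\cF(u_0)-\cF(u_\infty)\bigr)^{1/2},\qquad T>0.
\]
Testing the weak formulation of \eqref{eq:main:notation:localProblem:localProblem} against the admissible test function $u(t,\cdot)-u_\infty\in\Wkp{1}{p}(\Omega)$, subtracting the weak Euler--Lagrange identity for $u_\infty$, and using $\cA=\Id$ together with the pointwise monotonicity inequality $\bigl(|\nabla u|^{p-2}\nabla u-|\nabla u_\infty|^{p-2}\nabla u_\infty\bigr)\cdot\nabla(u-u_\infty)\geq 0$, leaves $\frac12\frac{d}{dt}\Vert u(t)-u_\infty\Vert_{\Lp{2}(\Omega)}^2\leq -\Vert u(t)-u_\infty\Vert_{\Lp{2}(\Omega)}^2$, whence $\Vert u(t)-u_\infty\Vert_{\Lp{2}(\Omega)}\leq e^{-t}\Vert u_0-u_\infty\Vert_{\Lp{2}(\Omega)}$ by Gr\"onwall; the $1$-strong convexity of $\cF$ then gives $\Vert u_0-u_\infty\Vert_{\Lp{2}(\Omega)}^2\leq 2\bigl(\cF(u_0)-\cF(u_\infty)\bigr)$, which yields the display (and in particular $u(t)\to u_\infty$, confirming that the flow relaxes to $u_\infty$). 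Specialising to $T=T(n)=\bigl(1+C_{\mathrm{op}}^4\bigr)^{-1}\log(\eps_n^{-\kappa})=\frac12\log(\eps_n^{-\kappa})$, since $C_{\mathrm{op}}=\Vert\Id\Vert_{\mathrm{op}}=1$, gives $e^{-T(n)}=\eps_n^{\kappa/2}\leq\eps_n^{\kappa/4}$ for $n$ large (so $\eps_n<1$), hence $\Vert u(T(n),\cdot)-u_\infty\Vert_{\Lp{2}(\Omega)}\leq\sqrt{2}\,\eps_n^{\kappa/4}\bigl(\cF(u_0)-\cF(u_\infty)\bigr)^{1/2}$, which is the term singled out in \eqref{eq:proofs:rates:simplified:fullRates}.

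Finally, I would conclude via the triangle inequality
\[
\Vert\cI_n\bar{u}_n^N-u_\infty\Vert_{\Lp{2}(\Omega')} \leq \Vert\cI_n\bar{u}_n^N-u(T(n),\cdot)\Vert_{\Lp{2}(\Omega')} + \Vert u(T(n),\cdot)-u_\infty\Vert_{\Lp{2}(\Omega')},
\]
bounding the first term by \eqref{eq:proofs:rates:simplified:final} of Corollary~\ref{cor:proofs:rates:discreteNonlocalContinuumLocal:simplified} evaluated at $k=N$ and $t=t^N=T(n)$, and the second term by the relaxation estimate of the previous step together with $\Vert\cdot\Vert_{\Lp{2}(\Omega')}\leq\Vert\cdot\Vert_{\Lp{2}(\Omega)}$ since $\Omega'\subseteq\Omega$. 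Adding the two bounds produces \eqref{eq:proofs:rates:simplified:fullRates}, and its right-hand side tends to $0$ because the bracketed terms do so by Corollary~\ref{cor:proofs:rates:discreteNonlocalContinuumLocal:simplified} and $\eps_n^{\kappa/4}\to 0$ by \ref{ass:main:assumptions:L1}. No further regularity or length-scale conditions are needed beyond those already assumed in Corollary~\ref{cor:proofs:rates:discreteNonlocalContinuumLocal:simplified}.

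The main obstacle is the second step. One must verify that the weak solution of Definition~\ref{def:main:notation:localProblem:weakSolution} is regular enough in time -- with values in $\Lp{2}(\Omega)$ and $\partial_t u\in\Lp{1}(0,T;\Lp{2}(\Omega))$, say -- for the chain rule $\frac{d}{dt}\Vert u(t)-u_\infty\Vert_{\Lp{2}(\Omega)}^2=2\langle\partial_t u(t),u(t)-u_\infty\rangle_{\Lp{2}(\Omega)}$ to be legitimate, i.e. that this weak solution genuinely is the subdifferential gradient flow of $\cF$ in $\Lp{2}(\Omega)$ (this should follow from Theorem~\ref{thm:proofs:wellPosedness:localProblem:existenceUniqueness} and the associated a priori estimates, or by approximation). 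One must also make sure the weak Euler--Lagrange identity for $u_\infty$ is available and that the constants are tracked so that the power of $\eps_n$ obtained at $T(n)$ is at least $\kappa/4$; this is precisely where the strong-convexity modulus of the data term and the scaling of $T(n)$ interact.
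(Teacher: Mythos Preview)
Your proposal is correct and follows the same overall architecture as the paper: triangle inequality at time $T(n)$, Corollary~\ref{cor:proofs:rates:discreteNonlocalContinuumLocal:simplified} for the first piece, and a relaxation estimate for the second.

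The one genuine difference is in how you obtain the relaxation bound. The paper argues via the energy gap: it invokes Lemma~\ref{lem:proofs:rates:lambdaConvexity} (the $1$-strong convexity of $\cF$ when $\cA=\Id$) together with first-order optimality at $u_\infty$ to get $\Vert u(T)-u_\infty\Vert_{\Lp{2}}^2 \leq C\bigl(\cF(u(T))-\cF(u_\infty)\bigr)$, and then appeals to the standard exponential energy decay for gradient flows of strongly convex functionals, $\cF(u(T))-\cF(u_\infty)\leq e^{-T}\bigl(\cF(u_0)-\cF(u_\infty)\bigr)$, which after a square root produces exactly the $\eps_n^{\kappa/4}$ factor. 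You instead test the weak formulation against $u(t)-u_\infty$, use monotonicity of the $p$-Laplacian term to derive the direct $\Lp{2}$ contraction $\Vert u(t)-u_\infty\Vert_{\Lp{2}}\leq e^{-t}\Vert u_0-u_\infty\Vert_{\Lp{2}}$, and only then invoke strong convexity to bound the initial distance by $\sqrt{2}\bigl(\cF(u_0)-\cF(u_\infty)\bigr)^{1/2}$. Your route is more self-contained (no black-box citation for energy decay) and in fact yields the sharper intermediate factor $e^{-T(n)}=\eps_n^{\kappa/2}$, which you then coarsen to $\eps_n^{\kappa/4}$ to match the stated bound; the paper's route lands on $\eps_n^{\kappa/4}$ directly because of the square root. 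Your caveat about justifying the chain rule for $\Vert u(t)-u_\infty\Vert_{\Lp{2}}^2$ is well placed and is exactly the kind of regularity issue the paper sidesteps by citing gradient-flow theory.
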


\begin{remark}[Curse of dimensionality]
    We note the presence of terms of the form $\eps_n^{-\gamma - \delta d}$ with $\gamma, \,\delta > 0$ in the right-hand sides of both \eqref{eq:proofs:rates:simplified:final} and \eqref{eq:proofs:rates:simplified:fullRates}. For fixed $n$ and $\eps_n < 1$, as $d \to \infty$, $\eps_n^{-\gamma - \delta d}$ will tend to infinity.
    While we can control this behaviours when $d$ is fixed, for high dimensions the rates deteriorate. 
\end{remark}

\subsubsection{Application to random graphs}

Everything discussed until now was determinisitic. In particular, the discretization procedure was based on partitioning the space into cells in a pre-defined way that would allow us to control the discretization error as described in Section \ref{subsec:approximations}. An alternative setting is the one of random graph models present in various applications. 

Obtaining results in the random graph setting is split in two steps: (1) prove rates of convergence between the discrete random and deterministic gradient flows and then (2) use the deterministic rates of Proposition \ref{prop:proofs:rates:NonFullyDiscreteLocal:L2rates} and Theorem \ref{thm:proofs:continuumRates:continuumNonlocalLocal}. 
The first part is conceptually similar to how the results in the nonlocal case are derived while adding the necessary probabilistic estimates. The main results are to be found
in Corollaries \ref{cor:application:simplified} and \ref{cor:application:final}. The proofs of the latter are given in Section \ref{subsec:application}.

\begin{corollary}[Discrete random-to-continuum local rates]
\label{cor:application:simplified}

Assume that Assumptions \ref{ass:main:assumptions:O1}, \ref{ass:main:assumptions:O2}, \ref{ass:main:assumptions:O3}, \ref{ass:main:assumptions:O4} and \ref{ass:main:assumptions:L1} hold. Let $p \geq 2$, $\mu > 0$, $T>0$, $u_0 \in \Lp{p}(\Omega)$, $\ell \in \Lp{2}(\Omega)$, $\Omega'$ be compactly contained in $\Omega$ and assume that $\cA^* \ell \in \Lp{p}(\Omega)$. Furthermore, let $n \in \bbN$ and define $\bar{K}_{\eps_n} = \cP_n \tilde{K}_{\eps_n}$, $\bar{f} = \cP_n \cA^*\ell $, $\bar{u}_0 = \cP_n u_0$.  We also suppose that $\rho_n$ is a positive sequence with $\rho_n \to 0$ and $\rho_n \ll \eps_n^{1+p}$. Let $\Lambda_n\in\bbR^{n\times n}$ be the weight matrix defined as in Definition~\ref{def:randomGraphModels} with $\bar{K}=\bar{K}_{\eps_n}$.

Then, for any partition $0 = t^0 < t^1 < \dots < t^N = T$, there exists a sequence $\{\bar{u}_n^k\}_{k=0}^N$ solving \eqref{eq:main:notation:random:evolutionProblem} with parameters $\bar{\Lambda}_n$, $\bar{f}$ and $\bar{u}_0$, a solution $u_{\eps_n}$ to \eqref{eq:main:notation:nonlocal:nonlocalProblem} with kernel $K_{\eps_n}$ and a solution $u$ to \eqref{eq:main:notation:localProblem:localProblem}.

In addition, assume that Assumptions \ref{ass:main:assumptions:S2} and \ref{ass:main:assumptions:K2} hold, $p \geq 3$, that we are using the partition $\Pi_{\uni,n}$, that $u_{\eps_n}$ satisfies Assumption \ref{ass:main:assumptions:R1} for all $T >0$ and that $u$ satisfies Assumption \ref{ass:main:assumptions:R3}.
For some $1 \leq q_1 < \infty$ and $ 1 \leq q_2 < \infty$ and $0 < \alpha_i \leq 1$ for $1 \leq i \leq 3$, assume furthermore that $u_0 \in \Lip(\alpha_1,\Lp{q_1}(\Omega)) \cap \Lp{\infty}(\Omega)$, $\cA^*\ell \in \Lip(\alpha_2,\Lp{q_2}(\Omega)) \cap \Lp{\infty}(\Omega)$ and $K \in \text{C}^{0,\alpha_3}(\Omega)$. For some $\kappa > 0$, let $T(n) = \l \frac{2}{2 + 3C_{\mathrm{op}}^4} \r \log(\eps_n^{-\kappa})$ and assume that
\[
\tau_n \ll \frac{1}{\log(\eps_n^{-\kappa})^{(2p-3)}} \eps_n^{2 + 2p + \kappa}, 
\]
\begin{align*}
\eps_n &\gg \max \Biggl\{ n^{-\alpha_1/\kappa}, n^{-\alpha_2/\kappa}, \ls \expW \l n^{\frac{\alpha_3}{\max\l 1 + (1+p + \alpha_3)/\kappa, p - 1 \r}}\r \rs^{-1/\kappa}, \\
& \ls \expW \l   \l \log(n)  \r^{1/\max(2(p-1),2+(1+3p)/\kappa)}   \r \rs^{-1/\kappa} \Biggr\}
\end{align*}
and
\[
\frac{\log(n) \eps_n^{2p}}{n} \ll \rho_n \ll \eps_n^{p+1}
\quad \text{as well as} \quad \frac{\log(\eps_n^{-\kappa})^{2(p-1)}}{ \eps_n^{1+3p} \log(n)} \ll \theta_n^2 \ll \eps_n^{2\kappa}.
\]
Then, for $n$ large enough, we have:
\begin{align}
  &\sup_{1 \leq k \leq N } \sup_{t \in (t^{k-1},t^k]} \Vert \cI_n \bar{u}_{n}^k - u(t,\cdot) \Vert_{\Lp{2}(\Omega')} \leq C \Bigg( \eps_n\log(\eps_n^{-\kappa}) \notag \\
  &+ \eps_n^{-\kappa}\ls n^{-\alpha_1} + n^{-\alpha_2} + \frac{\log(\eps_n^{-\kappa})^{(p-1)}}{\eps_n^{1+p + \alpha_3} n^{\alpha_3}} + \tau_n \frac{\log(\eps_n^{-\kappa})^{2p-3}}{\eps_n^{2(1+p)}}  + \theta_n \rs  \Biggr)  \label{eq:application:simplified:final}
\end{align}
for some $C > 0$ that might be dependent on $\Omega$, $u_0$ and $\cA^* \ell$ and with probability larger than
$$1 - \frac{C\log(\eps_n^{-\kappa})^{2(p-1)} \l 1+\frac{3 C_{\mathrm{op}}^4}{2}  \r^{2(1-p)}}{\theta_n^2\eps_n^{1+p} n\rho_n}.$$
Furthermore, the right-hand side of \eqref{eq:application:simplified:final} tends to 0 as $n \to \infty$ and the probability tends to $1$.

\end{corollary}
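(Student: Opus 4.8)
The plan is to reduce to the deterministic Corollary \ref{cor:proofs:rates:discreteNonlocalContinuumLocal:simplified} (specialised to $d=1$) by comparing the random discrete flow with its deterministic counterpart. First, let $\{\bar v_n^k\}_{k=0}^N$ solve the deterministic fully discrete scheme \eqref{eq:main:notation:discrete:nonlocalProblemFully} with parameters $\bar K_{\eps_n}=\cP_n\tilde K_{\eps_n}$, $\bar f$, $\bar u_0$, $\bar G_n$; the triangle inequality gives
\[
\|\cI_n\bar u_n^k - u(t,\cdot)\|_{\Lp2(\Omega')} \le \|\cI_n(\bar u_n^k-\bar v_n^k)\|_{\Lp2} + \|\cI_n\bar v_n^k - u(t,\cdot)\|_{\Lp2(\Omega')},
\]
and I would bound the second summand by the right-hand side of \eqref{eq:proofs:rates:simplified:final} with $d=1$, more precisely by invoking the underlying Proposition \ref{prop:proofs:rates:NonFullyDiscreteLocal:L2rates} and Theorem \ref{thm:proofs:continuumRates:continuumNonlocalLocal} with the horizon $T(n)=\bigl(\tfrac{2}{2+3C_{\mathrm{op}}^4}\bigr)\log(\eps_n^{-\kappa})$, which is smaller than the horizon used in Corollary \ref{cor:proofs:rates:discreteNonlocalContinuumLocal:simplified} so the same estimates apply. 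It then remains to control the first summand, the random-to-deterministic error.

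To that end, write $\bar w^k=\bar u_n^k-\bar v_n^k$, subtract the two backward-Euler schemes, and split
\[
\Delta_{p,n}^{\bar\Lambda_n}\bar u_n^k-\Delta_{p,n}^{\bar K_{\eps_n}}\bar v_n^k = \bigl(\Delta_{p,n}^{\bar\Lambda_n}\bar u_n^k-\Delta_{p,n}^{\bar\Lambda_n}\bar v_n^k\bigr) + \bar r_n^k, \qquad \bar r_n^k := \Delta_{p,n}^{\bar\Lambda_n}\bar v_n^k-\Delta_{p,n}^{\bar K_{\eps_n}}\bar v_n^k .
\]
Pairing the resulting identity with $\bar w^k$ in the cell-weighted $\Lp2$ inner product and using that $\bar u\mapsto\Delta_{p,n}^{\bar\Lambda_n}\bar u$ is monotone (all entries of $\bar\Lambda_n$ are nonnegative) and that $\bar G_n$ is positive semi-definite, one obtains the discrete Grönwall inequality $\|\cI_n\bar w^k\|_{\Lp2}\le\|\cI_n\bar w^{k-1}\|_{\Lp2}+\mu\tau^{k-1}\|\cI_n\bar r_n^k\|_{\Lp2}$; since $\bar w^0=\cP_n u_0-\cP_n u_0=0$ this telescopes to $\|\cI_n\bar w^N\|_{\Lp2}\le\mu\, T(n)\max_{1\le j\le N}\|\cI_n\bar r_n^j\|_{\Lp2}$.

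The key probabilistic step is to control $\bar r_n^j$. Since $\mathbb{E}[(\bar\Lambda_n)_{ij}]=(\bar K_{\eps_n})_{ij}$, each coordinate $(\bar r_n^j)_i$ is a centred sum of independent random variables with $\operatorname{Var}((\bar\Lambda_n)_{ij})\le(\bar K_{\eps_n})_{ij}/\rho_n$. Using the discrete analogue of \eqref{eq:proofs:wellPosedness:nonlocalProblem:existenceUniqueness:bound} (valid since $\bar G_n$ is order-preserving by Assumption \ref{ass:main:assumptions:O2}), $\|\bar v_n^j\|_{\Lp\infty}\lesssim\|u_0\|_{\Lp\infty}+T(n)\|\cA^*\ell\|_{\Lp\infty}\lesssim(1+\tfrac32 C_{\mathrm{op}}^4)^{-1}\log(\eps_n^{-\kappa})$, so the $p$-Laplacian increments are of size $\lesssim(1+\tfrac32 C_{\mathrm{op}}^4)^{1-p}\log(\eps_n^{-\kappa})^{p-1}$; the support condition \ref{ass:main:assumptions:K2} restricts the sum defining $(\bar r_n^j)_i$ to $O(\eps_n n)$ terms, each with $(\bar K_{\eps_n})_{ij}\lesssim\eps_n^{-(1+p)}$ and cell weight $n^{-1}$. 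Collecting these estimates yields $\mathbb{E}\|\cI_n\bar r_n^j\|_{\Lp2}^2\lesssim(1+\tfrac32 C_{\mathrm{op}}^4)^{2(1-p)}\log(\eps_n^{-\kappa})^{2(p-1)}\big/\bigl(\eps_n^{1+p}n\rho_n\bigr)$ uniformly in $j$; since the $\bar r_n^j$ are all functions of the single sample $\bar\Lambda_n$, Chebyshev's inequality then gives $\|\cI_n\bar w^N\|_{\Lp2}\lesssim\theta_n$ with probability at least the one stated.

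Combining the three steps produces \eqref{eq:application:simplified:final} (writing the $\theta_n$-contribution inside the $\eps_n^{-\kappa}$ bracket is a harmless over-estimate since $\eps_n^{-\kappa}\ge1$). Finally one checks that the standing hypotheses make everything vanish: $\rho_n\ll\eps_n^{1+p}$ and $\tfrac{\log(n)\eps_n^{2p}}{n}\ll\rho_n$ keep $\rho_n$ small enough for the construction and for the deterministic localisation bounds while making $n\rho_n$ large; the window $\tfrac{\log(\eps_n^{-\kappa})^{2(p-1)}}{\eps_n^{1+3p}\log n}\ll\theta_n^2\ll\eps_n^{2\kappa}$ forces both the failure probability $\to0$ and $\eps_n^{-\kappa}\theta_n\to0$, its non-emptiness being precisely what the $\expW$-type lower bound on $\eps_n$ guarantees; and the remaining lower bounds on $\eps_n$ together with the condition on $\tau_n$ are exactly those demanded by Corollary \ref{cor:proofs:rates:discreteNonlocalContinuumLocal:simplified} at $d=1$. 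The main obstacle is the probabilistic step: one must propagate the explicit $C_{\mathrm{op}}$- and $\log(\eps_n^{-\kappa})$-dependence of the discrete $\Lp\infty$ bound and track the exact powers of $\eps_n$, $n$, $\rho_n$ through the variance computation so that the failure probability emerges in precisely the claimed form, and one must avoid a wasteful union bound over the $N$ time steps by exploiting that all $\bar r_n^j$ depend on the same random weight matrix.
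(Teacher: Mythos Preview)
Your decomposition is essentially the same as the paper's, but the execution of the random-to-deterministic step is genuinely different. The paper compares the time-interpolated versions $\uTimeInter$ and $\vTimeInter$ in continuous time (Proposition~\ref{prop:application:ratesDiscreteDeterministics}), runs a differential Gr\"onwall argument, and picks up an exponential factor $e^{(1+\tfrac32 C_{\mathrm{op}}^4)T}=\eps_n^{-\kappa}$ in front of $\theta_n$ as well as additional $\tau$-terms involving $\sup_x\|\cI_n\bar\Lambda_n(x,\cdot)\|_{\Lp{1}}$; those extra terms are the reason the paper needs Lemma~\ref{lem:application:convergenceLambda} (a Bernstein/Borel--Cantelli bound giving $\sup_x\|\cI_n\bar\Lambda_n(x,\cdot)\|_{\Lp{1}}\lesssim\eps_n^{-p}$ almost surely). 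Your discrete backward-Euler contraction $\|\cI_n\bar w^k\|_{\Lp{2}}\le\|\cI_n\bar w^{k-1}\|_{\Lp{2}}+\mu\tau^{k-1}\|\cI_n\bar r_n^k\|_{\Lp{2}}$ is cleaner: it produces only a linear factor $T(n)$, generates no $\bar\Lambda_n$-dependent $\tau$-terms, and therefore dispenses with Lemma~\ref{lem:application:convergenceLambda} altogether. Since $T(n)\theta_n\ll\eps_n^{-\kappa}\theta_n$, your bound is actually sharper than what~\eqref{eq:application:simplified:final} records.

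The one place that needs tightening is the passage from the telescoped sum $S:=\sum_k\tau^{k-1}\|\cI_n\bar r_n^k\|_{\Lp{2}}$ to the stated failure probability without a union bound. The way you phrase it --- Chebyshev on each $\|\cI_n\bar r_n^j\|_{\Lp{2}}$ yielding $\|\cI_n\bar w^N\|_{\Lp{2}}\lesssim\theta_n$ --- is not quite right, because bounding $\max_j\|\cI_n\bar r_n^j\|_{\Lp{2}}$ that way costs a factor of $N$. The fix is to bound $S$ directly in $L^2(\bbP)$: by Cauchy--Schwarz, $\bbE[S^2]\le T(n)\sum_k\tau^{k-1}\bbE\|\cI_n\bar r_n^k\|_{\Lp{2}}^2\le T(n)^2\max_k\bbE\|\cI_n\bar r_n^k\|_{\Lp{2}}^2$, and your variance computation (uniform in $k$ since $\bar v_n^k$ is deterministic with a $k$-uniform $\Lp{\infty}$ bound) gives the right-hand side. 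Chebyshev at level $T(n)\theta_n$ then delivers $\|\cI_n\bar w^N\|_{\Lp{2}}\le\mu T(n)\theta_n$ with failure probability exactly $\max_k\bbE\|\cI_n\bar r_n^k\|_{\Lp{2}}^2/\theta_n^2$, matching the statement. (Your support-based count actually yields $\eps_n^{-p}$ rather than $\eps_n^{-(1+p)}$ in the variance, one power of $\eps_n$ better than what the paper records; the paper uses the cruder estimate $\int\!\!\int \cI_n\bar K_{\eps_n}|\psi|^2\le\|\tilde K_{\eps_n}\|_{\Lp{\infty}}\|\cI_n\bar v_n^k\|_{\Lp{2(p-1)}}^{2(p-1)}$.)
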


\begin{corollary}[Rates for $u_\infty$ using the random graph model]
\label{cor:application:final}

Assume the same setting as in Corollary \ref{cor:application:simplified} and furthermore that $\cA = \Id$. Then, for $n$ large enough, we have:
\begin{align}
  \Vert \cI_n \bar{u}_{n}^N - u_{\infty} \Vert_{\Lp{2}(\Omega')} &\leq C \Bigg( \eps_n\log(\eps_n^{-\kappa}) + \eps_n^{\kappa/5}(\cF(u_0)-\cF(u_\infty))^{1/2} \notag \\
  &+ \eps_n^{-\kappa}\ls n^{-\alpha_1} + n^{-\alpha_2} + \frac{\log(\eps_n^{-\kappa})^{(p-1)}}{\eps_n^{1+p + \alpha_3} n^{\alpha_3}} + \tau_n \frac{\log(\eps_n^{-\kappa})^{2p-3}}{\eps_n^{2(1+p)}}  + \theta_n \rs  \Biggr) \notag
\end{align}
for some $C > 0$ that might be dependent on $\Omega$ (and $d$), $u_0$ and $\cA^* \ell$ and with probability larger than 
$$1 - \frac{C\log(\eps_n^{-\kappa})^{2(p-1)}}{\theta_n^2\eps_n^{1+p} n\rho_n}.$$
Furthermore, the latter right-hand side tends to 0 as $n \to \infty$ and the probability tends to $1$.
\end{corollary}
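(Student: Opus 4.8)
The plan is to split the error by a triangle inequality into the discrete-to-local term, which is already controlled (with the stated probability) by Corollary~\ref{cor:application:simplified}, and a purely deterministic long-time convergence term for the local gradient flow. Since $\cA = \Id$, the equation in \eqref{eq:main:notation:localProblem:localProblem} reads $\partial_t u + \mu\Delta_p u + u = \ell$, i.e.\ $u$ is the $\Lp{2}(\Omega)$-gradient flow (with the natural Neumann boundary condition) of the extended-valued functional $\cF$ from \eqref{eq:intro:regularizationProblem}; the quadratic fidelity term makes $\cF$ $1$-strongly convex in $\Lp{2}$, so its unique minimizer over $\Wkp{1}{p}(\Omega)$, namely $u_\infty$ (Theorem~\ref{thm:proofs:wellPosedness:localProblem:existenceUniqueness}), is the unique stationary point of the flow. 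Writing $T = T(n) = \big(\tfrac{2}{2+3C_{\mathrm{op}}^4}\big)\log(\eps_n^{-\kappa}) = \tfrac{2}{5}\log(\eps_n^{-\kappa})$ (as $C_{\mathrm{op}}=1$) and using $\Vert\cdot\Vert_{\Lp{2}(\Omega')}\leq\Vert\cdot\Vert_{\Lp{2}(\Omega)}$ on the last term,
\[
\Vert\cI_n\bar u_n^N - u_\infty\Vert_{\Lp{2}(\Omega')} \leq \Vert\cI_n\bar u_n^N - u(T,\cdot)\Vert_{\Lp{2}(\Omega')} + \Vert u(T,\cdot) - u_\infty\Vert_{\Lp{2}(\Omega)},
\]
where Corollary~\ref{cor:application:simplified} bounds the first summand by the right-hand side of \eqref{eq:application:simplified:final} (the factor $(1+\tfrac{3}{2} C_{\mathrm{op}}^4)^{2(1-p)}$ in the probability there is now a universal constant and is absorbed into $C$).

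For the second summand I would use the gradient-flow decay estimate that also underlies Corollary~\ref{cor:proofs:rates:discreteNonlocalContinuum:final}. By Assumption~\ref{ass:main:assumptions:R3}, $t\mapsto\cF(u(t))$ is absolutely continuous along the flow with the dissipation identity $\tfrac{d}{dt}\cF(u(t)) = -\Vert\partial_t u(t)\Vert_{\Lp{2}}^2 = -\Vert\nabla\cF(u(t))\Vert_{\Lp{2}}^2$. The $1$-strong convexity of $\cF$ gives, on the one hand, $\tfrac{1}{2}\Vert v-u_\infty\Vert_{\Lp{2}}^2 \leq \cF(v)-\cF(u_\infty)$ for all $v$, and, on the other hand (combined with convexity), a gradient-dominance (Polyak--\L{}ojasiewicz-type) inequality $\Vert\nabla\cF(v)\Vert_{\Lp{2}}^2 \geq \cF(v)-\cF(u_\infty)$. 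Feeding the latter into the dissipation identity and applying Grönwall yields $\cF(u(t))-\cF(u_\infty) \leq e^{-t}\big(\cF(u_0)-\cF(u_\infty)\big)$, hence
\[
\Vert u(t,\cdot)-u_\infty\Vert_{\Lp{2}} \leq \sqrt{2\big(\cF(u(t))-\cF(u_\infty)\big)} \leq \sqrt{2}\, e^{-t/2}\big(\cF(u_0)-\cF(u_\infty)\big)^{1/2}.
\]
Evaluating at $t=T(n)=\tfrac{2}{5}\log(\eps_n^{-\kappa})$ converts the exponential into a power, $e^{-T(n)/2}=\eps_n^{\kappa/5}$, so this term is at most $\sqrt{2}\,\eps_n^{\kappa/5}(\cF(u_0)-\cF(u_\infty))^{1/2}$.

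Adding the two bounds and renaming constants gives the statement, with probability inherited verbatim from Corollary~\ref{cor:application:simplified}: the new term $\eps_n^{\kappa/5}(\cF(u_0)-\cF(u_\infty))^{1/2}\to0$ since $\eps_n\to0$ (and the bound is trivially true if $\cF(u_0)=+\infty$), while the remaining terms and the probability behave exactly as in Corollary~\ref{cor:application:simplified}. The only genuinely new ingredient is the long-time estimate of the second paragraph, and the step I expect to require the most care is justifying the dissipation identity / chain rule for $t\mapsto\cF(u(t))$ at the regularity level of weak solutions afforded by Assumption~\ref{ass:main:assumptions:R3}, together with keeping the constants in the gradient-dominance inequality and in $T(n)$ compatible so that the exponent of $\eps_n$ comes out as $\kappa/5$ rather than something smaller.
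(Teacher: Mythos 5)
Your proposal matches the paper's proof. The paper proves Corollary \ref{cor:application:final} by "proceeding as in the proof of Corollary \ref{cor:proofs:rates:discreteNonlocalContinuum:final} with Corollary \ref{cor:application:simplified}", and the proof of Corollary \ref{cor:proofs:rates:discreteNonlocalContinuum:final} is exactly your decomposition: triangle inequality into the discrete-to-local term (handled by the preceding corollary) plus the long-time term $\Vert u(T,\cdot)-u_\infty\Vert_{\Lp{2}}$, then the strong-convexity estimate $\Vert u(T,\cdot)-u_\infty\Vert_{\Lp{2}}^2 \leq \cF(u(T,\cdot))-\cF(u_\infty)$ from Lemma \ref{lem:proofs:rates:lambdaConvexity} and first-order conditions, the exponential energy decay $\cF(u(T,\cdot))-\cF(u_\infty)\leq e^{-T}(\cF(u_0)-\cF(u_\infty))$ cited from Santambrogio, and the substitution $T(n)=\tfrac{2}{5}\log(\eps_n^{-\kappa})$ (since $C_{\mathrm{op}}=1$), giving $e^{-T/2}=\eps_n^{\kappa/5}$. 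Your remark that the constant in the gradient-dominance inequality must be chosen to produce the $e^{-t}$ decay rate (rather than the sharper $e^{-2t}$) is accurate and reflects a choice the paper also makes implicitly; and your flagged concern about the chain rule / dissipation identity at the weak-solution regularity level is a genuine subtlety that the paper likewise elides behind the reference to \cite{santambrogio2015optimal}.
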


\section{Proofs} \label{sec:proofs}

\subsection{Well-posedness} \label{subsec:wellPosedness}

Well-posedness of all our gradient flows is a central question. For the continuum nonlocal case, we will make use of nonlinear semigroup theory. The discrete case will follow from the continuum nonlocal case by using the interplay between the $p$-Laplacian and injection operators.
Lastly, the continuum local case will follow from classical results.

\subsubsection{Nonlocal problem} \label{subsubsec:wellPosedness:nonLocalProblem}

The following proposition, an extension of \cite[Theorem 3.9]{HINDS20121411}, will allow us to characterize certain functions both by an equation they satisfy as well as a variational problem they minimize. The proof follows the above-mentioned reference but has been included in Section \ref{sec:supplementary:wellPosedness} for completeness.

\begin{proposition}[Dirichlet principles] \label{prop:proofs:wellPosedness:nonLocalProblem:dirichlet}

Assume that Assumptions \ref{ass:main:assumptions:S1}, \ref{ass:main:assumptions:K1} and \ref{ass:main:assumptions:O1} hold. Let $p \geq 2$, $\mu > 0$ and $f \in \Lp{2}(\Omega)$. 
Given $n \in \bbN$, $\lambda > 0$ and functions $u,\phi \in \Lp{p}(\Omega)$, consider the equations
\begin{equation} \label{eq:proofs:wellPosedness:nonLocalProblem:dirichlet:equationN}
    \frac{\vert u \vert^{p-2}u}{n} + \lambda \l \mu \Delta_p^K u + \cA^* \cA u - f \r + u - \phi = 0
\end{equation}
and
\begin{equation} \label{eq:proofs:wellPosedness:nonLocalProblem:dirichlet:equation}
     \lambda(\mu \Delta_p^K u + \cA^* \cA u - f) + u - \phi = 0
\end{equation}
as well as their variational counterparts
\begin{align} 
   E_{n,\lambda,\cA,f}(u) &= \frac{1}{np} \int_\Omega \vert u \vert^p \, \dd x + \frac{\lambda \mu}{2p} \int_{\Omega\times\Omega} K(\vert x-y \vert) \vert u(x) - u(y) \vert^p  \, \dd x \, \dd y + \frac{\lambda}{2}\int_\Omega \l\cA u \r^2 \, \dd x \label{eq:proofs:wellPosedness:nonLocalProblem:dirichlet:En} \\
   & \qquad \qquad - \lambda \int_{\Omega} f u \, \dd x + \frac{1}{2}\int_\Omega (u-\phi)^2 \, \dd x \notag
\end{align}
and 
\begin{align} \label{eq:proofs:wellPosedness:nonLocalProblem:dirichlet:E}
    E_{\lambda,\cA,f}(u) & = \frac{\lambda \mu}{2p} \int_\Omega \int_\Omega K(\vert x-y \vert) \vert u(x) - u(y) \vert^p \, \dd y \dd x + \frac{\lambda}{2}\int_\Omega \l\cA u \r^2 \, \dd x \\
    & \qquad \qquad - \lambda \int_{\Omega} f u \, \dd x + \frac{1}{2}\int_\Omega (u-\phi)^2 \, \dd x.   \notag 
\end{align}

\begin{enumerate}
    \vspace{-2mm}
    \setlength\itemsep{-1mm}
    \item If $u \in \Lp{p}(\Omega)$ satisfies \eqref{eq:proofs:wellPosedness:nonLocalProblem:dirichlet:equationN} $\lambda_x$-a.e., then we have $E_{n,\lambda,\cA,f}(u) \leq E_{n,\lambda,\cA,f}(v)$ for all $v \in \Lp{p}(\Omega)$.
    \item If for $u \in \Lp{p}(\Omega)$ we have $E_{\lambda,\cA,f}(u) \leq E_{\lambda,\cA,f}(v)$ for all $v \in \Lp{p}(\Omega)$, then $u$ satisfies \eqref{eq:proofs:wellPosedness:nonLocalProblem:dirichlet:equation} $\lambda_x$-a.e..
\end{enumerate}
\end{proposition}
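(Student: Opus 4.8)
The plan is to recognize both statements as instances of the standard Euler--Lagrange calculus for strictly convex functionals, the only subtlety being the integrability bookkeeping forced by working in $\Lp{p}(\Omega)$ rather than a reflexive Sobolev space. First I would record the first variation of each functional. For a test direction $v \in \Lp{p}(\Omega) \cap \Lp{\infty}(\Omega)$ (dense in $\Lp{p}(\Omega)$, and enough to control the nonlinear terms), one differentiates $t \mapsto E_{\lambda,\cA,f}(u+tv)$ at $t=0$. The term $\frac{\lambda\mu}{2p}\int_\Omega\int_\Omega K(|x-y|)|u(x)-u(y)|^p\,\dd y\,\dd x$ has Gateaux derivative $\lambda\mu\int_\Omega\int_\Omega K(|x-y|)|u(x)-u(y)|^{p-2}(u(x)-u(y))(v(x)-v(y))\,\dd y\,\dd x$, which after using the symmetry $K(|x-y|)=K(|y-x|)$ collapses to $\lambda\mu\int_\Omega (\Delta_p^K u)(x)\,v(x)\,\dd x$ (this is exactly the nonlocal integration-by-parts identity, and $\Delta_p^K u \in \Lp{q}(\Omega)$ by Jensen/H\"older together with \ref{ass:main:assumptions:K1}, so the pairing with $v\in\Lp{p}$ makes sense). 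The quadratic term gives $\lambda\int_\Omega \cA^*\cA u\,v\,\dd x$ (using \ref{ass:main:assumptions:O1} so $\cA^*\cA u\in\Lp{2}$), the linear term gives $-\lambda\int_\Omega f v\,\dd x$, and $\frac12\int_\Omega(u-\phi)^2$ gives $\int_\Omega(u-\phi)v\,\dd x$. So stationarity of $E_{\lambda,\cA,f}$ is precisely the weak form of \eqref{eq:proofs:wellPosedness:nonLocalProblem:dirichlet:equation}, and since each ingredient lies in $\Lp{q}(\Omega)$ (or $\Lp{2}\subseteq\Lp{q}$, using $p\geq2$), testing against all $v$ forces the equation to hold $\lambda_x$-a.e.

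For part (2), the key point is convexity: $u\mapsto|u(x)-u(y)|^p$ is convex, $u\mapsto(\cA u)^2 = \langle \cA^*\cA u,u\rangle$ is convex since $\cA^*\cA$ is positive semi-definite, $u\mapsto -\lambda\int f u$ is linear, and $u\mapsto\frac12\int(u-\phi)^2$ is strictly convex; hence $E_{\lambda,\cA,f}$ is convex (in fact strictly convex in $u$ because of the last term), so any minimizer is a critical point and conversely. Thus from "$u$ minimizes $E_{\lambda,\cA,f}$" I deduce the first variation vanishes in every admissible direction, and by the density/localization argument above I get \eqref{eq:proofs:wellPosedness:nonLocalProblem:dirichlet:equation} pointwise a.e. For part (1), $E_{n,\lambda,\cA,f}$ differs from $\frac{1}{n}$ times $\frac1p\int|u|^p$ plus $E_{\lambda,\cA,f}$-type terms; the extra term $\frac{1}{np}\int|u|^p$ is also convex with Gateaux derivative $\frac1n\int|u|^{p-2}u\,v$, so the same computation shows that a solution $u$ of \eqref{eq:proofs:wellPosedness:nonLocalProblem:dirichlet:equationN} is a critical point of the (strictly) convex $E_{n,\lambda,\cA,f}$, hence its unique global minimizer; this gives $E_{n,\lambda,\cA,f}(u)\le E_{n,\lambda,\cA,f}(v)$ for all $v\in\Lp{p}(\Omega)$.

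The main obstacle is purely technical: justifying differentiation under the integral sign for the $p$-th power terms when the perturbation $v$ is a general $\Lp{p}$ function, since $|u+tv|^p$ need not be dominated uniformly in $t$. The clean route is to first prove the first-variation identities for $v$ bounded (where a dominating function is immediate from the mean value theorem applied to $s\mapsto|s|^p$ on a bounded interval, giving a bound by $C(\|u\|_{\Lp{p}}^{p-1}+\|v\|_{\Lp\infty}^{p-1})|v|$ which is integrable), then extend the equality "first variation $=0$" to the full equation by the density of $\Lp{p}\cap\Lp{\infty}$ in $\Lp{p}$ together with continuity of each term of the Euler--Lagrange expression as a functional on $\Lp{p}$. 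I would follow the argument of \cite[Theorem 3.9]{HINDS20121411} for the precise bookkeeping, adding only the two new terms $\cA^*\cA u$ and $f$, which are harmless since they are linear and land in $\Lp{2}\subseteq\Lp{q}$ under $p\geq2$. No convexity beyond what is noted above is needed, and strict convexity from $\frac12\int(u-\phi)^2$ gives uniqueness of the minimizers as a free byproduct (useful later in Proposition \ref{prop:proofs:wellPosedness:nonlocalProblem:completeAccretivityRangeCondition}).
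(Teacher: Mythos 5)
Your proof is correct, but for part (1) it takes a genuinely different route than the paper, which is worth pointing out. The paper does not invoke convexity abstractly: it multiplies equation \eqref{eq:proofs:wellPosedness:nonLocalProblem:dirichlet:equationN} by $(u-v)$, integrates, rewrites the $\Delta_p^K$ term via the nonlocal integration-by-parts identity, and then applies Young's inequality termwise (e.g.\ $|u|^{p-1}|v|\le \tfrac1q|u|^p+\tfrac1p|v|^p$, and analogously for the $K$- and $\cA$-terms) to land directly at $E_{n,\lambda,\cA,f}(u)\le E_{n,\lambda,\cA,f}(v)$. That elementary manipulation is exactly the inequality $E(v)\ge E(u)+\langle E'(u),v-u\rangle$ unwound by hand, so the two arguments are morally the same, but the paper's version sidesteps any need to justify Gateaux differentiability of $E_{n,\lambda,\cA,f}$ at $u$ in arbitrary $\Lp p$-directions, which your abstract ``critical point of a convex functional is a global minimizer'' route does require. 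Your worry about the domination is well founded but easily resolved: by the mean-value theorem the difference quotient of the $p$-th-power terms is dominated by $C|v|\big(|u|^{p-1}+|v|^{p-1}\big)$ (and its two-variable analogue for the kernel term), which is integrable by H\"older and boundedness of $\Omega$, so dominated convergence already works for all $v\in\Lp p(\Omega)$ and the detour through $\Lp p\cap\Lp\infty$ plus density is unnecessary. For part (2) your argument coincides with the paper's (compute the first variation, test against $\Ckc{\infty}(\Omega)$ or bounded $v$, conclude the equation holds a.e.\ by the fundamental lemma of the calculus of variations); here convexity is not even needed, only that the minimum value yields vanishing directional derivatives.
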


\begin{remark}[Dirichlet principles]
It is clear from the proof of Proposition \ref{prop:proofs:wellPosedness:nonLocalProblem:dirichlet} that the converse of the two statements of Proposition \ref{prop:proofs:wellPosedness:nonLocalProblem:dirichlet} can be shown analogously.
\end{remark}

In the next step, our aim will be to show that if $u_n \rightharpoonup u$, we have $\liminf_{n\to\infty} E_{n,\lambda,\cA,f}(u_n) \geq E_{\lambda,\cA,f}(u)$, where $E_{n,\lambda,\cA,f}$ and $E_{\lambda,\cA,f}$ are respectively defined in \eqref{eq:proofs:wellPosedness:nonLocalProblem:dirichlet:En} and \eqref{eq:proofs:wellPosedness:nonLocalProblem:dirichlet:E}.

\begin{proposition}[$\liminf$-inequality for $E_{n,\lambda,\cA,f}$]
\label{prop:proofs:wellPosedness:nonlocalProblem:weakLsc}
Assume that Assumptions \ref{ass:main:assumptions:S1}, \ref{ass:main:assumptions:K1} and \ref{ass:main:assumptions:O1} hold. Let $p \geq 2$, $\mu > 0$, $f \in \Lp{2}(\Omega)$, $\lambda > 0$ and $\phi \in \Lp{p}(\Omega)$. Let $\{u_n\}_{n=1}^\infty\subset \Lp{p}(\Omega)$ and $u\in\Lp{p}(\Omega)$ be functions so that $u_n \rightharpoonup u$. Then, 
\[
    E_{\lambda,\cA,f}(u) \leq \liminf_{n \to \infty} E_{n,\lambda,\cA,f}(u_n)
\]
where $E_{n,\lambda,\cA,f}$ and $E_{\lambda,\cA,f}$ are respectively defined in \eqref{eq:proofs:wellPosedness:nonLocalProblem:dirichlet:En} and \eqref{eq:proofs:wellPosedness:nonLocalProblem:dirichlet:E}.
\end{proposition}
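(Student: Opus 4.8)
The plan is to decompose $E_{n,\lambda,\cA,f}$ into its five summands and pass to the limit in each separately: the penalty term $\frac{1}{np}\int_\Omega|u|^p$ vanishes, the affine term $-\lambda\int_\Omega f u$ is weakly continuous, and the three remaining non-negative convex terms --- the nonlocal energy, $\frac{\lambda}{2}\int_\Omega(\cA u)^2$ and $\frac12\int_\Omega(u-\phi)^2$ --- are weakly lower semicontinuous. Since $E_{\lambda,\cA,f}$ is exactly $E_{n,\lambda,\cA,f}$ with the penalty term deleted, combining these facts yields the inequality. This is the $\Gamma$-$\liminf$ step alluded to before Proposition~\ref{prop:proofs:wellPosedness:nonlocalProblem:completeAccretivityRangeCondition}.

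First I would carry out the routine reductions. A weakly convergent sequence is bounded, so $M := \sup_n \|u_n\|_{\Lp{p}(\Omega)} < \infty$ and $0 \le \frac{1}{np}\int_\Omega|u_n|^p \le \frac{M^p}{np} \to 0$. Since $\Omega$ is bounded (Assumption~\ref{ass:main:assumptions:S1}) and $p \ge 2$, one has $\Lp{2}(\Omega) \subseteq \Lp{p'}(\Omega)$ with $p' = p/(p-1) \le 2$; testing against $f \in \Lp{2}(\Omega)$ (resp.\ against an arbitrary $g \in \Lp{2}(\Omega)$) shows $\int_\Omega f u_n \to \int_\Omega f u$ and $u_n \rightharpoonup u$ weakly in $\Lp{2}(\Omega)$. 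Consequently $\cA u_n \rightharpoonup \cA u$ weakly in $\Lp{2}(\Omega)$ by boundedness and linearity of $\cA$ (Assumption~\ref{ass:main:assumptions:O1}), and $u_n - \phi \rightharpoonup u - \phi$ weakly in $\Lp{2}(\Omega)$ since $\phi \in \Lp{p}(\Omega) \subseteq \Lp{2}(\Omega)$; weak lower semicontinuity of $\|\cdot\|_{\Lp{2}(\Omega)}^2$ then handles the two quadratic terms.

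The substantive step, and the one I expect to require the most care, is the weak lower semicontinuity of the nonlocal term $J(v) := \frac{\lambda\mu}{2p}\int_{\Omega\times\Omega} K(|x-y|)\,|v(x)-v(y)|^p \,\dd x\,\dd y$. Here I would first note that the difference operator $\mathcal D : \Lp{p}(\Omega) \to \Lp{p}(\Omega\times\Omega)$, $(\mathcal D v)(x,y) = v(x)-v(y)$, is bounded and linear (using $\lambda_x(\Omega) < \infty$), hence weak-to-weak continuous, so $\mathcal D u_n \rightharpoonup \mathcal D u$ in $\Lp{p}(\Omega\times\Omega)$. Then the functional $w \mapsto \int_{\Omega\times\Omega} K(|x-y|)\,|w(x,y)|^p\,\dd x\,\dd y$ on $\Lp{p}(\Omega\times\Omega)$ is finite (as $K \in \Lp{\infty}$, Assumption~\ref{ass:main:assumptions:K1}), convex (since $t \mapsto |t|^p$ is convex and $K \ge 0$), and strongly lower semicontinuous (indeed strongly continuous, via a subsequence-plus-dominated-convergence argument with majorant $\|K\|_{\Lp{\infty}}$ times an $\Lp{p}$ bound); a convex, strongly lower semicontinuous functional on a Banach space is weakly lower semicontinuous, whence $\liminf_n J(u_n) \ge J(u)$.

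Finally I would assemble the estimate. Isolating the two convergent contributions and invoking superadditivity of $\liminf$ on the remaining non-negative terms (so no $\infty - \infty$ arises),
\[
\liminf_{n\to\infty} E_{n,\lambda,\cA,f}(u_n) \ge \lim_{n\to\infty}\left(\tfrac{1}{np}\int_\Omega|u_n|^p - \lambda\int_\Omega f u_n\right) + \liminf_{n\to\infty} J(u_n) + \tfrac{\lambda}{2}\liminf_{n\to\infty}\int_\Omega(\cA u_n)^2 + \tfrac12\liminf_{n\to\infty}\int_\Omega(u_n-\phi)^2,
\]
and bounding each term on the right below by the corresponding term of $E_{\lambda,\cA,f}(u)$ gives the conclusion. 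The only delicate bookkeeping point is, as noted, keeping the possibly-negative affine term among the convergent ones, so that superadditivity of $\liminf$ is applied only to non-negative summands.
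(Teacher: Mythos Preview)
Your proof is correct and follows essentially the same route as the paper: decompose the energy into summands, note that the $\frac{1}{np}\|u_n\|_p^p$ term vanishes by boundedness, and handle the remaining pieces via the principle that convex, strongly continuous functionals are weakly lower semicontinuous. The only cosmetic differences are that the paper groups the linear term $-\lambda\int f u$ together with the quadratic $\cA$-term (treating their sum as one convex continuous functional) and argues continuity of the nonlocal term by showing it is bounded on balls, whereas you isolate the linear term as weakly continuous and factor the nonlocal term through the bounded linear difference operator $\mathcal D$; both variants lead to the same conclusion with the same underlying tool.
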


\begin{proof}
In the proof $C>0$ will denote a constant that can be arbitrarily large, (which might be) dependent on the kernel $K$, $p$, $\lambda$, $\mu$, $\cA$ and/or $\Omega$, that may change from line to line.

We start by recalling \eqref{eq:proofs:wellPosedness:nonLocalProblem:dirichlet:En} for $v \in \Lp{p}(\Omega)$:
\begin{align*}
    E_{n,\lambda,\cA,f}(v) &= \frac{1}{np} \int_\Omega \vert v \vert^p \, \dd x + \frac{\lambda \mu}{2p} \int_{\Omega\times\Omega} K(\vert x-y \vert) \vert v(x) - v(y) \vert^p  \, \dd x \, \dd y + \frac{\lambda}{2}\int_\Omega \l\cA v \r^2 - 2fv \, \dd x\\
    & \qquad \qquad + \frac{1}{2}\int_\Omega (v-\phi)^2 \, \dd x  \\
    &=: T_1(v) + T_2(v)  + T_3(v) + T_4(v). 
\end{align*}

For the $T_1$ term, by weak lower semi-continuity of norms, we have
\[
\liminf_{n \to \infty} T_1(u_n) = \liminf_{n\to\infty} \frac{1}{np} \|u_n\|_{\Lp{p}}^p = 0. 
\]

For the $T_2$ term, we first note that $T_2$ is proper and convex. 
Next, let $0 < r < R$. We claim that $T_2$ is bounded on the ball $B_{\Lp{p}}(u,R)$. Indeed, for $v \in B_{\Lp{p}}(u,R)$:
\[
\vert T_2(v) \vert \leq C  \int_\Omega \int_\Omega 2^{p-1}(\vert v(y) \vert^p + \vert v(x) \vert^p ) \, \dd y \, \dd x  \leq C (R + \Vert u \Vert_{\Lp{p}}) \leq C. 
\]
Combining the above, by \cite[Proposition 5.11]{maso2012introduction}, we have that $T_2$ is Lipschitz continuous on $B_{\Lp{p}}(u,r)$ which in turn implies that $T_2$ is continuous and convex in $\Lp{p}(\Omega)$. By \cite[Corollary 2.2]{ekeland1999convex}, we deduce that $T_2$ is weakly lower-semicontinuous. 
Analogously, $T_3$ and $T_4$ are convex and continuous and therefore weakly lower semi-continuous by~\cite[Corollary 2.2]{ekeland1999convex}.

Collecting all the latter results, we obtain:
\[
\liminf_{n \to \infty} E_{n,\lambda,\cA,f}(u_n)  \geq T_2(u) + T_3(u) + T_4(u) = E_{\lambda,\cA,f}(u). \qedhere
\]
\end{proof}

The next properties are easily checked: in particular, monotony and coercivity follow from \cite[Lemma 2.3]{ANDREU2008201} and \cite[ Lemma 3.4 and Lemma 3.6]{Byrstrom}. For completeness, the proof may be found in Section \ref{sec:supplementary:wellPosedness}.

\begin{lemma}[Properties of $\cE_{n,\lambda,\cA,f}$] \label{lem:proofs:wellPosedness:nonlocalProblem:properties}
Assume Assumptions \ref{ass:main:assumptions:S1}, \ref{ass:main:assumptions:K1} and \ref{ass:main:assumptions:O1} hold. Let $p \geq 2$, $\mu > 0$ and $f \in \Lp{2}(\Omega)$. For $n \in \bbN$ and $\lambda > 0$, we define the operator:
\begin{equation} \label{eq:proofs:wellPosedness:nonlocalProblem:properties:En}
    \cE_{n,\lambda,\cA,f}(u) = \frac{\vert u \vert^{p-2}u}{n} + u + \lambda ( \mu \Delta_p^Ku + \cA^*\cA u - f).
\end{equation}
The following properties are satisfied:
\begin{enumerate}
    \vspace{-2mm}
    \setlength\itemsep{-1mm}
    \item For $q$ such that $p^{-1} + q^{-1} = 1$, $\cE_{n,\lambda,\cA,f}:\Lp{p}(\Omega) \mapsto \Lp{q}(\Omega)$ and
\[ \|\cE_{n,\lambda,\cA,f}(u)\|_{\Lp{q}} \leq C \l \l 1+\frac{1}{n}\r \|u\|_{\Lp{p}}^{\frac{p}{q}} + \|u\|_{\Lp{p}} + \|f\|_{\Lp{2}}\r; \] 
    \item $\cE_{n,\lambda,\cA,f}$ is hemicontinuous, monotone and coercive.
\end{enumerate}
\end{lemma}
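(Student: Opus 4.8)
The plan is to split $\cE_{n,\lambda,\cA,f}$ into the four pieces appearing in \eqref{eq:proofs:wellPosedness:nonlocalProblem:properties:En} --- the regularising term $\tfrac1n|u|^{p-2}u$, the zeroth-order term $u$, the nonlocal $p$-Laplacian $\mu\Delta_p^K u$, and the fidelity contribution $\lambda(\cA^*\cA u-f)$ --- to establish the required bound/properties for each separately, and to assemble the statement by (sub)additivity. Throughout I would use that $\Omega$ is bounded, so that for $p\geq2$ and $q=p/(p-1)\in(1,2]$ we have the continuous embeddings $\Lp{p}(\Omega)\hookrightarrow\Lp{2}(\Omega)\hookrightarrow\Lp{q}(\Omega)$, together with the identity $(p-1)q=p$ (equivalently $p/q=p-1$), which is exactly what lets $t\mapsto|t|^{p-2}t$ generate a Nemytskii operator from $\Lp{p}(\Omega)$ to $\Lp{q}(\Omega)$.

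For item (1) I would estimate each piece in $\Lp{q}(\Omega)$. The regularising term satisfies $\Vert|u|^{p-2}u\Vert_{\Lp{q}}=\Vert u\Vert_{\Lp{p}}^{p/q}$ exactly, since $\big||u|^{p-2}u\big|^q=|u|^{(p-1)q}=|u|^p$. For the nonlocal $p$-Laplacian I would bound pointwise $|\Delta_p^K u(x)|\leq\Vert K\Vert_{\Lp{\infty}}\int_\Omega|u(y)-u(x)|^{p-1}\,\dd y$, then apply $|a-b|^{p-1}\leq 2^{p-2}(|a|^{p-1}+|b|^{p-1})$ and H\"older's inequality on $\Omega$ to get $\Vert\Delta_p^K u\Vert_{\Lp{q}}\leq C\Vert u\Vert_{\Lp{p}}^{p-1}$. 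The term $\cA^*\cA u$ is handled by $\Vert\cA^*\cA v\Vert_{\Lp{2}}\leq C_{\mathrm{op}}^2\Vert v\Vert_{\Lp{2}}$ (the bound recorded after Assumption~\ref{ass:main:assumptions:O1}) together with the embeddings, giving $\Vert\cA^*\cA u\Vert_{\Lp{q}}\leq C\Vert u\Vert_{\Lp{p}}$; the contributions of $u$ and $f$ are immediate from the embeddings. Summing, absorbing the constant in front of $\Delta_p^K u$ into the $1$ of $1+\tfrac1n$ and using $p/q=p-1$, yields the stated estimate; in particular this shows $\cE_{n,\lambda,\cA,f}$ is a well-defined map $\Lp{p}(\Omega)\to\Lp{q}(\Omega)$.

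For item (2), monotonicity follows by pairing $\cE_{n,\lambda,\cA,f}(u)-\cE_{n,\lambda,\cA,f}(v)$ with $u-v$ in the $(\Lp{q},\Lp{p})$ duality: the $u$-term yields $\Vert u-v\Vert_{\Lp{2}}^2\geq0$, the $\cA^*\cA$-term yields $\Vert\cA(u-v)\Vert_{\Lp{2}}^2\geq0$, the constant $f$ cancels, and the regularising and $p$-Laplacian contributions are nonnegative by monotonicity of $t\mapsto|t|^{p-2}t$. For $\Delta_p^K$ one symmetrises the double integral in $(x,y)$, using $K(|x-y|)=K(|y-x|)$ and oddness of $t\mapsto|t|^{p-2}t$, to rewrite the pairing as $\tfrac12$ times the integral over $\Omega\times\Omega$ of $K(|x-y|)\,(|\delta_u|^{p-2}\delta_u-|\delta_v|^{p-2}\delta_v)(\delta_u-\delta_v)$, where $\delta_w(x,y):=w(y)-w(x)$, which is pointwise nonnegative; this and the coercivity estimate below can alternatively be read off from \cite[Lemma~2.3]{ANDREU2008201} and \cite[Lemma~3.4 and Lemma~3.6]{Byrstrom}. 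Coercivity comes from the same symmetrisation applied to $\langle\cE_{n,\lambda,\cA,f}(u),u\rangle$: the $\Delta_p^K$, $\cA^*\cA$ and $u$ pieces are nonnegative, the regularising term contributes exactly $\tfrac1n\Vert u\Vert_{\Lp{p}}^p$, and $-\lambda\int_\Omega f u$ is bounded below by $-\lambda C\Vert f\Vert_{\Lp{2}}\Vert u\Vert_{\Lp{p}}$; dividing by $\Vert u\Vert_{\Lp{p}}$ leaves $\tfrac1n\Vert u\Vert_{\Lp{p}}^{p-1}-\lambda C\Vert f\Vert_{\Lp{2}}\to\infty$, where it is essential that the regularisation $\tfrac1n|u|^{p-2}u$ is present, since $\Delta_p^K$ alone controls only the nonlocal seminorm and not the full $\Lp{p}$-norm. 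Finally, hemicontinuity I would obtain from Lemma~\ref{lem:background:hemicontinuity} once $\cE_{n,\lambda,\cA,f}:\Lp{p}(\Omega)\to\Lp{q}(\Omega)$ is shown to be continuous: the only nonlinear pieces are Nemytskii operators, whose $\Lp{p}\to\Lp{q}$ continuity is classical (subsequence extraction, a.e. convergence, $\Lp{q}$-domination via item (1), dominated convergence), and for $\Delta_p^K$ one precomposes with the bounded linear operator $u\mapsto\big((x,y)\mapsto u(y)-u(x)\big)$ from $\Lp{p}(\Omega)$ into $\Lp{p}(\Omega\times\Omega)$ and postcomposes with integration in $y$, bounded $\Lp{q}(\Omega\times\Omega)\to\Lp{q}(\Omega)$ on the bounded domain.

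The calculations are all routine; the one place needing genuine care is the continuity of $\Delta_p^K$ as a map $\Lp{p}(\Omega)\to\Lp{q}(\Omega)$ (equivalently, continuity of the scalar Nemytskii operator on the product domain $\Omega\times\Omega$), together with the bookkeeping in item (1) that keeps the $\tfrac1n$ on the regularising term only and gives every other contribution the claimed homogeneity; otherwise I would lean on \cite[Lemma~2.3]{ANDREU2008201} and \cite[Lemma~3.4, Lemma~3.6]{Byrstrom} as signposted.
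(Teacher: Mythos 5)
Your proposal is correct, and for item (1), monotonicity and coercivity it follows essentially the same route as the paper: the same four-way decomposition, the same exact identity $\Vert |u|^{p-2}u\Vert_{\Lp{q}}^q=\Vert u\Vert_{\Lp{p}}^p$, the same symmetrisation of the double integral combined with monotonicity of $t\mapsto|t|^{p-2}t$ (the paper derives the pointwise inequality from Young's inequality), and the same observation that only the regularising term $\tfrac1n|u|^{p-2}u$ supplies coercivity, the remaining pairings being merely nonnegative or bounded below by $-\lambda C\Vert f\Vert_{\Lp{2}}$. The paper bounds $\Delta_p^K u$ in $\Lp{q}$ with Jensen's inequality for $|\cdot|^q$ rather than your pointwise-plus-H\"older estimate, but these are interchangeable. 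The one place you genuinely diverge is hemicontinuity: the paper does \emph{not} pass through norm continuity and Lemma~\ref{lem:background:hemicontinuity}; it verifies the definition directly, showing for fixed $u,v,w\in\Lp{p}(\Omega)$ and $t_n\to t$ that $\int_\Omega \cE_{n,\lambda,\cA,f}(u+t_nv)\,w\,\dd x\to\int_\Omega\cE_{n,\lambda,\cA,f}(u+tv)\,w\,\dd x$ by exhibiting explicit $t_n$-independent dominating functions such as $C(|u|^{p-1}+|v|^{p-1})|w|$ (integrable by H\"older) and applying dominated convergence; this establishes only weak convergence in $\Lp{q}$ along the line segment, which is all the definition requires. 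Your route instead proves the stronger statement that the Nemytskii pieces are norm-continuous $\Lp{p}\to\Lp{q}$ (with the correct factorisation of $\Delta_p^K$ through the bounded linear map $u\mapsto u(y)-u(x)$ into $\Lp{p}(\Omega\times\Omega)$) and then cites continuity implies hemicontinuity. Both arguments are sound: the paper's is more self-contained, while yours is more modular but leans on the classical $\Lp{p}\to\Lp{q}$ continuity theorem for superposition operators, whose standard proof is itself the subsequence/dominated-convergence argument you sketch.
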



We now proceed to show a range condition on our evolution operator which will allow us to deduce the existence of a solution to the nonlocal problem. 

\begin{proof}[Proof of Proposition \ref{prop:proofs:wellPosedness:nonlocalProblem:completeAccretivityRangeCondition}]

In the proof $C>0$ will denote a constant that can be arbitrarily large, (which might be) dependent on the kernel $K$ and/or $\Omega$, that may change from line to line.

We begin by showing complete accretivity of $\cE^K_{\cA,f}$. Let $u,v \in \Lp{p}(\Omega)$, $h \in \cH$ and consider: 
\begin{align*}
    \int_\Omega \l \cE^K_{\cA,f}(u) - \cE^K_{\cA,f}(v) \r h(u - v) \, \dd x &= \mu\int_\Omega \l \Delta_p^K u - \Delta_p^K v \r h(u - v) \, \dd x \\
    &+ \int_\Omega \l \cA^* \cA u - \cA^* \cA v \r h(u - v) \, \dd x\\
    &=: T_1 + T_2.
\end{align*}
For the $T_1$ term, 
we obtain:
\begin{align*}
T_1 & = \mu \int_{\Omega\times\Omega} K(|x-y|) \Bigg( |u(x) - u(y)|^{p-2}(u(x)-u(y)) - |v(x)-v(y)|^{p-2}(v(x)-v(y)) \Bigg) \\
 & \qquad \qquad h(u(x)-v(x)) \, \dd x \, \dd y \\
 & = \frac{\mu}{2}\int_{\Omega\times\Omega} K(\vert x - y \vert) \Bigg( |u(x)-u(y)|^{p-2} (u(x)-u(y)) - |v(x)-v(y)|^{p-2} (v(x) - v(y)) \Bigg) \\
 & \qquad \qquad \ls h(u(x)-v(x)) - h(u(y)-v(y))\rs \, \dd x \, \dd y. 
\end{align*}
Since $h$ and $t\mapsto |t|^{p-2}t$ are both increasing then by splitting the latter equation in cases where $u(y)-v(y) \geq u(x) - v(x)$ (and conversely) we see that $T_1 \geq 0$. By Assumptions \ref{ass:main:assumptions:O1} and \ref{ass:main:assumptions:O2}, we know that $T_2 \geq 0$ and therefore, by Proposition \ref{prop:background:completelyAccretiveCharacterization}
and Assumption \ref{ass:main:assumptions:S1}, $\cE^K_{\cA,f}$ is completely accretive. 

For the range condition, let $\phi \in \Lp{p}(\Omega) \subseteq \Lp{q}(\Omega)$ where $q = p /(p-1)$ since $p \geq 2$ . We first show that there exists a solution to the equation
\[
\frac{\vert u \vert^{p-2}u}{n} + u + \lambda \l \mu\Delta_p^Ku + \cA^* \cA u - f \r = \cE_{n,\lambda,\cA,f}(u) = \phi.
\]
where $\cE_{n,\lambda,\cA,f}$ is defined in \eqref{eq:proofs:wellPosedness:nonlocalProblem:properties:En}.
By Lemma \ref{lem:proofs:wellPosedness:nonlocalProblem:properties}, the operator $\cE_{n,\lambda,\cA,f}$ satisfies all the conditions required to apply Theorem \ref{thm:background:nonlinearStationary}
and we deduce that for all $n \in \bbN$, there exists a function $u_n \in \Lp{p}(\Omega)$ such that $\cE_{n,\lambda,\cA,f}(u_n) = \phi$, or formulated otherwise: $u_n$ satisfies \eqref{eq:proofs:wellPosedness:nonLocalProblem:dirichlet:equationN}. 

Next, we claim that $u_n \llSgt \phi + \lambda f$. Indeed, let $h \in \cH$ and, using \eqref{eq:proofs:wellPosedness:nonLocalProblem:dirichlet:equationN}, we compute as follows:
\begin{align}
\int_\Omega \l \phi + \lambda f \r h(u_{n}) \, \dd x &= \int_\Omega u_{n} h(u_{n}) \, \dd x + \lambda \mu \int_\Omega \Delta_p^K u_{n} h(u_{n}) \, \dd x  \notag \\
& \qquad \qquad + \frac{1}{n}\int_\Omega \vert u_{n} \vert^{p-2} u_{n}h(u_{n}) \, \dd x + \lambda \int_\Omega \cA^* \cA u_{n} h(u_{n}) \, \dd x \notag \\
&\geq \int_\Omega u_{n} h(u_{n}) \, \dd x \label{eq:proofs:wellPosedness:nonlocalProblem:completeAccretivityRangeCondition:orderRelation}
\end{align}
where we used 
the same argument as to show $T_1\geq 0$ (using $v=0$) to infer $\int_\Omega \Delta_p^K u_n h(u_n) \, \dd x\geq 0$,
the fact that $\signFunc(u_n) = \signFunc(h(u_n))$ and the Assumption~\ref{ass:main:assumptions:O2} for~\eqref{eq:proofs:wellPosedness:nonlocalProblem:completeAccretivityRangeCondition:orderRelation}. By Lemma \ref{lem:background:orderRelation:properties} and since $f \in \Lp{p}(\Omega)$, we therefore obtain $u_{n} \llSgt \phi + \lambda f \in \Lp{p}(\Omega)$ and consequently $\Vert u_{n} \Vert_{\Lp{p}} \leq \Vert \phi + \lambda f \Vert_{\Lp{p}} \leq C$. From this, we deduce that the set $\{u_n\}_{n=1}^\infty$ is uniformly bounded in $\Lp{p}(\Omega)$ and hence, there exists $u^* \in \Lp{p}(\Omega)$ and a subsequence $\{u_{n_m}\}_{m=1}^\infty$ such that $u_{n_m} \rightharpoonup u^*$ in $\Lp{p}(\Omega)$.

By Proposition \ref{prop:proofs:wellPosedness:nonLocalProblem:dirichlet}, for each $n$, $u_n$ minimizes $E_{n,\lambda,\cA,f}$ in $\Lp{p}(\Omega)$. We will now show that $u^*$ minimizes $E_{\lambda,\cA,f}$ in $\Lp{p}(\Omega)$. Let $v \in \Lp{p}(\Omega)$. In fact, by Proposition \ref{prop:proofs:wellPosedness:nonlocalProblem:weakLsc} we have 
\[
     E_{\lambda,\cA,f}(u^*) \leq \liminf_{m \to \infty} E_{n_m,\lambda,\cA,f}(u_{n_m}) \leq \liminf_{m \to \infty} E_{n_m,\lambda,\cA,f}(v) = E_{\lambda,\cA,f}(v). 
\]
Therefore, by Proposition \ref{prop:proofs:wellPosedness:nonLocalProblem:dirichlet}, $u^*$ satisfies \eqref{eq:proofs:wellPosedness:nonLocalProblem:dirichlet:equation} which concludes the proof of the range condition.
\end{proof}

The straight-forward proof of the next result may be found in Section \ref{sec:supplementary:wellPosedness}.

\begin{corollary}[Special cases of $\cE^K_{\cA,f}$] \label{cor:proofs:wellPosedness:nonlocalProblem:specialCases}

Assume that Assumptions \ref{ass:main:assumptions:S1}, \ref{ass:main:assumptions:O1}, \ref{ass:main:assumptions:O2} and \ref{ass:main:assumptions:K1} hold. Let $p \geq 2$, $\mu > 0$ and assume that $f \in \Lp{p}(\Omega)$.
Then, the operators $\cE^K_{\cA,f}$, $\cE^K_{\cA,0}$ and $\cE^K_{0,0}$ are $m$-completely accretive.
\end{corollary}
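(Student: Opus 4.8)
The plan is to reduce everything to Proposition \ref{prop:proofs:wellPosedness:nonlocalProblem:completeAccretivityRangeCondition}. By definition an operator on $\Lp{p}(\Omega)$ is $m$-completely accretive exactly when it is completely accretive (Definition \ref{def:background:completelyAccretive}) and satisfies the full range condition $\range(\Id + \lambda A) = \Lp{p}(\Omega)$ for every $\lambda > 0$ (Definition \ref{def:background:maccretive}). For $A = \cE^K_{\cA,f}$ the complete accretivity and the inclusion $\Lp{p}(\Omega) \subseteq \range(\Id + \lambda \cE^K_{\cA,f})$ are precisely what Proposition \ref{prop:proofs:wellPosedness:nonlocalProblem:completeAccretivityRangeCondition} provides, so the only thing left to verify is the reverse inclusion $\range(\Id + \lambda \cE^K_{\cA,f}) \subseteq \Lp{p}(\Omega)$. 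This is immediate once we view $\cE^K_{\cA,f}$ as an operator on $\Lp{p}(\Omega)$ in the sense of Section \ref{sec:background:nonlinear}: its domain consists of those $u \in \Lp{p}(\Omega)$ with $\cE^K_{\cA,f}(u) \in \Lp{p}(\Omega)$, and for such $u$ one has $(\Id + \lambda \cE^K_{\cA,f})(u) = u + \lambda \cE^K_{\cA,f}(u) \in \Lp{p}(\Omega)$. That this domain is nonempty is guaranteed by the range condition itself, since the function $u^*$ produced in the proof of Proposition \ref{prop:proofs:wellPosedness:nonlocalProblem:completeAccretivityRangeCondition} satisfies $\lambda \cE^K_{\cA,f}(u^*) = \phi - u^* \in \Lp{p}(\Omega)$. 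Hence $\range(\Id + \lambda \cE^K_{\cA,f}) = \Lp{p}(\Omega)$ and $\cE^K_{\cA,f}$ is $m$-completely accretive.

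For $\cE^K_{\cA,0}$ I would simply invoke the statement just proved with the choice $f = 0 \in \Lp{p}(\Omega)$; no further argument is needed. For $\cE^K_{0,0}(u) = \mu \Delta_p^K u$ the subtlety is that the operator $\cA = 0$ is not literally admissible in Assumption \ref{ass:main:assumptions:O2}. However, inspecting the proof of Proposition \ref{prop:proofs:wellPosedness:nonlocalProblem:completeAccretivityRangeCondition} shows that Assumptions \ref{ass:main:assumptions:O1} and \ref{ass:main:assumptions:O2} are used there only to secure the nonnegativity of the $\cA^*\cA$-terms, namely $T_2 = \int_\Omega (\cA^*\cA u - \cA^*\cA v)\, h(u-v)\, \dd x \geq 0$ in the complete-accretivity step and $\int_\Omega \cA^*\cA u_n\, h(u_n)\, \dd x \geq 0$ in the range-condition step. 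When the operator is identically zero these terms vanish, so the same chain of estimates — the Dirichlet-principle reformulation of Proposition \ref{prop:proofs:wellPosedness:nonLocalProblem:dirichlet} together with the $\liminf$-inequality of Proposition \ref{prop:proofs:wellPosedness:nonlocalProblem:weakLsc}, now with all $\cA$-dependent quantities deleted — yields complete accretivity of $\mu \Delta_p^K$ and $\Lp{p}(\Omega) \subseteq \range(\Id + \lambda \mu \Delta_p^K)$. The reverse inclusion follows exactly as above, so $\cE^K_{0,0}$ is $m$-completely accretive too.

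I do not expect a real obstacle here: the corollary is essentially a repackaging of Proposition \ref{prop:proofs:wellPosedness:nonlocalProblem:completeAccretivityRangeCondition} plus the trivial observation that $\Id + \lambda \cE^K_{\cA,f}$ cannot map its domain outside $\Lp{p}(\Omega)$. The only mild point of care is the $\cE^K_{0,0}$ case, where one must either re-run the $\cA$-free portion of the previous proof as sketched above, or alternatively appeal to the already-known $m$-complete accretivity of the nonlocal $p$-Laplacian from \cite{ANDREU2008201}; both routes are routine.
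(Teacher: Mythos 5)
Your proof is correct and follows the paper's strategy: reduce to Proposition \ref{prop:proofs:wellPosedness:nonlocalProblem:completeAccretivityRangeCondition} for the first operator, take $f=0$ for the second, and observe that the only role of Assumptions~\ref{ass:main:assumptions:O1}--\ref{ass:main:assumptions:O2} in that proposition is to guarantee nonnegativity of the $\cA^*\cA$ integrals, which is trivially preserved when $\cA=0$. Your handling of the inclusion $\range(\Id + \lambda \cE^K_{\cA,f}) \subseteq \Lp{p}(\Omega)$ is actually more careful than the paper's: the paper asserts ``$\Lp{q}(\Omega)\subseteq\Lp{p}(\Omega)$'' to conclude that $\cE^K_{\cA,f}$ maps $\Lp{p}$ into $\Lp{p}$, but on a bounded domain with $q=p/(p-1)\le 2\le p$ that inclusion runs the wrong way (for $p>2$ we only have $\Lp{p}\subseteq\Lp{q}$, as the paper itself correctly uses in the proof of Lemma~\ref{lem:proofs:wellPosedness:nonlocalProblem:properties}); your observation that the range is automatically contained in $\Lp{p}(\Omega)$ once $\cE^K_{\cA,f}$ is regarded as an operator \emph{on} $\Lp{p}(\Omega)$ (with domain $\{u\in\Lp{p}:\cE^K_{\cA,f}(u)\in\Lp{p}\}$, nonempty by the construction of $u^*$ in Proposition~\ref{prop:proofs:wellPosedness:nonlocalProblem:completeAccretivityRangeCondition}) sidesteps this and is the cleaner way to close the argument.
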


Finally, we can deduce the following existence and uniqueness result.

\begin{proof}[Proof of Theorem \ref{thm:proofs:wellPosedness:nonlocalProblem:existenceUniqueness}]
First, we note that if we find a solution to
\begin{equation} \label{eq:proofs:wellPosedness:nonlocalProblem:existenceUniqueness:formulation2}
\begin{cases}
\frac{\partial}{\partial t}u + \cE^K_{\cA,0}(u) = f \\
u(0,\cdot) = u_0,
\end{cases}
\end{equation} 
then, $u$ solves \eqref{eq:main:notation:nonlocal:nonlocalProblem} with the operator $\cE^K_{\cA,f}$. From Corollary \ref{cor:proofs:wellPosedness:nonlocalProblem:specialCases}, we know that $\cE^K_{\cA,0}$ is $m$-completely accretive. We can therefore apply Theorem \ref{thm:background:solution} to deduce the existence of a unique solution $u$ as in Definition \ref{def:main:notation:nonlocal:strongSolutionCP}.
We obtain \eqref{eq:proofs:wellPosedness:nonlocalProblem:existenceUniqueness:contraction} by applying Lemma \ref{lem:background:orderRelation:contraction}. 

For the last part, we note that if Assumption \ref{ass:main:assumptions:O3} is satisfied, then $v = 0$ solves \eqref{eq:proofs:wellPosedness:nonlocalProblem:existenceUniqueness:formulation2} with $f = 0$ and $u_0 = 0$, so that by inserting $v = 0$ in \eqref{eq:proofs:wellPosedness:nonlocalProblem:existenceUniqueness:contraction}, we obtain \eqref{eq:proofs:wellPosedness:nonlocalProblem:existenceUniqueness:bound}.
\end{proof}

The next result is a stability result for solutions to \eqref{eq:main:notation:nonlocal:nonlocalProblem} which can be considered an extension of \cite[Theorem 5.1]{ElBouchairi} whose proof can be found in Section \ref{sec:supplementary:wellPosedness}.

\begin{proposition}[Stability of solutions to \eqref{eq:main:notation:nonlocal:nonlocalProblem}]
\label{prop:proofs:nonlocalDiscreteContinuum:stability}
 Assume that Assumptions \ref{ass:main:assumptions:S1}, \ref{ass:main:assumptions:O1}, \ref{ass:main:assumptions:O2} and \ref{ass:main:assumptions:O3} hold. Let $p \geq 2$, $\mu > 0$ and $T\geq 1$. Furthermore, for $i=1,2$, let $K_i$ satisfy Assumption \ref{ass:main:assumptions:K1}, $u_{0,i} \in \Lp{p}(\Omega)$ and $f_i \in \Lp{p}(\Omega)$. 
Then, for $i=1,2$, there exists a unique solution $u_i$ to the nonlocal problem \eqref{eq:main:notation:nonlocal:nonlocalProblem} with evolution operator $\cE^{K_i}_{\cA,f_i}$ and initial condition $u_{0,i}$ and we have the following stability estimates for some $C > 0$ dependent on $\Omega$, $u_0$ and $f_i$:
\begin{enumerate}
    \vspace{-2mm}
    \setlength\itemsep{-1mm}
    \item if, for either $i=1$ or $i=2$, we have $u_{0,i} \in \Lp{2(p-1)}(\Omega)$ and $f_i \in \Lp{2(p-1)}(\Omega)$ then
\end{enumerate}
\begin{equation*}
    \Vert u_2(t,\cdot) - u_1(t,\cdot) \Vert_{\Lp{2}} \leq Ct^p \l  \sup_{x \in \Omega} \Vert K_2(x-\cdot) - K_1(x-\cdot) \Vert_{\Lp{2}} + \Vert f_1 - f_2 \Vert_{\Lp{2}} \r + \Vert u_{0,2} - u_{0,1} \Vert_{\Lp{2}};
\end{equation*}
\begin{enumerate}
\setcounter{enumi}{1}
\item if, for either $i-1$ or $i=2$, we have $u_{0,i} \in \Lp{\infty}(\Omega)$ and $f_i \in \Lp{\infty}(\Omega)$ then
\end{enumerate}
\begin{equation*} 
    \Vert u_2(t,\cdot) - u_1(t,\cdot) \Vert_{\Lp{2}} \leq C t^p \l  \Vert K_2 - K_1 \Vert_{\Lp{2}} + \Vert f_1 - f_2 \Vert_{\Lp{2}} \r + \Vert u_{0,2} - u_{0,1} \Vert_{\Lp{2}}.
\end{equation*}
\end{proposition}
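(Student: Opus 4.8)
The plan is to leverage the strong contraction property \eqref{eq:proofs:wellPosedness:nonlocalProblem:existenceUniqueness:contraction} (equivalently Lemma \ref{lem:background:orderRelation:contraction}) already available for solutions of \eqref{eq:main:notation:nonlocal:nonlocalProblem}, after recasting $u_2$ as a solution driven by the \emph{first} kernel with a perturbed right-hand side. Since $u_2$ satisfies $\frac{\partial}{\partial t}u_2 + \mu\Delta_p^{K_2}u_2 + \cA^*\cA u_2 = f_2$ $\lambda_t$-a.e., adding and subtracting $\mu\Delta_p^{K_1}u_2$ exhibits $u_2$ as a strong solution (Definition \ref{def:main:notation:nonlocal:strongSolutionCP}), in a suitable Lebesgue space, of $(\mathrm{CP}_{g_2,u_{0,2}})$ for the $m$-completely accretive operator $\cE^{K_1}_{\cA,0}$ (Corollary \ref{cor:proofs:wellPosedness:nonlocalProblem:specialCases}), where
\[
g_2(s) = f_2 + \mu\l \Delta_p^{K_1} - \Delta_p^{K_2} \r u_2(s).
\]
Since $u_1$ is a strong solution of $(\mathrm{CP}_{f_1,u_{0,1}})$ for the same operator, Lemma \ref{lem:background:orderRelation:contraction} with $r=2$ yields
\[
\Vert u_2(t) - u_1(t) \Vert_{\Lp 2} \leq \Vert u_{0,2} - u_{0,1} \Vert_{\Lp 2} + t\Vert f_2 - f_1 \Vert_{\Lp 2} + \mu\int_0^t \big\Vert \l \Delta_p^{K_1} - \Delta_p^{K_2} \r u_2(s) \big\Vert_{\Lp 2}\,\dd s ,
\]
so that the only remaining work is to control the last integrand (and, using $p\geq 2$ and $T\geq 1$, to absorb the factor $t$ into $t^p$ on the range $t\geq 1$). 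The same inequality can alternatively be obtained by a direct energy estimate: testing the equation for $w=u_2-u_1$ against $w$ itself, the contribution of $\Delta_p^{K_1}$ is nonnegative by the monotonicity established in the proof of Proposition \ref{prop:proofs:wellPosedness:nonlocalProblem:completeAccretivityRangeCondition} and $\langle\cA^*\cA w,w\rangle=\Vert\cA w\Vert_{\Lp 2}^2\geq 0$.

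For the kernel-difference term I would start from
\[
\big\vert \l \Delta_p^{K_1} - \Delta_p^{K_2} \r u_2(s)(x) \big\vert \leq \int_\Omega \big\vert K_1(\vert x-y\vert) - K_2(\vert x-y\vert) \big\vert \, \vert u_2(s)(y) - u_2(s)(x) \vert^{p-1}\,\dd y
\]
and estimate the $\Lp 2(\Omega,\dd x)$-norm according to the integrability of $u_2(s)$. Under the hypotheses of the first estimate, bounding $\vert u_2(s)(y) - u_2(s)(x)\vert^{p-1}\leq 2^{p-1}(\vert u_2(s)(y)\vert^{p-1} + \vert u_2(s)(x)\vert^{p-1})$ and combining Minkowski's integral inequality, the symmetry of $(x,y)\mapsto\vert K_1(\vert x-y\vert)-K_2(\vert x-y\vert)\vert$ together with a change of variables, and H\"older's inequality on the bounded set $\Omega$ gives
\[
\big\Vert \l \Delta_p^{K_1} - \Delta_p^{K_2} \r u_2(s) \big\Vert_{\Lp 2} \leq C\l \sup_{x\in\Omega} \Vert K_2(x-\cdot) - K_1(x-\cdot) \Vert_{\Lp 2} \r \Vert u_2(s)\Vert_{\Lp{2(p-1)}}^{p-1} ,
\]
while under the hypotheses of the second estimate I would instead use $\vert u_2(s)(y) - u_2(s)(x)\vert^{p-1}\leq (2\Vert u_2(s)\Vert_{\Lp\infty})^{p-1}$ and reduce $\int_\Omega\vert K_1(\vert x-y\vert) - K_2(\vert x-y\vert)\vert\,\dd y$ to a one-dimensional radial integral bounded by $C\Vert K_2-K_1\Vert_{\Lp 2}$ via Cauchy--Schwarz, obtaining the analogue with $\Vert K_2-K_1\Vert_{\Lp 2}$ and $\Vert u_2(s)\Vert_{\Lp\infty}^{p-1}$. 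In both cases the role of the extra regularity assumptions is exactly to make the bound \eqref{eq:proofs:wellPosedness:nonlocalProblem:existenceUniqueness:bound} applicable (valid since Assumption \ref{ass:main:assumptions:O3} holds) at level $r=2(p-1)$, respectively $r=\infty$, giving $\Vert u_2(s)\Vert_{\Lp r}\leq\Vert u_{0,2}\Vert_{\Lp r}+s\Vert f_2\Vert_{\Lp r}\leq C(1+s)$.

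Inserting this and using $\int_0^t(1+s)^{p-1}\,\dd s\leq C t^p$ for $t\geq 1$ produces the two claimed bounds. Finally, the indices $1$ and $2$ are interchangeable: if it is $(u_{0,1},f_1)$ rather than $(u_{0,2},f_2)$ that enjoys the extra integrability, one instead recasts $u_1$ as a solution driven by $K_2$ and perturbs its right-hand side by $\mu(\Delta_p^{K_2}-\Delta_p^{K_1})u_1$, which is why the hypotheses only demand the regularity of one of the two data pairs. I expect the delicate points to be (a) justifying the recasting, i.e. identifying the Lebesgue space in which $u_2$ (resp. $u_1$) is genuinely a strong solution of the auxiliary Cauchy problem, which is where the structure of $\cE^{K}_{\cA,0}$ and the extra integrability assumptions enter, and (b) arranging the convolution-type estimates so that precisely the kernel norm appearing in each statement---$\sup_{x}\Vert K_2(x-\cdot)-K_1(x-\cdot)\Vert_{\Lp 2}$ in the first case, $\Vert K_2-K_1\Vert_{\Lp 2}$ in the second---is recovered rather than a weaker surrogate.
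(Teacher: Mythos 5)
Your proposal is correct, and your primary route is genuinely different from the paper's. The paper proceeds by a direct differential inequality: set $\zeta = u_1 - u_2$, compute $\frac{1}{2}\frac{\partial}{\partial t}\Vert \zeta \Vert_{\Lp{2}}^2$, drop the monotone contributions of $\Delta_p^{K_1}$ and $\cA^*\cA$, estimate the kernel-mismatch pairing $-\mu\int_\Omega(\Delta_p^{K_1}u_2 - \Delta_p^{K_2}u_2)\zeta$ directly by Cauchy--Schwarz on the double integral in $(x,y)$ together with the a priori bound \eqref{eq:proofs:wellPosedness:nonlocalProblem:existenceUniqueness:bound}, and integrate. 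You instead transplant the kernel mismatch into the inhomogeneity: viewing $u_2$ as a strong solution of $(\mathrm{CP}_{g_2,u_{0,2}})$ for the $m$-completely accretive operator $\cE^{K_1}_{\cA,0}$ with $g_2 = f_2 + \mu(\Delta_p^{K_1}-\Delta_p^{K_2})u_2$, and then invoking Lemma \ref{lem:background:orderRelation:contraction} with $r=2$. This is cleaner in that it avoids the explicit energy computation and makes the role of the contraction machinery fully transparent; the price (which you correctly flag as the delicate point) is verifying that $g_2(s)\in\Lp{2}(\Omega)$ a.e.\ so the estimate is nonvacuous, which is exactly where the extra integrability $u_{0,i},f_i\in\Lp{2(p-1)}$ (resp.\ $\Lp{\infty}$) combined with \eqref{eq:proofs:wellPosedness:nonlocalProblem:existenceUniqueness:bound} enter. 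Both routes then rely on the same kernel-difference estimate for $(\Delta_p^{K_1}-\Delta_p^{K_2})u_2$, which you derive correctly in both regimes. You also sketch the direct energy estimate as an alternative, so you effectively cover the paper's route too; the two bounds obtained agree because the paper's inequality on $\partial_t\Vert\zeta\Vert_{\Lp{2}}$ has no $\Vert\zeta\Vert$-dependent term on the right and therefore integrates additively, with no Gronwall exponential.
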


\begin{remark}[Generality of $f$ in \eqref{eq:main:notation:nonlocal:nonlocalProblem}]

In Section \ref{subsec:wellPosedness}, we have considered a general function $f$ in \eqref{eq:main:notation:nonlocal:nonlocalProblem}.
We could continue to do so below, but for ease of exposition, from now on, we will only consider $f = \cA^*\ell$ or $f = \cP_n \cA^*\ell$.
\end{remark}

\subsubsection{Discrete nonlocal problem}

As a corollary of Proposition \ref{prop:proofs:wellPosedness:nonlocalProblem:completeAccretivityRangeCondition}, we can prove the well-posedness of the discrete problem \eqref{eq:main:notation:discrete:nonlocalProblemFully} in the case that is of interest to us. The proof of the next corollary also displays the importance of accretivity of our operator.

\begin{corollary}[Well-posedness of \eqref{eq:main:notation:discrete:nonlocalProblemFully}]
\label{cor:proofs:wellPosedness:discreteProblem:existence}

Assume that Assumptions \ref{ass:main:assumptions:S1}, \ref{ass:main:assumptions:O1}, \ref{ass:main:assumptions:O2}, \ref{ass:main:assumptions:O3}, \ref{ass:main:assumptions:O4} and \ref{ass:main:assumptions:K1} hold. Let $p \geq 2$, $\mu > 0$, $T>0$, $u_0 \in \Lp{p}(\Omega)$, $\ell \in \Lp{2}(\Omega)$ and assume that $\cA^* \ell \in \Lp{p}(\Omega)$. Furthermore, let $n \in \bbN$ and define $\bar{K} = \cP_n \tilde{K}$, $\bar{f} = \cP_n \cA^*\ell $, $\bar{u}_0 = \cP_n u_0$.
Then, for any partition $0 = t^0 < t^1 < \dots < t^N = T$, there exists a sequence $\{\bar{u}_n^k\}_{k=0}^N$ satisfying \eqref{eq:main:notation:discrete:nonlocalProblemFully} with the above parameters that is well-defined and unique. We also have
\begin{equation} \label{eq:proofs:wellPosedness:discreteProblem:uniformBound}
\Vert \cI_n \bar{u}_n^k \Vert_{\Lp{r}} \leq \Vert u_0 \Vert_{\Lp{r}} + T \Vert \cA^*\ell \Vert_{\Lp{r}}
\end{equation}
for $1 \leq r \leq \infty$.
\end{corollary}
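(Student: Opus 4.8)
The plan is to recognise the fully discrete scheme \eqref{eq:main:notation:discrete:nonlocalProblemFully} as the implicit Euler time-discretization of the continuum nonlocal problem \eqref{eq:main:notation:nonlocal:nonlocalProblem} for a suitable piecewise-constant kernel, and then to run it through the complete-accretivity machinery of Proposition \ref{prop:proofs:wellPosedness:nonlocalProblem:completeAccretivityRangeCondition} and Corollary \ref{cor:proofs:wellPosedness:nonlocalProblem:specialCases}. First I would record the elementary identity
\[
\cI_n\l\Delta_{p,n}^{\bar{K}}\bar{u}\r = \Delta_p^{v}\l\cI_n\bar{u}\r, \qquad v := \cI_n\bar{K} = \cI_n\cP_n\tilde{K},
\]
where $v:\Omega\times\Omega\mapsto[0,\infty)$ is a bounded symmetric kernel function; this follows by splitting the integral defining $\Delta_p^{v}$ over the cells $\pi_j^n$ and using that $\cI_n\bar{u}$ is constant on each cell. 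Combined with Assumption \ref{ass:main:assumptions:O4}, which gives $\cI_n(\bar{G}_n\bar{u}) = \cA^*\cA(\cI_n\bar{u})$, this shows that $\{\bar{u}_n^k\}_{k=0}^N$ solves \eqref{eq:main:notation:discrete:nonlocalProblemFully} with parameters $\bar{K}=\cP_n\tilde{K}$, $\bar{f}=\cP_n\cA^*\ell$, $\bar{u}_0=\cP_n u_0$ and $\bar{G}_n$ if and only if the injected functions $u^k := \cI_n\bar{u}_n^k$ satisfy $u^0 = \cI_n\cP_n u_0$ and
\[
u^k = J^{v}_{\tau^{k-1}}\l u^{k-1} + \tau^{k-1}\cI_n\cP_n\cA^*\ell\r, \qquad 1 \leq k \leq N,
\]
where $J^{v}_\tau := \l\Id + \tau\cE^{v}_{\cA,0}\r^{-1}$ denotes the resolvent of the evolution operator $\cE^{v}_{\cA,0}$, i.e.\ of $\cE^K_{\cA,0}$ with the kernel function $v$ in place of $K$.

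Second, for existence and uniqueness of $\{\bar{u}_n^k\}$ I would induct on $k$. The operator $\cE^{v}_{\cA,0}$ is $m$-completely accretive: the proofs of Proposition \ref{prop:proofs:wellPosedness:nonlocalProblem:completeAccretivityRangeCondition} and Corollary \ref{cor:proofs:wellPosedness:nonlocalProblem:specialCases} use only that the kernel function is nonnegative, bounded and symmetric, which $v$ is, so they carry over with only notational changes. Hence $J^{v}_\tau$ is a single-valued contraction on $\Lp{p}(\Omega)$ for every $\tau>0$ and each step of the recursion has a unique solution in $\Lp{p}(\Omega)$. Since the argument $u^{k-1} + \tau^{k-1}\cI_n\cP_n\cA^*\ell$ is piecewise constant on $\Pi_n$, the one point requiring care is that $J^{v}_\tau$ maps the finite-dimensional subspace of $\Pi_n$-piecewise-constant functions into itself, so that $\bar{u}_n^k := \cP_n u^k$ is well defined with $\cI_n\bar{u}_n^k = u^k$. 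I would establish this invariance through the Dirichlet principle (Proposition \ref{prop:proofs:wellPosedness:nonLocalProblem:dirichlet}): $u^k$ is the unique minimizer of $E_{\tau^{k-1},\cA,0}$ built with data $\phi := u^{k-1} + \tau^{k-1}\cI_n\cP_n\cA^*\ell$, and replacing $u^k$ by $\cI_n\cP_n u^k$ does not increase any of the three terms in this functional --- the nonlocal term by Jensen's inequality on each product cell $\pi_i^n\times\pi_j^n$, on which $v$ is constant; the fidelity term $\tfrac12\int_\Omega(u-\phi)^2$ by orthogonality of $u - \cI_n\cP_n u$ to piecewise-constant functions; and the term $\tfrac{\tau^{k-1}}{2}\int_\Omega(\cA u)^2$ by a short computation using Assumptions \ref{ass:main:assumptions:O4} and \ref{ass:main:assumptions:O1} --- so strict convexity of $E_{\tau^{k-1},\cA,0}$ forces $u^k = \cI_n\cP_n u^k$. (Alternatively one may bypass the continuum entirely and check directly that $\mu\Delta_{p,n}^{\bar{K}} + \bar{G}_n$ is a continuous, coercive, monotone map on $\bbR^{\vert\Pi_n\vert}$ --- in fact the gradient of a strictly convex coercive discrete Dirichlet energy --- so that each step is a strictly convex minimization.) I expect this stability of the piecewise-constant subspace under the continuum resolvent to be the main obstacle; everything else is bookkeeping.

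Third, for the uniform bound \eqref{eq:proofs:wellPosedness:discreteProblem:uniformBound}, I use that $\cE^{v}_{\cA,0}$ is completely accretive and satisfies $\cE^{v}_{\cA,0}(0) = 0$, since $\Delta_p^{v}(0) = 0$ and, by Assumption \ref{ass:main:assumptions:O3}, $\cA^*\cA(0) = 0$. Consequently its resolvent is a complete contraction fixing the origin, $J^{v}_\tau g \llSgt g$ for all $g \in \Lp{p}(\Omega)$, and applying this with $j(t) = \vert t\vert^r$ for $1\leq r<\infty$ and with the truncations $j(t) = (\vert t\vert - M)_+$ for $r=\infty$ (cf.\ Lemma \ref{lem:background:orderRelation:properties}) gives $\Vert J^{v}_\tau g\Vert_{\Lp{r}} \leq \Vert g\Vert_{\Lp{r}}$ for every $1\leq r\leq\infty$. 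Iterating the recursion for $u^k$ and using $\sum_{j=0}^{N-1}\tau^j = T$ then yields
\[
\Vert\cI_n\bar{u}_n^k\Vert_{\Lp{r}} \leq \Vert\cI_n\cP_n u_0\Vert_{\Lp{r}} + T\,\Vert\cI_n\cP_n\cA^*\ell\Vert_{\Lp{r}},
\]
and a final application of Jensen's inequality on each cell, which gives $\Vert\cI_n\cP_n g\Vert_{\Lp{r}}\leq\Vert g\Vert_{\Lp{r}}$, produces \eqref{eq:proofs:wellPosedness:discreteProblem:uniformBound}.
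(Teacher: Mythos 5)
Your proof is correct, and at its crux it takes a route that differs from the paper's. Both proofs agree on the setup: the injection identity $\cI_n(\Delta_{p,n}^{\bar K}\bar u)=\Delta_p^{\cI_n\bar K}(\cI_n\bar u)$ and Assumption \ref{ass:main:assumptions:O4} turn the discrete scheme into a backward-Euler recursion $u^k = J^{v}_{\tau^{k-1}}(u^{k-1}+\tau^{k-1}\cI_n\bar f)$ with resolvent $J^v_\tau = (\Id+\tau\cE^{v}_{\cA,0})^{-1}$, and $m$-complete accretivity of $\cE^{v}_{\cA,0}$ (Corollary \ref{cor:proofs:wellPosedness:nonlocalProblem:specialCases}) gives a single-valued complete contraction fixing $0$, from which \eqref{eq:proofs:wellPosedness:discreteProblem:uniformBound} follows by iteration and $\Vert\cI_n\cP_n g\Vert_{\Lp r}\leq\Vert g\Vert_{\Lp r}$. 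The key obstacle, which you correctly identify, is that the continuum resolvent must map the $\Pi_n$-piecewise-constant subspace into itself so that $\bar u_n^k=\cP_n u^k$ exists with $\cI_n\bar u_n^k=u^k$. The paper handles this by working in $\bbR^{\vert\Pi_n\vert}$ from the outset: it shows directly that $\mu\Delta_{p,n}^{\bar K}+\bar G_n$ is monotone (hence accretive by Proposition \ref{prop:background:monotony}), so the finite-dimensional resolvent exists, and then matches $\cI_n\bar u_n^k$ to $u_n^k$ by uniqueness of the $\Lp{p}$ iterate --- essentially your ``alternatively one may bypass the continuum'' Route B. Your primary route instead shows invariance by realising $u^k$ as the unique minimizer of the strictly convex $E_{\tau^{k-1},\cA,0}$ (Dirichlet principle and its converse) and verifying that cell-averaging does not increase any of its three terms: Jensen on $\pi_i^n\times\pi_j^n$ for the nonlocal term, $L^2$-orthogonality of $u-\cI_n\cP_n u$ to piecewise constants for the fidelity term, and, for the $\frac{\lambda}{2}\Vert\cA u\Vert_{\Lp2}^2$ term, the Pythagorean decomposition $\Vert\cA u\Vert^2=\Vert\cA(u-\cI_n\cP_n u)\Vert^2+\Vert\cA\cI_n\cP_n u\Vert^2$, whose cross term vanishes precisely because Assumption \ref{ass:main:assumptions:O4} makes $\cA^*\cA\cI_n\cP_n u$ piecewise constant. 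This is a genuine alternative; what it buys is a conceptual explanation of \emph{why} the piecewise-constant structure is preserved (each term of the energy is lowered by conditional averaging), at the cost of needing the Dirichlet principle and the converse asserted in the remark following Proposition \ref{prop:proofs:wellPosedness:nonLocalProblem:dirichlet}. The paper's argument is more self-contained and shorter, since finite-dimensional monotonicity is immediate. Either approach is complete; you should spell out the Pythagorean identity for the $\cA$ term since it is the one place where Assumption \ref{ass:main:assumptions:O4} (rather than mere boundedness of $\cA$) is actually indispensable.
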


\begin{proof}
We first start by considering the well-posedness of the sequence $\{u_n^k\}_{k=0}^N$ defined iteratively by $u_n^0 = \cI_n \bar{u}_0 = \cI_n \cP_n u_0$ and 
\begin{equation} \label{eq:proofs:wellPosedness:discreteProblem:iterative}
 (\Id + \tau^{k-1}\cE^{\cI_n \bar{K}}_{\cA,\cI_n \bar{f}})(u_n^k) = u_n^k + \tau^{k-1}(\mu \Delta_p^{\cI_n\bar{K}} u_n^k + \cA^*\cA u_n^k -\cI_n \bar{f}) = u_n^{k-1}.
\end{equation}
This can be reformulated as $u_n^k = (\Id + \tau^{k-1}\cE^{\cI_n \bar{K}}_{\cA,0})^{-1}(u_n^{k-1} + \tau^{k-1} \cI_n \bar{f})$

By \cite[Lemma 2.1]{ElBouchairi}, we have that $\Vert \cI_n \bar{K} \Vert_{\Lp{\infty}} \leq \Vert K \Vert_{\Lp{\infty}} < \infty$, so we can apply Corollary \ref{cor:proofs:wellPosedness:nonlocalProblem:specialCases} to deduce that $\cE^{\cI_n \bar{K}}_{\cA,0}$ is $m$-completely accretive. In particular, by Proposition~\ref{prop:proofs:wellPosedness:nonlocalProblem:completeAccretivityRangeCondition} and~\cite[Section 2]{BenilanCrandall}, for $\lambda > 0$, we have that $(\Id + \lambda \cE^{\cI_n \bar{K}}_{\cA,0})^{-1}$ is single-valued on $\domain((\Id + \lambda \cE^{\cI_n \bar{K}}_{\cA,0})^{-1}) = \range(\Id + \lambda \cE^{\cI_n \bar{K}}_{\cA,0}) = \Lp{p}(\Omega)$ and, for $1 \leq r \leq \infty$:
\[
\Vert (\Id + \lambda \cE^{\cI_n \bar{K}}_{\cA,0})^{-1}(g_1) - (\Id + \lambda \cE_{\cA,0})^{-1}(g_2) \Vert_{\Lp{r}} \leq \Vert g_1 - g_2 \Vert_{\Lp{r}} 
\]
for any $g_i \in \Lp{p}(\Omega)$. By Assumption \ref{ass:main:assumptions:O3}, we have that $(\Id + \lambda \cE^{\cI_n \bar{K}}_{\cA,0})^{-1}(0) = 0$ so that, combining with the above,
\begin{equation*} 
    \Vert (\Id + \lambda \cE^{\cI_n \bar{K}}_{\cA,0})^{-1}(g) \Vert_{\Lp{r}} \leq \Vert g  \Vert_{\Lp{r}} 
\end{equation*}
for any $g \in \Lp{p}(\Omega)$.

Since by \cite[Lemma 2.1]{ElBouchairi}, we have that $\Vert \cI_n \bar{f} \Vert_{\Lp{r}} \leq \Vert \cA^* \ell \Vert_{\Lp{r}}$ and $\Vert \cI_n \bar{u}_0 \Vert_{\Lp{p}} \leq \Vert u_0 \Vert_{\Lp{p}}$,  we can now proceed to show that $u_n^k$ is well-posed by induction. For $k = 0$, we have that $u_n^k = \cI_n \bar{u}_0 \in \Lp{p}(\Omega)$. Now, assume that for $1 \leq m \leq k-1$, $u_n^m \in \Lp{p}(\Omega)$ is well-defined. Then, $u_n^{k-1} + \tau^{k-1}\cI_n \bar{f} \in \Lp{p}(\Omega)$ and, since $(\Id + \tau^{k-1} \cE^{\cI_n \bar{K}}_{\cA,0})^{-1}$ is single-valued on $\Lp{p}(\Omega)$, $u_n^{k}$ is well-defined with 
\begin{align*}
\Vert u_n^k \Vert_{\Lp{r}} & = \lda \l \Id + \tau^{k-1} \cE_{\cA,0}^{\cI_n\bar{K}}\r^{-1} (u_n^{k-1} + \tau^{k-1}\cI_n\bar{f}) \rda_{\Lp{r}} \\
 & \leq \lda u_n^{k-1} + \tau^{k-1}\cI_n\bar{f} \rda_{\Lp{r}} \\
 & \leq \lda u_n^{k-1} \rda_{\Lp{r}} + \tau^{k-1} \lda \cI_n\bar{f} \rda_{\Lp{r}}.
\end{align*}
By induction
\[ \Vert u_n^k \Vert_{\Lp{r}} \leq \|u_n^0\|_{\Lp{r}} + \sum_{m=0}^{k-1} \tau^m \| \cI_n \bar{f} \|_{\Lp{r}} = \|u_n^0\|_{\Lp{r}} +t^k \| \cI_n \bar{f} \|_{\Lp{r}} \leq \|u_n^0\|_{\Lp{r}} + T \| \cI_n \bar{f} \|_{\Lp{r}}. \]
The well-posedness implies that $\{u_n^k\}_{k=0}^N$ is the unique sequence that is defined iteratively by \eqref{eq:proofs:wellPosedness:discreteProblem:iterative} and such that $u_n^0 = \cI_n \bar{u}_0$.

Now, assume that there exists $\{\bar{u}_n^k\}_{k=0}^N$ that solves \eqref{eq:main:notation:discrete:nonlocalProblemFully} with $\bar{f}$, $\bar{u}_0$, $\bar{G}_n$ and $\bar{K}$ as defined above. Then, we have $\cI_n \bar{u}_0 = \cI_n \cP_n u_0 = u_n^0$ and
\begin{align}
    (\Id + \tau^{k-1} \cE^{\cI_n \bar{K}}_{\cA,0})\cI_n \bar{u}_n^k 
    &= \cI_n \bar{u}_{n}^k + \tau^{k - 1} \l  \cI_n (\mu \Delta_{p,n}^{\bar{K}} \bar{u}_n^k ) + \cI_n( \bar{G}_n(\bar{u}_{n}^k) ) \r \label{eq:proofs:wellPosedness:discreteProblem:discreteToContinuum} \\
    &= \sum_{i=1}^{\vert \Pi_n \vert} \chi_{\pi_i^n} \ls (\bar{u}_n^{k-1})_i + \tau^{k-1} (\bar{f})_i \rs \label{eq:proofs:wellPosedness:discreteProblem:problem} \\
    &= \cI_n \bar{u}_n^{k-1} + \tau^{k-1}\cI_n \bar{f} \notag
\end{align}
where we used Assumption \ref{ass:main:assumptions:O4} and \cite[Lemma 6.1]{ElBouchairi} for \eqref{eq:proofs:wellPosedness:discreteProblem:discreteToContinuum} and \eqref{eq:main:notation:discrete:nonlocalProblemFully} for \eqref{eq:proofs:wellPosedness:discreteProblem:problem}. By the uniqueness of the sequence $\{u_n^k\}_{k=0}^N$, we have that $u_n^k = \cI_n \bar{u}_n^k$.

To conclude the proof, we show the existence of $\{\bar{u}_n^k\}_{k=0}^N$. First, recall from \eqref{eq:main:notation:discrete:nonlocalProblemFully} that $\bar{u}_n^0 = \bar{u}_0$ and 
\[
\bar{u}_n^k = (\Id + \tau^{k-1}(\mu \Delta_{p,n}^{\bar{K}} + \bar{G}_n))^{-1}(\bar{u}^{k-1}_{n} + \tau^{k-1}\bar{f}).
\]
By the same argument as above, we need to show that the operator $\mu \Delta_{p,n}^{\bar{K}} + \bar{G}_n$ is accretive on $\bbR^{\vert \Pi_n \vert}$ for $(\Id + \tau^{k-1}(\mu \Delta_{p,n}^{\bar{K}} + \bar{G}_n))^{-1}$ to be well-defined and unique.
It is clear that $\range(\Delta_{p,n}^{\bar{K}} + \bar{G}_n) \subseteq \bbR^{\vert \Pi_n \vert}$. By 
Proposition \ref{prop:background:monotony}, we know that accretivity is equivalent to monotony in $\bbR^{\vert \Pi_n \vert}$ and it therefore only remains to verify that for $\bar{v},\bar{w} \in \bbR^{\vert \Pi_n \vert}$:
\begin{align}
\langle (\mu \Delta_{p,n}^{\bar{K}} + \bar{G}_n)(\bar{v})  - (\mu \Delta_{p,n}^{\bar{K}} + \bar{G}_n)(\bar{w}), \bar{v} - \bar{w} \rangle &= \langle \mu \Delta_{p,n}^{\bar{K}}(\bar{v}) - \mu \Delta_{p,n}^{\bar{K}}(\bar{w}), \bar{v} - \bar{w} \rangle \notag \\
&+ \langle \bar{G}_n(\bar{v}) - \bar{G}_n(\bar{w}), \bar{v} - \bar{w} \rangle\notag \\
&=: T_1 + T_2  \geq 0. \notag
\end{align}
The proof of $T_1 \geq 0$ is analogous to what we have shown for the continuum nonlocal Laplacian in the proof of Proposition \ref{prop:proofs:wellPosedness:nonlocalProblem:completeAccretivityRangeCondition}; $T_2 \geq 0$ is due to Assumption \ref{ass:main:assumptions:O4}.
\end{proof}

\subsubsection{Local problem}

\begin{theorem}[Well-posededness of \eqref{eq:main:notation:localProblem:localProblem}]
\label{thm:proofs:wellPosedness:localProblem:existenceUniqueness}
Assume that Assumptions \ref{ass:main:assumptions:S1} and \ref{ass:main:assumptions:O1}
hold.
Let $p \geq 2$, $\mu > 0$, $T>0$, $\ell \in \Lp{2}(\Omega)$ and $u_0 \in \Lp{p}(\Omega)$. Then, there exists a unique weak solution $u(t,x)$ to the evolution problem \eqref{eq:main:notation:localProblem:localProblem}. 
\end{theorem}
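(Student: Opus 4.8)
The plan is to recognise the weak formulation of Definition~\ref{def:main:notation:localProblem:weakSolution} as an instance of the abstract evolution problem solved by Theorem~\ref{thm:background:nonlinearEvolution}, applied with two operators ($q=2$ in the notation there). I take $H = \Lp{2}(\Omega)$, $V_1 = \Wkp{1}{p}(\Omega)$ (reflexive and separable, as $1 < p < \infty$) and $V_2 = \Lp{2}(\Omega)$; both embed densely into $H$, and $V_1 \cap V_2 = \Wkp{1}{p}(\Omega)$ is separable and dense in $H$. With $p_1 = p$ I define $A_1 \colon V_1 \to V_1^*$ by $\langle A_1 v, \zeta \rangle_{V_1^*} = \mu \int_\Omega |\nabla v|^{p-2} \nabla v \cdot \nabla \zeta \, \dd x$, i.e.\ the weak $p$-Laplacian with the Neumann boundary condition built in, and with $p_2 = 2$ I set $A_2 = \cA^*\cA \colon V_2 \to V_2^* = \Lp{2}(\Omega)$, which is well defined by Assumption~\ref{ass:main:assumptions:O1}. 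Then $A_1 + A_2$ is precisely the spatial operator in~\eqref{eq:main:notation:localProblem:localProblem}. For the data, $f := \cA^*\ell \in \Lp{2}(\Omega) = V_2^*$, viewed as time-independent, lies in $\Lp{2}(0,T;V_2^*)$ with $2 = p_2/(p_2-1)$, and $u_0 \in \Lp{p}(\Omega) \subseteq \Lp{2}(\Omega) = H$ because $\Omega$ is bounded and $p \geq 2$ (one of the places that hypothesis enters).

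I would then verify the structural hypotheses. Boundedness: $\|A_1 v\|_{V_1^*} \leq \mu \|\nabla v\|_{\Lp{p}}^{p-1} \leq \mu \|v\|_{V_1}^{p-1}$ and $\|A_2 v\|_{V_2^*} \leq C_{\mathrm{op}}^2 \|v\|_{\Lp{2}} = C_{\mathrm{op}}^2 \|v\|_{V_2}^{p_2-1}$. Monotonicity: $\langle A_1 v - A_1 w, v-w\rangle_{V_1^*} = \mu \int_\Omega \big(|\nabla v|^{p-2}\nabla v - |\nabla w|^{p-2}\nabla w\big)\cdot(\nabla v - \nabla w)\, \dd x \geq 0$ by the elementary monotonicity of $a \mapsto |a|^{p-2}a$ on $\bbR^d$, and $\langle A_2(v-w), v-w\rangle_{V_2^*} = \|\cA(v-w)\|_{\Lp{2}}^2 \geq 0$. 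For the coercivity/seminorm condition I take $[v]_1 = \|\nabla v\|_{\Lp{p}}$, so $\langle A_1 v, v\rangle_{V_1^*} = \mu [v]_1^{p}$, and $[\cdot]_2 \equiv 0$, so $\langle A_2 v, v\rangle_{V_2^*} = \|\cA v\|_{\Lp{2}}^2 \geq 0 = \alpha_2 [v]_2^{2}$ trivially; the comparison $[v]_2 + \lambda_2 \|v\|_H \geq \beta_2 \|v\|_{V_2}$ is immediate, whereas $[v]_1 + \lambda_1 \|v\|_H \geq \beta_1 \|v\|_{V_1}$ reduces to a Poincar\'e--Wirtinger inequality $\|v\|_{\Wkp{1}{p}(\Omega)} \leq C\big(\|\nabla v\|_{\Lp{p}(\Omega)} + \|v\|_{\Lp{2}(\Omega)}\big)$ on $\Omega$.

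Hemicontinuity follows from Lemma~\ref{lem:background:hemicontinuity} once continuity is established: $A_2$ is bounded and linear, hence continuous; and if $v_n \to v$ in $\Wkp{1}{p}(\Omega)$ then $\nabla v_n \to \nabla v$ in $\Lp{p}(\Omega;\bbR^d)$, so, since the Nemytskii map $w \mapsto |w|^{p-2}w$ is continuous from $\Lp{p}(\Omega;\bbR^d)$ to $\Lp{p/(p-1)}(\Omega;\bbR^d)$ (a continuous integrand of $(p-1)$-growth), $\|A_1 v_n - A_1 v\|_{V_1^*} \leq \mu \big\||\nabla v_n|^{p-2}\nabla v_n - |\nabla v|^{p-2}\nabla v\big\|_{\Lp{p/(p-1)}} \to 0$. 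Theorem~\ref{thm:background:nonlinearEvolution} then produces a unique $u \in \Lp{p}(0,T;\Wkp{1}{p}(\Omega)) \cap \Lp{2}(0,T;\Lp{2}(\Omega))$ with $u \in \Lp{\infty}(0,T;\Lp{2}(\Omega))$ satisfying $\partial_t u + A_1 u + A_2 u = f$ and $u(0) = u_0$.

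Finally I would translate this back to Definition~\ref{def:main:notation:localProblem:weakSolution}: testing the abstract identity against $\zeta \in V_1 \cap V_2 = \Wkp{1}{p}(\Omega)$ gives, $\lambda_t$-a.e., $\int_\Omega \partial_t u\, \zeta\, \dd x + \mu \int_\Omega |\nabla u|^{p-2}\nabla u \cdot \nabla \zeta\, \dd x + \int_\Omega \cA^*\cA u\, \zeta\, \dd x = \int_\Omega \cA^*\ell\, \zeta\, \dd x$, which is exactly the required weak equation, the unrestricted test space $\Wkp{1}{p}(\Omega)$ encoding the Neumann condition $|\nabla u|^{p-2}\nabla u \cdot \overrightarrow{n} = 0$ through the boundary term in the formal integration by parts; the regularity classes are those of the definition, and since $\partial_t u \in \Lp{p/(p-1)}(0,T;V_1^*) + \Lp{2}(0,T;V_2^*)$ the solution has a well-defined $\Lp{2}(\Omega)$-valued trace at $t=0$, so $u(0) = u_0$ holds $\lambda_x$-a.e., with uniqueness inherited from Theorem~\ref{thm:background:nonlinearEvolution}. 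I expect the main obstacle to be the verification of the third (coercivity) hypothesis of Theorem~\ref{thm:background:nonlinearEvolution}: the $p$-Laplacian energy controls only the seminorm $\|\nabla \cdot\|_{\Lp{p}}$, so a Poincar\'e--Wirtinger inequality on $\Omega$ is needed to recover the full $\Wkp{1}{p}$-norm; this is also why $\cA^*\cA$ must be carried as a separate operator on $\Lp{2}$ with exponent $p_2 = 2$, since absorbing it into $A_1$ would break the $(p-1)$-growth bound of the first hypothesis near the origin when $p > 2$. Secondary technical points are the continuity of the $p$-Laplacian as a Nemytskii-type operator and the identification of the abstract solution with the weak solution of Definition~\ref{def:main:notation:localProblem:weakSolution}.
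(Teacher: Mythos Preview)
Your proposal is correct and follows essentially the same approach as the paper: both apply Theorem~\ref{thm:background:nonlinearEvolution} with $H=\Lp{2}(\Omega)$ and the two operators $\mu\Delta_p$ (on $\Wkp{1}{p}(\Omega)$ with exponent $p$) and $\cA^*\cA$ (on $\Lp{2}(\Omega)$ with exponent $2$), verifying the same growth, monotonicity, hemicontinuity and seminorm-coercivity hypotheses, and citing Lions for the norm comparison $\|\nabla v\|_{\Lp{p}}+\lambda\|v\|_{\Lp{2}}\gtrsim\|v\|_{\Wkp{1}{p}}$. The only cosmetic differences are that the paper swaps your indices $1\leftrightarrow 2$ and uses the seminorm $\cS(v)=\|\cA v\|_{\Lp{2}}$ for the $\cA^*\cA$ operator rather than your $[\cdot]_2\equiv 0$; both choices satisfy hypothesis~3 of Theorem~\ref{thm:background:nonlinearEvolution}.
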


\begin{proof}
We begin the proof by verifying some properties of the operator $\cA^*\cA$. By Assumption \ref{ass:main:assumptions:O1}, $\cA^* \cA$ is linear and we know that $\cA^* \cA$ is 
bounded, hence continuous and therefore hemicontinuous by Lemma \ref{lem:background:hemicontinuity}. Since $\cA$ is linear, we can define the seminorm $ \cS(v) := \Vert \cA v \Vert_{\Lp{2}}$ on $\Lp{2}(\Omega)$. By the boundedness of $\cA$, we then note that $\cS(\cdot) + \Vert \cdot \Vert_{\Lp{2}}$ and $\Vert \cdot \Vert_{\Lp{2}}$ are equivalent. 
Finally, we have that $\cA^* \cA$ is monotone since for $u,v \in \Lp{2}(\Omega)$:
\[
\langle \cA^* \cA u - \cA^* \cA v, u-v \rangle_{\Lp{2}} = \langle \cA^* \cA (u-v), u-v \rangle_{\Lp{2}} = \Vert \cA(u-v) \Vert_{\Lp{2}}^2 \geq 0. 
\]
We deduce that
$\cA^* \cA$ satisfies the assumptions of Theorem \ref{thm:background:nonlinearEvolution} with $V_1 = H = V_1^* = \Lp{2}(\Omega)$ and $p_1 = 2$.

We now define the operator $\cD$ by 
\[
\cD(u)(v) = \int_\Omega \vert \nabla u \vert^{p-2} \nabla u \cdot \nabla v \, \dd x 
    \]
for $u,v \in \Wkp{1}{p}$. It is straight-forward (similarly to Lemma \ref{lem:proofs:wellPosedness:nonlocalProblem:properties}) to check that $\cD:\Wkp{1}{p} \mapsto (\Wkp{1}{p})^*$ is bounded, hemicontinuous and satisfies $\Vert \cD(u)\Vert_{(\Wkp{1}{p})^*} \leq C \Vert u \Vert_{\Wkp{1}{p}}^{p-1}$ as well as $(\cD(u)-\cD(v))(u-v) \geq 0$ for all $u,v \in \Wkp{1}{p}$ (the last claim follows from \cite[Lemma 3.6]{Byrstrom}). Furthermore, we define the Sobolev seminorm $[v] = \Vert \nabla v \Vert_{\Lp{p}}$ and one can show that there exists $\lambda >0$ such that $[v] + \lambda \Vert v \Vert_{\Lp{2}} \geq \Vert v \Vert_{\Wkp{1}{p}}$ (see for example \cite[Chapter 2, Section 1.5.1]{lions1969quelques}) for all $v \in \Wkp{1}{p}$. It is also clear that $\cD(v)(v) \geq [v]^p$. Hence $\cD$ satisfies the assumptions of Theorem \ref{thm:background:nonlinearEvolution} with $V_2 = \Wkp{1}{p}(\Omega) \subseteq \Lp{2}(\Omega) \subseteq \Wkp{1}{p}(\Omega)^*$ (where the inclusion is made possible by the fact that $p \geq 2$) and $p_2 = p$.

After an application of Theorem \ref{thm:background:nonlinearEvolution}, we therefore obtain the existence of a unique function 
\[
u \in \Lp{p}(0,T;\Wkp{1}{p}(\Omega)) \cap \Lp{2}(0,T;\Lp{2}(\Omega)) \text{ and $u \in \Lp{\infty}(0,T;\Lp{2}(\Omega))$}
\]
satisfying Definition \ref{def:main:notation:localProblem:weakSolution} (see \cite[Chapter 2, Section 1.5.1 or Example 1.7.2]{lions1969quelques} for the treatment of the boundary term). 
\end{proof}

\subsection{Rates}

We now turn our attention to establishing rates between our gradient flows defined on $[0,T]$. However, all of our rates will be expressed explicitly as a function of $T$ as our aim will be to take $T\to \infty$ as to approximate the solution of \eqref{eq:intro:regularizationProblem}. We begin with the continuum-to-continuum case.

\subsubsection{Continuum nonlocal to local rates}

\begin{remark}[Higher regularity] \label{rem:proofs;rates:continuumRates:increasedRegularity}
In Theorem \ref{thm:proofs:continuumRates:continuumNonlocalLocal}, we will assume higher regularity on the solutions of \eqref{eq:main:notation:nonlocal:nonlocalProblem} and \eqref{eq:main:notation:localProblem:localProblem}. Previous attempts at deriving rates in related problems \cite{10.2307/2158008},\cite{M2AN_1975__9_2_41_0},\cite{10.2307/2158136},\cite{10.2307/2153239} have led to similar assumptions. 

In the nonlocal case, we require the solutions to be in the same space as the local solution, namely $\Wkp{1}{p}(\Omega)$. This is a natural requirement for establishing rates which is fulfilled when approximating local problems by finite elements as in \cite{M2AN_1975__9_2_41_0}. This is in contrast with 
our finite-differences approach for which conservation of regularity properties of the local problem is not inherent. For a similar problem, regularity of a nonlocal solution is studied in \cite{aubert2009nonlocal}.

In the local case, we extend the regularity from $\Wkp{1}{p}(\Omega)$ to some fractional Sobolev space $\Wkp{s}{p}(\Omega)$. This will allow us to consider a function $u$ with continuous first, second and third derivatives. In order to align for example with \cite[Section 6]{M2AN_1975__9_2_41_0} where functions are taken in $\Wkp{2}{p}(\Omega)$, and due to the regularity of Sobolev spaces 
we would like to have $s = k + \delta$ for $k = 2,3$. As it will turn out, this uncovers a subtle interplay between regularity and dimension of the underlying space.
\end{remark}

\begin{remark}[Embeddings of $\Wkp{k+\delta}{p}(\Omega)$ into $\Ck{k}(\Omega)$] 
\label{rem:proofs:rates:continuumRates:embeddings}
In view of Remark \ref{rem:proofs;rates:continuumRates:increasedRegularity}, we are interested in the embedding of $\Wkp{k + \delta}{p}(\Omega)$ into $\Ck{k}(\Omega)$.
However, $\Wkp{k+\delta}{p}(\Omega)$ embeds into $\Ck{k}(\Omega)$ only when $\delta > d/p$: the weaker we want our regularity assumption on the local solution to be, the higher the regularization parameter $p$ needs to be. 
\end{remark}

The proof of the next result follows the same structure as the proof of Proposition \ref{prop:proofs:rates:NonFullyDiscreteLocal:L2rates} in order to apply Gronwall's lemma. The terms involving the application of $\Delta_p^{K^{\eps_n}} - \Delta_p$ on a regular function give rise to rates through Taylor expansions as in \cite[Theorem A.1]{Calder_2018}. For completeness, the full proof is included in Section \ref{sec:supplementary:rates}.

\begin{theorem}[Continuum nonlocal-to-local rates]
\label{thm:proofs:continuumRates:continuumNonlocalLocal}

Assume that Assumptions \ref{ass:main:assumptions:S1}, \ref{ass:main:assumptions:O1}, \ref{ass:main:assumptions:O2}, \ref{ass:main:assumptions:K1} and \ref{ass:main:assumptions:L1} hold. Let $p \geq 2$, $\mu > 0$, $T>0$, $\ell \in \Lp{2}(\Omega)$, $u_0 \in \Lp{p}(\Omega)$, $\Omega'$ be compactly contained in $\Omega$ and assume that $\cA^* \ell \in \Lp{p}(\Omega)$.
Then, for all $n$, there exists a solution $u_{\eps_n}$ to \eqref{eq:main:notation:nonlocal:nonlocalProblem} with kernel $K_{\eps_n}$ and $f=\cA^*\ell$ and a solution $u$ to \eqref{eq:main:notation:localProblem:localProblem}.

In addition, assume that Assumptions \ref{ass:main:assumptions:S2} and \ref{ass:main:assumptions:K2} hold, $p \geq 3$, that $u_{\eps_n}$ satisfies Assumption \ref{ass:main:assumptions:R1} and $u$ satisfies Assumption \ref{ass:main:assumptions:R2}.
Then, for $n$ large enough, we have:\begin{equation}\label{eq:proofs:continuumRates:l2convergence:rates}
        \Vert u_{\eps_n}(t,\cdot) - u(t,\cdot) \Vert_{\Lp{2}(\Omega')} \leq   \mathcal{O} \l \eps_n t C_1^{p-3} \ls C_1 + C_2^2 \rs \r
    \end{equation}
    where $C_1 = \sup_{t \in (0,T)} \Vert \nabla u(t,\cdot) \Vert_{\Lp{\infty}}$ and $C_2 = \sup_{t \in (0,T)} \Vert \nabla^2 u(t,\cdot) \Vert_{\Lp{\infty}}$.
\end{theorem}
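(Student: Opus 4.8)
The plan is to derive an evolution equation for the difference $w_{\eps_n}(t,\cdot) := u_{\eps_n}(t,\cdot) - u(t,\cdot)$ and then close a Gronwall estimate in $\Lp{2}(\Omega')$. Both $u_{\eps_n}$ and $u$ solve a problem of the form $\partial_t v + \mu \Delta_p^{(\cdot)} v + \cA^*\cA v = \cA^*\ell$, the former with the nonlocal operator $\Delta_p^{K_{\eps_n}}$ and the latter with the local $\Delta_p$; since both share the same initial datum $u_0$ (so $w_{\eps_n}(0,\cdot) = 0$) and the same forcing, subtracting gives
\[
\partial_t w_{\eps_n} + \mu\l \Delta_p^{K_{\eps_n}} u_{\eps_n} - \Delta_p u\r + \cA^*\cA w_{\eps_n} = 0.
\]
Testing against $w_{\eps_n}$ (restricted appropriately to $\Omega'$, with a cutoff handled as in the referenced proof of Proposition \ref{prop:proofs:rates:NonFullyDiscreteLocal:L2rates}), the $\cA^*\cA$ term is nonnegative by Assumptions \ref{ass:main:assumptions:O1}--\ref{ass:main:assumptions:O2} and can be dropped, while the monotonicity of $\Delta_p^{K_{\eps_n}}$ (shown in the proof of Proposition \ref{prop:proofs:wellPosedness:nonlocalProblem:completeAccretivityRangeCondition}) lets us write
\[
\Delta_p^{K_{\eps_n}} u_{\eps_n} - \Delta_p u = \l \Delta_p^{K_{\eps_n}} u_{\eps_n} - \Delta_p^{K_{\eps_n}} u\r + \l \Delta_p^{K_{\eps_n}} u - \Delta_p u\r,
\]
so that the first bracket, when tested against $w_{\eps_n}$, has a sign and only the consistency term $\Delta_p^{K_{\eps_n}} u - \Delta_p u$ (a nonlocal-to-local discrepancy evaluated on the smooth function $u$) remains to be controlled.

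The heart of the argument is then the pointwise consistency estimate: for $u(t,\cdot)$ sufficiently regular (here Assumption \ref{ass:main:assumptions:R2} with $s > 3 + d/p$, which by the embedding of Remark \ref{rem:proofs:rates:continuumRates:embeddings} gives $u(t,\cdot) \in \Ck{3}$), one has
\[
\bigl\| \Delta_p^{K_{\eps_n}} u(t,\cdot) - \Delta_p u(t,\cdot)\bigr\|_{\Lp{2}(\Omega')} \leq C\,\eps_n\, C_1^{p-3}\ls C_1 + C_2^2\rs,
\]
obtained exactly as in \cite[Theorem A.1]{Calder_2018}: one writes the nonlocal operator as an $\eps_n$-scaled finite-difference average of $|u(y)-u(x)|^{p-2}(u(y)-u(x))$, Taylor expands $u(y)$ around $x$ to third order, Taylor expands the map $z\mapsto |z|^{p-2}z$ (which is $\Ck{1}$-with-Hölder-derivative down to $p\geq 3$ — this is precisely where the $p\geq 3$ hypothesis enters) around the first-order term $\nabla u(x)\cdot(y-x)$, and uses the normalization constant $c(p,d)$ in $K_{\eps_n}$ together with the evenness of $K(|\cdot|)$ to kill the odd-order terms, leaving the local $p$-Laplacian plus an $\Ord(\eps_n)$ remainder whose size is governed by $\|\nabla u\|_{\Lp\infty}$ and $\|\nabla^2 u\|_{\Lp\infty}$ in the stated combination. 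The restriction to $\Omega' \Subset \Omega$ and the condition "$n$ large enough" are needed so that, for $x\in\Omega'$, the ball $B(x,\eps_n)$ over which the kernel is supported stays inside $\Omega$, making the Taylor expansion legitimate and removing boundary effects (the Neumann condition in \eqref{eq:main:notation:localProblem:localProblem} is only used away from $\Omega'$).

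Putting these together, the energy inequality reads $\tfrac12 \tfrac{d}{dt}\|w_{\eps_n}(t,\cdot)\|_{\Lp2(\Omega')}^2 \leq \mu\, C\,\eps_n C_1^{p-3}[C_1 + C_2^2]\, \|w_{\eps_n}(t,\cdot)\|_{\Lp2(\Omega')}$ (with the cutoff cross-terms absorbed, again as in the model proof), and dividing by $\|w_{\eps_n}\|_{\Lp2(\Omega')}$ and integrating from $0$ to $t$ with $w_{\eps_n}(0,\cdot)=0$ gives the claimed $\Ord\l\eps_n t\, C_1^{p-3}[C_1+C_2^2]\r$ bound; no exponential factor appears because the $\cA^*\cA$ term was discarded rather than estimated. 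I expect the main obstacle to be the consistency estimate on $\Delta_p^{K_{\eps_n}} u - \Delta_p u$: keeping careful track of which Taylor remainders survive the symmetrization, verifying that the second-order term reproduces exactly $\mu\Delta_p u$ after integrating against $K$ with the constant $c(p,d)$, and bounding the third-order Hölder remainder of $z\mapsto|z|^{p-2}z$ uniformly — this is the delicate, $p$-dependent computation, whereas the Gronwall closure and the cutoff bookkeeping are routine given Proposition \ref{prop:proofs:rates:NonFullyDiscreteLocal:L2rates}.
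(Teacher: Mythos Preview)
Your proposal is correct and follows essentially the same approach as the paper: derive the energy identity for $u_{\eps_n}-u$ on $\Omega'$, split $\Delta_p^{K_{\eps_n}}u_{\eps_n}-\Delta_p u$ into a monotone part (discarded) plus the consistency error $\Delta_p^{K_{\eps_n}}u-\Delta_p u$, drop the $\cA^*\cA$ term by sign, bound the consistency error pointwise by a third-order Taylor expansion of $u$ and of $t\mapsto|t|^{p-2}t$ as in \cite{Calder_2018}, and close by integrating in time. One minor simplification: the paper does not introduce a cutoff function at all---it simply integrates over $\Omega'$ directly and uses Lemma~\ref{lem:proofs:rates:continuumRates:domainIntegration} so that for $n$ large the nonlocal integral at $x\in\Omega'$ sees all of $B(0,1)$ after rescaling; there are no ``cutoff cross-terms'' to absorb.
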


\begin{remark}[Asymptotic rates in Theorem \ref{thm:proofs:continuumRates:continuumNonlocalLocal}] \label{rem:proofs:rates:continuumRates:asymptotic}

In Theorem \ref{thm:proofs:continuumRates:continuumNonlocalLocal}, the rates hold for $n$ large enough, but this is not constraining in practice. Indeed, the latter condition is derived from Lemma \ref{lem:proofs:rates:continuumRates:domainIntegration}. This implies that the rates are relevant as soon as $\eps_n < m/c$ where $m$ is the minimum distance between the closure of the compactly contained set and the boundary of $\Omega$ and $c$ is chosen so that $\closure(\Omega^\prime)\subset B(0,c)$. 
\end{remark}

\subsubsection{Discrete-to-continuum nonlocal rates} 

Here we loosely follow \cite[Section 6.2.2]{ElBouchairi}. By using \cite[Lemma 6.1]{ElBouchairi}, we easily check that the time interpolated version of the injected discrete problem satisfies an evolution problem. The proof can be found in Section \ref{sec:supplementary:rates}.

\begin{lemma}[Evolution problem for $\uTimeInter$] 
\label{lem:proofs:rates:NonFullyDiscreteLocal:evolutionTimeInterpolation}

Assume that Assumptions \ref{ass:main:assumptions:S1}, \ref{ass:main:assumptions:O1}, \ref{ass:main:assumptions:O2}, \ref{ass:main:assumptions:O3}, \ref{ass:main:assumptions:O4} and \ref{ass:main:assumptions:K1} hold. Let $p \geq 2$, $\mu > 0$, $T>0$, $u_0 \in \Lp{p}(\Omega)$, $\ell \in \Lp{2}(\Omega)$ and assume that $\cA^* \ell \in \Lp{p}(\Omega)$. Furthermore, let $n \in \bbN$ and define $\bar{K} = \cP_n \tilde{K}$, $\bar{f} = \cP_n \cA^*\ell $, $\bar{u}_0 = \cP_n u_0$.
Then, for any partition $0 = t^0 < t^1 < \dots < t^N = T$, the sequence $\{\bar{u}_n^k\}_{k=0}^N$ is unique and well-defined by \eqref{eq:main:notation:discrete:nonlocalProblemFully} with the above parameters. Furthermore, $\uTimeInter$ solves the following evolution problem:
\[
\begin{cases}
\frac{\partial}{\partial t } \uTimeInter + \mu\Delta_{p}^{\cI_n \bar{K}}(\uTimeInject) + \cA^*\cA(\uTimeInject) = \cI_n \bar{f} &\text{ in } (0,T) \times \Omega \\
\uTimeInter(0,\cdot) = \cI_n \bar{u}_0.
\end{cases}
\]
\end{lemma}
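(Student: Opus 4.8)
The plan is to invoke the discrete well-posedness result already established and then carry out a direct differentiation of the piecewise-affine time interpolant, converting the discrete recursion \eqref{eq:main:notation:discrete:nonlocalProblemFully} into the claimed continuum evolution equation by means of the linearity of $\cI_n$ together with two structural identities.

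First I would record that existence and uniqueness of $\{\bar{u}_n^k\}_{k=0}^N$ solving \eqref{eq:main:notation:discrete:nonlocalProblemFully} with $\bar{K} = \cP_n\tilde{K}$, $\bar{f} = \cP_n\cA^*\ell$, $\bar{u}_0 = \cP_n u_0$ and $\bar{G}_n$ as in Assumption \ref{ass:main:assumptions:O4} is exactly Corollary \ref{cor:proofs:wellPosedness:discreteProblem:existence}; in particular each $\bar{u}_n^k$ lies in $\bbR^{\vert\Pi_n\vert}$, so $\cI_n\bar{u}_n^k \in \Lp{p}(\Omega)$. From the definition of $\uTimeInter$ it is continuous and affine in $t$ on each $[t^{k-1},t^k]$ with values in the finite-dimensional subspace $\cI_n(\bbR^{\vert\Pi_n\vert})$ of $\Lp{p}(\Omega)$, hence $\uTimeInter \in C([0,T];\Lp{p}(\Omega))$ and it is Lipschitz — thus absolutely continuous — in $t$, so that $\uTimeInter \in \mathrm{W}^{1,\infty}((0,T);\Lp{p}(\Omega)) \subseteq \mathrm{W}_{\mathrm{loc}}^{1,1}((0,T);\Lp{p}(\Omega))$; the initial condition $\uTimeInter(0,\cdot) = \cI_n\bar{u}_0$ holds by definition.

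Then comes the computation. Fix $k \in \{1,\dots,N\}$ and $t \in (t^{k-1},t^k)$. Differentiating the defining formula of $\uTimeInter$ in $t$ and using the linearity of $\cI_n$,
\[
\frac{\partial}{\partial t}\uTimeInter(t,\cdot) = \frac{\cI_n\bar{u}_n^k - \cI_n\bar{u}_n^{k-1}}{\tau^{k-1}} = \cI_n\!\left( \frac{\bar{u}_n^k - \bar{u}_n^{k-1}}{\tau^{k-1}} \right).
\]
By \eqref{eq:main:notation:discrete:nonlocalProblemFully} the bracketed quantity equals $\bar{f} - \mu\Delta_{p,n}^{\bar{K}}\bar{u}_n^k - \bar{G}_n(\bar{u}_n^k)$; applying $\cI_n$, using its linearity, the commutation identity $\cI_n(\Delta_{p,n}^{\bar{K}}\bar{v}) = \Delta_p^{\cI_n\bar{K}}(\cI_n\bar{v})$ from \cite[Lemma 6.1]{ElBouchairi}, and Assumption \ref{ass:main:assumptions:O4} in the form $\cI_n(\bar{G}_n(\bar{v})) = \cA^*\cA(\cI_n\bar{v})$, yields
\[
\frac{\partial}{\partial t}\uTimeInter(t,\cdot) = \cI_n\bar{f} - \mu\Delta_p^{\cI_n\bar{K}}(\cI_n\bar{u}_n^k) - \cA^*\cA(\cI_n\bar{u}_n^k).
\]
Since $\uTimeInject(t,\cdot) = \cI_n\bar{u}_n^k$ for all $t \in (t^{k-1},t^k]$, the right-hand side is precisely $\cI_n\bar{f} - \mu\Delta_p^{\cI_n\bar{K}}(\uTimeInject(t,\cdot)) - \cA^*\cA(\uTimeInject(t,\cdot))$. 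As $k$ and $t \in (t^{k-1},t^k)$ were arbitrary, the evolution equation holds for every $t \in (0,T)\setminus\{t^1,\dots,t^{N-1}\}$, hence $\lambda_t$-a.e., which together with the initial condition is the claim.

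I do not expect a genuine obstacle here: the argument is bookkeeping plus the two structural facts — commutation of the nonlocal $p$-Laplacian with $\cI_n$ (\cite[Lemma 6.1]{ElBouchairi}) and the compatibility Assumption \ref{ass:main:assumptions:O4} — both of which are available. The only mildly delicate point is that ``solves'' must be read in the almost-everywhere sense of Definition \ref{def:main:notation:nonlocal:strongSolutionCP}: the finitely many kink points $t^1,\dots,t^{N-1}$, where $\uTimeInter$ fails to be differentiable, form a $\lambda_t$-null set and so are irrelevant, which is exactly why the Lipschitz (hence absolutely continuous) regularity established in the first step suffices.
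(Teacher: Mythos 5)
Your proposal is correct and follows essentially the same route as the paper's proof: well-posedness from Corollary \ref{cor:proofs:wellPosedness:discreteProblem:existence}, differentiation of the affine interpolant on each subinterval, then the recursion \eqref{eq:main:notation:discrete:nonlocalProblemFully} combined with the injection--$p$-Laplacian commutation of \cite[Lemma 6.1]{ElBouchairi} and Assumption \ref{ass:main:assumptions:O4}, and the identification $\uTimeInject(t,\cdot)=\cI_n\bar{u}_n^k$ on $(t^{k-1},t^k]$. Your explicit remarks on the $\mathrm{W}^{1,\infty}$ time regularity and on the null set of kink points are left implicit in the paper but are correct and harmless additions.
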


While our final results in Corollaries \ref{cor:proofs:rates:discreteNonlocalContinuumLocal:simplified} and \ref{cor:proofs:rates:discreteNonlocalContinuum:final} will be concerned with letting $T\to \infty$, $\tau_n \to 0$ and $\eps_n \to 0$, the next proposition presents more general non-asymptotic results that are valid for any kernel $K$, time-discretization $\tau$ and time $T$.

\begin{proposition}[Discrete-to-continuum nonlocal rates] 
\label{prop:proofs:rates:NonFullyDiscreteLocal:L2rates}

Assume that Assumptions \ref{ass:main:assumptions:S1}, \ref{ass:main:assumptions:O1}, \ref{ass:main:assumptions:O2}, \ref{ass:main:assumptions:O3}, \ref{ass:main:assumptions:O4} and \ref{ass:main:assumptions:K1} hold. Let $p \geq 2$, $\mu > 0$, $T>0$, $u_0 \in \Lp{p}(\Omega)$, $\ell \in \Lp{2}(\Omega)$ and assume that $\cA^* \ell \in \Lp{p}(\Omega)$. Furthermore, let $n \in \bbN$ and $\bar{K} = \cP_n \tilde{K}$, $\bar{f} = \cP_n \cA^*\ell $, $\bar{u}_0 = \cP_n u_0$.
Then, for any partition $0 = t^0 < t^1 < \dots < t^N = T$, there exists a sequence $\{\bar{u}_n^k\}_{k=0}^N$ satisfying \eqref{eq:main:notation:discrete:nonlocalProblemFully} with the above parameters. Furthermore, there exists a solution $u_K$ to \eqref{eq:main:notation:nonlocal:nonlocalProblem}. We also have the following rates for some $C > 0$ dependent on $\Omega$, $u_0$ and $\cA^*\ell$:
\begin{enumerate}
\vspace{-2mm}
    \setlength\itemsep{-1mm}
    \item if $u_0 \in \Lp{2p- 2/(p-1)}(\Omega)$ and $\cA^* \ell \in \Lp{2p- 2/(p-1)}(\Omega)$:
    \begin{align}
        &\sup_{1 \leq k \leq N } \sup_{t \in (t^{k-1},t^k]} \Vert \cI_n \bar{u}_n^k - u_K(t,\cdot) \Vert_{\Lp{2}} \leq C e^{\l 1 +  C^4_{\mathrm{op}} \r T} \Biggl( \Vert \cI_n \cP_n u_0 - u_0 \Vert_{\Lp{2}} + \Vert \cI_n \cP_n \cA^*\ell - \cA^*\ell \Vert_{\Lp{2}} \notag\\
        &+ (1 + T^{p-1}) \sup_{x \in \Omega} \Vert \cI_n \cP_n K(\vert x - \cdot \vert) - K(\vert x - \cdot \vert) \Vert_{\Lp{2}}  +  \tau (1 + \Vert K \Vert_{\Lp{\infty}}) (1 + T^{p-1}) \label{eq:proofs:rates:NonFullyDiscreteLocal:L2rates:LPrates}\\
    &+    \tau^{p/(2p-1)} \Vert K \Vert_{\Lp{\infty}}^{p/(2p-1)} ( 1 + \Vert K \Vert_{\Lp{\infty}})^{p/(2p-1)} (1 + T^{p-1})^{p/(2p-1)} ( 1 + T^{p-1 - 1/p})^{p/(2p-1)} \notag\\
    &+  \tau^{(p+1)/(2p)} \Vert K \Vert_{\Lp{\infty}}^{1/2} \l 1 + T^{p-1 - 1/p}  \r^{1/2} ( 1 + \Vert K \Vert_{\Lp{\infty}})^{(p+1)/(2p)} (1 + T^{p-1})^{(p+1)/(2p)} \Biggr); \notag
    \end{align}
    \item if $u_0 \in \Lp{\infty}(\Omega)$ and $\cA^* \ell \in \Lp{\infty}(\Omega)$:
    \begin{align}
        \sup_{1 \leq k \leq N } &\sup_{t \in (t^{k-1},t^k]} \Vert \cI_n \bar{u}_n^k - u_K(t,\cdot) \Vert_{\Lp{2}} \leq C e^{ \l 1 +  C^4_{\mathrm{op}} \r T} \Biggl( \Vert \cI_n \cP_n u_0 - u_0 \Vert_{\Lp{2}} + \Vert \cI_n \cP_n \cA^*\ell - \cA^*\ell \Vert_{\Lp{2}} \notag \\
        &+ (1 + T^{p-1}) \Vert \cI_n \cP_n \tilde{K} - \tilde{K} \Vert_{\Lp{2}(\Omega \times \Omega)} \label{eq:proofs:rates:NonFullyDiscreteLocal:L2rates:Linftyrates} \\
        &+ \tau (1 + \Vert K \Vert_{\Lp{\infty}}) (1 + T^{p-1}) \ls 1 + \Vert K \Vert_{\infty}(1+T^{p-2}) + \l \Vert K \Vert_{\Lp{\infty}} (1 + T^{p-2}) \r^{1/2} \rs \biggr). \notag
    \end{align}
\end{enumerate}
\end{proposition}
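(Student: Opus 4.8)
The plan is to estimate not the iterates directly but the time-interpolant, and transfer back at the end. Existence and uniqueness of $\{\bar u_n^k\}_{k=0}^N$ solving \eqref{eq:main:notation:discrete:nonlocalProblemFully} with $\bar K=\cP_n\tilde K$, $\bar f=\cP_n\cA^*\ell$, $\bar u_0=\cP_n u_0$, together with the uniform bounds \eqref{eq:proofs:wellPosedness:discreteProblem:uniformBound} on $\Vert\cI_n\bar u_n^k\Vert_{\Lp r}$, are furnished by Corollary \ref{cor:proofs:wellPosedness:discreteProblem:existence}; the solution $u_K$ to \eqref{eq:main:notation:nonlocal:nonlocalProblem} and the a priori bound \eqref{eq:proofs:wellPosedness:nonlocalProblem:existenceUniqueness:bound} on $\Vert u_K(t)\Vert_{\Lp r}$ come from Theorem \ref{thm:proofs:wellPosedness:nonlocalProblem:existenceUniqueness} (Assumption \ref{ass:main:assumptions:O3} is in force). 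By Lemma \ref{lem:proofs:rates:NonFullyDiscreteLocal:evolutionTimeInterpolation} the interpolant $\uTimeInter$ solves $\partial_t\uTimeInter+\mu\Delta_p^{\cI_n\bar K}(\uTimeInject)+\cA^*\cA(\uTimeInject)=\cI_n\bar f$ with $\uTimeInter(0,\cdot)=\cI_n\bar u_0$. Since $\cI_n\bar u_n^k=\uTimeInject(t,\cdot)$ for $t\in(t^{k-1},t^k]$, the left-hand side of \eqref{eq:proofs:rates:NonFullyDiscreteLocal:L2rates:LPrates}--\eqref{eq:proofs:rates:NonFullyDiscreteLocal:L2rates:Linftyrates} is controlled once we bound $\Vert\uTimeInject(t)-\uTimeInter(t)\Vert_{\Lp2}$ (a pure time-discretization term) and $\Vert\uTimeInter(t)-u_K(t)\Vert_{\Lp2}$ (by a Gronwall argument), uniformly in $t\le T$.

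\textbf{The time-discretization defect.} On $(t^{k-1},t^k]$ one has $\uTimeInter(t)-\uTimeInject(t)=\tfrac{t^k-t}{\tau^{k-1}}\,\cI_n(\bar u_n^{k-1}-\bar u_n^k)$, and from \eqref{eq:main:notation:discrete:nonlocalProblemFully} the increment satisfies $\cI_n(\bar u_n^k-\bar u_n^{k-1})=-\tau^{k-1}\l\mu\Delta_p^{\cI_n\bar K}(\cI_n\bar u_n^k)+\cA^*\cA(\cI_n\bar u_n^k)-\cI_n\bar f\r$ (using Assumption \ref{ass:main:assumptions:O4} to commute $\bar G_n$ with $\cI_n$, as in Corollary \ref{cor:proofs:wellPosedness:discreteProblem:existence}). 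Bounding $\Vert\cI_n\bar K\Vert_{\Lp\infty}\le\Vert K\Vert_{\Lp\infty}$, $\Vert\cA^*\cA\,\cdot\Vert_{\Lp2}\le C_{\mathrm{op}}^2\Vert\cdot\Vert_{\Lp2}$, $\Vert\Delta_p^{\cI_n\bar K}(\cI_n\bar u_n^k)\Vert_{\Lp2}\le C\Vert K\Vert_{\Lp\infty}\Vert\cI_n\bar u_n^k\Vert_{\Lp{2(p-1)}}^{p-1}$ (in part (2) the $\Lp\infty$ bound may be used instead), and inserting \eqref{eq:proofs:wellPosedness:discreteProblem:uniformBound}, yields $\Vert\uTimeInter(t)-\uTimeInject(t)\Vert_{\Lp2}\le\tau\,Q(T,\Vert K\Vert_{\Lp\infty})$ for an explicit polynomial $Q$; this produces the linear-in-$\tau$ contributions in \eqref{eq:proofs:rates:NonFullyDiscreteLocal:L2rates:LPrates}--\eqref{eq:proofs:rates:NonFullyDiscreteLocal:L2rates:Linftyrates}. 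In part (1), where no $\Lp\infty$ control is assumed, one additionally runs a discrete energy estimate (testing \eqref{eq:main:notation:discrete:nonlocalProblemFully} against $\bar u_n^k-\bar u_n^{k-1}$, using convexity of the discrete Dirichlet energy and positive semi-definiteness of $\bar G_n$) to obtain $\sum_{k}\tau^{k-1}\Vert\cI_n(\bar u_n^k-\bar u_n^{k-1})/\tau^{k-1}\Vert_{\Lp2}^2\le C(1+T^{p-1})$, and interpolates this $\ell^2$-in-time bound against the pointwise one above: this is the mechanism generating the fractional exponents $\tau^{p/(2p-1)}$ and $\tau^{(p+1)/(2p)}$.

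\textbf{The Gronwall comparison.} Set $w=\uTimeInter-u_K$. Subtracting the evolution equation for $\uTimeInter$ from \eqref{eq:main:notation:nonlocal:nonlocalProblem} and testing against $w$ (all terms are well-defined since $\Delta_p^v$ maps $\Lp p(\Omega)$ to $\Lp q(\Omega)$ and $p\ge2$) gives, $\lambda_t$-a.e.,
\[
\tfrac12\tfrac{d}{dt}\Vert w\Vert_{\Lp2}^2+\mu\int_\Omega\l\Delta_p^{\cI_n\bar K}(\uTimeInject)-\Delta_p^{K}(u_K)\r w\,\dd x+\int_\Omega\cA^*\cA(\uTimeInject-u_K)\,w\,\dd x=\int_\Omega(\cI_n\bar f-\cA^*\ell)\,w\,\dd x.
\]
Inserting $\uTimeInter$ and $\cI_n\bar K$ as intermediate arguments, the monotone piece $\int_\Omega(\Delta_p^{\cI_n\bar K}(\uTimeInter)-\Delta_p^{\cI_n\bar K}(u_K))w\,\dd x\ge0$ (by the same splitting-by-cases argument as in Proposition \ref{prop:proofs:wellPosedness:nonlocalProblem:completeAccretivityRangeCondition}) is discarded; the time pieces ($\uTimeInject$ versus $\uTimeInter$) are dominated using $\Vert\uTimeInter-\uTimeInject\Vert_{\Lp2}$ from the previous step together with the $\Lp2$ H\"older-continuity of $\Delta_p^v$ in its argument and the bound $C_{\mathrm{op}}^2$; the kernel piece $\Delta_p^{\cI_n\bar K}(u_K)-\Delta_p^{K}(u_K)$ is dominated by $\sup_{x\in\Omega}\Vert\cI_n\cP_n\tilde K(x,\cdot)-\tilde K(x,\cdot)\Vert_{\Lp2}(1+T^{p-1})$ in part (1) and by $\Vert\cI_n\cP_n\tilde K-\tilde K\Vert_{\Lp2(\Omega\times\Omega)}(1+T^{p-1})$ in part (2), via the $\Lp2$-, respectively $\Lp\infty$-, H\"older estimate for $\Delta_p$ (this is exactly the dichotomy of Proposition \ref{prop:proofs:nonlocalDiscreteContinuum:stability}); and the right-hand side is $\Vert\cI_n\cP_n\cA^*\ell-\cA^*\ell\Vert_{\Lp2}\Vert w\Vert_{\Lp2}$. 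Collecting terms, using Cauchy--Schwarz and Young's inequality on the contribution of $\int_\Omega\cA^*\cA(\uTimeInject-u_K)w\,\dd x$ (which is where the coefficient $C_{\mathrm{op}}^4$ emerges), one arrives at $\tfrac{d}{dt}\Vert w\Vert_{\Lp2}\le C(1+C_{\mathrm{op}}^4)\Vert w\Vert_{\Lp2}+R_n(t)$ with $R_n$ the sum of the time-, kernel- and data-errors just listed; Gronwall's lemma with $w(0,\cdot)=\cI_n\cP_n u_0-u_0$ then yields the factor $e^{(1+C_{\mathrm{op}}^4)T}$ multiplying $\Vert\cI_n\cP_n u_0-u_0\Vert_{\Lp2}$ and $\int_0^T R_n$, and reorganizing $\int_0^T R_n$ reproduces the remaining terms of \eqref{eq:proofs:rates:NonFullyDiscreteLocal:L2rates:LPrates}--\eqref{eq:proofs:rates:NonFullyDiscreteLocal:L2rates:Linftyrates}. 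Equivalently, one may interpose $v_n$, the solution of \eqref{eq:main:notation:nonlocal:nonlocalProblem} with kernel $\cI_n\bar K$, data $\cI_n\bar f$ and datum $\cI_n\bar u_0$, bounding $\Vert v_n-u_K\Vert_{\Lp2}$ by Proposition \ref{prop:proofs:nonlocalDiscreteContinuum:stability} and $\Vert\uTimeInter-v_n\Vert_{\Lp2}$ via the contraction estimate \eqref{eq:proofs:wellPosedness:nonlocalProblem:existenceUniqueness:contraction} applied to the time-discretization defect.

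\textbf{Main obstacle.} The core difficulty is the time-discretization analysis for $p>2$: since $t\mapsto|t|^{p-2}t$ is not Lipschitz but only H\"older with the weight $(|a|+|b|)^{p-2}$, controlling $\Delta_p^v(\uTimeInject)-\Delta_p^v(\uTimeInter)$ in $\Lp2$ without an $\Lp\infty$ bound forces a H\"older balancing that dictates the integrability hypothesis $\Lp{2p-2/(p-1)}(\Omega)$ in part (1), and obtaining the $\tau$-rate requires combining the crude pointwise-in-time increment bound with the discrete energy (dissipation) estimate through an interpolation inequality --- this is the origin of the exponents $p/(2p-1)$ and $(p+1)/(2p)$. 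Equally delicate is the bookkeeping of the $T$-dependence: every constant must be kept polynomial in $T$ (apart from the unavoidable Gronwall factor $e^{(1+C_{\mathrm{op}}^4)T}$) so that the subsequent Corollaries \ref{cor:proofs:rates:discreteNonlocalContinuumLocal:simplified} and \ref{cor:proofs:rates:discreteNonlocalContinuum:final} can take $T=T(n)\to\infty$ at a controlled logarithmic rate. The monotonicity of $\Delta_p^{\cI_n\bar K}$ and the $\Lp2$/$\Lp\infty$ H\"older estimates for $\Delta_p^v$ are otherwise routine, following the computations in Proposition \ref{prop:proofs:wellPosedness:nonlocalProblem:completeAccretivityRangeCondition} and the references cited there.
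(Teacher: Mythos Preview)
Your global plan (estimate $\Vert\uTimeInter-u_K\Vert_{\Lp{2}}$ via an energy/Gronwall argument, then add the time-defect $\Vert\uTimeInject-\uTimeInter\Vert_{\Lp{2}}$) is exactly what the paper does, and your treatment of the data term, the kernel term $T_3$, and the $\cA^*\cA$ term (whence $C_{\mathrm{op}}^4$) all match. However, the crucial step --- how the fractional exponents $\tau^{p/(2p-1)}$ and $\tau^{(p+1)/(2p)}$ arise in part (1) --- is handled differently in the paper, and your proposed mechanism does not clearly produce them.

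The paper does \emph{not} insert $\uTimeInter$ as intermediate argument in the operator; instead it splits the \emph{test function}. Writing $\zeta=\uTimeInter-u_K=(\uTimeInject-u_K)+(\uTimeInter-\uTimeInject)$, the $p$-Laplacian contribution $-\mu\int(\Delta_p^{\cI_n\bar K}\uTimeInject-\Delta_p^{\cI_n\bar K}u_K)\zeta$ becomes the monotone term $T_1$ tested against $\uTimeInject-u_K$ (discarded) plus the cross term $T_2=-\int(\Delta_p^{\cI_n\bar K}\uTimeInject-\Delta_p^{\cI_n\bar K}u_K)(\uTimeInter-\uTimeInject)$. The second factor of $T_2$ is already $O(\tau)$; for the first factor one applies the H\"older-$1/p$ continuity of $t\mapsto|t|^{p-2}t$ (\cite[Lemma~4.1(ii)]{ElBouchairi} with $\alpha=1/p$) to obtain $\Vert\Delta_p^{\cI_n\bar K}\uTimeInject-\Delta_p^{\cI_n\bar K}u_K\Vert_{\Lp{2}}\le C\Vert K\Vert_{\Lp{\infty}}(1+T^{p-1-1/p})\Vert\uTimeInject-u_K\Vert_{\Lp{2}}^{1/p}$ --- this is where the $\Lp{2p-2/(p-1)}$ hypothesis enters. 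Since $\Vert\uTimeInject-u_K\Vert_{\Lp{2}}^{1/p}\le\Vert\zeta\Vert_{\Lp{2}}^{1/p}+(\tau Q)^{1/p}$, the term $C\tau(\cdots)\Vert\zeta\Vert_{\Lp{2}}^{1/p}$ is then absorbed by Young with exponents $2p$ and $2p/(2p-1)$, yielding the $\tau^{2p/(2p-1)}$ contribution to $T_7$, while the remainder $C\tau(\cdots)(\tau Q)^{1/p}$ gives $\tau^{(p+1)/p}$. Taking square roots after Gronwall produces the stated exponents. No discrete dissipation estimate $\sum_k\tau^{k-1}\Vert(\bar u_n^k-\bar u_n^{k-1})/\tau^{k-1}\Vert_{\Lp{2}}^2\le C$ is ever used.

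Your alternative splitting (with $\uTimeInter$ as intermediate in the operator) would leave a time piece $\int(\Delta_p^{\cI_n\bar K}\uTimeInject-\Delta_p^{\cI_n\bar K}\uTimeInter)\zeta$, which after the same H\"older-$1/p$ step and Young gives only $\tau^{1/p}$ --- strictly worse than $\tau^{p/(2p-1)}$ for $p>1$. The ``discrete energy estimate plus $\ell^2$-in-time interpolation'' you invoke to recover the sharper rate is neither carried out nor present in the paper, and as stated it is unclear how it would yield precisely those exponents. The gap is therefore in the mechanism behind the fractional $\tau$-powers: replace your operator-argument splitting by the paper's test-function splitting, apply the H\"older-$1/p$ bound to $\uTimeInject-u_K$ (not $\uTimeInject-\uTimeInter$), and use Young with exponent $2p$.
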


\begin{proof}
In the proof $C>0$ will denote a constant that can be arbitrarily large, (which might be) dependent on $\Omega$, $u_0$ or/and $\cA^*\ell$, that may change from line to line.

The existence and well-posedness of $\{\bar{u}_n^k\}_{k=0}^N$ and $u_K$ follow from Corollary \ref{cor:proofs:wellPosedness:discreteProblem:existence} and Theorem \ref{thm:proofs:wellPosedness:nonlocalProblem:existenceUniqueness} respectively. Also, note that for $p \geq 2$, we have $2p - 2/(p-1) \geq 2(p-1) \geq p$. 
Let $\alpha_i > 0$ for $1 \leq i \leq 3$ be such that $\alpha_1 + \mu (\alpha_2 + \alpha_3) = \frac{1}{2}$.  

We start by noticing that for $t \in (t^{k-1},t^k]$, we have
\[
\Vert \uTimeInter(t,\cdot) \Vert_{\Lp{p}} \leq \frac{t^k - t}{\tau^{k-1}} \Vert \cI_n \bar{u}_n^{k-1} \Vert_{\Lp{p}} + \frac{t-t^{k-1}}{\tau^{k-1}} \Vert \cI_n \bar{u}^{k}_n \Vert_{\Lp{p}} \leq C \l \Vert u_0 \Vert_{\Lp{p}} + T \Vert \cA^*\ell \Vert_{\Lp{p}} \r   < C(C + TC)
\]
where we used \eqref{eq:proofs:wellPosedness:discreteProblem:uniformBound} for the second inequality and also:
\begin{equation} \label{eq:proofs:rates:nonFullyDiscreteLocal:L2Rates:uTimeInjectBound}
\Vert \uTimeInject(t,\cdot) \Vert_{\Lp{p}} = \Vert \cI_n \bar{u}_n^{k-1} \Vert_{\Lp{p}} \leq \Vert u_0 \Vert_{\Lp{p}} + T \Vert \cA^*\ell \Vert_{\Lp{p}}   < C+TC
\end{equation}
again by \eqref{eq:proofs:wellPosedness:discreteProblem:uniformBound}. This, together with $\Vert \cI_n \bar{u}_0 \Vert_{\Lp{2}} \leq \Vert u_0 \Vert_{\Lp{p}}$ by \cite[Lemma 2.1]{ElBouchairi}, implies that for any $0 \leq t < T$, we have that $\uTimeInject(t,\cdot),\uTimeInter(t,\cdot) \in \Lp{p}(\Omega)$ uniformly in $t$.

Define $\zeta_{\text{TimeInt}}(t,x) = \uTimeInter(t,x) - u_K(t,x)$ and compute as follows:
\begin{align}
    \frac{1}{2}\frac{\partial}{\partial t} \Vert \zeta_{\text{TimeInt}}(t,\cdot) \Vert_{\Lp{2}}^2 &= - \mu \int_{\Omega}  \l \Delta_{p}^{\cI_n \bar{K}} \uTimeInject(t,x) - \Delta_p^{\cI_n \bar{K}} u_K(t,x) \r (\uTimeInject(t,x) - u_K(t,x) ) \, \dd x \notag \\
    &- \mu \int_{\Omega}  \l \Delta_{p}^{\cI_n \bar{K}} \uTimeInject(t,x) - \Delta_p^{\cI_n \bar{K}} u_K(t,x) \r (\uTimeInter(t,x) - \uTimeInject(t,x) ) \, \dd x \notag \\
    &- \mu \int_{\Omega}  \l \Delta_{p}^{\cI_n \bar{K}} u_K(t,x) - \Delta_p^{K} u_K(t,x) \r \zeta_{\text{TimeInt}}(t,x) \, \dd x \label{eq:proofs:rates:NonFullyDiscreteLocal:L2Rates:equation1}\\
    &- \int_{\Omega}  \cA^*\cA( \uTimeInject(t,x) - u_K(t,x) ) \zeta_{\text{TimeInt}}(t,x) \, \dd x  \notag \\
    &+ \int_{\Omega}  (\cI_n \bar{f}(x) - \cA^*\ell(x)) \zeta_{\text{TimeInt}}(t,x) \, \dd x \notag \\
    &=: T_1 + \mu T_2 + \mu T_3 + T_4 + T_5 \notag
\end{align}
where we used Lemma \ref{lem:proofs:rates:NonFullyDiscreteLocal:evolutionTimeInterpolation}, \eqref{eq:main:notation:nonlocal:nonlocalProblem} and the fact that $\uTimeInject(t,\cdot),\uTimeInter(t,\cdot) \in \Lp{p}(\Omega)$ for \eqref{eq:proofs:rates:NonFullyDiscreteLocal:L2Rates:equation1}. Arguing as in Proposition \ref{prop:proofs:wellPosedness:nonlocalProblem:completeAccretivityRangeCondition} or Proposition \ref{prop:proofs:nonlocalDiscreteContinuum:stability} (which relies on \cite[Lemma 2.3]{ANDREU2008201}), we obtain that $T_1 \leq 0$. Furthermore, by Young's inequality for products,
\begin{equation}\label{eq:proofs:rates:NonFullyDiscreteLocal:L2Rates:T5}
    T_5 \leq C \Vert \cI_n \cP_n \cA^*\ell - \cA^*\ell \Vert_{2}^2 + \alpha_1 \Vert \zeta_{\text{TimeInt}}(t) \Vert_{\Lp{2}}^2.  
\end{equation}

We continue our estimates by showing some auxiliary results first. For $t \in (t^{k-1},t^k]$:
\begin{align}
    \Vert \uTimeInter(t,\cdot) - \uTimeInject(t,\cdot) \Vert_{\Lp{2}} &= \vert t^k - t \vert \left\Vert \frac{\cI_n \bar{u}_n^{k} - \cI_n \bar{u}_n^{k-1}}{\tau^{k-1}} \right\Vert_{\Lp{2}} \notag \\
    &= \vert t^k - t \vert \left\Vert \mu \Delta_p^{\cI_n \bar{K}} \cI_n \bar{u}_n^k + \cA^*\cA \cI_n \bar{u}_n^k - \cI_n \bar{f} \right\Vert_{\Lp{2}} \label{eq:proofs:rates:NonFullyDiscreteLocal:L2Rates:resolvent} \\
    &\leq \tau \ls \mu \Vert \Delta_p^{\cI_n \bar{K}} \cI_n \bar{u}_n^k \Vert_{\Lp{2}} + \Vert  \cA^*\cA \cI_n \bar{u}_n^k \Vert_{\Lp{2}}  + \Vert  \cI_n \bar{f} \Vert_{\Lp{2}} \rs \label{eq:proofs:rates:NonFullyDiscreteLocal:L2Rates:equation2}
\end{align}
where we used the proof of Corollary \ref{cor:proofs:wellPosedness:discreteProblem:existence} for \eqref{eq:proofs:rates:NonFullyDiscreteLocal:L2Rates:resolvent}. Now, 
by Assumption \ref{ass:main:assumptions:O1} and 
\eqref{eq:proofs:rates:nonFullyDiscreteLocal:L2Rates:uTimeInjectBound}
we have that
\[
\Vert  \cA^*\cA \cI_n \bar{u}_n^k \Vert_{\Lp{2}} =  \Vert  \cA^*\cA \uTimeInject(t,\cdot) \Vert_{\Lp{2}} \leq C \Vert \uTimeInject(t,\cdot) \Vert_{\Lp{2}} \leq C \Vert \uTimeInject(t,\cdot) \Vert_{\Lp{p}} \leq C(C + TC)
\]
as well as 
$\Vert  \cI_n \bar{f} \Vert_{\Lp{2}} \leq \Vert \cA^* \ell \Vert_{\Lp{2}} \leq C \Vert \cA^*\ell \Vert_{\Lp{p}} \leq C$
by \cite[Lemma 2.1]{ElBouchairi}. Furthermore, re-using \cite[Lemma 2.1]{ElBouchairi} and Assumption \ref{ass:main:assumptions:S1} we have
\begin{align}
    \Vert \Delta_p^{\cI_n \bar{K}} \cI_n \bar{u}_n^k \Vert_{\Lp{2}} &= \Vert \Delta_p^{\cI_n \bar{K}} \uTimeInject(t,\cdot) \Vert_{\Lp{2}} \leq C \Vert K \Vert_{\Lp{\infty}} \Vert \uTimeInject(t,\cdot) \Vert_{\Lp{2(p-1)}}^{p-1}
    \label{eq:proofs:rates:NonFullyDiscreteLocal:L2Rates:equation3}
\end{align}
Now, in either case, $u_0 \in \Lp{2p - 2/(p-1)}(\Omega)$ and $\cA^*\ell \in \Lp{2p - 2/(p-1)}(\Omega)$ or $u_0 \in \Lp{\infty}(\Omega)$ and $\cA^*\ell \in \Lp{\infty}(\Omega)$,  we get from \eqref{eq:proofs:wellPosedness:discreteProblem:uniformBound} that 
$\Vert \uTimeInject(t,\cdot) \Vert_{\Lp{2(p-1)}}^{p-1} \leq C + T^{p-1} C$
as was shown in \eqref{eq:proofs:rates:nonFullyDiscreteLocal:L2Rates:uTimeInjectBound}. Injecting the above observation in \eqref{eq:proofs:rates:NonFullyDiscreteLocal:L2Rates:equation3} and then starting from \eqref{eq:proofs:rates:NonFullyDiscreteLocal:L2Rates:equation2}, we obtain that:
\begin{align} 
    \Vert \uTimeInter(t,\cdot) - \uTimeInject(t,\cdot) \Vert_{\Lp{2}}
    &\leq \tau (C + TC + \Vert K \Vert_{\Lp{\infty}} (C + T^{p-1}C))
    \label{eq:proofs:rates:NonFullyDiscreteLocal:L2Rates:differenceUs}
\end{align}

For the $T_4$ term, by Young's inequality of products: 
\begin{align}
\vert T_4 \vert &\leq \frac{1}{2} \Vert \zeta_{\text{TimeInt}}(t) \Vert_{\Lp{2}}^2 + \frac{1}{2} \Vert \cA^* \cA (\uTimeInject(t) - u_K(t)) \Vert_{\Lp{2}}^2 \notag \\
&\leq \frac{1}{2} \Vert \zeta_{\text{TimeInt}}(t) \Vert_{\Lp{2}}^2 + \frac{C^4_{\mathrm{op}}}{2} \Vert \uTimeInject(t) - \uTimeInter + \uTimeInter - u_K(t) \Vert_{\Lp{2}}^2 \label{eq:proofs:rates:NonFullyDiscreteLocal:L2Rates:T4:2} \\
&\leq \l \frac{1}{2} + C^4_{\mathrm{op}} \r \Vert \zeta_{\text{TimeInt}}(t) \Vert_{\Lp{2}}^2 + C \Vert \uTimeInject(t) - \uTimeInter \Vert_{\Lp{2}}^2 \notag \\
&\leq \l \frac{1}{2} + C^4_{\mathrm{op}} \r \Vert \zeta_{\text{TimeInt}}(t) \Vert_{\Lp{2}}^2 + \ls \tau (C + TC + \Vert K \Vert_{\Lp{\infty}} (C + T^{p-1}C)) \rs^2 \label{eq:proofs:rates:NonFullyDiscreteLocal:L2Rates:T4:4}
\end{align}
where we used Assumption \ref{ass:main:assumptions:O1}
for \eqref{eq:proofs:rates:NonFullyDiscreteLocal:L2Rates:T4:2} and \eqref{eq:proofs:rates:NonFullyDiscreteLocal:L2Rates:differenceUs} for \eqref{eq:proofs:rates:NonFullyDiscreteLocal:L2Rates:T4:4}.

We will now tackle the $T_2$ and $T_3$ terms. First, assume that $u_0, \, \cA^*\ell \in \Lp{2p - 2/(p-1)}(\Omega)$.
We want to estimate 
$T_6 := \Vert \Delta_{p}^{\cI_n \bar{K}} \uTimeInject(t,\cdot) - \Delta_p^{\cI_n \bar{K}} u_K(t,\cdot) \Vert_{\Lp{2}}$:
\begin{align}
    T_6^2 &\leq C\Vert \cI_n \bar{K} \Vert_{\Lp{\infty}}^2 \int_\Omega \int_{\Omega} \bigl\vert \vert \uTimeInject(y) - \uTimeInject(x)\vert^{p-2}(\uTimeInject(y) - \uTimeInject(x)) \notag \\
    & \qquad \qquad - \vert u_K(y) - u_K(x) \vert^{p-2}(u_K(y) - u_K(x))  \bigr\vert^2 \, \dd y \dd x \notag \\
    &\leq C\Vert K \Vert_{\Lp{\infty}}^2  \int_\Omega \int_{\Omega} \vert  \uTimeInject(y) - \uTimeInject(x)  - u_K(y) + u_K(x)  \vert^{2/p} \notag \\
    &\qquad \qquad \times ( \vert \uTimeInject(y) - \uTimeInject(x) \vert + \vert u_K(y) - u_K(x) \vert )^{2(p-1) - 2/p} \, \dd y \dd x \label{eq:proofs:rates:NonFullyDiscreteLocal:L2Rates:continuity} \\
    &\leq C\Vert K \Vert_{\Lp{\infty}}^2 \ls \int_\Omega \int_\Omega \vert  \uTimeInject(y) - \uTimeInject(x)  - u_K(y) + u_K(x)  \vert^{2} \, \dd y \dd x \rs^{1/p} \notag\\
    & \qquad \qquad \times \ls \int_\Omega \int_\Omega ( \vert \uTimeInject(y) - \uTimeInject(x) \vert + \vert u_K(y) - u_K(x) \vert )^{2p - 2/(p-1)} \, \dd y \dd x \rs^{1/q} \label{eq:proofs:rates:NonFullyDiscreteLocal:L2Rates:holder}\\
    &\leq C \Vert K \Vert_{\Lp{\infty}}^2 \ls \int_\Omega  \vert  \uTimeInject(x)  - u_K(x) \vert^{2} \, \dd x \rs^{1/p} \notag \\
    & \qquad \qquad \times \ls \int_\Omega \int_\Omega ( \vert \uTimeInject(y) \vert + \vert \uTimeInject(x) \vert + \vert u_K(y) \vert  + \vert u_K(x) \vert )^{2p - 2/(p-1)} \, \dd y \dd x \rs^{1/q} \label{eq:proofs:rates:NonFullyDiscreteLocal:L2Rates:domain1} \\
    &\leq C \Vert K \Vert_{\Lp{\infty}}^2 \Vert \uTimeInject(t,\cdot)  - u_K(t,\cdot) \Vert_{\Lp{2}}^{2/p} \Vert \vert \uTimeInject(t,\cdot) \vert + \vert u_K(t,\cdot) \vert \Vert_{\Lp{2p - 2/(p-1)}}^{2(p-1) - 2/p} \label{eq:proofs:rates:NonFullyDiscreteLocal:L2Rates:domain2} \\
    &\leq C \Vert K \Vert_{\Lp{\infty}}^2 \Vert \uTimeInject(t,\cdot)  - u_K(t,\cdot) \Vert_{\Lp{2}}^{2/p} \l C + T^{2(p-1) - 2/p} C  \r \label{eq:proofs:rates:NonFullyDiscreteLocal:L2Rates:contraction}
\end{align}
where we used \cite[Lemma 4.1, (ii)]{ElBouchairi} with $\alpha = 1/p$ for \eqref{eq:proofs:rates:NonFullyDiscreteLocal:L2Rates:continuity}, H\"older's inequality for \eqref{eq:proofs:rates:NonFullyDiscreteLocal:L2Rates:holder}, Assumption \ref{ass:main:assumptions:S1} for \eqref{eq:proofs:rates:NonFullyDiscreteLocal:L2Rates:domain1} and \eqref{eq:proofs:rates:NonFullyDiscreteLocal:L2Rates:domain2}, and \eqref{eq:proofs:wellPosedness:discreteProblem:uniformBound} as well as \eqref{eq:proofs:wellPosedness:nonlocalProblem:existenceUniqueness:bound} with $r = 2p - 2/(p-1)$ for \eqref{eq:proofs:rates:NonFullyDiscreteLocal:L2Rates:contraction}.

We now return to $T_2$ and estimate as follows:
\begin{align}
    \vert T_2 \vert &\leq \Vert \Delta_{p}^{\cI_n \bar{K}} \uTimeInject(t,\cdot) - \Delta_p^{\cI_n \bar{K}} u_K(t,\cdot) \Vert_{\Lp{2}}  \Vert \uTimeInter(t,\cdot) - \uTimeInject(t,\cdot) \Vert_{\Lp{2}} \notag \\
    &\leq \tau \Vert K \Vert_{\Lp{\infty}} \l C + TC + \Vert K \Vert_{\Lp{\infty}} (C + T^{p-1}C) \r \l C + T^{(p-1) - 1/p} C \r \Vert \uTimeInject(t,\cdot)  - u_K(t,\cdot) \Vert_{\Lp{2}}^{1/p}  \label{eq:proofs:rates:NonFullyDiscreteLocal:L2Rates:equation4} \\
    &\leq \tau \Vert K \Vert_{\Lp{\infty}} \l C + TC + \Vert K \Vert_{\Lp{\infty}} (C + T^{p-1}C) \r \l C + T^{(p-1) - 1/p} C \r \biggl[ \Vert \uTimeInter(t,\cdot)  - u_K(t,\cdot)  \Vert_{\Lp{2}}^{1/p} \notag \\ 
    &+  \l \tau (C + TC + \Vert K \Vert_{\Lp{\infty}} (C + T^{p-1}C)) \r^{1/p} \biggr] \label{eq:proofs:rates:NonFullyDiscreteLocal:L2Rates:equation5} \\
    &\leq \alpha_2 \Vert \zeta_{\text{TimeInt}}(t)  \Vert_{\Lp{2}}^{2} +   \ls   \tau \Vert K \Vert_{\Lp{\infty}} \l C + TC + \Vert K \Vert_{\Lp{\infty}} (C + T^{p-1}C) \r \l C + T^{(p-1) - 1/p} C \r \rs^{2p/(2p-1)} \notag \\
    &+   \ls \tau \l \Vert K \Vert_{\Lp{\infty}}\l C + T^{(p-1) - 1/p} C \r \r ^{p/(p+1)} \l C + TC + \Vert K \Vert_{\Lp{\infty}} (C + T^{p-1}C) \r \rs^{(p+1)/p} \label{eq:proofs:rates:NonFullyDiscreteLocal:L2Rates:T2}
\end{align}
where we used \eqref{eq:proofs:rates:NonFullyDiscreteLocal:L2Rates:differenceUs} and \eqref{eq:proofs:rates:NonFullyDiscreteLocal:L2Rates:contraction} for \eqref{eq:proofs:rates:NonFullyDiscreteLocal:L2Rates:equation4}, \eqref{eq:proofs:rates:NonFullyDiscreteLocal:L2Rates:differenceUs} again for \eqref{eq:proofs:rates:NonFullyDiscreteLocal:L2Rates:equation5} and Young's inequality for products for \eqref{eq:proofs:rates:NonFullyDiscreteLocal:L2Rates:T2}.

For $T_3$, relying on the fact that for $p \geq 2$ we have $2p - 2/(p-1) \geq 2(p-1)$, we proceed as in Proposition \ref{prop:proofs:nonlocalDiscreteContinuum:stability} to obtain \eqref{eq:proofs:nonlocalDiscreteContinuum:stability:contraction2}:
\begin{align}
    \vert T_3 \vert &\leq C(C + T^{p-1}C) \ls \sup_{x \in \Omega} \Vert \cI_n \bar{K}(\vert x - \cdot \vert) - K(\vert x - \cdot \vert) \Vert_{\Lp{2}} \rs \Vert \zeta_{\text{TimeInt}}(t) \Vert_{\Lp{2}} \notag \\
    &\leq (C + T^{2(p-1)} C)  \sup_{x \in \Omega} \Vert \cI_n \bar{K}(\vert x - \cdot \vert) - K(\vert x - \cdot \vert) \Vert_{\Lp{2}}^2 + \alpha_3 \Vert \zeta_{\text{TimeInt}}(t) \Vert_{\Lp{2}}^2 \label{eq:proofs:rates:NonFullyDiscreteLocal:L2Rates:T3}
\end{align}
using Young's inequality for products for \eqref{eq:proofs:rates:NonFullyDiscreteLocal:L2Rates:T3}.

Combining \eqref{eq:proofs:rates:NonFullyDiscreteLocal:L2Rates:T2}, \eqref{eq:proofs:rates:NonFullyDiscreteLocal:L2Rates:T3}, \eqref{eq:proofs:rates:NonFullyDiscreteLocal:L2Rates:T4:4} and \eqref{eq:proofs:rates:NonFullyDiscreteLocal:L2Rates:T5}, we obtain:
\begin{align}
    \frac{\partial}{\partial t} \Vert \zeta_{\text{TimeInt}}(t) \Vert_{\Lp{2}}^2 &\leq 2 \l \alpha_1 + \mu(\alpha_2 + \alpha_3) + \frac{1}{2} + C^4_{\mathrm{op}} \r \Vert \zeta_{\text{TimeInt}}(t)  \Vert_{\Lp{2}}^{2} \notag \\
    &+ \ls \tau (C + TC + \Vert K \Vert_{\Lp{\infty}} (C + T^{p-1}C)) \rs^2 \notag \\
    &+  \ls   \tau \Vert K \Vert_{\Lp{\infty}} \l C + TC + \Vert K \Vert_{\Lp{\infty}} (C + T^{p-1}C) \r \l C + T^{(p-1) - 1/p} C \r \rs^{2p/(2p-1)} \notag \\
    &+ \ls \tau \l \Vert K \Vert_{\Lp{\infty}}\l C + T^{(p-1) - 1/p} C \r \r ^{p/(p+1)} \l C + TC + \Vert K \Vert_{\Lp{\infty}} (C + T^{p-1}C) \r \rs^{(p+1)/p} \notag \\
    &+ (C + T^{2(p-1)}C) \sup_{x \in \Omega} \Vert \cI_n \bar{K}(\vert x - \cdot \vert) - K(\vert x - \cdot \vert) \Vert_{\Lp{2}}^2 + C \Vert \cI_n \cP_n \cA^*\ell - \cA^*\ell \Vert_{\Lp{2}}^2  \notag \\
    &=:  \l 2 +  2C^4_{\mathrm{op}} \r \Vert \zeta_{\text{TimeInt}}(t) \Vert_{\Lp{2}}^2 + T_7. \notag
\end{align}
We continue by applying Gronwall's lemma on the latter to deduce:
\begin{equation} \label{eq:proofs:rates:NonFullyDiscreteLocal:L2Rates:gronwall}
    \Vert \zeta_{\text{TimeInt}}(t) \Vert_{\Lp{2}} \leq e^{\l 1 +  C^4_{\mathrm{op}} \r T} \l \Vert \cI_n \cP_n u_0 - u_0 \Vert_{\Lp{2}} + C \cdot T_7^{1/2} \r.
\end{equation}

We conclude that
\begin{align}
    \sup_{1 \leq k \leq N } \sup_{t \in (t^{k-1},t^k]} \Vert \cI_n \bar{u}_n
    ^k - u_K(t,\cdot) \Vert_{\Lp{2}} &= \sup_{0 < t \leq T} \Vert \uTimeInject(t,\cdot) - u_K(t,\cdot) \Vert_{\Lp{2}} \notag \\
    &\leq \sup_{0 < t \leq T} \Vert \uTimeInter - u_K(t,\cdot) \Vert_{\Lp{2}} + \sup_{0 < t \leq T} \Vert \uTimeInject - \uTimeInter \Vert_{\Lp{2}} \notag \\
    &\leq e^{ \l 1 +  C^4_{\mathrm{op}} \r T} \l \Vert \cI_n \cP_n u_0 - u_0 \Vert_{\Lp{2}} + C \cdot T_7^{1/2} \r \notag \\ 
    &+ \tau \l C + TC + \Vert K \Vert_{\Lp{\infty}} (C + T^{p-1}C) \r \label{eq:proofs:rates:NonFullyDiscreteLocal:L2Rates:rates}
\end{align}
where we used \eqref{eq:proofs:rates:NonFullyDiscreteLocal:L2Rates:gronwall} and \eqref{eq:proofs:rates:NonFullyDiscreteLocal:L2Rates:differenceUs} for \eqref{eq:proofs:rates:NonFullyDiscreteLocal:L2Rates:rates}.

Let us now assume that $u_0 \in \Lp{\infty}(\Omega)$ and $\cA^*\ell \in \Lp{\infty}(\infty)$. We will slightly change the estimates for $T_6$ and $T_3$.
\begin{align}
    T_6^2
    &\leq C\Vert K \Vert_{\Lp{\infty}}^2  \int_\Omega \vert \int_{\Omega}  \vert \uTimeInject(y) - \uTimeInject(x)  - u_K(y) + u_K(x)  \vert \notag \\
    & \quad \quad \quad \quad \quad \quad \quad \times ( \vert \uTimeInject(y) - \uTimeInject(x) \vert + \vert u_K(y) - u_K(x) \vert )^{p-2} \, \dd y \vert^2 \dd x \label{eq:proofs:rates:NonFullyDiscreteLocal:L2Rates:continuity2} \\
    &\leq C \Vert K \Vert_{\Lp{\infty}}^2 \l \Vert u_0 \Vert_{\Lp{\infty}} + T \Vert \cA^*\ell \Vert_{\Lp{\infty}} \r^{2(p-2)} \notag \\
    &\times \int_\Omega \int_{\Omega}  \vert \uTimeInject(y) - \uTimeInject(x)  - u_K(y) + u_K(x)  \vert^2 \, \dd y \dd x \label{eq:proofs:rates:NonFullyDiscreteLocal:L2Rates:contraction2} \\
    &\leq (C + T^{2(p-2)}C) \Vert K \Vert_{\Lp{\infty}}^2 \Vert \uTimeInject(t,\cdot) - u_K(t,\cdot) \Vert_{\Lp{2}}^2 \label{eq:proofs:rates:NonFullyDiscreteLocal:L2Rates:domain3}
\end{align}
where we used \cite[Lemma 4.1 (ii)]{ElBouchairi} with $\alpha = 1$ and \cite[Lemma 2.1]{ElBouchairi} for \eqref{eq:proofs:rates:NonFullyDiscreteLocal:L2Rates:continuity2}, \eqref{eq:proofs:wellPosedness:nonlocalProblem:existenceUniqueness:bound} and \eqref{eq:proofs:wellPosedness:discreteProblem:uniformBound} for \eqref{eq:proofs:rates:NonFullyDiscreteLocal:L2Rates:contraction2} and Assumption \ref{ass:main:assumptions:S1} for \eqref{eq:proofs:rates:NonFullyDiscreteLocal:L2Rates:domain3}.
We can then return to $T_2$ and estimate as follows:
\begin{align}
    \vert T_2 \vert &\leq \tau \Vert K \Vert_{\Lp{\infty}} \l C + TC + \Vert K \Vert_{\Lp{\infty}} (C + T^{p-1}C) \r (C + T^{(p-2)}C) \Vert \uTimeInject(t,\cdot) - u_K(t,\cdot) \Vert_{\Lp{2}} \label{eq:proofs:rates:NonFullyDiscreteLocal:L2Rates:equation6} \\
    &\leq \tau \Vert K \Vert_{\Lp{\infty}} \l C + TC + \Vert K \Vert_{\Lp{\infty}} (C + T^{p-1}C) \r (C + T^{(p-2)}C) \biggl[ \Vert \uTimeInter(t,\cdot) - u_K(t,\cdot) \Vert_{\Lp{2}} \notag \\
    &+ \tau (C + TC + \Vert K \Vert_{\Lp{\infty}} (C + T^{p-1}C)) \biggr] \label{eq:proofs:rates:NonFullyDiscreteLocal:L2Rates:equation7} \\
    &\leq \alpha_2 \Vert \uTimeInter(t,\cdot) - u_K(t,\cdot) \Vert_{\Lp{2}}^2 + \ls \tau \Vert K \Vert_{\Lp{\infty}} \l C + TC + \Vert K \Vert_{\Lp{\infty}} (C + T^{p-1}C) \r (C + T^{(p-2)}C) \rs^2 \notag \\
    &+ \ls \l \Vert K \Vert_{\Lp{\infty}} (C + T^{(p-2)}C) \r^{1/2} \tau \l C + TC + \Vert K \Vert_{\Lp{\infty}} (C + T^{p-1}C) \r \rs^{2} \label{eq:proofs:rates:NonFullyDiscreteLocal:L2Rates:T22}
\end{align}
where we used \eqref{eq:proofs:rates:NonFullyDiscreteLocal:L2Rates:differenceUs} and \eqref{eq:proofs:rates:NonFullyDiscreteLocal:L2Rates:domain3} for \eqref{eq:proofs:rates:NonFullyDiscreteLocal:L2Rates:equation6}, \eqref{eq:proofs:rates:NonFullyDiscreteLocal:L2Rates:differenceUs} again for \eqref{eq:proofs:rates:NonFullyDiscreteLocal:L2Rates:equation7} and Young's inequality for products for \eqref{eq:proofs:rates:NonFullyDiscreteLocal:L2Rates:T22}.

For $T_3$, we proceed as in Proposition \ref{prop:proofs:nonlocalDiscreteContinuum:stability} in order to obtain \eqref{eq:proofs:nonlocalDiscreteContinuum:stability:Linfty}:
\begin{align}
    \vert T_3 \vert &\leq C(C + T^{p-1}C) \Vert \cI_n \bar{K} - \tilde{K} \Vert_{\Lp{2}(\Omega \times \Omega)} \Vert \zeta_{\text{TimeInt}}(t) \Vert_{\Lp{2}}\notag \\
    &\leq (C + T^{2(p-1)}C) \Vert \cI_n \bar{K} - \tilde{K} \Vert_{\Lp{2}(\Omega \times \Omega)}^2 + \alpha_3 \Vert \zeta_{\text{TimeInt}}(t) \Vert_{\Lp{2}}^2 \label{eq:proofs:rates:NonFullyDiscreteLocal:L2Rates:T32}
\end{align}
where we used Young's inequality for products for \eqref{eq:proofs:rates:NonFullyDiscreteLocal:L2Rates:T32}. We proceed as above to conclude.
\end{proof}

\subsubsection{Discrete-to-continuum local rates} \label{subsec:ratesDiscreteLocal}

In this section we will derive rates between the fully discrete problem and the continuum problem. In particular, by combining the results of Proposition \ref{prop:proofs:rates:NonFullyDiscreteLocal:L2rates} and Theorem \ref{thm:proofs:continuumRates:continuumNonlocalLocal}, one obtains general rates for fixed $T>0$ and several classes of $u_0$, $\cA^*\ell$. 

Of greater interest is the question of the possibility of letting $T \to \infty$, as the solution of the gradient flow \eqref{eq:main:notation:localProblem:localProblem} solved for large $T$ converges to the minimizer of the original regularization problem \eqref{eq:intro:regularizationProblem}. Corollary \ref{cor:proofs:rates:discreteNonlocalContinuumLocal:simplified} answers that question in a positive way: indeed, this can be achieved by correctly choosing the partition, the functions $u_0$, $\cA^* \ell$ and the kernel $K$ as well as indexing the time $T(n)$ in a meaningful way and imposing conditions between the time and space discretization parameters $\tau_n$ and $\eps_n$.

\begin{proof}[Proof of Corollary \ref{cor:proofs:rates:discreteNonlocalContinuumLocal:simplified}]
In the proof $C>0$ will denote a constant that can be arbitrarily large, (which might be) dependent on $\Omega$, $u_0$ or/and $\cA^*\ell$, that may change from line to line. We also briefly comment on some notation: $u_{\eps_n}$ in \eqref{eq:proofs:continuumRates:l2convergence:rates} is a short-hand for $u_{K_{\eps_n}}$ appearing in \eqref{eq:proofs:rates:NonFullyDiscreteLocal:L2rates:Linftyrates} when using the kernel $K_{\eps_n}$.

We have 
\begin{equation}\label{eq:application:ratesDiscreteDeterministic:kHatBound}
    \Vert \cI_n \bar{K}_{\eps_n} \Vert_{\Lp{\infty}} \leq \Vert \tilde{K}_{\eps_n} \Vert_{\Lp{\infty}}  \leq \eps_{n}^{-(d+p)} \Vert K \Vert_{\Lp{\infty}}
\end{equation}
by \cite[Lemma 2.1]{ElBouchairi}. The latter is finite by Assumption \ref{ass:main:assumptions:K1} and the existence claims then follow from Proposition \ref{prop:proofs:rates:NonFullyDiscreteLocal:L2rates} and Theorem \ref{thm:proofs:continuumRates:continuumNonlocalLocal}.

When combining \eqref{eq:proofs:rates:NonFullyDiscreteLocal:L2rates:Linftyrates} and \eqref{eq:proofs:continuumRates:l2convergence:rates}, 
we note that the constant is independent of $T$ and therefore, we have that for large $T \geq 1$ and small $\eps_n \leq e^{-1/\kappa} \leq 1$:
\begin{align}
        \sup_{1 \leq k \leq N } &\sup_{t \in (t^{k-1},t^k]} \Vert \cI_n \bar{u}_n^k - u(t,\cdot) \Vert_{\Lp{2}(\Omega')} \leq C e^{ \l 1 +  C^4_{\mathrm{op}}  \r T} \Biggl( \Vert \cI_n \cP_n u_0 - u_0 \Vert_{\Lp{2}} \notag \\
        &+  \Vert \cI_n \cP_n \cA^*\ell - \cA^*\ell \Vert_{2} + \frac{T^{(p-1)}}{\eps_n^{d+p}} \Vert \cI_n \cP_n \tilde{K}(\cdot/\eps_n) - \tilde{K}(\cdot/\eps_n) \Vert_{\Lp{2}(\Omega \times \Omega)} \notag \\
        &+ \tau \frac{T^{p-1}}{\eps_n^{d+p}} \ls \frac{T^{p-2}}{\eps_n^{d+p}} + \frac{ T^{(p-2)/2} }{\eps_n^{(d+p)/2}} \rs \Biggr) + C T \eps_n \label{eq:proofs:rates:discreteNonlocalContinuumLocal:simplified:equation1}\\
    &\leq Ce^{\l 1 +  C^4_{\mathrm{op}} \r T} \Biggl( n^{-\alpha_1} + n^{-\alpha_2} + \frac{T^{(p-1)}}{\eps_n^{d+p + \alpha_3} n^{\alpha_3}} + \tau \frac{T^{(2p-3)}}{\eps_n^{2(d+p)}} \Biggr)+ C T \eps_n \label{eq:proofs:rates:discreteNonlocalContinuumLocal:simplified:equation2}\\
    &=: C (T_1 + T_2 + T_3 + T_4 + T_5) \label{eq:proofs:rates:discreteNonlocalContinuumLocal:simplified:equation6}
\end{align}
where we used Assumption \ref{ass:main:assumptions:K1} and Assumption \ref{ass:main:assumptions:R3} for \eqref{eq:proofs:rates:discreteNonlocalContinuumLocal:simplified:equation1}, Lemma \ref{lem:background:piecewise:equiRates} and the fact that for $p \geq 3$,
\[
2p - 3 \geq \begin{cases}
p-1 & \\
p/2  \\
3p/2 - 2
\end{cases}
\]
for \eqref{eq:proofs:rates:discreteNonlocalContinuumLocal:simplified:equation2}.

We now want all the terms in \eqref{eq:proofs:rates:discreteNonlocalContinuumLocal:simplified:equation6} to tend to $0$. By choosing $T = T(n) = \l 1+C_{\mathrm{op}}^4  \r^{-1} \log(\eps^{-\kappa})$ for some $\kappa >0$, we now derive sufficient conditions on $\eps_n$ and $\tau = \tau_n$ such that $T_i \to 0$ for $1 \leq i \leq 5$. 

We have that $\eps_n \gg n^{-\alpha_1/\kappa}$
implies that $T_1 \to 0$. Indeed, from the latter, $1 \gg \eps_n^{-\kappa} n^{-\alpha_1} = e^{\l 1 +  C^4_{\mathrm{op}} \r T } n^{-\alpha_1}$. Analogously, $\eps_n \gg n^{-\alpha_2/\kappa}$ implies that $T_2 \to 0$.

We have that
$\eps_n \gg  \ls \expW \l n^{\frac{\alpha_3}{\max\l 1 + (d+p+\alpha_3)/\kappa, p - 1 \r}}\r \rs^{-1/\kappa}$
implies that $T_3 \to 0$. Indeed, similarly to the above, the latter is equivalent to 
\begin{align}
    1 &\gg \ls \eps_n^{-\kappa} \log(\eps_n^{-\kappa}) \rs^{\max\l 1 + (d+p+\alpha_3)/\kappa, p - 1 \r} \l 1+C_{\mathrm{op}}^4  \r^{(1-p)} n^{-\alpha_3} \notag \\
    &\geq \frac{1}{\eps_n^{\kappa + d+p + \alpha_3}} \log(\eps_n^{-\kappa})^{(p-1)} \l 1+C_{\mathrm{op}}^4  \r^{(1-p)} n^{-\alpha_3} \notag \\
    &= T_3. \notag
\end{align}

We have that
\[
    \tau \ll  \l \frac{1}{1+C_{\mathrm{op}}^4}  \r^{(3-2p)} \frac{\eps_n^{2(d+p) + \kappa }}{\log(\eps_n^{-\kappa})^{(2p-3)}} = e^{- \l 1 +  C^4_{\mathrm{op}} \r T} \frac{\eps_n^{2(d+p)}}{T^{(2p-3)}}
\]
which implies that $T_4 \to 0$. 

Lastly, we have that $\lim_{n \to \infty} \eps_n \log(\eps_n^{-\kappa}) =0$ implying that $T_5 \to 0$ which concludes the proof.
\end{proof}

\begin{remark}[Generality of Corollary \ref{cor:proofs:rates:discreteNonlocalContinuumLocal:simplified}] \label{rem:proofs:rates:generality}
    The choice of $T(n)$ in Corollary \ref{cor:proofs:rates:discreteNonlocalContinuumLocal:simplified} is arbitrary: by considering the general rates obtained by combining the results of Proposition \ref{prop:proofs:rates:NonFullyDiscreteLocal:L2rates} and Theorem \ref{thm:proofs:continuumRates:continuumNonlocalLocal}, 
    one could derive similar results to \eqref{eq:proofs:rates:simplified:final} with other $T(n)$. Furthermore, by combining \eqref{eq:proofs:rates:NonFullyDiscreteLocal:L2rates:LPrates} with \eqref{eq:proofs:continuumRates:l2convergence:rates},
    one could extend the results to more general $u_0$ and $\cA^*\ell$.
\end{remark}

\begin{proof}[Proof of Corollary \ref{cor:proofs:rates:discreteNonlocalContinuum:final}]
In the proof $C>0$ will denote a constant that can be arbitrarily large, (which might be) dependent on $\Omega$, $u_0$ or/and $\cA^*\ell$, that may change from line to line.

We have:
\begin{align}
    \Vert \cI_n \bar{u}_n^N - u_{\infty} \Vert_{\Lp{2}(\Omega')} &\leq \Vert \cI_n \bar{u}_n^N - u(T,\cdot) \Vert_{\Lp{2}(\Omega')} + \Vert u(T,\cdot) - u_{\infty} \Vert_{\Lp{2}} \notag \\
    &\leq \sup_{1 \leq k \leq N } \sup_{t \in (t^{k-1},t^k]} \Vert \cI_n \bar{u}_{n}^k - u(t,\cdot) \Vert_{\Lp{2}(\Omega')} + \Vert u(T,\cdot) - u_{\infty} \Vert_{\Lp{2}} \notag \\
    &=: T_1 + T_2. \notag
\end{align}
We note that $C_{\mathrm{op}} = 1$ since $\cA = \Id$. For the $T_1$ term, we can apply Corollary \ref{cor:proofs:rates:discreteNonlocalContinuumLocal:simplified}. For the $T_2$ term, we note from Lemma \ref{lem:proofs:rates:lambdaConvexity} and first order conditions that:
\[
    C\Vert u(T,\cdot) - u_{\infty} \Vert_{\Lp{2}}^2 + \langle \nabla \cF(u_{\infty}), u(T,\cdot) - u_\infty \rangle = \Vert u(T,\cdot) - u_{\infty} \Vert_{\Lp{2}}^2 \leq \cF(u(T,\cdot)) - \cF(u_\infty).
\]
Furthermore, by standard considerations of gradient flows \cite{santambrogio2015optimal} we obtain that 
\[
\cF(u(T,\cdot)) - \cF(u_\infty) \leq e^{- T}(\cF(u_0) - \cF(u_\infty)) = \eps_n^{\kappa/2}(\cF(u_0) - \cF(u_\infty))
\]
so that combining the latter yields the claim.
\end{proof}

\begin{remark}[Asymptotic rates in Corollaries \ref{cor:proofs:rates:discreteNonlocalContinuumLocal:simplified} and \ref{cor:proofs:rates:discreteNonlocalContinuum:final}.] \label{rem:proofs:rates:discreteNonLocalContinuumLocal:asymptotics}

The asymptotic aspect in Corollaries \ref{cor:proofs:rates:discreteNonlocalContinuumLocal:simplified} and \ref{cor:proofs:rates:discreteNonlocalContinuum:final} are not very restrictive: indeed, one part comes from Theorem \ref{thm:proofs:continuumRates:continuumNonlocalLocal} and discussed in Remark \ref{rem:proofs:rates:continuumRates:asymptotic} while the second part comes from our estimates in Corollary \ref{cor:proofs:rates:discreteNonlocalContinuumLocal:simplified}. The latter are related to finding the smallest $n$ such that $\eps_n \leq e^{-1/\kappa}$ and $T \geq 1$ which for all reasonable choices of $\eps_n$ should occur for a very small $n$. 

\end{remark}

\begin{remark}[$g_n$ when $\cA = \Id$] 
In Corollary \ref{cor:proofs:rates:discreteNonlocalContinuum:final}, we pick $\cA = \Id$ so that Assumption \ref{ass:main:assumptions:O4} simplifies greatly: indeed, we can just pick $\bar{G}_n = \Id$ in this case.
\end{remark}

\subsection{Application to random graph models} \label{subsec:application}

Let us now consider the evolution problem \eqref{eq:main:notation:random:evolutionProblem}. We recall that we will now be working on $(0,1)$ with the uniform partition so that Assumptions \ref{ass:main:assumptions:S1} and \ref{ass:main:assumptions:S2} will always be satisfied and we have $d = 1$. 
 
\subsubsection{Well-posedness}

\begin{corollary}[Well-posedness of \eqref{eq:main:notation:random:evolutionProblem}]
\label{cor:application:wellPosedness}

Assume that Assumptions \ref{ass:main:assumptions:S1}, \ref{ass:main:assumptions:O1}, \ref{ass:main:assumptions:O2}, \ref{ass:main:assumptions:O3} and \ref{ass:main:assumptions:O4} hold. Let $K:[0,\infty) \mapsto [0,\infty)$, $p \geq 2$, $\mu > 0$, $T>0$, $u_0 \in \Lp{p}(\Omega)$, $\ell \in \Lp{2}(\Omega)$ and assume that $\cA^* \ell \in \Lp{p}(\Omega)$. Furthermore, let $n \in \bbN$ and define $\bar{K} = \cP_n \tilde{K}$, $\tilde{K}(x,y) = K(|x-y|)$, $\bar{f} = \cP_n \cA^*\ell $, $\bar{u}_0 = \cP_n u_0$.
Then, $\bbP$-a.e., for any partition $0 = t^0 < t^1 < \dots < t^N = T$, there exist a sequence $\{\bar{u}_n^k\}_{k=0}^N$ satisfying \eqref{eq:main:notation:random:evolutionProblem} with parameters  $\bar{\Lambda}_n$, $\bar{f}$ and $\bar{u}_0$ that is well-defined and unique. We also have
\[
\Vert \cI_n \bar{u}_n^k \Vert_{\Lp{r}} \leq \Vert u_0 \Vert_{\Lp{r}} + T \Vert \cA^*\ell \Vert_{\Lp{r}}
\]
for $1 \leq r \leq \infty$.
\end{corollary}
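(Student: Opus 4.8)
The plan is to reduce everything to the deterministic well-posedness result, Corollary \ref{cor:proofs:wellPosedness:discreteProblem:existence}, since \eqref{eq:main:notation:random:evolutionProblem} is literally \eqref{eq:main:notation:discrete:nonlocalProblemFully} with the weight matrix $\bar{K}$ replaced by the random matrix $\bar{\Lambda}_n$. First I would fix a realization of $\bar{\Lambda}_n$ and observe that, by Definition \ref{def:randomGraphModels}, it is symmetric, has non-negative entries, vanishing diagonal, and all entries in $\{0,\rho_n^{-1}\}$; since $\rho_n>0$ this yields $\Vert \cI_n \bar{\Lambda}_n \Vert_{\Lp{\infty}} \leq \rho_n^{-1}<\infty$, so the step-kernel $\cI_n \bar{\Lambda}_n$ satisfies Assumption \ref{ass:main:assumptions:K1}. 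Consequently Corollary \ref{cor:proofs:wellPosedness:nonlocalProblem:specialCases} applies and $\cE^{\cI_n \bar{\Lambda}_n}_{\cA,0}$ is $m$-completely accretive. These structural properties hold for $\bbP$-a.e.\ realization of $\bar{\Lambda}_n$ (indeed for every realization, once $\rho_n>0$), which is the only place probability enters.

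With this in hand I would repeat the proof of Corollary \ref{cor:proofs:wellPosedness:discreteProblem:existence} verbatim, replacing $\bar{K}$ by $\bar{\Lambda}_n$ throughout. Concretely: set $u_n^0 = \cI_n \bar{u}_0$ and iteratively $u_n^k = (\Id + \tau^{k-1}\cE^{\cI_n \bar{\Lambda}_n}_{\cA,0})^{-1}(u_n^{k-1} + \tau^{k-1}\cI_n \bar{f})$. By $m$-complete accretivity the resolvent $(\Id + \lambda \cE^{\cI_n \bar{\Lambda}_n}_{\cA,0})^{-1}$ is single-valued on $\Lp{p}(\Omega)$ and is an $\Lp{r}$-contraction for every $1 \leq r \leq \infty$, and it fixes $0$ by Assumption \ref{ass:main:assumptions:O3}; hence an induction shows $u_n^k \in \Lp{p}(\Omega)$ is well-defined with $\Vert u_n^k \Vert_{\Lp{r}} \leq \Vert u_n^0 \Vert_{\Lp{r}} + t^k \Vert \cI_n \bar{f} \Vert_{\Lp{r}}$. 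Using \cite[Lemma 6.1]{ElBouchairi} to commute $\cI_n$ with $\Delta_{p,n}^{\bar{\Lambda}_n}$ and Assumption \ref{ass:main:assumptions:O4} to commute $\cI_n$ with $\bar{G}_n$, one checks that any $\{\bar{u}_n^k\}$ solving \eqref{eq:main:notation:random:evolutionProblem} with parameters $\bar{\Lambda}_n,\bar{f},\bar{u}_0$ must satisfy $\cI_n \bar{u}_n^k = u_n^k$, which gives uniqueness and, recalling $\Vert \cI_n \bar{f} \Vert_{\Lp{r}} \leq \Vert \cA^*\ell \Vert_{\Lp{r}}$ and $\Vert \cI_n \bar{u}_0 \Vert_{\Lp{p}} \leq \Vert u_0 \Vert_{\Lp{p}}$ from \cite[Lemma 2.1]{ElBouchairi}, the claimed $\Lp{r}$ bound. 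For existence I would verify, exactly as in that proof, that $\mu \Delta_{p,n}^{\bar{\Lambda}_n} + \bar{G}_n$ is accretive on $\bbR^n$: by Proposition \ref{prop:background:monotony} this is equivalent to monotonicity, and the pairing $\langle (\mu \Delta_{p,n}^{\bar{\Lambda}_n} + \bar{G}_n)(\bar v) - (\mu \Delta_{p,n}^{\bar{\Lambda}_n} + \bar{G}_n)(\bar w), \bar v - \bar w\rangle$ splits into a term that is non-negative because $\bar{\Lambda}_n$ is symmetric and non-negative and $t \mapsto |t|^{p-2}t$ is increasing (the splitting-into-cases argument used for $\Delta_p^K$ in Proposition \ref{prop:proofs:wellPosedness:nonlocalProblem:completeAccretivityRangeCondition}) plus a term that is non-negative by Assumption \ref{ass:main:assumptions:O4}.

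I do not expect a genuine obstacle: the deterministic argument never used that $\bar{K}$ equals $\cP_n$ of a continuum kernel, only its symmetry, non-negativity and $\Lp{\infty}$-boundedness, so it transfers pathwise to $\bar{\Lambda}_n$. The only point worth emphasizing is that these three properties are inherited by $\bar{\Lambda}_n$ for $\bbP$-a.e.\ realization, which is immediate from Definition \ref{def:randomGraphModels}; no probabilistic estimates are needed at this stage (those will only appear when comparing the random and deterministic flows later in Section \ref{subsec:application}).
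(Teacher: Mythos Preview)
Your proposal is correct and matches the paper's approach exactly: the paper's proof is the one-line observation that since $\Vert \cI_n \bar{\Lambda}_n \Vert_{\Lp{\infty}}$ is bounded one proceeds as in Corollary~\ref{cor:proofs:wellPosedness:discreteProblem:existence} with the kernel $\cI_n \bar{\Lambda}_n$. Your version simply spells out the details of that repetition (and you are right that the properties of $\bar{\Lambda}_n$ in fact hold for every realization, not merely $\bbP$-a.e.).
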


\begin{proof}
Since $\Vert \cI_n \bar{\Lambda}_n \Vert_{\Lp{\infty}}$ is bounded we proceed exactly as in Corollary \ref{cor:proofs:wellPosedness:discreteProblem:existence} by considering \eqref{eq:proofs:wellPosedness:discreteProblem:iterative} with kernel $\cI_n \bar{\Lambda}_n$.
\end{proof}

\subsubsection{Rates}

As an intermediate step in establishing the rates between the random discrete and continuum local problems, we will have to compare random and deterministic discrete solutions which is what we discuss in the next proposition. The proof of the latter is similar to the proof of Proposition \ref{prop:proofs:rates:NonFullyDiscreteLocal:L2rates} but requires a few additional probabilistic estimates. Furthermore, on the results side, some of terms in the error bounds of Proposition \ref{prop:application:ratesDiscreteDeterministics} only differ from terms in \eqref{eq:proofs:rates:NonFullyDiscreteLocal:L2rates:LPrates} and \eqref{eq:proofs:rates:NonFullyDiscreteLocal:L2rates:Linftyrates} by having $\sup_{x \in \Omega}\Vert \cI_n \bar{\Lambda}_n(x,\cdot) \Vert_{\Lp{1}} + \Vert \tilde{K}^{\eps_n} \Vert_{\Lp{\infty}}$ instead of $\Vert \tilde{K}^{\eps_n} \Vert_{\Lp{\infty}}$.  For completeness, the full proof can be found in Section \ref{sec:supplementary:rates}.

\begin{proposition}[Random-to-deterministic rates]
\label{prop:application:ratesDiscreteDeterministics}

Assume that Assumptions \ref{ass:main:assumptions:O1}, \ref{ass:main:assumptions:O2}, \ref{ass:main:assumptions:O3}, \ref{ass:main:assumptions:O4} and \ref{ass:main:assumptions:K1} hold. Let $p \geq 2$, $\mu > 0$, $T>0$, $u_0 \in \Lp{p}(\Omega)$, $\ell \in \Lp{2}(\Omega)$ and assume that $\cA^* \ell \in \Lp{p}(\Omega)$. Furthermore, let $n \in \bbN$ and define $\bar{K}^{\eps_n} = \cP_n \tilde{K}^{\eps_n}$, $\bar{f} = \cP_n \cA^*\ell $, $\bar{u}_0 = \cP_n u_0$.
We also suppose that $\rho_n$ is a positive sequence with $\rho_n \to 0$ and $\rho_n \ll \eps_n^{1+p}$. Let $\Lambda_n\in\bbR^{n\times n}$ be the weight matrix defined as in Definition~\ref{def:randomGraphModels} with $\bar{K}=\bar{K}^{\eps_n}$.

Then, for any partition $0 = t^0 < t^1 < \dots < t^N = T$, there exists a sequence $\{\bar{v}_n^k\}_{k=0}^N$ satisfying \eqref{eq:main:notation:discrete:nonlocalProblemFully} with parameters $\bar{K}^{\eps_n}$, $\bar{f}$ and $\bar{u}_0$. In addition, $\bbP$-a.e., there exists a sequence $\{\bar{u}_n^k\}_{k=0}^N$ solving \eqref{eq:main:notation:random:evolutionProblem} with parameters $\bar{\Lambda}_n$, $\bar{f}$ and $\bar{u}_0$.

For any $\theta > 0$, we have the following rates with probability larger than $1 - \frac{(C + T^{2(p-1)}C)} {\theta^2 n \rho_n } \Vert \tilde{K}^{\eps_n} \Vert_{\Lp{\infty}}$ for some $C >0$ dependent on $\Omega$, $u_0$ and $\cA^*\ell$: \begin{enumerate}[leftmargin=*]
\vspace{-2mm}
    \setlength\itemsep{-1mm}
    \item if $u_0 \in \Lp{2p-2/(2p-1)}(\Omega)$ and $\cA^*\ell \in \Lp{2p-2/(2p-1)}(\Omega)$:
    \begin{align}
        &\sup_{0 \leq t \leq T} \Vert \uTimeInter - \vTimeInter \Vert_{\Lp{2}} \leq C e^{\l \frac{2 + 3C_{\mathrm{op}}^4}{2} \r T} \Biggl( \theta \notag \\
        &+ \tau^{p/(2p-1)} \sup_{x \in \Omega}\Vert \cI_n \bar{\Lambda}_n(x,\cdot) \Vert_{\Lp{1}}^{p/(2p-1)} (1 + T^{p-1-1/p})^{p/(2p-1)} \notag \\
        &\times (1+T^{p-1})^{p/(2p-1)}\l 1 + \sup_{x \in \Omega}\Vert \cI_n \bar{\Lambda}_n(x,\cdot) \Vert_{\Lp{1}} + \Vert \tilde{K}^{\eps_n} \Vert_{\Lp{\infty}}  \r^{p/(2p-1)} \notag \\
        &+ \tau^{(p+1)/(2p)} (1 + T^{p-1-1/p})^{1/2} \sup_{x \in \Omega}\Vert \cI_n \bar{\Lambda}_n(x,\cdot) \Vert_{\Lp{1}}^{1/2} \notag \\
        &\times (1+T^{p-1})^{(p+1)/(2p)} \l 1 + \sup_{x \in \Omega}\Vert \cI_n \bar{\Lambda}_n(x,\cdot) \Vert_{\Lp{1}} + \Vert \tilde{K}^{\eps_n} \Vert_{\Lp{\infty}} \r^{(p+1)/(2p)} \notag \\
        &+  \tau (1+T^{p-1})\l 1 + \sup_{x \in \Omega}\Vert \cI_n \bar{\Lambda}_n(x,\cdot) \Vert_{\Lp{1}} + \Vert \tilde{K}^{\eps_n} \Vert_{\Lp{\infty}}  \r  \Biggr);\notag
    \end{align}
    
    \item if $u_0 \in \Lp{\infty}(\Omega)$ and $\cA^*\ell \in \Lp{\infty}(\Omega)$:
    \begin{align}
        &\sup_{0 \leq t \leq T} \Vert \uTimeInter - \vTimeInter \Vert_{\Lp{2}} \leq C e^{\l \frac{2 + 3C_{\mathrm{op}}^4}{2} \r T} \Biggl( \theta \notag \\
        &+\tau (1+T^{p-1})\l 1 + \sup_{x \in \Omega}\Vert \cI_n \bar{\Lambda}_n(x,\cdot) \Vert_{\Lp{1}} + \Vert \tilde{K}^{\eps_n} \Vert_{\Lp{\infty}}  \r \notag \\
        &\times \Biggl[ 1 + \l \sup_{x \in \Omega}\Vert \cI_n \bar{\Lambda}_n(x,\cdot) \Vert_{\Lp{1}} (1 + T^{p-2})\r^{1/2} + \sup_{x \in \Omega}\Vert \cI_n \bar{\Lambda}_n(x,\cdot) \Vert_{\Lp{1}} (1 + T^{p-2}) \Biggr] \Biggr). \notag
    \end{align}
\end{enumerate}
\end{proposition}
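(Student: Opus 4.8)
**

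The plan is to mirror the proof of Proposition~\ref{prop:proofs:rates:NonFullyDiscreteLocal:L2rates}, carrying an extra layer of probabilistic control for the random weights. First I would record the well-posedness facts: the deterministic sequence $\{\bar v_n^k\}_{k=0}^N$ solving \eqref{eq:main:notation:discrete:nonlocalProblemFully} with kernel $\bar K^{\eps_n}$ exists and is unique by Corollary~\ref{cor:proofs:wellPosedness:discreteProblem:existence}, and $\bbP$-a.e.\ the random sequence $\{\bar u_n^k\}_{k=0}^N$ solving \eqref{eq:main:notation:random:evolutionProblem} with $\bar\Lambda_n$ exists and is unique by Corollary~\ref{cor:application:wellPosedness} (since $\|\cI_n\bar\Lambda_n\|_{\Lp\infty}\le 1/\rho_n<\infty$ a.e.). Both injected time-interpolants $\vTimeInter$ and $\uTimeInter$ then satisfy an evolution problem of the form given in Lemma~\ref{lem:proofs:rates:NonFullyDiscreteLocal:evolutionTimeInterpolation}, one with nonlocal operator $\Delta_p^{\cI_n\bar K^{\eps_n}}$ and the other with $\Delta_p^{\cI_n\bar\Lambda_n}$, both driven by $\cI_n\bar f$ and with the same initial datum $\cI_n\bar u_0$. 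The boundedness estimates $\|\cI_n\bar v_n^k\|_{\Lp r},\|\cI_n\bar u_n^k\|_{\Lp r}\le \|u_0\|_{\Lp r}+T\|\cA^*\ell\|_{\Lp r}$ from Corollaries~\ref{cor:proofs:wellPosedness:discreteProblem:existence} and~\ref{cor:application:wellPosedness} will be used throughout exactly as \eqref{eq:proofs:wellPosedness:discreteProblem:uniformBound} is used in Proposition~\ref{prop:proofs:rates:NonFullyDiscreteLocal:L2rates}.

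Next I would set $\eta_{\text{TimeInt}}=\uTimeInter-\vTimeInter$ and differentiate $\tfrac12\tfrac{\partial}{\partial t}\|\eta_{\text{TimeInt}}(t,\cdot)\|_{\Lp2}^2$, decomposing the right-hand side into: (T1) the monotone term $-\mu\int_\Omega(\Delta_p^{\cI_n\bar\Lambda_n}\uTimeInject-\Delta_p^{\cI_n\bar\Lambda_n}\vTimeInject)(\uTimeInject-\vTimeInject)$, which is $\le 0$ by the argument used for $T_1$ in Proposition~\ref{prop:proofs:wellPosedness:nonlocalProblem:completeAccretivityRangeCondition}; (T2) the interpolation-vs-injection discrepancy terms $\int_\Omega(\cdots)(\uTimeInter-\uTimeInject)$ and the analogue for $\vTimeInter-\vTimeInject$, bounded via the resolvent identity \eqref{eq:proofs:rates:NonFullyDiscreteLocal:L2Rates:resolvent} as in \eqref{eq:proofs:rates:NonFullyDiscreteLocal:L2Rates:differenceUs} but now with $\|K\|_{\Lp\infty}$ replaced by $\sup_{x\in\Omega}\|\cI_n\bar\Lambda_n(x,\cdot)\|_{\Lp1}+\|\tilde K^{\eps_n}\|_{\Lp\infty}$ after an application of \cite[Lemma~2.1, Lemma~4.1]{ElBouchairi} and Young's inequality; (T3) the kernel-mismatch term $-\mu\int_\Omega(\Delta_p^{\cI_n\bar\Lambda_n}\vTimeInject-\Delta_p^{\cI_n\bar K^{\eps_n}}\vTimeInject)\eta_{\text{TimeInt}}$; and (T4) the $\cA^*\cA$ term, bounded by $(\tfrac12+C_{\mathrm{op}}^4)\|\eta_{\text{TimeInt}}\|_{\Lp2}^2$ plus a $\uTimeInject-\uTimeInter$, $\vTimeInject-\vTimeInter$ remainder exactly as in \eqref{eq:proofs:rates:NonFullyDiscreteLocal:L2Rates:T4:4}. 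The bookkeeping of the two regularity regimes ($u_0,\cA^*\ell\in\Lp{2p-2/(2p-1)}$ via H\"older with exponents $p,q$ and \cite[Lemma~4.1 (ii)]{ElBouchairi} with $\alpha=1/p$; or $u_0,\cA^*\ell\in\Lp\infty$ with $\alpha=1$) is identical to the two cases in Proposition~\ref{prop:proofs:rates:NonFullyDiscreteLocal:L2rates}, which is why the stated bounds retain the same $\tau$-powers and $T$-powers.

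The genuinely new step, and the one I expect to be the main obstacle, is controlling (T3). Here one writes $\Delta_p^{\cI_n\bar\Lambda_n}-\Delta_p^{\cI_n\bar K^{\eps_n}}=\Delta_p^{\cI_n(\bar\Lambda_n-\bar K^{\eps_n})}$ and must bound $\sup_{x\in\Omega}\|\cI_n(\bar\Lambda_n-\bar K^{\eps_n})(x,\cdot)\|_{\Lp1}$ or an $\Lp2$-type analogue with high probability. By Definition~\ref{def:randomGraphModels} the entries $(\bar\Lambda_n)_{ij}$ are independent with mean $(\bar K^{\eps_n})_{ij}$ and variance $\le (\bar K^{\eps_n})_{ij}/\rho_n\le \|\tilde K^{\eps_n}\|_{\Lp\infty}/\rho_n$; so a row-sum concentration argument — Chebyshev applied to $\sum_j \tfrac1n(\bar\Lambda_n-\bar K^{\eps_n})_{ij}$, then a union bound over the $n$ rows, or more efficiently a direct second-moment bound on $\|\eta_{\text{TimeInt}}\|$ using independence — yields that with probability at least $1-\tfrac{(C+T^{2(p-1)}C)}{\theta^2 n\rho_n}\|\tilde K^{\eps_n}\|_{\Lp\infty}$ the contribution of (T3) is at most $\theta^2$ (up to the $T$-dependent prefactor), which after Young's inequality gives $|T3|\le \alpha_3\|\eta_{\text{TimeInt}}\|_{\Lp2}^2+C e^{\,\cdot\,T}\theta^2$-type terms; the factor $\tfrac{2+3C_{\mathrm{op}}^4}{2}$ rather than $1+C_{\mathrm{op}}^4$ in the Gronwall exponent comes from splitting the coefficient of $\|\eta_{\text{TimeInt}}\|_{\Lp2}^2$ slightly less tightly to absorb the probabilistic remainder. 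Finally I would choose $\alpha_1+\mu(\alpha_2+\alpha_3)=\tfrac12$ as before, collect all terms, apply Gronwall's lemma to obtain the bound on $\sup_{0\le t\le T}\|\uTimeInter-\vTimeInter\|_{\Lp2}$ on the stated event, and read off the two displayed estimates.
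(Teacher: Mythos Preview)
Your plan is essentially the paper's proof: same Gronwall set-up on $\|\uTimeInter-\vTimeInter\|_{\Lp{2}}^2$, same four-term decomposition (monotone piece, interpolation--injection discrepancy, kernel mismatch, $\cA^*\cA$), and the two regularity regimes are handled exactly as in Proposition~\ref{prop:proofs:rates:NonFullyDiscreteLocal:L2rates}.

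Two points of imprecision are worth flagging. First, for the kernel-mismatch term the paper does \emph{not} bound the kernel difference in isolation via a row-wise union bound; that route would lose a factor of $n$ in the probability and would not yield the stated $1/(n\rho_n)$. Instead one sets $Z_i^k=n^{-1}\sum_j(\bar\Lambda_{n,ij}-\bar K^{\eps_n}_{ij})|(\bar v_n^k)_j-(\bar v_n^k)_i|^{p-2}((\bar v_n^k)_j-(\bar v_n^k)_i)$, notes that $\bar v_n^k$ is \emph{deterministic} so the summands are independent and centered, and applies Markov directly to $\|\cI_n Z^k\|_{\Lp{2}}^2=\tfrac{1}{n}\sum_i(Z_i^k)^2$; the variance computation then produces exactly the $(C+T^{2(p-1)}C)\|\tilde K^{\eps_n}\|_{\Lp{\infty}}/(\theta^2 n\rho_n)$ probability. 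Your ``more efficient direct second-moment'' alternative is the right idea, but it must be applied to this $Z^k$, not to $\eta_{\text{TimeInt}}$. Second, the exponent $\tfrac{2+3C_{\mathrm{op}}^4}{2}$ does not come from ``absorbing the probabilistic remainder'': in the $T_4$ estimate one now has $\|\vTimeInject-\uTimeInject\|\le\|\vTimeInject-\vTimeInter\|+\|\zeta\|+\|\uTimeInter-\uTimeInject\|$ with \emph{three} summands (two inject--inter gaps instead of one), so $(a+b+c)^2\le 3(a^2+b^2+c^2)$ gives the coefficient $\tfrac12+\tfrac{3}{2}C_{\mathrm{op}}^4$. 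Accordingly the paper only needs two Young parameters with $\mu(\alpha_1+\alpha_2)=\tfrac12$, not three.
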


\begin{corollary}[Discrete random-to-continuum nonlocal rates]
\label{cor:application:rates}
Assume that Assumptions \ref{ass:main:assumptions:O1}, \ref{ass:main:assumptions:O2}, \ref{ass:main:assumptions:O3}, \ref{ass:main:assumptions:O4} and \ref{ass:main:assumptions:K1} hold. Let $p \geq 2$, $\mu > 0$, $T>0$, $u_0 \in \Lp{p}(\Omega)$, $\ell \in \Lp{2}(\Omega)$ and assume that $\cA^* \ell \in \Lp{p}(\Omega)$.  Furthermore, let $n \in \bbN$ and define $\bar{K}^{\eps_n} = \cP_n \tilde{K}^{\eps_n}$, $\bar{f} = \cP_n \cA^*\ell $, $\bar{u}_0 = \cP_n u_0$. We also suppose that $\rho_n$ is a positive sequence with $\rho_n \to 0$ and $\rho_n \ll \eps_n^{1+p}$. Let $\Lambda_n\in\bbR^{n\times n}$ be the weight matrix defined as in Definition~\ref{def:randomGraphModels} with $\bar{K}=\bar{K}^{\eps_n}$.

Then, for any partition $0 = t^0 < t^1 < \dots < t^N = T$, there exists a sequence $\{\bar{u}_n^k\}_{k=0}^N$ solving \eqref{eq:main:notation:random:evolutionProblem} with parameters $\bar{\Lambda}_n$, $\bar{f}$ and $\bar{u}_0$, and a solution $u_{\eps_n}$ to \eqref{eq:main:notation:nonlocal:nonlocalProblem} with kernel $K^{\eps_n}$.

For any $\theta > 0$, we have the following rates with probability larger than $1 - \frac{(C + T^{2(p-1)}C)} {\theta^2 n \rho_n } \Vert \tilde{K}^{\eps_n} \Vert_{\Lp{\infty}}$ for some $C >0$ dependent on $\Omega$, $u_0$ and $\cA^*\ell$: \begin{enumerate}[leftmargin=*]
\vspace{-2mm}
    \setlength\itemsep{-1mm}
    \item if $u_0 \in \Lp{2p-2/(2p-1)}(\Omega)$ and $\cA^*\ell \in \Lp{2p-2/(2p-1)}(\Omega)$, then:
        \begin{align}
    &\sup_{1 \leq k \leq N} \sup_{t \in (t^{k-1},t^k]} \Vert \cI_n \bar{u}_n^k - u_{\eps_n} \Vert_{\Lp{2}} \leq \tau C (1+T^{p-1})\l 1 + \sup_{x \in \Omega}\Vert \cI_n \bar{\Lambda}_n(x,\cdot) \Vert_{\Lp{1}} + \frac{\Vert K \Vert_{\Lp{\infty}}}{{\eps_n^{1+p}}}  \r \notag \\
    &+ C e^{\l \frac{2 + 3C_{\mathrm{op}}^4}{2} \r T} \Biggl( \theta \notag \\
        &+ \tau^{p/(2p-1)} \sup_{x \in \Omega}\Vert \cI_n \bar{\Lambda}_n(x,\cdot) \Vert_{\Lp{1}}^{p/(2p-1)} (1 + T^{p-1-1/p})^{p/(2p-1)} \notag \\
        &\times (1+T^{p-1})^{p/(2p-1)}\l 1 + \sup_{x \in \Omega}\Vert \cI_n \bar{\Lambda}_n(x,\cdot) \Vert_{\Lp{1}} + \frac{\Vert K \Vert_{\Lp{\infty}}}{{\eps_n^{1+p}}}  \r^{p/(2p-1)} \notag \\
        &+  \tau^{(p+1)/(2p)} (1 + T^{p-1-1/p})^{1/2} \sup_{x \in \Omega}\Vert \cI_n \bar{\Lambda}_n(x,\cdot) \Vert_{\Lp{1}}^{1/2} \notag \\
        &\times (1+T^{p-1})^{(p+1)/(2p)} \l 1 + \sup_{x \in \Omega}\Vert \cI_n \bar{\Lambda}_n(x,\cdot) \Vert_{\Lp{1}} + \frac{\Vert K \Vert_{\Lp{\infty}}}{{\eps_n^{1+p}}} \r^{(p+1)/(2p)} \notag \\
        &+  \tau (1+T^{p-1})\l 1 + \sup_{x \in \Omega}\Vert \cI_n \bar{\Lambda}_n(x,\cdot) \Vert_{\Lp{1}} + \frac{\Vert K \Vert_{\Lp{\infty}}}{{\eps_n^{1+p}}}  \r  \Biggr)\notag \\
    &+ C e^{\l 1 +  C^4_{\mathrm{op}} \r T} \Biggl( \Vert \cI_n \cP_n u_0 - u_0 \Vert_{\Lp{2}} + \Vert \cI_n \cP_n \cA^*\ell - \cA^*\ell \Vert_{\Lp{2}} \label{eq:application:randomContinuumNonlocal:Lp}\\
        &+ \frac{(1 + T^{p-1})}{\eps_n^{1+p}} \sup_{x \in \Omega} \Vert \cI_n \cP_n K(\vert x - \cdot \vert/\eps_n) - K(\vert x - \cdot \vert/\eps_n) \Vert_{\Lp{2}}  +  \tau \l 1 + \frac{\Vert K \Vert_{\Lp{\infty}}}{{\eps_n^{1+p}}} \r (1 + T^{p-1}) \notag\\
    &+    \tau^{p/(2p-1)} \l \frac{\Vert K \Vert_{\Lp{\infty}}}{{\eps_n^{1+p}}} \r^{p/(2p-1)} \l 1 + \frac{\Vert K \Vert_{\Lp{\infty}}}{{\eps_n^{1+p}}}\r^{p/(2p-1)} (1 + T^{p-1})^{p/(2p-1)} ( 1 + T^{p-1 - 1/p})^{p/(2p-1)} \notag\\
    &+  \tau^{(p+1)/(2p)} \l \frac{\Vert K \Vert_{\Lp{\infty}}}{{\eps_n^{1+p}}} \r^{1/2} \l 1 + T^{p-1 - 1/p}  \r^{1/2} \l 1 + \frac{\Vert K \Vert_{\Lp{\infty}}}{{\eps_n^{1+p}}} \r^{(p+1)/(2p)} (1 + T^{p-1})^{(p+1)/(2p)} \Biggr); \notag
\end{align}
    
    \item if $u_0 \in \Lp{\infty}(\Omega)$ and $\cA^*\ell \in \Lp{\infty}(\Omega)$, then:
    \begin{align}
    &\sup_{1 \leq k \leq N} \sup_{t \in (t^{k-1},t^k]} \Vert \cI_n \bar{u}_n^k - u_{\eps_n} \Vert_{\Lp{2}} \leq C e^{\l \frac{2 + 3C_{\mathrm{op}}^4}{2} \r T} \Biggl( \theta \notag \\
        &+\tau (1+T^{p-1})\l 1 + \sup_{x \in \Omega}\Vert \cI_n \bar{\Lambda}_n(x,\cdot) \Vert_{\Lp{1}} + \frac{\Vert K \Vert_{\Lp{\infty}}}{{\eps_n^{1+p}}}  \r \notag \\
        &\times \Biggl[ 1 + \l \sup_{x \in \Omega}\Vert \cI_n \bar{\Lambda}_n(x,\cdot) \Vert_{\Lp{1}} (1 + T^{p-2})\r^{1/2} + \sup_{x \in \Omega}\Vert \cI_n \bar{\Lambda}_n(x,\cdot) \Vert_{\Lp{1}} (1 + T^{p-2}) \Biggr] \Biggr) \notag  \\
    &+ C e^{ \l 1 +  C^4_{\mathrm{op}} \r T} \Biggl( \Vert \cI_n \cP_n u_0 - u_0 \Vert_{\Lp{2}} + \Vert \cI_n \cP_n \cA^*\ell - \cA^*\ell \Vert_{\Lp{2}} \label{eq:application:randomContinuumNonlocal:Linfty} \\
        &+ \frac{(1 + T^{p-1})}{\eps_n^{d+1}} \Vert \cI_n \cP_n \tilde{K}(\cdot/\eps_n) - \tilde{K}(\cdot/\eps_n) \Vert_{\Lp{2}(\Omega \times \Omega)} \notag \\
        &+ \tau \l 1 + \frac{\Vert K \Vert_{\Lp{\infty}}}{{\eps_n^{1+p}}} \r (1 + T^{p-1}) \ls 1 + \frac{\Vert K \Vert_{\Lp{\infty}}}{{\eps_n^{1+p}}}(1+T^{p-2}) + \l \frac{\Vert K \Vert_{\Lp{\infty}}}{{\eps_n^{1+p}}} (1 + T^{p-2}) \r^{1/2} \rs \biggr).\notag
\end{align}
\end{enumerate}
\end{corollary}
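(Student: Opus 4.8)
\noindent\textbf{Proof proposal for Corollary~\ref{cor:application:rates}.}
The plan is to prove the estimate by inserting the \emph{deterministic} discrete flow as an intermediate object and then concatenating Proposition~\ref{prop:application:ratesDiscreteDeterministics} with Proposition~\ref{prop:proofs:rates:NonFullyDiscreteLocal:L2rates}. First I would invoke Corollary~\ref{cor:application:wellPosedness} to obtain the random sequence $\{\bar{u}_n^k\}$ solving \eqref{eq:main:notation:random:evolutionProblem}; here one checks that the random graph model of Definition~\ref{def:randomGraphModels} is admissible, i.e.\ $\rho_n\bar{K}^{\eps_n}_{ij}\leq\rho_n\eps_n^{-(1+p)}\|K\|_{\Lp{\infty}}\leq 1$ for $n$ large, which is exactly the role of the hypothesis $\rho_n\ll\eps_n^{1+p}$. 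Let $\{\bar{v}_n^k\}$ be the deterministic sequence solving \eqref{eq:main:notation:discrete:nonlocalProblemFully} with parameters $\bar{K}^{\eps_n}=\cP_n\tilde{K}^{\eps_n}$, $\bar{f}$, $\bar{u}_0$ (well-posed by Corollary~\ref{cor:proofs:wellPosedness:discreteProblem:existence}), and let $u_{\eps_n}$ solve \eqref{eq:main:notation:nonlocal:nonlocalProblem} with kernel $K^{\eps_n}$. For $t\in(t^{k-1},t^k]$ the triangle inequality then gives
\[
\Vert \cI_n\bar{u}_n^k - u_{\eps_n}(t,\cdot)\Vert_{\Lp{2}} \leq \Vert \cI_n\bar{u}_n^k - \cI_n\bar{v}_n^k\Vert_{\Lp{2}} + \Vert \cI_n\bar{v}_n^k - u_{\eps_n}(t,\cdot)\Vert_{\Lp{2}},
\]
and the two cases of the corollary will follow by bounding the two summands separately.

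For the second summand I would apply Proposition~\ref{prop:proofs:rates:NonFullyDiscreteLocal:L2rates} directly to the deterministic flow with kernel $K^{\eps_n}$, obtaining the bounds \eqref{eq:proofs:rates:NonFullyDiscreteLocal:L2rates:LPrates} (case~1) and \eqref{eq:proofs:rates:NonFullyDiscreteLocal:L2rates:Linftyrates} (case~2) with $\|K\|_{\Lp{\infty}}$ replaced everywhere by $\|\tilde{K}^{\eps_n}\|_{\Lp{\infty}}$ and the kernel-approximation term involving $K(|x-\cdot|/\eps_n)$ rather than $K(|x-\cdot|)$; using $\|\tilde{K}^{\eps_n}\|_{\Lp{\infty}}\leq\eps_n^{-(1+p)}\|K\|_{\Lp{\infty}}$ (via \cite[Lemma 2.1]{ElBouchairi}, with $d=1$) then produces exactly the blocks \eqref{eq:application:randomContinuumNonlocal:Lp} and \eqref{eq:application:randomContinuumNonlocal:Linfty}. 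This contribution is deterministic.

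For the first summand, note that $\cI_n\bar{u}_n^k=\uTimeInject(t,\cdot)$ and $\cI_n\bar{v}_n^k=\vTimeInject(t,\cdot)$ on $(t^{k-1},t^k]$, so $\sup_k\sup_t\Vert\cI_n\bar{u}_n^k-\cI_n\bar{v}_n^k\Vert_{\Lp{2}}\leq\sup_t\Vert\uTimeInter-\vTimeInter\Vert_{\Lp{2}}+\sup_t\Vert\uTimeInter-\uTimeInject\Vert_{\Lp{2}}+\sup_t\Vert\vTimeInter-\vTimeInject\Vert_{\Lp{2}}$; I would bound the first term by Proposition~\ref{prop:application:ratesDiscreteDeterministics} (on the event of probability at least $1-\frac{(C+T^{2(p-1)}C)}{\theta^2 n\rho_n}\|\tilde{K}^{\eps_n}\|_{\Lp{\infty}}$) and the last two by the $\mathcal{O}(\tau(\cdots))$ interpolation-versus-injection estimate \eqref{eq:proofs:rates:NonFullyDiscreteLocal:L2Rates:differenceUs} and its random analogue (with $\sup_x\|\cI_n\bar{\Lambda}_n(x,\cdot)\|_{\Lp{1}}$ in the role of $\|K\|_{\Lp{\infty}}$), which only reintroduce terms already present in the statement. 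Since $\frac{2+3C_{\mathrm{op}}^4}{2}\geq 1+C_{\mathrm{op}}^4$, the prefactor $e^{\frac{2+3C_{\mathrm{op}}^4}{2}T}$ dominates $e^{(1+C_{\mathrm{op}}^4)T}$, and absorbing numerical constants into $C$ and collecting all terms yields the asserted bounds; the probability statement is inherited from Proposition~\ref{prop:application:ratesDiscreteDeterministics} because the deterministic-to-continuum part carries no randomness. The only step requiring genuine care, as opposed to bookkeeping, will be verifying that the $\Vert\uTimeInter-\uTimeInject\Vert_{\Lp{2}}$-type corrections for the \emph{random} iterates are truly subsumed by the listed terms; this rests on controlling $\Delta_{p,n}^{\bar{\Lambda}_n}$ applied to the uniformly $\Lp{p}$-bounded iterates by $\sup_x\|\cI_n\bar{\Lambda}_n(x,\cdot)\|_{\Lp{1}}$, exactly as in the proof of Proposition~\ref{prop:application:ratesDiscreteDeterministics}, after which the remainder is a routine concatenation.
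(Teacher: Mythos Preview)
Your proposal is correct and follows essentially the same route as the paper: insert the deterministic discrete flow $\{\bar{v}_n^k\}$ as an intermediate, decompose $\|\cI_n\bar{u}_n^k-u_{\eps_n}\|_{\Lp{2}}$ into the four pieces $\|\uTimeInject-\uTimeInter\|+\|\uTimeInter-\vTimeInter\|+\|\vTimeInter-\vTimeInject\|+\|\vTimeInject-u_{\eps_n}\|$, and bound these respectively by the random interpolation estimate, Proposition~\ref{prop:application:ratesDiscreteDeterministics}, the deterministic interpolation estimate~\eqref{eq:proofs:rates:NonFullyDiscreteLocal:L2Rates:differenceUs}, and Proposition~\ref{prop:proofs:rates:NonFullyDiscreteLocal:L2rates} with kernel $K^{\eps_n}$. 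The paper cites~\eqref{eq:application:ratesDiscreteDeterminisitic:equation1} (from the proof of Proposition~\ref{prop:application:ratesDiscreteDeterministics}) for the two interpolation pieces, which is exactly the ``random analogue of \eqref{eq:proofs:rates:NonFullyDiscreteLocal:L2Rates:differenceUs}'' you identify; your observation that controlling $\Delta_{p,n}^{\bar{\Lambda}_n}$ via $\sup_x\|\cI_n\bar{\Lambda}_n(x,\cdot)\|_{\Lp{1}}$ is the key ingredient there is spot on.
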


\begin{proof}
In the proof $C>0$ will denote a constant that can be arbitrarily large, (which might be) dependent on $\Omega$, $u_0$ or/and $\cA^*\ell$, that may change from line to line.

The existence of $\{\bar{u}_n^k\}_{k=0}^N$ follows from Proposition \ref{prop:application:ratesDiscreteDeterministics}. Let $\{\bar{v}_n^k\}_{k=0}^N$ be as in Proposition \ref{prop:application:ratesDiscreteDeterministics} as well. The existence of $u_{\eps_n}$ follows from Corollary \ref{cor:proofs:rates:discreteNonlocalContinuumLocal:simplified}. 
We note that
\begin{align}
    \sup_{1 \leq k \leq N} \sup_{t \in (t^{k-1},t^k]} \Vert \cI_n \bar{u}_n^k - u_{\eps_n} \Vert_{\Lp{2}} &= \sup_{0 < t \leq T} \Vert \uTimeInject - u_{\eps_n} \Vert_{\Lp{2}} \notag \\
    &\leq \sup_{0 < t \leq T} \Vert \uTimeInject - \uTimeInter \Vert_{\Lp{2}} \notag + \sup_{0 < t \leq T} \Vert \uTimeInter -  \vTimeInter \Vert_{\Lp{2}}
    \notag\\
    &+ \sup_{0 < t \leq T} \Vert \vTimeInter -  \vTimeInject \Vert_{\Lp{2}} + \sup_{0 < t \leq T} \Vert \vTimeInject -  u_{\eps_n} \Vert_{\Lp{2}} \notag \\
    &\leq \tau \l C + CT + (C+T^{p-1}C)\ls \sup_{x \in \Omega}\Vert \cI_n \bar{\Lambda}_n(x,\cdot) \Vert_{\Lp{1}} + \Vert \tilde{K}^{\eps_n} \Vert_{\Lp{\infty}}  \rs \r \notag \\
    &+ \sup_{0 < t \leq T} \Vert \uTimeInter -  \vTimeInter \Vert_{\Lp{2}} + \sup_{0 < t \leq T} \Vert \vTimeInject -  u_{\eps_n} \Vert_{\Lp{2}} \label{eq:application:randomContinuumNonlocal:equation1}
\end{align}
where we used \eqref{eq:application:ratesDiscreteDeterminisitic:equation1} for \eqref{eq:application:randomContinuumNonlocal:equation1}.

Relying on \eqref{eq:application:ratesDiscreteDeterministic:kHatBound}, it is possible to apply Proposition \ref{prop:proofs:rates:NonFullyDiscreteLocal:L2rates} with the kernel $\bar{K}^{\eps_n}$ and obtain the same bounds (with the scaling factor $\eps_n^{-(1+p)}$). Combining the latter with Proposition \ref{prop:application:ratesDiscreteDeterministics} we obtain   \eqref{eq:application:randomContinuumNonlocal:Lp} and \eqref{eq:application:randomContinuumNonlocal:Linfty}.
\end{proof}

Similarly to the setting in Section \ref{subsec:ratesDiscreteLocal}, we can combine the results of Corollary \ref{cor:application:rates} and Theorem \ref{thm:proofs:continuumRates:continuumNonlocalLocal} to obtain precise rates for fixed $T > 0$. 
Recalling the discussion in Remark \ref{rem:proofs:rates:generality}, the analogous results to Corollaries \ref{cor:proofs:rates:discreteNonlocalContinuumLocal:simplified} and \ref{cor:proofs:rates:discreteNonlocalContinuum:final} are Corollaries \ref{cor:application:simplified} and \ref{cor:application:final}. Before proceeding to the proof of the latter two results, we present two probabilistic lemmas that are essential for the establishment of the rates.

 First, we start a variant of \cite[Lemma 3.1]{ElBouchairi}, by explicitly writing out the estimates for the scaled kernels. In this section, we recall that we will be working on $(0,1)$ with the uniform partition so that Assumptions \ref{ass:main:assumptions:S1} and \ref{ass:main:assumptions:S2} will always be satisfied and we have $d = 1$. 
 In this setting the operators $\cI_n$ and $\cP_n$ can be defined
\begin{align*}
(\cI_nu)(x) & = \sum_{i\in[n]} u_i \chi_{\Omega_{n,i}}(x) 
 & (\cI_nu)(x,y) & = \sum_{i\in[n],j\in[n]} u_{ij} \chi_{\Omega_{n,i}}(x) \chi_{\Omega_{n,j}}(y) \\
(\cP_n u)_i & = n\int_{\Omega_{n,i}} u(x) \, \dd x & (\cP_n u)_{ij} & = n^2 \int_{\Omega_{n,i}\times \Omega_{n,j}} u(x,y) \, \dd x \, \dd y
\end{align*}
where $\Omega_{n,i}=(\frac{i-1}{n},\frac{i}{n})$.

\begin{lemma}[Convergence of the random weight matrix] \label{lem:application:convergenceLambda}
Assume $K$ satisfies Assumption \ref{ass:main:assumptions:K1}, $p > 1$, $d=1$, $\rho_n$ and $\eps_n$ are positive sequences with $\eps_n \to 0$ and $\rho_n \to 0$ and define $\tilde{K}_{\eps_n}(x,y) = \frac{2}{c(p,1)\eps_n^{p+1}} K\l\frac{|x-y|}{\eps}\r$, where $c(p,1)$ is defined by~\eqref{eq:main:notation:nonlocal:cpd}.
Define $\bar{K}^{\eps_n}=\cP_n\tilde{K}_{\eps_n}$ and let $\Lambda_n\in\bbR^{n\times n}$ be the weight matrix defined as in Definition~\ref{def:randomGraphModels} with $\bar{K}=\bar{K}^{\eps_n}$.
Assume that
\[
\frac{\log(n) \eps_n^{2p}}{n} \ll \rho_n \ll \eps_n^{p+1}.
\]
Then, with probability one, for $n$ large enough, we have
\begin{equation} \label{eq:application:convergenceLambda:rate}
\vert \sup_{x\in \Omega} \Vert \cI_n \bar{\Lambda}_n(x,\cdot) \Vert_{\Lp{1}}  - \sup_{x \in \Omega} \Vert \cI_n \bar{K}_{\eps_n}(x,\cdot)  \Vert_{\Lp{1}} \vert < \eps_n^{-p}.
\end{equation}
Furthermore, with probability one, for $n$ large enough, we have
\[ \sup_{x\in\Omega} \| \cI_n \bar{\Lambda}_n(x,\cdot)\|_{\Lp{1}} \leq \frac{C}{\eps_n^p}. \]
\end{lemma}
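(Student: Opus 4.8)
The plan is to reduce \eqref{eq:application:convergenceLambda:rate} to a concentration estimate for $n$ independent sums and then conclude by Borel--Cantelli. First I would note that both suprema are in fact maxima over the $n$ cells: for $x\in\Omega_{n,i}$ the slice $\cI_n\bar{\Lambda}_n(x,\cdot)$ is the step function $y\mapsto\sum_{j\in[n]}(\bar{\Lambda}_n)_{ij}\chi_{\Omega_{n,j}}(y)$, so $\|\cI_n\bar{\Lambda}_n(x,\cdot)\|_{\Lp{1}}=S_i:=\tfrac1n\sum_{j\in[n]}(\bar{\Lambda}_n)_{ij}$, and likewise $\|\cI_n\bar{K}^{\eps_n}(x,\cdot)\|_{\Lp{1}}=m_i:=\tfrac1n\sum_{j\in[n]}(\bar{K}^{\eps_n})_{ij}$. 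Hence $\sup_x\|\cI_n\bar{\Lambda}_n(x,\cdot)\|_{\Lp{1}}=\max_i S_i$ and $\sup_x\|\cI_n\bar{K}^{\eps_n}(x,\cdot)\|_{\Lp{1}}=\max_i m_i$, and since $|\max_i S_i-\max_i m_i|\le\max_i|S_i-m_i|$, it suffices to bound $\max_i|S_i-m_i|$ by $\eps_n^{-p}$.

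Second, I would record the deterministic size of the scaled kernel. By \cite[Lemma 2.1]{ElBouchairi} together with $\support(K)=[0,1]$ (so that $\tilde{K}_{\eps_n}(x,\cdot)$ is supported on $\{y:|x-y|\le\eps_n\}$), one has $\|\tilde{K}_{\eps_n}\|_{\Lp{\infty}}\le\tfrac{2\|K\|_{\Lp{\infty}}}{c(p,1)}\eps_n^{-(p+1)}$ and $\sup_x\|\tilde{K}_{\eps_n}(x,\cdot)\|_{\Lp{1}}\le C\eps_n^{-p}$; since $(\bar{K}^{\eps_n})_{ij}=(\cP_n\tilde{K}_{\eps_n})_{ij}$, these estimates pass to the discrete level and give $m_i\le C\eps_n^{-p}$ uniformly in $i$. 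For fixed $i$, $S_i$ is a sum of $n$ independent random variables, each equal to $\tfrac1{n\rho_n}$ with probability $\rho_n(\bar{K}^{\eps_n})_{ij}$ and $0$ otherwise; the upper bound $\rho_n\ll\eps_n^{1+p}$ makes $\rho_n(\bar{K}^{\eps_n})_{ij}\le\rho_n\|\tilde{K}_{\eps_n}\|_{\Lp{\infty}}\le1$ for $n$ large (so these are genuine probabilities) and forces each centred summand to have modulus $\le\tfrac1{n\rho_n}$. Moreover $\mathbb{E}\,S_i=\tfrac1n\sum_{j\ne i}(\bar{K}^{\eps_n})_{ij}$ differs from $m_i$ only by the diagonal term $\tfrac1n(\bar{K}^{\eps_n})_{ii}=O(\eps_n^{-(p+1)}/n)$, negligible against $\eps_n^{-p}$ since $n\eps_n\to\infty$, and $\mathrm{Var}(S_i)\le\tfrac1{n\rho_n}m_i\le\tfrac{C\eps_n^{-p}}{n\rho_n}$.

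Third, I would apply Bernstein's inequality to $S_i-\mathbb{E}\,S_i$: $\mathbb{P}(|S_i-\mathbb{E}\,S_i|>t)\le 2\exp\!\big(-\tfrac{t^2/2}{C\eps_n^{-p}/(n\rho_n)+t/(3n\rho_n)}\big)$. Choosing $t$ of order $\eps_n^{-p}$ makes both terms in the denominator of order $\eps_n^{-p}/(n\rho_n)$, so the bound becomes $2\exp(-c\,n\rho_n\eps_n^{-p})$ for some $c>0$; a union bound over $i\in[n]$ then gives $\mathbb{P}(\max_i|S_i-\mathbb{E}\,S_i|>t)\le 2n\exp(-c\,n\rho_n\eps_n^{-p})$, and the assumed two-sided control of $\rho_n$ by powers of $\eps_n$ and by $\log n/n$ is exactly what is used to make the right-hand side summable in $n$, so Borel--Cantelli yields $\max_i|S_i-\mathbb{E}\,S_i|\le t$ almost surely for all large $n$. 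Combining with the negligible diagonal correction gives $\max_i|S_i-m_i|<\eps_n^{-p}$ for $n$ large, i.e. \eqref{eq:application:convergenceLambda:rate}; the final claim then follows by the triangle inequality together with the already-noted deterministic bound $\max_i m_i=\sup_x\|\cI_n\bar{K}^{\eps_n}(x,\cdot)\|_{\Lp{1}}\le C\eps_n^{-p}$. The main obstacle is this concentration step: one must pick the correct deviation scale and verify, from the hypotheses relating $\rho_n$, $\eps_n$ and $n$, that the Bernstein exponent outgrows $\log n$; the role of $\rho_n\ll\eps_n^{1+p}$ there is to keep the per-edge increments at scale $1/(n\rho_n)$ rather than $\|\tilde{K}_{\eps_n}\|_{\Lp{\infty}}/n$.
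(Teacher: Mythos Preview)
Your proposal is correct and follows essentially the same route as the paper: reduce the suprema to $\max_i S_i$ and $\max_i m_i$, control $\max_i|S_i-m_i|$ via a union bound and Bernstein's inequality, invoke Borel--Cantelli, and then combine with the deterministic bound $\max_i m_i\le C\eps_n^{-p}$ for the final claim. Your variance estimate $\mathrm{Var}(S_i)\le m_i/(n\rho_n)\le C\eps_n^{-p}/(n\rho_n)$ is in fact sharper than the paper's cruder per-term bound $1/(4\rho_n^2)$, yielding the larger Bernstein exponent $cn\rho_n\eps_n^{-p}$ in place of $cn\rho_n^2\eps_n^{-2p}$, and your explicit treatment of the diagonal correction is a detail the paper glosses over.
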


\begin{proof}
We can estimate as follows:
\begin{align}
& \bbP \Biggl( \vert \sup_{x \in \Omega}\Vert \cI_n \bar{\Lambda}_n(x,\cdot) \Vert_{\Lp{1}}  - \sup_{x \in \Omega} \Vert \cI_n \bar{K}^{\eps_n}(x,\cdot)  \Vert_{\Lp{1}} \vert > \eps_n^{-p} \Biggr) \notag \\
& \qquad \qquad = \bbP \l \vert \max_{i\in [n]} \frac{1}{n} \sum_{j \in [n]} \bar{\Lambda}_{n,ij}   - \max_{i\in [n]} \frac{1}{n} \sum_{j \in [n]} \bar{K}^{\eps_n}_{ij} \vert > \eps_n^{-p} \r \notag \\
& \qquad \qquad \leq \bbP \l \max_{i\in [n]} \vert \frac{1}{n} \sum_{j \in [n]} \l \bar{\Lambda}_{n,ij}   - \bar{K}^{\eps_n}_{ij} \r \vert > \eps_n^{-p} \r \notag \\
& \qquad \qquad \leq \sum_{i\in [n]} \bbP \l \vert \frac{1}{n} \sum_{j\in [n]} \l \bar{\Lambda}_{n,ij}  - \bar{K}^{\eps_n}_{ij} \r \vert > \eps_n^{-p} \r \notag \\
& \qquad \qquad \leq 2 \sum_{i\in [n]} \exp\l-\frac{\frac12 n^2\eps_n^{-2p}}{\sum_{j\in [n]} \frac{\bar{K}_{ij}^{\eps_n}}{\rho_n} (1-\rho_n\bar{K}^{\eps_n}_{ij}) + \frac{n}{3\eps_n^p} (\frac{1}{\rho_n} + \frac{C}{\eps_n^{p+1}}) } \r  \label{eq:application:convergenceLambda:Bernstein} \\
& \qquad \qquad \leq  2 \sum_{i\in[n]} \exp\l - \frac{cn^2\eps_n^{-2p}}{\frac{n}{\rho_n^2} + \frac{n}{\eps_n^{p}}(\frac{1}{\rho_n}+\frac{1}{\eps_n^{p+1}})} \r \notag \\
& \qquad \qquad \leq 2 \sum_{i\in[n]} \exp\l - \frac{cn\eps_n^{-2p}\rho_n^2}{1+\frac{\rho_n}{\eps_n^{p}} + \frac{\rho_n^2}{\eps_n^{2p+1}}} \r \notag \\
& \qquad \qquad \leq 2\sum_{i\in[n]} \exp\l-\frac{cn\rho_n^2}{\eps_n^{2p}}\r \notag
\end{align}
where we used Bernstein's lemma for~\eqref{eq:application:convergenceLambda:Bernstein} after noticing that $\bbE[\bar{\Lambda}_{n,ij}-\bar{K}_{ij}^{\eps_n}]=0$, $\la\bar{\Lambda}_{n,ij}-\bar{K}^{\eps_n}_{ij}\ra\leq \frac{1}{\rho_n} + \frac{C}{\eps_n^{p+1}}$ and $\bbE[\bar{\Lambda}_{n,ij}-\bar{K}^{\eps_n}_{ij}]^2 = \frac{\bar{K}^{\eps_n}_{ij}}{\rho_n}(1-\rho_n\bar{K}^{\eps_n}_{ij})\leq \frac{1}{4\rho_n^2}$.
Choosing $\gamma>2$ such that $\frac{cn\rho_n^2}{\eps_n^{2p}} \geq \gamma \log(n)$ we have
\[ \bbP \Biggl( \vert \sup_{x \in \Omega}\Vert \cI_n \bar{\Lambda}_n(x,\cdot) \Vert_{\Lp{1}}  - \sup_{x \in \Omega} \Vert \cI_n \bar{K}^{\eps_n}(x,\cdot)  \Vert_{\Lp{1}} \vert > \eps_n^{-p} \Biggr) \leq 2 \sum_{i\in[n]} n^{-\gamma} \]
which is summable.
By the Borel-Cantelli Lemma, with probability one, for all but finitely many $n$,
\[
\vert \sup_{x \in \Omega}\Vert \cI_n \bar{\Lambda}_n(x,\cdot) \Vert_{\Lp{1}}  - \sup_{x \in \Omega} \Vert \cI_n \bar{K}^{\eps_n}(x,\cdot)  \Vert_{\Lp{1}} \vert < \eps_n^{-p}.
\]

For the furthermore part of the lemma we note that we can write
\begin{align*}
\lda \cI_n \bar{K}^{\eps_n}(x,\cdot) \rda_{\Lp{1}} & = \int_\Omega \cI_n\bar{K}^{\eps_n}(x,y) \, \dd y \\
 & = \int_\Omega \sum_{i.j=1}^n (\cP_n \tilde{K}^{\eps_n})_{i,j} \chi_{\Omega_{n,i}}(x) \chi_{\Omega_{n,j}}(y) \, \dd y \\
 & = \frac{n^2 C(p,1)}{\eps_n^{p+1}} \int_\Omega \sum_{i,j=1}^n \int_{\Omega_{n,i}\times \Omega_{n,j}} K\l\frac{|w-z|}{\eps_n}\r \, \dd w \, \dd z \, \chi_{\Omega_{n,i}}(x) \chi_{\Omega_{n,j}}(y) \, \dd y \\
 & = \frac{n C(p,1)}{\eps_n^{p+1}} \sum_{i,j=1}^n \int_{\Omega_{n,i}\times\Omega_{n,j}} K\l\frac{|w-z|}{\eps_n}\r \, \dd w \, \dd z \, \chi_{\Omega_{n,i}}(x).
\end{align*}
So,
\[ \sup_{x\in\Omega} \| \cI_n \bar{K}^{\eps_n}(x,\cdot) \|_{\Lp{1}} \leq \frac{C(p,1)}{\eps_n^{p+1}} \int_{\bbR} K\l\frac{|z|}{\eps}\r \, \dd z = \frac{C(p,1)}{\eps_n^p} \int_{\bbR} K(|z|) \, \dd z. \]
Combining with the first part of the lemma, this completes the proof.
\end{proof}

\begin{remark} \label{rem:application:tradeOff}
Asymptotic rates and Borel-Cantelli arguments. The asymptotic claim  in \eqref{eq:application:convergenceLambda:rate}, i.e. the existence of some $N$ such that for all $n\geq N$~\eqref{eq:application:convergenceLambda:rate} holds, comes from a Borel-Cantelli argument. 
All we know is that, with probability one, $N<\infty$.
This can be circumvented with the following trade-off: either one argues with a Borel-Cantelli Lemma and obtains an $\bbP$-a.e. statement with an asymptotic part or one does not and is then left with a claim holding with high probability. 
\end{remark}

Similarly to what was discussed in Section \ref{subsec:ratesDiscreteLocal} we want to find conditions under which we will be able to take the right-hand side of \eqref{eq:application:randomContinuumNonlocal:Linfty} to $0$. To that purpose, we explicit a choice of $\theta = \theta_n$ that is compatible with the conditions derived in Lemma \ref{lem:application:convergenceLambda}.

\begin{lemma}[Rates for $\theta_n$] \label{lem:application:ratesThetaSimplified}
Let $d = 1$, $p > 1$, $\rho_n$ and $\eps_n$ be positive sequences with $\eps_n \to 0$ and $\rho_n \to 0$. For some $\kappa > 0$, let $T(n) = \l \frac{2}{2 + 3C_{\mathrm{op}}^4} \r \log(\eps_n^{-\kappa})$. Assume that
\[ \eps_n \gg \ls \expW \l   \l \log(n)  \r^{1/\max(2(p-1),2+(1+3p)/\kappa)}   \r \rs^{-1/\kappa}.\]
Then, $\frac{[\log(\eps_n^{-\kappa})]^{2(p-1)}}{ \eps_n^{1+3p} \log(n)} \ll \eps_n^{2\kappa}$.
Moreover, for a positive sequence $\theta_n$ satisfying
\[
    \frac{[\log(\eps_n^{-\kappa})]^{2(p-1)}}{ \eps_n^{1+3p} \log(n)} \ll \theta_n^2 \ll \eps_n^{2\kappa},
\]
and assuming
\[ \frac{\log(n) \eps_n^{2p}}{n} \ll \rho_n \]
we have that $e^{\l \frac{2 + 3C_{\mathrm{op}}^4}{2} \r T}\theta_n \ll 1$ and $\frac{T^{2(p-1)}}{\theta_n^2 n \rho_n \eps_n^{1+p}} \ll 1$.
\end{lemma}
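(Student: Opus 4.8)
The plan is to reduce the whole statement to elementary asymptotic bookkeeping on the quantity $x_n := \eps_n^{-\kappa}$, exploiting the defining identity $\expW(y)\log \expW(y) = y$ recalled in the preliminaries. Write $M := \max\{2(p-1),\, 2+(1+3p)/\kappa\}$ and $z_n := \expW((\log n)^{1/M})$. The hypothesis $\eps_n \gg [\expW((\log n)^{1/M})]^{-1/\kappa}$ is, after raising to the power $-\kappa$, exactly $x_n \ll z_n$; moreover $z_n\log z_n = (\log n)^{1/M}$ by definition of $\expW$, and since $z_n \to \infty$ this forces $z_n \le (\log n)^{1/M}$ and $\log z_n \le (\log n)^{1/M}$ for all large $n$.

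For the first assertion, substituting $\eps_n = x_n^{-1/\kappa}$ shows it is equivalent to $x_n^{a}(\log x_n)^{b}\ll \log n$ with $a := 2+(1+3p)/\kappa$ and $b := 2(p-1)$, so that $M=\max\{a,b\}$. First I would bound $z_n^{a}(\log z_n)^{b}\le \log n$: when $a\ge b$, write $z_n^{a}(\log z_n)^{b}=(z_n\log z_n)^{b}\, z_n^{a-b}\le (\log n)^{b/M}(\log n)^{(a-b)/M}=(\log n)^{a/M}=\log n$; when $b>a$, write $z_n^{a}(\log z_n)^{b}=(z_n\log z_n)^{a}(\log z_n)^{b-a}\le(\log n)^{a/M}(\log n)^{(b-a)/M}=(\log n)^{b/M}=\log n$. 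Then, since $x_n\le z_n$ and $\log x_n\le\log z_n$ for large $n$, $x_n^{a}(\log x_n)^{b}\le (x_n/z_n)^{a}\, z_n^{a}(\log z_n)^{b}\le (x_n/z_n)^{a}\log n$, and $(x_n/z_n)^{a}\to 0$ since $x_n\ll z_n$; translating back through $x_n=\eps_n^{-\kappa}$ gives the first claim. In particular the interval sandwiching $\theta_n^{2}$ is nonempty for large $n$, so such a sequence $\theta_n$ exists, e.g.\ the geometric mean of the two bounds.

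For the second part, the choice $T(n)=(2/(2+3C_{\mathrm{op}}^{4}))\log(\eps_n^{-\kappa})$ makes $e^{((2+3C_{\mathrm{op}}^{4})/2)\,T}=\eps_n^{-\kappa}$ identically, so $e^{((2+3C_{\mathrm{op}}^{4})/2)\,T}\theta_n=\eps_n^{-\kappa}\theta_n\ll 1$ follows at once from $\theta_n^{2}\ll\eps_n^{2\kappa}$ by taking square roots. For the last estimate, $T^{2(p-1)}$ is a fixed constant times $[\log(\eps_n^{-\kappa})]^{2(p-1)}$, so it suffices to prove $[\log(\eps_n^{-\kappa})]^{2(p-1)}\ll\theta_n^{2}\, n\rho_n\eps_n^{1+p}$; using the lower bound $\theta_n^{2}\gg [\log(\eps_n^{-\kappa})]^{2(p-1)}/(\eps_n^{1+3p}\log n)$ one obtains
\[
\frac{[\log(\eps_n^{-\kappa})]^{2(p-1)}}{\theta_n^{2}\, n\rho_n\eps_n^{1+p}} \ll \frac{\eps_n^{1+3p}\log n}{n\rho_n\eps_n^{1+p}} = \frac{\eps_n^{2p}\log n}{n\rho_n} \longrightarrow 0,
\]
the convergence holding precisely because $\rho_n\gg \eps_n^{2p}\log n/n$.

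The one genuinely delicate point is the first assertion: the crude bound $z_n^{a}(\log z_n)^{b}\le(\log n)^{(a+b)/M}$ is useless, since $(a+b)/M$ can exceed $1$; one must instead keep the product $z_n\log z_n=(\log n)^{1/M}$ intact and absorb the \emph{larger} of the exponents $a,b$ into it, which is exactly why $M$ is defined as $\max\{a,b\}$. All the remaining steps are routine manipulations of the $\ll$-relation, and I do not anticipate any obstacle beyond tracking this case distinction.
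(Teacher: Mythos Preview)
Your proof is correct and follows essentially the same route as the paper's. The only structural difference is in the first assertion: the paper bounds $x_n^{a}(\log x_n)^{b}\le (x_n\log x_n)^{M}$ in one line (valid once $x_n\ge e$) and then invokes the hypothesis directly, whereas you work at the level of $z_n$ via the case split $a\ge b$ versus $b>a$ to get the sharp inequality $z_n^{a}(\log z_n)^{b}\le\log n$ before passing to $x_n$. Both lead to the same conclusion; your version is more explicit about why $M=\max\{a,b\}$ is the right exponent, while the paper's is terser. The ``Moreover'' part is handled identically in both.
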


\begin{proof}
We start by assuming $\frac{[\log(\eps_n^{-\kappa})]^{2(p-1)}}{ \eps_n^{1+3p} \log(n)} \ll \eps_n^{2\kappa}$ holds, take $\eps_n$, $\rho_n$, $T=T(n)$ and $\rho_n$ satisfying the appropriate assumptions and show $e^{\l \frac{2 + 3C_{\mathrm{op}}^4}{2} \r T}\theta_n \ll 1$ and $\frac{T^{2(p-1)}}{\theta_n^2 n \rho_n \eps_n^{1+p}} \ll 1$.
The former is equivalent to 
$\eps_n^{-\kappa} \theta_n \ll 1$ 
or 
$
\theta_n^2 \ll \eps_n^{2\kappa}.
$

For the latter, by assumption, we have that $\rho_n^{-1} \ll n \log(n)^{-1} \eps_n^{-2p}$ so that
\[
\frac{T^{2(p-1)}}{\theta_n^2 n \rho_n \eps_n^{1+p}} \ll \frac{T^{2(p-1)}}{\theta_n^2 \eps_n^{1+3p} \log(n)} 
\]
and therefore, the lower bound 
\[
 \frac{T^{2(p-1)} }{\eps_n^{1+3p} \log(n)} = \frac{\l \frac{2}{2 + 3C_{\mathrm{op}}^4} \r^{2(p-1)}[\log(\eps_n^{-\kappa})]^{2(p-1)}}{ \eps_n^{1+3p} \log(n)} \ll \theta_n^2 
\]
is sufficient.

It remains to check under which conditions we have
\[
\frac{\log(\eps_n^{-\kappa})^{2(p-1)}}{ \eps_n^{1+3p} \log(n)} \ll  \eps_n^{2\kappa}.
\]
This is equivalent to 
\[
 \frac{\log(\eps_n^{-\kappa})^{2(p - 1)}}{\eps_n^{2\kappa + 1 + 3p}} \ll  \log(n)
\]
which, for $n$ large enough, is implied by
\[
\l \frac{1}{\eps_n^{\kappa}} \log(\eps_n^{-\kappa}) \r^{\max(2(p-1),2+(1+3p)/\kappa)} \ll \log(n).
\]
In turn this leads to
\[
\eps_n \gg \ls \expW \l   \l \log(n)  \r^{1/\max(2(p-1),2+(1+3p)/\kappa)}   \r \rs^{-1/\kappa}. \qedhere
\]
\end{proof}

\begin{proof}[Proof of Corollary \ref{cor:application:simplified}]
In the proof $C>0$ will denote a constant that can be arbitrarily large, (which might be) dependent on $\Omega$, $u_0$ or/and $\cA^*\ell$, that may change from line to line.

The existence claims follow from Corollary \ref{cor:application:rates} and Theorem \ref{thm:proofs:wellPosedness:localProblem:existenceUniqueness}.

In view of Corollary \ref{cor:proofs:rates:discreteNonlocalContinuumLocal:simplified}, let $T_1$ be the terms in the combination of \eqref{eq:proofs:continuumRates:l2convergence:rates} and  \eqref{eq:application:randomContinuumNonlocal:Linfty} that are not included in the combination of 
\eqref{eq:proofs:continuumRates:l2convergence:rates} and \eqref{eq:proofs:rates:NonFullyDiscreteLocal:L2rates:Linftyrates}.
Since the constant in the combination of \eqref{eq:proofs:continuumRates:l2convergence:rates} and  \eqref{eq:application:randomContinuumNonlocal:Linfty} 
is independent of $T$, we have that for large $T \geq 1$ and small $\eps_n \leq 1$:
\begin{align}
    T_1
    &\leq C e^{\l \frac{2 + 3C_{\mathrm{op}}^4}{2} \r T} \Biggl( \theta_n + \tau_n   T^{p-1}\l \sup_{x \in \Omega}\Vert \cI_n \bar{\Lambda}_n(x,\cdot) \Vert_{\Lp{1}} + \frac{\Vert K \Vert_{\Lp{\infty}}}{{\eps_n^{1+p}}}  \r \notag \\
        &\times \Biggl[\l \sup_{x \in \Omega}\Vert \cI_n \bar{\Lambda}_n(x,\cdot) \Vert_{\Lp{1}} T^{p-2}\r^{1/2} + \sup_{x \in \Omega}\Vert \cI_n \bar{\Lambda}_n(x,\cdot) \Vert_{\Lp{1}} T^{p-2} \Biggr] \Biggr) \notag \\
    &\leq  C e^{\l \frac{2 + 3C_{\mathrm{op}}^4}{2} \r T} \Biggl( \theta_n + \tau_n  \frac{T^{p-1}}{\eps_n^{1+p}}  \times \Biggl[ \frac{T^{(p-2)/2}}{\eps_n^{p/2}} + \frac{T^{p-2}}{\eps_n^{p}} \Biggr] \Biggr) \label{eq:application:simplified:T1:equation3}\\
    &\leq  C e^{\l \frac{2 + 3C_{\mathrm{op}}^4}{2} \r T} \Biggl( \theta_n + \tau_n  \frac{T^{2p-3}}{\eps_n^{1+2p}}  \Biggr) \label{eq:application:simplified:T1:equation4} \\
    &=: T_2 + T_3 \notag
\end{align}
where we used 
Lemma \ref{lem:application:convergenceLambda} for \eqref{eq:application:simplified:T1:equation3}
similar reasoning for $p \geq 3$ as in Corollary \ref{cor:proofs:rates:discreteNonlocalContinuumLocal:simplified} for \eqref{eq:application:simplified:T1:equation4}.

By assumption, we can apply Lemma \ref{lem:application:ratesThetaSimplified} to see that $T_2 \to 0$.
We have that $T_3 \to 0$ is equivalent to 
\[
\tau_n \ll \frac{ \eps_n^{1 + 2p + \kappa}}{\log(\eps_n^{-\kappa})^{2p - 3}}
\]
which holds by assumption.
We conclude the proof by combining \eqref{eq:application:simplified:T1:equation4} and \eqref{eq:proofs:rates:simplified:final} to obtain \eqref{eq:application:simplified:final}. The probability claim follows from Lemma \ref{lem:application:ratesThetaSimplified}.
\end{proof}

\begin{remark}[CFL condition for random-to-deterministic error convergence]
    Using the notation of Corollary \ref{cor:application:simplified}, we see from the proof of the latter that the requirement on $\tau_n$ that ensures $T_1 \to 0$ (note that $T_1$ here is the error bound arising from the comparison of the random solution to the deterministic one) is slightly better than the CFL condition for our complete problem. Indeed, for $T_1 \to 0$, we only need 
    \[
    \tau_n \ll \frac{ \eps_n^{1 + 2p + \kappa}}{\log(\eps_n^{-\kappa})^{2p - 3}} \qquad\text{as opposed to} \qquad \tau_n \ll \frac{ \eps_n^{2 + 2p + \kappa}}{\log(\eps_n^{-\kappa})^{2p - 3}}
    \]
    for the rest of the terms.
\end{remark}

\begin{proof}[Proof of Corollary \ref{cor:application:final}]
We proceed as in the proof of Corollary \ref{cor:proofs:rates:discreteNonlocalContinuum:final} with Corollary \ref{cor:application:simplified}.
\end{proof}

\begin{remark}[Asymptotics in Corollaries \ref{cor:application:simplified} and \ref{cor:application:final}]

The rates in Corollary \ref{cor:application:final} are formulated for large $n$. This is partly non-restrictive in practice as described in Remark \ref{rem:proofs:rates:discreteNonLocalContinuumLocal:asymptotics} and in the proof of the Corollary itself. The non-desirable part of this requirement stems from the application of Lemma \ref{lem:application:convergenceLambda} as discussed in Remark \ref{rem:application:tradeOff}. We conclude from these results that when considering random graph models, one loses the full traceability of the asymptotic aspect of the rates as opposed to the results in Corollary \ref{cor:proofs:rates:discreteNonlocalContinuum:final}.
\end{remark}

\section*{Acknowledgements}
The authors were supported by the European Research Council under the European Union's Horizon 2020 research and innovation programme grant agreement No 777826 (NoMADS). AW is particularly thankful for the warm hospitality during his stay at GREYC where the research idea behind the paper was developed.

\bibliographystyle{plain}
\bibliography{references}{}

\newpage

\section{Supplementary material}
 
\subsection{Technical lemmas} 

\subsubsection{Domain of integration}

\begin{lemma}[Asymptotics of domain of integration] \label{lem:proofs:rates:continuumRates:domainIntegration}

Assume that Assumptions \ref{ass:main:assumptions:S1} and \ref{ass:main:assumptions:L1} hold. Let $\Omega'$ be compactly contained in $\Omega$ and $C \subseteq \bbR^d$ be a compact set. Then, for $n$ large enough, for all $x \in \Omega'$, the set $S_{\eps_n}(x) = \{z \in \bbR^d \spaceBar x + \eps_n z \in \Omega \} \cap C$ is equal to $C$.
\end{lemma}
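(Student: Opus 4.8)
The plan is to show the two inclusions separately. The inclusion $S_{\eps_n}(x) \subseteq C$ is immediate for every $n$ and every $x$, since $S_{\eps_n}(x)$ is defined as an intersection with $C$. All the content is therefore in the reverse inclusion $C \subseteq S_{\eps_n}(x)$, which amounts to the statement that $x + \eps_n z \in \Omega$ for every $x \in \Omega'$ and every $z \in C$, once $n$ is large enough.

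First I would fix the two geometric quantities. If $\Omega = \bbR^d$ the claim is trivial, so assume $\Omega \neq \bbR^d$. Since $\Omega'$ is compactly contained in $\Omega$, the set $\closure(\Omega')$ is a compact subset of the open set $\Omega$, hence $m := \diameter$-type distance $\dist(\closure(\Omega'), \bbR^d \setminus \Omega) > 0$ (a compact set and a disjoint closed set have strictly positive distance). Since $C$ is compact it is bounded, so $c := \sup_{z \in C} |z| < \infty$; equivalently $C \subseteq B(0,c)$.

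Next I would use Assumption \ref{ass:main:assumptions:L1}: because $\eps_n \to 0$, there is $n_0 \in \bbN$ such that $\eps_n c < m$ for all $n \geq n_0$. Then for any such $n$, any $x \in \Omega'$ and any $z \in C$ we have
\[
|(x + \eps_n z) - x| = \eps_n |z| \leq \eps_n c < m \leq \dist(x, \bbR^d \setminus \Omega),
\]
so $x + \eps_n z \in \Omega$, i.e. $z \in S_{\eps_n}(x)$. This gives $C \subseteq S_{\eps_n}(x)$, and combined with the trivial reverse inclusion we conclude $S_{\eps_n}(x) = C$ for all $x \in \Omega'$ and all $n \geq n_0$.

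There is no real obstacle here: the only points requiring a word of care are that "compactly contained" is used in the strong sense ($\closure(\Omega')$ compact and contained in $\Omega$) so that the separating distance $m$ is strictly positive, and that $n_0$ is uniform in $x \in \Omega'$ — which it is, since $m$ and $c$ do not depend on $x$. This also makes transparent the threshold $\eps_n < m/c$ referenced in Remark \ref{rem:proofs:rates:continuumRates:asymptotic}.
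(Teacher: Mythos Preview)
Your proof is correct and follows essentially the same approach as the paper: both show $C \subseteq S_{\eps_n}(x)$ by combining a uniform positive lower bound $m$ on the distance from $\closure(\Omega')$ to the complement (equivalently, boundary) of $\Omega$ with the bound $|z|\leq c$ for $z\in C$, and then choose $n_0$ so that $\eps_n c < m$. The only cosmetic difference is that the paper obtains $m$ by minimising the continuous function $d(\cdot,\partial\Omega)$ over the compact set $\closure(\Omega')$, whereas you invoke the positive distance between a compact set and a disjoint closed set directly; also note that Assumption~\ref{ass:main:assumptions:S1} already forces $\Omega$ bounded, so the case $\Omega=\bbR^d$ need not be singled out.
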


\begin{proof}
We only need to show that $C \subseteq S_{\eps_n}(x)$. Since by definition $\closure(\Omega') \subseteq \interior(\Omega)$, for all $x \in \Omega'$, there exists $r_x > 0$ such that $B(x,r_x) \subseteq \interior(\Omega)$ which implies that $d(x,\partial \Omega) > 0$. Now $d(\cdot,\partial\Omega):\closure(\Omega')\mapsto \bbR$ is continuous and $\closure(\Omega')$ is compact so that there exists $x_0 \in \closure(\Omega')$ with $r = \min_{y \in \closure(\Omega')} d(\cdot,\partial \Omega) = d(x_0,\partial \Omega) > 0$.

Since $C$ is a compact set, there exists $c > 0$ such that $\vert y \vert \leq c$ for all $y \in C$. By Assumption \ref{ass:main:assumptions:L1}, there exists $n_0$ such that for all $n \geq n_0$, $\eps_n \leq r/c$. 

Let $n \geq n_0$, $x \in \Omega'$ and $z' \in C$. Then, $x + \eps_n z' \in B(x,\eps_n \vert z' \vert) \subseteq B(x,r) \subseteq \interior(\Omega) \subseteq \Omega$, so $z^\prime \in S_{\eps_n}(x)$.
\end{proof}

\subsubsection{Convexity}

\begin{mydef}[$\lambda$-convexity]
Let $(X,\|\cdot\|_X)$ be a normed space.
We say that a function $f:X \mapsto \bbR$ is $\lambda$-convex if for all $t \in [0,1]$ and $x,y \in X$, we have:
\[
f((1-t)x + ty) \leq (1-t)f(x) + tf(y) - \frac{\lambda}{2}t(1-t)\Vert x - y \Vert_{X}^2.
\]
\end{mydef}

\begin{lemma}[$\lambda$-convexity of $\cF$] \label{lem:proofs:rates:lambdaConvexity}
Assume that Assumption \ref{ass:main:assumptions:S1} holds.
Further, assume that $\cA$ is such that $\cA^*\cA$ is coercive with $\langle \cA^*\cA v,v\rangle\geq c_{\cA}\|v\|_{\Lp{2}(\Omega)}$ for all $v\in \Lp{2}(\Omega)$.
Then, we have that the energy $\cF$ is $c_{\cA}$-convex.
\end{lemma}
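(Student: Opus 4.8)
The plan is to split the energy as $\cF = \cF_1 + \cF_2$, where
\[
\cF_1(v) = \frac{\mu}{p}\Vert \nabla v \Vert_{\Lp{p}(\Omega)}^p \qquad \text{and} \qquad \cF_2(v) = \frac{1}{2}\Vert \cA v - \ell \Vert_{\Lp{2}(\Omega)}^2 ,
\]
to show that $\cF_1$ is convex (i.e. $0$-convex) and that $\cF_2$ is $c_{\cA}$-convex, and then to note that the sum of a $0$-convex function and a $c_{\cA}$-convex function is $c_{\cA}$-convex, which is immediate from the defining inequality of $\lambda$-convexity (the two inequalities simply add).

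For $\cF_1$, the map $\xi \mapsto |\xi|^p$ on $\bbR^d$ is convex since $p \geq 1$, and $v \mapsto \nabla v$ is linear, so for $t \in [0,1]$ and $x,y \in \Wkp{1}{p}(\Omega)$ one has the pointwise bound $|\nabla((1-t)x + ty)|^p \leq (1-t)|\nabla x|^p + t|\nabla y|^p$ a.e. on $\Omega$. Integrating and using linearity of the integral yields $\cF_1((1-t)x+ty) \leq (1-t)\cF_1(x) + t\cF_1(y)$, i.e. $\cF_1$ is $0$-convex (if one prefers to work on all of $\Lp{2}(\Omega)$, extend $\cF_1$ by $+\infty$ off $\Wkp{1}{p}(\Omega)$; the inequality trivially persists).

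For $\cF_2$, the key identity is the quadratic (parallelogram-type) expansion. Writing $w = (1-t)x + ty$, linearity of $\cA$ gives $\cA w - \ell = (1-t)(\cA x - \ell) + t(\cA y - \ell)$, and expanding the square together with $2\langle a,b\rangle_{\Lp{2}} = \Vert a \Vert_{\Lp{2}}^2 + \Vert b \Vert_{\Lp{2}}^2 - \Vert a-b \Vert_{\Lp{2}}^2$ yields
\[
\Vert \cA w - \ell \Vert_{\Lp{2}}^2 = (1-t)\Vert \cA x - \ell \Vert_{\Lp{2}}^2 + t\Vert \cA y - \ell \Vert_{\Lp{2}}^2 - t(1-t)\Vert \cA(x-y) \Vert_{\Lp{2}}^2 .
\]
Dividing by $2$ and using $\Vert \cA(x-y) \Vert_{\Lp{2}}^2 = \langle \cA^*\cA(x-y), x-y \rangle_{\Lp{2}} \geq c_{\cA}\Vert x-y \Vert_{\Lp{2}(\Omega)}^2$ (the coercivity hypothesis on $\cA^*\cA$, together with the elementary identity $\Vert \cA u \Vert_{\Lp{2}}^2 = \langle \cA^*\cA u, u \rangle_{\Lp{2}}$) gives exactly $\cF_2(w) \leq (1-t)\cF_2(x) + t\cF_2(y) - \frac{c_{\cA}}{2}t(1-t)\Vert x-y \Vert_{\Lp{2}(\Omega)}^2$, so $\cF_2$ is $c_{\cA}$-convex. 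Adding the two estimates concludes the proof.

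There is essentially no hard step here; the only points deserving care are that the lower bound on $\Vert \cA(x-y) \Vert_{\Lp{2}}^2$ must come from coercivity of $\cA^*\cA$ rather than from boundedness of $\cA$, and that the strong-convexity modulus is accounted for by $\cF_2$ alone while $\cF_1$ contributes only ordinary convexity. One should also double-check the coefficient bookkeeping in the quadratic identity (the cross terms recombine as $(1-t)\Vert\cA x-\ell\Vert^2[(1-t)+t]$ and similarly for $y$), but this is routine.
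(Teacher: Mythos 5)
Your proof is correct and follows essentially the same route as the paper: convexity of $\xi\mapsto|\xi|^p$ handles the gradient term, the exact quadratic identity produces the $-\tfrac12 t(1-t)\Vert\cA(x-y)\Vert_{\Lp{2}}^2$ defect, and coercivity of $\cA^*\cA$ converts it to $-\tfrac{c_{\cA}}{2}t(1-t)\Vert x-y\Vert_{\Lp{2}}^2$. The only difference is cosmetic (you split $\cF=\cF_1+\cF_2$ while the paper runs a single combined computation), so nothing further is needed.
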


\begin{proof}
Since $t\mapsto |t|^p$ is convex we have 
\begin{align}
    \cF(tu + (1-t)v) &= \frac{\mu}{p}\int_\Omega \vert t \nabla u + (1-t) \nabla v \vert^p \, \dd x + \frac{1}{2}\Vert t \cA u + (1-t) \cA v - \ell \Vert_{\Lp{2}}^2 \notag \\
    &\leq \frac{\mu}{p} \int_\Omega t |\nabla u|^p + (1-t)|\nabla v|^p \, \dd x + \frac{1}{2} [  t^2 \Vert \cA u - \ell \Vert_{\Lp{2}}^2 + (1-t)^2 \Vert \cA v - \ell \Vert_{\Lp{2}}^2 \notag \\
    & \qquad + 2t(1-t) \langle \cA v - \ell , \cA u - \ell \rangle \notag ] \\
    &= \frac{\mu}{p} \ls t \Vert \nabla u \Vert_{\Lp{p}}^p + (1-t) \Vert \nabla v \Vert_{\Lp{p}}^p \rs + \frac{1}{2} [  t \Vert \cA u - \ell \Vert_{\Lp{2}}^2 + (1-t) \Vert \cA v - \ell \Vert_{\Lp{2}}^2 \notag \\
    & \qquad + t(1-t) \ls \langle \ell - \cA u , \cA u - \ell \rangle + \langle \ell - \cA v , \cA v - \ell \rangle + 2 \langle \cA v - \ell , \cA u - \ell \rangle \rs ] \notag \\
    &= \frac{\mu}{p} \ls t \Vert \nabla u \Vert_{\Lp{p}}^p + (1-t) \Vert \nabla v \Vert_{\Lp{p}}^p \rs +  \frac{1}{2} [  t \Vert \cA u - \ell \Vert_{\Lp{2}}^2 + (1-t) \Vert \cA v - \ell \Vert_{\Lp{2}}^2 \notag \\
    & \qquad -  t(1-t) \Vert \cA (u-v) \Vert_{\Lp{2}}^2] \notag \\
    & = t\cF(u) + (1-t)\cF(v) - \frac{1}{2} t(1-t) \|\cA(u-v)\|_{\Lp{2}}^2. \notag
\end{align}
Now, $\|\cA(u-v)\|_{\Lp{2}}^2=\langle \cA^*\cA(u-v),u-v\rangle \geq c_{\cA} \|u-v\|_{\Lp{2}}^2$ and so
we have that $\cF$ is $c_{\cA}$-convex. 
\end{proof}

\subsection{Background results} \label{sec:supplementary:background}

\subsubsection{Nonlinear semigroup theory}

For accretive operators, $(\Id + \lambda A)^{-{1}}$ -- also called the resolvent of $A$ -- is well-behaved: it is single-valued and Lipschitz with Lipschitz constant $1$ for $\lambda \geq 0$ (see \cite{andreu2004parabolic} and \cite[Section 2]{BenilanCrandall}).

Equipped with the two notions of accretivity and $m$-accretivity, one is able to prove the existence of solutions to \eqref{eq:background:orderRelation:abstractCauchyProblem} through semigroups (see for example \cite[Crandall-Liggett Theorem, A.28]{andreu2004parabolic} or Theorem \ref{thm:background:solution}) and one also obtains useful contraction properties (we mean results similar to \eqref{eq:background:orderRelation:contraction}) as the resolvent of the operators is well-behaved. However, by considering completely accretive operators (see~\cite{BenilanCrandall} and Definition \ref{def:background:completelyAccretive}), one can observe even better contraction behaviour as given in Lemma \ref{lem:background:orderRelation:contraction}.

\begin{mydef}[Normal functional]
We say that a functional $N:M(\Omega) \mapsto \bbR$ is normal if $N(v) \leq N(w)$ whenever $v \llSgt w$.
\end{mydef}

By definition of $v \llSgt w$, we have that $A$ is completely accretive if and only if for all normal functionals $N$, $\lambda >0$ and $(v,\hat{v}),(w,\hat{w}) \in \graph(A)$, we have $N(v-w) \leq N(v-w + \lambda(\hat{v} - \hat{w}))$. 

Compared to Definition \ref{def:background:accretive}, we note that Definition \ref{def:background:completelyAccretive} is independent of any norms. However, if $A$ is completely accretive on $M(\Omega)$ and $\graph(A) \subseteq V \times V$ where $V \subseteq M(\Omega)$ is a Banach space whose norm is a normal functional, then $A$ is accretive in $V$. As an example, one could pick $V = \Lp{p}(\Omega)$ for $1 \leq p \leq \infty$. Indeed the $p$-norm of $\Lp{p}$-spaces is a normal functional as the next lemma shows.

\begin{lemma}[Properties of the order relation] \label{lem:background:orderRelation:properties}
Let $\Omega$ be an open bounded subset of $\bbR^d$. Then,
\begin{enumerate}
\item For $u,v \in \Lp{1}(\Omega)$, assume that
\begin{equation*}
    \int_\Omega u(x)h(u(x)) \, \dd x \leq \int_\Omega v(x) h(u(x)) \, \dd x
\end{equation*}
for all $h \in \cH$. Then, we have $u \llSgt v$.
\item Let $1 \leq p \leq \infty$, $u \in \Lp{p}(\Omega)$ and $v$ a measurable function. If $v \llSgt u$, then $\Vert v \Vert_{\Lp{p}} \leq \Vert u \Vert_{\Lp{p}}$.
\item Let $u \in \Lp{1}(\Omega)$. Then, the set $\{v:\Omega \mapsto \bbR \spaceBar v \text{ is measurable and } v \llSgt u \}$ is a weakly sequentially compact subset of $\Lp{1}(\Omega)$. 
\end{enumerate}
\end{lemma}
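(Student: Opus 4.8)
The three assertions are essentially independent, so the plan is to dispatch (2) first, then (3) using (2) together with the Dunford--Pettis theorem, and finally (1), which carries the real content. Statement (2) is immediate: the maps $r \mapsto |r|^p$ for $1 \le p < \infty$, and $r \mapsto (|r|-M)^+$ for $M \ge 0$, all belong to $\cJ$, being convex, continuous (hence lower semi-continuous) and zero at $0$. Thus $v \llSgt u$ yields $\int_\Omega |v|^p \, \dd x \le \int_\Omega |u|^p \, \dd x$, i.e. $\|v\|_{\Lp{p}} \le \|u\|_{\Lp{p}}$ for $1 \le p < \infty$; and for $p = \infty$, writing $M = \|u\|_{\Lp{\infty}}$ we get $\int_\Omega (|v|-M)^+ \, \dd x \le \int_\Omega (|u|-M)^+ \, \dd x = 0$, so $|v| \le M$ almost everywhere.

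For (3), set $\mathcal{S} = \{\, v : \Omega \mapsto \bbR \spaceBar v \text{ measurable and } v \llSgt u \,\}$. By (2) applied with $p = 1$, every $v \in \mathcal{S}$ satisfies $\|v\|_{\Lp{1}} \le \|u\|_{\Lp{1}}$, so $\mathcal{S} \subseteq \Lp{1}(\Omega)$ and is norm-bounded. To show $\mathcal{S}$ is uniformly integrable I would again use the functionals $j_M(r) = (|r|-M)^+ \in \cJ$: for $v \in \mathcal{S}$ and $M \geq 1$,
\[
\int_{\{|v| > 2M\}} |v| \, \dd x \le 2 \int_\Omega (|v|-M)^+ \, \dd x \le 2 \int_\Omega (|u|-M)^+ \, \dd x,
\]
and the right-hand side tends to $0$ as $M \to \infty$ by dominated convergence, since $u \in \Lp{1}(\Omega)$ and $\lambda_x(\Omega) < \infty$; this bound is uniform in $v$. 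By Dunford--Pettis, $\mathcal{S}$ is relatively weakly compact in $\Lp{1}(\Omega)$. Moreover $\mathcal{S}$ is weakly closed: for each $j \in \cJ$ the functional $w \mapsto \int_\Omega j(w) \, \dd x \in [0,\infty]$ is convex and lower semi-continuous for strong $\Lp{1}$ convergence (Fatou along an a.e.-convergent subsequence realizing the $\liminf$), hence weakly lower semi-continuous; so if $v_n \llSgt u$ and $v_n \rightharpoonup v$ in $\Lp{1}(\Omega)$ then $\int_\Omega j(v) \, \dd x \le \liminf_n \int_\Omega j(v_n) \, \dd x \le \int_\Omega j(u) \, \dd x$, i.e. $v \in \mathcal{S}$. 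A weakly closed, relatively weakly compact subset of a Banach space is weakly compact, hence weakly sequentially compact by the Eberlein--\v{S}mulian theorem.

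The substance is (1). Fix $j \in \cJ$; one must prove $\int_\Omega j(u) \, \dd x \le \int_\Omega j(v) \, \dd x$. Since $j \ge j(0) = 0$ is convex, $0$ is a global minimum of $j$, so $j$ is nonincreasing on $(-\infty,0]$, nondecreasing on $[0,\infty)$, and $0 \in \partial j(0)$. I would approximate $j$ from below by a monotone sequence $j_k \uparrow j$ of functions that are smooth, convex, nonnegative, affine outside a compact interval, and identically $0$ (hence with vanishing first derivative) near the origin: concretely, replace $j$ by its tangent-line continuation outside $[-k,k]$, flatten it to $0$ on a shrinking neighbourhood of $0$, then mollify. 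For any such $\psi = j_k$, $\psi''$ is bounded, say by $L > 0$, so $h := L^{-1}\psi'$ satisfies $0 \le h' \le 1$, has compactly supported derivative, and $h(0) = 0$; thus $h \in \cH$, and the hypothesis gives $\int_\Omega (v - u)\,\psi'(u) \, \dd x = L \int_\Omega (v-u)\, h(u) \, \dd x \ge 0$. Integrating the convexity inequality $\psi(v) \ge \psi(u) + \psi'(u)(v-u)$ over $\Omega$ — all terms being integrable since $u,v \in \Lp{1}(\Omega)$, $\lambda_x(\Omega) < \infty$, and $\psi$ has at most linear growth — yields $\int_\Omega \psi(u) \, \dd x \le \int_\Omega \psi(v) \, \dd x$. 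Letting $k \to \infty$ and using monotone convergence ($0 \le j_k \uparrow j$) gives $\int_\Omega j(u) \, \dd x \le \int_\Omega j(v) \, \dd x$, which is precisely $u \llSgt v$. The main obstacle is the construction of the approximants $j_k$ with all the listed properties simultaneously: one must arrange $j_k'(0) = 0$ (so that $h(0) = 0$) without destroying the monotone convergence $j_k \uparrow j$ when $j$ is not differentiable at $0$ (e.g. $j(r) = |r|$), which is why the flattening at the origin must precede the mollification and, in practice, forces a nested limit — in the cut-off level $k$, the flattening width, and the mollification scale — with a dominated- or monotone-convergence passage at each stage.
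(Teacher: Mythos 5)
Your proposal is correct, but it is worth noting that the paper does not prove this lemma at all: it simply cites \cite[Proposition 2.7, eq.\ (2.8) and Proposition 2.11(ii)]{BenilanCrandall}. What you have written is, in effect, a self-contained reconstruction of the arguments in that reference, and all three parts check out. Parts (2) and (3) are exactly the standard route: the test functions $r\mapsto |r|^p$ and $r\mapsto(|r|-M)^+$ lie in $\cJ$, the latter gives uniform integrability via $\int_{\{|v|>2M\}}|v|\,\dd x\le 2\int_\Omega(|v|-M)^+\,\dd x$, and weak closedness of the set follows because each $w\mapsto\int_\Omega j(w)\,\dd x$ is convex and strongly lower semi-continuous, hence weakly lower semi-continuous. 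For part (1), your reduction is the right one: any smooth convex $\psi\ge 0$ that vanishes near $0$, is affine outside a compact set and has bounded second derivative satisfies $L^{-1}\psi'\in\cH$, so the hypothesis plus the subgradient inequality yields $\int_\Omega\psi(u)\,\dd x\le\int_\Omega\psi(v)\,\dd x$.

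Two concrete remarks on the one step you flag as delicate. First, you do not actually need a monotone sequence $j_k\uparrow j$: it suffices to produce admissible $\psi_k$ with $0\le\psi_k\le j+c_k$ pointwise, $c_k\to 0$, and $\psi_k\to j$ pointwise. Then $\int_\Omega j(u)\,\dd x\le\liminf_k\int_\Omega\psi_k(u)\,\dd x$ by Fatou, while $\int_\Omega\psi_k(v)\,\dd x\le\int_\Omega j(v)\,\dd x+c_k\lambda_x(\Omega)$ since $\lambda_x(\Omega)<\infty$; this collapses your nested limit to a single diagonal choice of parameters. Second, "tangent-line continuation outside $[-k,k]$" is ill-defined when $j$ takes the value $+\infty$ on part of $[-k,k]$ (e.g.\ $j=0$ on $[-1,1]$ and $+\infty$ elsewhere, which is in $\cJ$); the clean fix is to start from the Moreau--Yosida regularization $j_k(r)=\inf_s\bigl[j(s)+\tfrac{k}{2}|r-s|^2\bigr]$, which is finite, convex, $C^{1,1}$, nonnegative, has its global minimum at $0$, and increases pointwise to $j$; then truncate to an affine continuation, subtract a small $\eta_k$ and take the positive part to create the flat neighbourhood of the origin, and mollify at a scale smaller than that neighbourhood. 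With those adjustments the proof is complete.
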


\begin{proof} 
The proof of 1, 2 and 3 can be found in~\cite[Proposition 2.7, eq. (2.8) and Proposition 2.11(ii)]{BenilanCrandall} respectively.
\end{proof}

We now prove Lemma~\ref{lem:background:orderRelation:contraction}.

\begin{proof}[Proof of Lemma \ref{lem:background:orderRelation:contraction}]
By definition and Lemma \ref{lem:background:orderRelation:properties}, we know that for $(u,\hat{u}),(v,\hat{v}) \in \graph(A)$ and $\lambda > 0$, 
\[
 \Vert u - v \Vert_{\Lp{r}} \llSgt \Vert u - v + \lambda(\hat{u} - \hat{v})\Vert_{\Lp{r}}
\]
for $1 \leq r \leq \infty$. Then, we can apply \cite[Theorem A.20]{andreu2004parabolic}
to obtain \eqref{eq:background:orderRelation:contraction}. 

Uniqueness follows by assuming the existence of another strong solution $v$ solving~\eqref{eq:background:orderRelation:abstractCauchyProblem}. 
Then, from \eqref{eq:background:orderRelation:contraction}, we get that $u = v$.
\end{proof}

\subsubsection{Piecewise constant approximations}

\begin{proof}[Proof of Lemma \ref{lem:background:piecewise:equiRates}]
By assumption, the cells $\Omega_{n,\bi}$ have sides of length $1/n$, $\lambda_x(\Omega_{n ,\bi}) = n^{-d}$ and they are all translations of each other which means that $\Omega_{n,\bi}=\bi+\Omega_n$, where $\Omega_n$ is a hypercube of side $1/n$. Let $V_d$ be the volume of the unit ball in $\bbR^d$. We then have
\begin{align}
\|g-\cI_{\Pi_{\uni,n}}\cP_{\Pi_{\uni,n}}g\|_{\Lp{q}(\Omega)}^q  
&= \sum_{\bi \in [n]^d} \int_{\Omega_{n,\bi}} \abs{n^d \int_{\Omega_{n,\bi}} \pa{g(z)-g(x)} \,\dd z}^q \,\dd x \notag \\
&\leq \sum_{\bi \in [n]^d} \int_{\Omega_{n,\bi}} n^d \int_{\Omega_{n,\bi}} \abs{g(z)-g(x)}^q \, \dd z \, \dd x  \notag \\
&\leq n^d 
\sum_{\bi \in [n]^d} \int_{\Omega_{n,\bi}} 
\int_{|z| \leq \sqrt{d}/n }
\abs{g(x+z)-g(x)}^q \, \dd z \, \dd x \notag \\ 
&=n^d 
\int_{\Omega} 
\int_{|z| \leq \sqrt{d}/n }
\abs{g(x+z)-g(x)}^q \,\dd z \, \dd x  \notag \\
&=n^d 
\int_{|z| \leq \sqrt{d}/n}
\int_{\Omega} 
\abs{g(x+z)-g(x)}^q \, \dd x \, \dd z  \notag \\
&\leq n^d V_d (\sqrt{d}/n)^d 
\sup_{z \in \bbR^d, |z| \leq \sqrt{d}/n}
\int_{\Omega} 
\abs{g(\bx+\bz)-g(\bx)}^q \, \dd x  \notag \\
&=V_d ~ d^{d/2}  \omega(g,\sqrt{d}/n)_q^q \notag \\ 
&\leq V_d d^{\frac{d+sq}{2}} |g|_{\Lip(s,\Lp{q}(\Omega))}^q n^{-sq}. \notag
\end{align}

If $g \in \Ck{0,\alpha}$, we have:
\begin{align}
    |g(\cdot/\eps)|_{\Lip(\alpha,\Lp{q}(\Omega))} &= \sup_{h > 0} h^{-\alpha} \sup_{z \in \bbR^d, |z| < h} \l \int_{x, x + z \in \Omega}\left\vert g\l \frac{x+z}{\eps}\r-g\l \frac{x}{\eps} \r \right\vert^q \, \dd x \r^{1/q} \notag \\
    &\leq C h^{-\alpha} \frac{h^\alpha}{\eps^\alpha} \notag \\
    &= \frac{C}{\eps^\alpha}. \notag
\end{align}
Combining the latter with the first statement of the lemma yields the result.
\end{proof}

\subsection{Supplementary proofs}

\subsubsection{Well-posedness} \label{sec:supplementary:wellPosedness}

\begin{proof}[Proof of Proposition \ref{prop:proofs:wellPosedness:nonLocalProblem:dirichlet}]
We start by showing the first claim. 
Let $u \in \Lp{p}(\Omega)$ satisfy \eqref{eq:proofs:wellPosedness:nonLocalProblem:dirichlet:equationN} and $v \in \Lp{p}(\Omega)$. 
First note
\begin{align}
& \int_\Omega (\Delta_p^K u)(u - v) \, \dd x \notag \\
& \quad = \frac{1}{2} \int_{\Omega\times\Omega} K(\vert x - y \vert) \la u(y)-u(x) \ra^{p-2} (u(x)-u(y)) (u(x)-u(y) + v(y) - v(x)) \, \dd y \dd x \notag \\
& \quad = \frac{1}{2} \int_{\Omega\times\Omega} K(\vert x - y \vert) \vert u(y) - u(x) \vert^p \, \dd y \dd x \notag\\
& \qquad \qquad \qquad - \frac{1}{2} \int_{\Omega\times\Omega} K(\vert x - y \vert) \la u(y) - u(x)\ra^{p-2} (u(y)-u(x)) (v(y) - v(x)) \, \dd y \dd x. \label{eq:proofs:wellPosedness:nonLocalProblem:dirichlet:identity1} 
\end{align}

By multiplying both sides of \eqref{eq:proofs:wellPosedness:nonLocalProblem:dirichlet:equationN} by $(u-v)$, integrating and using \eqref{eq:proofs:wellPosedness:nonLocalProblem:dirichlet:identity1}, we obtain:
    \begin{align}
         &\frac{1}{n} \int_\Omega \vert u \vert^p \, \dd x + \frac{\lambda \mu}{2} \int_{\Omega\times\Omega} K(\vert x-y \vert) \vert u(x) - u(y) \vert^p  \, \dd y \dd x + \int_\Omega [ \lambda \l \cA u\r^2  - \lambda f u + u^2 - \phi u] \, \dd x  \notag \\
         & \qquad = \frac{1}{n} \int_\Omega \vert u \vert^{p-2}uv \, \dd x + \frac{\lambda \mu}{2} \int_{\Omega\times\Omega} K(\vert x-y \vert) | u(x) - u(y)|^{p-2} (u(x)-u(y))(v(x)-v(y))  \, \dd y \dd x \notag \\
         & \qquad \qquad \qquad + \int_\Omega [\lambda\cA u\cA v - \lambda f v + uv - \phi v] \, \dd x \notag \\
         & \qquad \leq  \frac{1}{n} \int_\Omega \vert u \vert^{p-1} \vert v \vert \, \dd x + \frac{\lambda \mu}{2} \int_{\Omega\times\Omega} K(\vert x-y \vert) \vert u(x) - u(y)\vert^{p-1}\vert v(y)-v(x) \vert  \, \dd y \dd x \notag \\
         & \qquad \qquad \qquad + \int_\Omega [ \lambda\cA u \cA v - \lambda f v + uv - \phi v] \, \dd x \notag\\
         & \qquad \leq \frac{1}{qn} \int_\Omega \vert u \vert^p \, \dd x + \frac{1}{pn} \int_\Omega \vert v \vert^p \, \dd x + \frac{\lambda \mu}{2q} \int_{\Omega\times\Omega} K(\vert x-y \vert) \vert u(x) - u(y) \vert^p  \, \dd y \dd x \notag \\
         & \qquad \qquad \qquad + \frac{\lambda \mu}{2p} \int_{\Omega\times\Omega} K(\vert x-y \vert) \vert v(x) - v(y) \vert^p  \, \dd y \dd x + \frac{\lambda}{2} \int_\Omega \l \cA u \r^2 \, \dd x + \frac{\lambda}{2} \int_\Omega \l \cA v \r^2 \, \dd x \label{eq:proofs:wellPosedness:nonLocalProblem:dirichlet:identity2}\\ 
         & \qquad \qquad \qquad + \frac{1}{2} \int_\Omega u^2 \, \dd x + \frac{1}{2} \int_\Omega v^2 \, \dd x - \int_\Omega [  \lambda fv + \phi v] \, \dd x \notag
    \end{align}
where $q$ is such that $p^{-1} + q^{-1} = 1$ and we used Young's inequality for products for \eqref{eq:proofs:wellPosedness:nonLocalProblem:dirichlet:identity2}. Again, rearranging the latter and adding
$\frac{1}{2} \int_\Omega \phi^2 \, \dd x$
on each side yields $E_{n,\lambda,\cA,f}(u) \leq E_{n,\lambda,\cA,f}(v)$. 

For the second claim, suppose that $u \in \Lp{p}(\Omega)$ is such that $E_{\lambda,\cA,f}(u) \leq E_{\lambda,\cA,f}(v)$ for all $v \in \Lp{p}(\Omega)$. Then, the function $t \mapsto E_{\lambda,\cA,f}(u + tv)$ has a minimum at 0 so that $\frac{\partial}{\partial t}E_{\lambda,\cA,f}(u + tv)\vert_{t = 0} = 0$. We therefore compute
\begin{align}
\frac{\partial}{\partial t}E_\lambda(u + tv)\vert_{t = 0} &= \frac{\lambda \mu}{2} \int_\Omega \int_\Omega K(\vert x - y \vert) \la u(x) - u(y)\ra^{p-2} (u(x)-u(y))(v(x)-v(y)) \, \dd y \dd x \notag\\
& \qquad \qquad + \lambda\int_\Omega (\cA^*\cA u - f) v \, \dd x + \int_\Omega (u-\phi)v \, \dd x \notag\\
&= \lambda \mu\int_\Omega \l \Delta_p^K u \r v \, \dd x + \lambda\int_\Omega (\cA^*\cA u - f) v \, \dd x + \int_\Omega (u-\phi)v \, \dd x = 0. \label{eq:proofs:wellPosedness:nonLocalProblem:dirichlet:identity3}
\end{align}
Since~\eqref{eq:proofs:wellPosedness:nonLocalProblem:dirichlet:identity3} holds for all $v \in \Ckc{\infty}(\Omega)$, we deduce that $\lambda_x$-a.e.
\[
\lambda(\mu\Delta_p^K u + \cA^* \cA u - f) + u - \phi = 0,
\]
which means that $u$ satisfies \eqref{eq:proofs:wellPosedness:nonLocalProblem:dirichlet:equation}.
\end{proof}

\begin{proof}[Proof of Lemma \ref{lem:proofs:wellPosedness:nonlocalProblem:properties}]
In the proof $C>0$ will denote a constant that can be arbitrarily large, (which might be) dependent on the kernel $K$, $\lambda$, $\mu$, $p$, $\cA$ and/or $\Omega$ (but independent of $n$, $u$, $v$, $w$, $t_n$ and $t$), that may change from line to line.

By Assumption \ref{ass:main:assumptions:S1} and the fact that $2 \leq p$, we have $q \leq 2 \leq p$ and therefore $\Lp{p}(\Omega) \subseteq \Lp{2}(\Omega) \subseteq \Lp{q}(\Omega)$ as well as $\frac{1}{C}\Vert \cdot \Vert_{\Lp{q}} \leq \Vert \cdot \Vert_{\Lp{2}} \leq C \Vert \cdot \Vert_{\Lp{p}}$. Let $u \in \Lp{p}(\Omega)$.

We note that 
\[
\left\Vert \frac{\vert u \vert^{p-2}u}{n} \right\Vert_{\Lp{q}}^q = \frac{1}{n^q} \int_\Omega \vert u \vert^{(p-1)q} \, \dd x = \frac{1}{n^q} \Vert u \Vert_{\Lp{p}}^p,
\]
\begin{align}
   \left\Vert \Delta_p^K u\right\Vert_{\Lp{q}}^q &= \int_\Omega \left\vert \int_\Omega K(\vert x-y \vert) \vert u(y) - u(x) \vert^{p-2}(u(y) - u(x)) \, \dd y  \right\vert^q \, \dd x \notag \\
   &\leq C \int_\Omega \int_\Omega \vert u(y) - u(x) \vert^{q \cdot (p-1) } \, \dd y \, \dd x \label{eq:proofs:wellPosedness:nonlocalProblem:properties:jensen}\\
   &\leq C \Vert u \Vert_{\Lp{p}}^p \notag
\end{align}
where we used Assumption \ref{ass:main:assumptions:K1} and Jensen's inequality for the convex function $\vert \cdot \vert^{q}$ for \eqref{eq:proofs:wellPosedness:nonlocalProblem:properties:jensen}.
Since \[
\|\cA^*\cA u\|_{\Lp{q}}\leq C\|\cA^*\cA u\|_{\Lp{2}} \leq C C_{\op}^2 \|u\|_{\Lp{2}} \leq C^2C_{\op}^2 \|u\|_{\Lp{p}}
\]
by Assumption~\ref{ass:main:assumptions:O1} we can infer
\[
    \Vert \cE_{n,\lambda,\cA,f}(u) \Vert_{\Lp{q}} \leq C \ls \l\frac{1}{n} + 1 \r \Vert u \Vert_{\Lp{p}}^{p/q} + \Vert u \Vert_{\Lp{p}} + \Vert f \Vert_{\Lp{2}} \rs. 
\]

We will now proceed to verify hemicontinuity.
Let $u,v,w\in\Lp{p}(\Omega)$ and $t_n\to t$.
We have 
\[
\la u(x) + t_n v(x) \ra^{p-2} \l u(x) + t_n v(x) \r w(x) \leq \vert u(x) + t_n v(x) \vert^{p-1} \vert w(x) \vert \leq C ( \vert u(x) \vert^{p-1} + C \vert v(x) \vert^{p-1}) \vert w(x) \vert  
\]
and 
\[
\int_\Omega ( \vert u \vert^{p-1} + C \vert v \vert^{p-1}) \vert w \vert \, \dd x \leq \left[ \int_\Omega ( \vert u \vert^{p-1} + C \vert v \vert^{p-1})^q \, \dd x \right]^{1/q} \Vert w \Vert_{\Lp{p}} \leq C \left[ \int_\Omega \vert u \vert^{p} + \vert v \vert^{p} \, \dd x \right]^{1/q} \Vert w \Vert_{\Lp{p}}
\]
where we used H\"older's inequality for the above. 
The right-hand side is finite by assumption and we can therefore apply the dominated convergence theorem to infer that 
\[
\lim_{n \to \infty} \int_\Omega \la u + t_nv\ra^{p-2} (u+t_n v) w \, \dd x \to \int_\Omega |u+tv|^{p-2} (u + tv)w \, \dd x.
\]
Similarly:
\begin{align*}
& K(\vert x-y \vert)  \la u(x) - u(y) + t_n (v(x) - v(y))\ra^{p-2} \l u(x) - u(y) + t_n(v(x) - v(y) \r w(x)\\
& \qquad \leq C \vert u(x) - u(y) + t_n (v(x) - v(y)) \vert^{p-1} \vert w(x) \vert  \\
& \qquad \leq C \l \vert u(x) \vert^{p-1} + \vert u(y) \vert^{p-1} + \vert v(x) \vert^{p-1} + \vert v(y) \vert^{p-1} \r \vert w(x) \vert 
\end{align*}
and, using H\"older's inequality,
\begin{align*}
& \int_{\Omega\times\Omega} K(\vert x-y \vert)  (\vert u(y) \vert^{p-1} + C \vert v(y) \vert^{p-1} + \vert u(x) \vert^{p-1} + C\vert v(x) \vert^{p-1} ) \vert w(x) \vert \, \dd x \, \dd y \\
& \qquad \leq C \left[ \int_\Omega \int_\Omega (\vert u(y) \vert^{p-1} + \vert v(y) \vert^{p-1} + \vert u(x) \vert^{p-1} + \vert v(x) \vert^{p-1} )^{q} \, \dd y \, \dd x  \right]^{1/q} \Vert w \Vert_{\Lp{p}(\Omega \times \Omega)} \\
& \qquad \leq C \int_{\Omega\times\Omega} \l \vert u(y) \vert^{p} + \vert v(y) \vert^{p} + \vert u(x) \vert^{p} + \vert v(x) \vert^{p} \r \, \dd x \, \dd y \Vert w \Vert_{\Lp{p}(\Omega)}.
\end{align*}
The right-hand side is finite by assumption on the functions as well as Assumption \ref{ass:main:assumptions:S1}. This allows us to use the dominated convergence theorem to conclude that
\[
\lim_{n \to \infty} \int_\Omega \Delta_p^K(u + t_n v) w \, \dd x = \int_\Omega \Delta_p^K(u + t v) w \, \dd x.
\]
Furthermore, it is clear that by linearity and Assumption \ref{ass:main:assumptions:O1}, we have
\[
\lim_{n\to \infty} \int_{\Omega} (u + t_n v) w \, \dd x = \int_{\Omega} (u + t v) w \, \dd x
\]
as well as 
\[
\lim_{n\to \infty} \int_{\Omega} \cA^* \cA(u + t_n v) w \, \dd x = \int_{\Omega} \cA^* \cA(u + t v) w \, \dd x.
\]
Combining the above implies that $\cE_{n,\lambda,\cA,f}(u+t_nv)$ converges weakly to $\cE_{n,\lambda,\cA,f}(u+tv)$ in $\Lp{q}$. 

Regarding monotony, we note that
\begin{align}
& \l |\tilde{u}(x,y)|^{p-2} \tilde{u}(x,y) - |\tilde{v}(x,y)|^{p-2}\tilde{v}(x,y) \r \l \tilde{u}(x,y) - \tilde{v}(x,y) \r \notag \\
& \qquad \qquad \geq |\tilde{u}(x,y)|^p + |\tilde{v}(x,y)|^p - \frac{p-1}{p} |\tilde{u}(x,y)|^p - \frac{1}{p} |\tilde{v}(x,y)|^p - \frac{p-1}{p} |\tilde{v}(x,y)|^p - \frac{1}{p} |\tilde{u}(x,y)|^p \notag \\
& \qquad \qquad = 0 \label{eq:utildevtildeMon}
\end{align}
by Young's inequality, and therefore (choosing $\tilde{u}(x,y) = u(x) - u(y)$ and $\tilde{v}(x,y)=v(x)-v(y)$) we have
\begin{align}
& \int_\Omega \l \Delta_p^K u - \Delta_p^K v \r (u-v) \, \dd x \notag \\ 
& \qquad = \frac12 \int_{\Omega\times\Omega} K(|x-y|) \Big( |u(x)-u(y)|^{p-2}(u(x)-u(y)) \notag \\
& \qquad \qquad \qquad - |v(x)-v(y)|^{p-2}(v(x)-v(y)) \Big) \l u(x) - u(y) - v(x) + v(y) \r \, \dd x \, \dd y \notag \\
& \qquad \geq 0. \label{eq:LapMono}
\end{align}
In addition,
\[
\int_\Omega \l \cA^*\cA u - \cA^* \cA v  \r (u -v) \, \dd x = \Vert \cA(u-v) \Vert_{\Lp{2}} \geq 0
\]
by Assumption \ref{ass:main:assumptions:O1}.
The latter facts and substituting $\tilde{u}(x,y) = u(x)$, $\tilde{v}(x,y) = v(x)$ in~\eqref{eq:utildevtildeMon} allow us to deduce that 
\[
\int_\Omega \l \cE_{n,\lambda,\cA,f}(u) - \cE_{n,\lambda,\cA,f}(v) \r (u-v) \, \dd x \geq 0
\]
which proves monotony. 

Finally, we tackle coercivity. To this purpose, note that 
\[
\frac{\int_\Omega (\vert u \vert^{p-2}u)u \, \dd x }{\Vert u \Vert_{\Lp{p}}} = \Vert u \Vert_{\Lp{p}}^{p-1},
\]
\[
\int_\Omega (\Delta_p^K u)u \, \dd x \geq 0 
\]
(by choosing $v=0$ in~\eqref{eq:LapMono}),
\[
\int_\Omega (\cA^* \cA u)u \, \dd x = \Vert \cA u \Vert_{\Lp{2}}^2 \geq 0
\]
and 
\[
-\frac{1}{\Vert u \Vert_{\Lp{p}}} \int_\Omega f u \, \dd x \geq - \Vert f \Vert_{\Lp{q}} > -\infty 
\]
since $\Vert f \Vert_{\Lp{q}} \leq C \Vert f \Vert_{\Lp{2}} < \infty$. This allows us to conclude that 
\[
\lim_{\Vert u \Vert_{\Lp{p}} \to \infty} \frac{\int_\Omega \cE_{n,\lambda,\cA,f}(u) u \, \dd x }{\Vert u \Vert_{\Lp{p}}} \geq \lim_{\|u\|_{\Lp{p}}\to\infty} \l \frac{1}{n} \|u\|_{\Lp{p}}^{p-1} - \|f\|_{\Lp{q}} \r = \infty
\]
which proves coercivity.
\end{proof}

\begin{proof}[Proof of Corollary \ref{cor:proofs:wellPosedness:nonlocalProblem:specialCases}]
From the proof of Lemma \ref{lem:proofs:wellPosedness:nonlocalProblem:properties}, we have that the evolution operator $\cE^K_{\cA,f}$ is well-defined from $\Lp{p}(\Omega)$ to $\Lp{q}(\Omega)$ with $p^{-1} + q^{-1} = 1$.

For the first part, we note that since $\Lp{q}(\Omega) \subseteq \Lp{p}(\Omega)$, for any $g \in \Lp{p}(\Omega)$, $g + \lambda \cE^K_{\cA,f}(g) \in \Lp{p}(\Omega)$ which shows that $\range(\Id + \lambda \cE) = \Lp{p}(\Omega)$ by Proposition \ref{prop:proofs:wellPosedness:nonlocalProblem:completeAccretivityRangeCondition}. This implies that $\cE^K_{\cA,f}$ is $m$-completely accretive. For the second part, we use the first statement of the corollary and the fact that $0 \in \Lp{p}(\Omega)$. For the third part, we note that whenever Assumption \ref{ass:main:assumptions:O2} was used, if the operator is 0, then the inequalities required in the proof of Proposition \ref{prop:proofs:wellPosedness:nonlocalProblem:completeAccretivityRangeCondition} are trivially satisfied.
\end{proof}

\begin{proof}[Proof of Proposition \ref{prop:proofs:nonlocalDiscreteContinuum:stability}]
In the proof $C>0$ will denote a constant that can be arbitrarily large, (which might be) dependent on $K_i$, $\Omega$, $\lambda$, $\mu$, $p$, $u_0$ and/or $f_i$, that may change from line to line.

By Theorem \ref{thm:proofs:wellPosedness:nonlocalProblem:existenceUniqueness}, we get the existence of the solutions $u_i$ for $i=1,2$. Without loss of generality, let $u_{0,2}, f_2 \in \Lp{2(p-1)}(\Omega)$. 

We define $\zeta(t,\cdot) = u_1(t,\cdot) - u_2(t,\cdot) \in \Lp{p}(\Omega)$ for $t < T$ and differentiate its squared $\Lp{2}$-norm:
\begin{align}
    \frac{1}{2}\frac{\partial}{\partial t} \Vert \zeta(t,\cdot) \Vert_{\Lp{2}}^2 
    &=\int_{\Omega} \l \frac{\partial }{\partial t} u_1(t,x)  - \frac{\partial }{\partial t} u_2(t,x) \r \zeta(t,x) \, \dd x \notag \\
    &= - \mu \int_{\Omega} \l \Delta_p^{K_1} u_1(t,x)  - \Delta_p^{K_1} u_2(t,x) \r \zeta(t,x) \, \dd x \notag \\
    & \qquad \qquad - \mu \int_{\Omega} \l \Delta_p^{K_1} u_2(t,x)  - \Delta_p^{K_2} u_2(t,x) \r \zeta(t,x) \, \dd x \notag \\ 
    & \qquad \qquad - \int_\Omega \cA( u_1(t,x) - u_2(t,x) ) \cA \zeta(t,x) \, \dd x + \int_\Omega \l f_1(x) - f_2(x) \r \zeta(t,x) \, \dd x \notag \\
    &\leq - \mu \int_{\Omega} \l \Delta_p^{K_1} u_2(t,x)  - \Delta_p^{K_2} u_2(t,x) \r \zeta(t,x) \, \dd x + \Vert f_1 - f_2 \Vert_{\Lp{2}} \Vert \zeta(t,\cdot) \Vert_{\Lp{2}} \label{eq:proofs:nonlocalDiscreteContinuum:stability:monotony} \\
    &=: T_1 + T_2 \notag
\end{align}
where we used
the monotony of the nonlocal $p$-Laplacian from \cite[Lemma 2.3]{ANDREU2008201} (see also Lemma~\ref{lem:proofs:wellPosedness:nonlocalProblem:properties} and in particular~\eqref{eq:LapMono}) as well as H\"older's inequality for \eqref{eq:proofs:nonlocalDiscreteContinuum:stability:monotony}. We now proceed as follows for the first term of the above:
\begin{align}
T_1 & \leq \mu \int_{\Omega\times\Omega} \la K_2(\vert x-y\vert) - K_1(\vert x-y\vert) \ra |u_2(t,x) - u_2(t,y)|^{p-1} \vert\zeta(t,x) \vert \, \dd x \, \dd y \notag \\
 & \leq \mu \left[ \int_{\Omega\times\Omega} \vert u_2(t,y) - u_2(t,x) \vert^{2(p-1)} \, \dd x \, \dd y \right]^{1/2} \notag \\
 & \qquad \qquad \times \left[ \int_{\Omega\times\Omega} \vert K_2(\vert x-y\vert) - K_1(\vert x-y\vert) \vert^2 \vert \zeta(t,x) \vert^2 \, \dd x \, \dd y \right]^{1/2} \notag\\
 & \leq C \Vert u_2(t,\cdot) \Vert_{\Lp{2(p-1)}}^{p-1} \left[ \sup_{x \in \Omega} \Vert K_2(\vert x - \cdot \vert) - K_1(\vert x - \cdot \vert) \Vert_{\Lp{2}} \right] \Vert \zeta(t,\cdot) \Vert_{\Lp{2}} \notag\\
 & \leq C (\Vert u_{0,2} \Vert_{\Lp{2(p-1)}} + T \Vert f_2 \Vert_{\Lp{2(p-1)}})^{p-1} \left[ \sup_{x \in \Omega} \Vert K_2(\vert x - \cdot \vert) - K_1(\vert x - \cdot \vert) \Vert_{\Lp{2}} \right] \Vert \zeta(t,\cdot) \Vert_{\Lp{2}} \label{eq:proofs:nonlocalDiscreteContinuum:stability:contraction} \\
    &\leq C(1 + T^{p-1}) \left[ \sup_{x \in \Omega} \Vert K_2(\vert x - \cdot \vert) - K_1(\vert x - \cdot \vert) \Vert_{\Lp{2}} \right] \Vert \zeta(t,\cdot) \Vert_{\Lp{2}} \label{eq:proofs:nonlocalDiscreteContinuum:stability:contraction2} \\
    &\leq CT^{p-1} \left[ \sup_{x \in \Omega} \Vert K_2(\vert x - \cdot \vert) - K_1(\vert x - \cdot \vert) \Vert_{\Lp{2}} \right] \Vert \zeta(t,\cdot) \Vert_{\Lp{2}} \notag
\end{align}
where we used \eqref{eq:proofs:wellPosedness:nonlocalProblem:existenceUniqueness:bound} for \eqref{eq:proofs:nonlocalDiscreteContinuum:stability:contraction} and the fact that $u_{0,2} \in \Lp{2(p-1)}(\Omega)$ and $f_2 \in \Lp{2(p-1)}(\Omega)$ for \eqref{eq:proofs:nonlocalDiscreteContinuum:stability:contraction2}.
Inserting the latter in \eqref{eq:proofs:nonlocalDiscreteContinuum:stability:monotony}, we obtain
\begin{align*}
\frac{\partial}{\partial t} \Vert \zeta(t,\cdot) \Vert_{\Lp{2}} & = \frac{1}{2 \Vert \zeta(t,\cdot) \Vert_{\Lp{2}}} \frac{\partial}{\partial t} \Vert \zeta(t,\cdot) \Vert_{\Lp{2}}^2 \\
 & \leq C T^{p-1} \left[ \sup_{x \in \Omega} \Vert K_2(\vert x - \cdot \vert) - K_1(\vert x - \cdot \vert) \Vert_{\Lp{2}} \right] + \Vert f_1 - f_2 \Vert_{\Lp{2}} \notag
\end{align*}
which we integrate from $0$ to $T$ to conclude:
\begin{align*}
\Vert u_2(t,\cdot) - u_1(t,\cdot) \Vert_{\Lp{2}}
&\leq CT^p \l \left[ \sup_{x \in \Omega} \Vert K_2(x,\cdot) - K_1(x,\cdot) \Vert_{\Lp{2}} \right] + \Vert f_1 - f_2 \Vert_{\Lp{2}} \r + \Vert u_{0,2} - u_{0,1} \Vert_{\Lp{2}}.
\end{align*}

Now, assume that without loss of generality $u_{0,2},f_2 \in \Lp{\infty}(\Omega)$. 
We proceed analogously with the appropriate modification to the $T_1$ term: 
\begin{align}
    T_1
    &\leq C \Vert u_2(t,\cdot) \Vert_{\Lp{\infty}}^{p-1} \int_{\Omega\times\Omega} \vert \left[ K_2(\vert x-y\vert) - K_1(\vert x-y\vert) \right] \zeta(t,x) \vert \, \dd x \, \dd y \notag \\
    &\leq C (\Vert u_{0,2} \Vert_{\Lp{\infty}} + T \Vert f_2 \Vert_{\Lp{\infty}})^{p-1} \Vert K_2 - K_1 \Vert_{\Lp{2}(\Omega \times \Omega)} \Vert \zeta \Vert_{\Lp{2}}\label{eq:proofs:nonlocalDiscreteContinuum:stability:contraction3} \\
    &\leq C T^{p-1} \Vert K_2 - K_1 \Vert_{\Lp{2}(\bbR)} \Vert \zeta \Vert_{\Lp{2}} \label{eq:proofs:nonlocalDiscreteContinuum:stability:Linfty}
\end{align}
where we used \eqref{eq:proofs:wellPosedness:nonlocalProblem:existenceUniqueness:bound} for \eqref{eq:proofs:nonlocalDiscreteContinuum:stability:contraction3}. We then conclude as above. 
\end{proof}

\subsubsection{Rates} \label{sec:supplementary:rates}

\begin{remark}[Consistency]\label{rem:consistency}
    Assuming the same setting as in Theorem \ref{thm:proofs:continuumRates:continuumNonlocalLocal}, we are also able to show the following consistency result:
    for $h \geq 1$, $s>2+\frac{d}{p}$ and $r>1+\frac{d}{p}$, suppose that, instead of Assumption~\ref{ass:main:assumptions:R2}, the solution $u$ to~\eqref{eq:main:notation:localProblem:localProblem} satisfies the weaker assumption
\begin{equation} \label{eq:proofs:rates:continuumRates:l2convergence:C2}
u\in \Lp{h}(0,T;\Wkp{s}{p}(\Omega)) \cap \Lp{\infty}(0,T;\Wkp{r}{p}(\Omega)).
\end{equation}
Then, we have the consistency result: for every $t$, as $n\to\infty$
\begin{equation} \label{eq:proofs:continuumRates:l2convergence:consistency}
\Vert u_{\eps_n}(t,\cdot) - u(t,\cdot) \Vert_{\Lp{2}(\Omega')} \to 0.
\end{equation}
\end{remark}

\begin{proof}[Proof of Remark \ref{rem:consistency} and Theorem \ref{thm:proofs:continuumRates:continuumNonlocalLocal}]
For ease of notation in the below, we will write $u(x)$ for $u(t,x)$ if the statement applies to all $t \in (0,T)$.

The existence of $u_{\eps_n}$ and $u$ are just applications of Theorem \ref{thm:proofs:wellPosedness:nonlocalProblem:existenceUniqueness} and Theorem \ref{thm:proofs:wellPosedness:localProblem:existenceUniqueness} respectively. 

Define $\zeta_{\eps_n}(t) = u_{\eps_n}(t) - u(t) \in \Wkp{1}{p}(\Omega)$ for $t < T$. Using H\"older's inequality, we start with the following computation:
\begin{align}
    \frac{1}{2}\frac{\partial}{\partial t} \Vert \zeta_{\eps_n}  \Vert_{\Lp{2}(\Omega')}^2 &= \int_{\Omega'} \l \frac{\partial}{\partial t} u_{\eps_n} - \frac{\partial}{\partial t} u \r \zeta_{\eps_n} \, \dd x \notag \\
    &= - \mu \int_{\Omega'} \l \Delta^{K_{\eps_n}}_p u_{\eps_n}\r \zeta_{\eps_n} \, \dd x - \int_{\Omega'} \cA^*(\cA u_{\eps_n} - \ell) \zeta_{\eps_n} \, \dd x \label{eq:proofs:rates:continuumRates:l2convergence:differenceEvolutions}\\ 
    & + \mu \int_{\Omega'} \Delta_p u \zeta_{\eps_n} \, \dd x + \int_{\Omega'} \cA^*(\cA u - \ell) \zeta_{\eps_n} \, \dd x \notag \\
    &= \underbrace{-\mu \int_{\Omega'} \l \Delta_p^{K_{\eps_n}} u_{\eps_n} - \Delta_p^{K_{\eps_n}} u \r (u_{\eps_n} - u) \, \dd x}_{=:T_1} \notag \\
    &+ \mu \int_{\Omega'} \l \Delta_p u - \Delta_p^{K_{\eps_n}} u \r \zeta_{\eps_n} \, \dd x \underbrace{- \int_{\Omega'} \cA \l u_{\eps_n} - u \r \cA \zeta_{\eps_n} \, \dd x}_{=:T_3} \notag \\
    &\leq T_1 + \mu \underbrace{\left\Vert \Delta_p u - \Delta_p^{K_{\eps_n}} u \right\Vert_{\Lp{2}(\Omega')}}_{=:T_2} \left\Vert \zeta_{\eps_n} \right\Vert_{\Lp{2}(\Omega')} + T_3  \notag
\end{align}
where we used \eqref{eq:main:notation:nonlocal:nonlocalProblem} and \eqref{eq:main:notation:localProblem:localProblem} for \eqref{eq:proofs:rates:continuumRates:l2convergence:differenceEvolutions}.
 Arguing as in the proof of the monotony in Lemma \ref{lem:proofs:wellPosedness:nonlocalProblem:properties} (see~\eqref{eq:LapMono}), we observe that $T_1 \leq 0$ and $T_3 = - \Vert \cA(u_{\eps_n} - u) \Vert_{\Lp{2}(\Omega)}^2 \leq 0$.

For $x \in \Omega'$, let $S_{\eps_n}(x) = \{z \in \bbR^d \spaceBar x + \eps_n z \in \Omega \} \cap \closure(B(0,1))$ and let $n$ be large enough so that $S_{\eps_n}(x) = \closure(B(0,1))$ for all $x \in \Omega'$  by Lemma \ref{lem:proofs:rates:continuumRates:domainIntegration}. 
Using a change of variables, we estimate as follows for the $T_2$ term:
\begin{align}
    T_2^2 &= \int_{\Omega'} \left\vert  \Delta_p^{K_{\eps_n}} u -\Delta_p u  \right\vert^2 \, \dd x  \notag \\
    & = \int_{\Omega^\prime} \la \frac{2}{c(p,d) \eps_n^{d+p}} \int_\Omega K\l\frac{|x-y|}{\eps_n}\r |u(y)-u(x)|^{p-2} (u(x)-u(y)) \, \dd y - \Delta_pu(x) \ra^2 \, \dd x \notag \\
    &= \int_{\Omega'} \left\vert -\frac{C(p,d)}{\eps_n^{p}} \int_{B(0,1)} K(\vert z \vert) \la u(x + \eps_n z)-u(x) \ra^{p-2} \l u(x)-u(x-\eps_n z) \r \, \dd z - \Delta_p u(x)   \right\vert^2 \, \dd x. \label{eq:proofs:rates:continuumRates:l2convergence:T2}
\end{align}

By assumptions on $s$, $p$ and $d$ and \ref{ass:main:assumptions:S2}, we can use Sobolev regularity in order to obtain that
\[
\begin{cases}
u \in \Ck{2}(\Omega) \text{ with $\sup_{t \in (0,T)} \Vert \nabla u(t,\cdot) \Vert_{\Lp{\infty}} < \infty$} & \text{if $u$ satisfies \eqref{eq:proofs:rates:continuumRates:l2convergence:C2}},\\
u \in \Ck{3}(\Omega) \text{ with $\max\{\sup_{t \in (0,T)} \Vert \nabla u(t,\cdot) \Vert_{\Lp{\infty}}, \sup_{t \in (0,T)} \Vert \nabla^2 u(t,\cdot) \Vert_{\Lp{\infty}}\} < \infty$} & \parbox[t]{.2\textwidth}{if $u$ satisfies Assumption \ref{ass:main:assumptions:R2}.}
\end{cases}
\]

We define the finite constants $C_1(t) = \Vert \nabla u(t,\cdot) \Vert_{\Lp{\infty}}$ and $C_2(t) = \Vert \nabla^2u(t,\cdot) \Vert_{\Lp{\infty}}$ and consider the inner integrand of \eqref{eq:proofs:rates:continuumRates:l2convergence:T2}. 
Letting $\psi(t) = |t|^{p-2}t$ and using $\psi(t) = \psi(a) + \psi^\prime(a)(t-a) + \cO(C^{p-3}|t-a|^2)$ for $a,t\in [-C,C]$ we have, for $u \in \Ck{2}(\Omega)$, 
\begin{align}
& |\underbrace{u(x+\eps_nz) - u(x)}_{=t}|^{p-2} \l u(x) - u(x+\eps_n z) \r \notag \\
& \qquad \qquad = |\underbrace{\eps_n z\cdot \nabla u(x)}_{=a}|^{p-2} \eps_n z \cdot \nabla u(x) \notag \\
 & \qquad \qquad \qquad \qquad + (p-1)  |\eps_n z \cdot \nabla u(x)|^{p-2} \l u(x+\eps_n z) - u(x) - \eps_n z \cdot \nabla u(x) \r \notag \\
 & \qquad \qquad \qquad \qquad + \cO\l C^{p-3} \la u(x+\eps_n z) - u(x) - \eps_n z \cdot \nabla u(x) \ra^2 \r \notag \\
 & \qquad \qquad = \eps_n^{p-1} |z\cdot \nabla u(x)|^{p-2} z\cdot \nabla u(x) \notag \\
 & \qquad \qquad \qquad \qquad + \frac{(p-1)\eps_n^p}{2} \la z\cdot \nabla u(x) \ra^{p-2} \l z^\top \nabla^2 u(x) z + o(|z|^2) \r  \label{eq:proofs:rates:continuumRates:l2convergence:TaylorC2} \\
 & \qquad \qquad \qquad \qquad + \cO(\eps_n^4 |z|^4 C^{p-3} C_2^2) \notag
\end{align}
where $\max\{|u(x+\eps_n z) - u(x)|,|\eps_n z \cdot \nabla u(x)|\}\leq C$.
We can choose $C\sim \eps_n |z| C_1$ so that
\begin{align*}
& |u(x+\eps_nz) - u(x)|^{p-2} \l u(x) - u(x+\eps_n z) \r = \eps_n^{p-1} |z\cdot \nabla u(x)|^{p-2} z\cdot \nabla u(x) \\
& \qquad \qquad + \frac{(p-1)\eps_n^p}{2} \la z\cdot \nabla u(x) \ra^{p-2} z^\top \nabla^2 u(x) z + o(\eps_n^p |z|^{p+1} C_1^{p-2}) + \cO(\eps_n^{p+1} |z|^{p+1} C_1^{p-3} C_2^2).
\end{align*}
Multiplying by $K(\vert z \vert)$ and integrating the latter equation over $B(0,1)$, we obtain:
\begin{align*}
& \int_{B(0,1)} K(\vert z \vert)|u(x+\eps_nz) - u(x)|^{p-2} \l u(x) - u(x+\eps_n z) \r \, \dd x \\
& \qquad \qquad = \eps_n^{p-1} \int_{B(0,1)} K(\vert z \vert) |u(x+\eps_nz) - u(x)|^{p-2} \l u(x) - u(x+\eps_n z) \r \, \dd z \notag \\
& \qquad \qquad \qquad + \frac{(p-1)\eps_n^p}{2} \int_{B(0,1)} K(\vert z \vert) |z\cdot \nabla u(x)|^{p-2} z^\top \nabla^2 u(x) z \, \dd z \notag + o(C_1^{p-2}\eps_n^{p}). 
\end{align*}
The first integral on the right hand side of the latter vanishes since it is odd in $z$; for the second integral, we make an appropriate change of variables as in \cite[Theorem A.1]{Calder_2018}
which yields:
\[ \frac{(p-1)\eps_n^p}{2} \int_{B(0,1)} K(\vert z \vert) |z\cdot \nabla u(x)|^{p-2} z^\top \nabla^2 u(x) z \, \dd z = \frac{\eps_n^p}{2} c(p,d) \Delta_p u(x).
\]
Finally, inserting these results in \eqref{eq:proofs:rates:continuumRates:l2convergence:T2}, we have:
\begin{align}
    T_2^2 &= \int_{\Omega'} \left\vert -\frac{C(p,d)}{\eps_n^p} \left[ \frac{\eps_n^p}{2}c(p,d)\Delta_p u + o(C_1^{p-2}\eps_n^{p}) 
    \right] - \Delta_p u  \right\vert^2 \, \dd x \notag \\
    &= o \l \sup_{t \in (0,T)} C_1(t)^{2p-4} \r. \label{eq:proofs:rates:continuumRates:l2convergence:T2final}
\end{align}
Coming back to our initial computation and using \eqref{eq:proofs:rates:continuumRates:l2convergence:T2final}, we obtain:
\[
\frac{\partial}{\partial t} \Vert \zeta_{\eps_n}(t) \Vert_{\Lp{2}(\Omega')} \leq \mu T_2 = o \l \sup_{t \in (0,T)} C_1(t)^{p-2} \r.
\]
Integrating the latter from $0$ to $t$ and recalling that the initial conditions are the same in \eqref{eq:main:notation:nonlocal:nonlocalProblem} and \eqref{eq:main:notation:localProblem:localProblem} we conclude that
\[
\Vert \zeta_{\eps_n}(t) \Vert_{\Lp{2}(\Omega')} = o \l t \sup_{t \in (0,T)} C_1(t)^{p-2} \r
\]
which proves \eqref{eq:proofs:continuumRates:l2convergence:consistency}.

In the case of $u$ satisfying Assumption \ref{ass:main:assumptions:R2}, we use
a third order Taylor expansion
in \eqref{eq:proofs:rates:continuumRates:l2convergence:TaylorC2} to obtain 
\begin{align*}
& |u(x+\eps_nz) - u(x)|^{p-2} \l u(x) - u(x+\eps_n z) \r \\
& \qquad \qquad = \eps_n^{p-1} |z\cdot \nabla u(x)|^{p-2} z \cdot \nabla u(x) + (p-1)\eps_n^{p-2} |z\cdot \nabla u(x)|^{p-2} \l \frac{\eps_n^2}{2} z^\top \nabla^2 u(x) z + \cO(\eps_n^3 |z|^3)\r \\
 & \qquad \qquad \qquad \qquad + \cO(\eps_n^4 |z|^4 C^{p-3} C_2^2) \\
& \qquad \qquad = \eps_n^{p-1} |z\cdot \nabla u(x)|^{p-2} z \cdot \nabla u(x) + \frac{(p-1)\eps_n^p}{2} |z\cdot \nabla u(x)|^{p-2} z^\top \nabla^2 u(x) z \\
 & \qquad \qquad \qquad \qquad + \cO(\eps_n^{p+1} |z|^{p+1}C_1^{p-2}) + \cO(\eps_n^{p+1} |z|^{p+1} C_1^{p-3} C_2^2)
\end{align*}
and, inserting the latter in \eqref{eq:proofs:rates:continuumRates:l2convergence:T2},
\begin{align*}
T_2^2 &= \mathcal{O}\l \eps_n^2 \l C_1^{p-2} + C_1^{p-3}C_2^2 \r^2\r \\
&= \mathcal{O}\l \eps_n^2 \ls \sup_{t \in (0,T)} C_1(t)^{p-2} + \sup_{t \in (0,T)} C_1(t)^{p-3}\sup_{t \in (0,T)} C_2(t)^{2} \rs^2 \r
\end{align*}
Proceeding as above, we obtain \eqref{eq:proofs:continuumRates:l2convergence:rates} which concludes our proof.
\end{proof}

\begin{proof}[Proof of Lemma \ref{lem:proofs:rates:NonFullyDiscreteLocal:evolutionTimeInterpolation}]
The existence and well-posedness of $\{\bar{u}_n^k\}_{k=0}^N$ follows from Corollary \ref{cor:proofs:wellPosedness:discreteProblem:existence}. For the second claim, let $t \in (t^{k-1},t^k]$ and compute:
\begin{align}
    \frac{\partial}{\partial t} \uTimeInter(t,x) &= \frac{\cI_n \bar{u}_n^{k-1} - \cI_n \bar{u}_n^{k-1} }{\tau^{k-1}} \notag \\
    &= \sum_{i=1}^{\vert \Pi_n \vert} \chi_{\pi_i^n} \ls  \frac{(\bar{u}_n^{k-1})_i - (\bar{u}_n^{k-1})_i }{\tau^{k-1}}  \rs \notag \\
    &= \sum_{i=1}^{\vert \Pi_n \vert} \chi_{\pi_i^n} \ls -\mu (\Delta_{p,n}^{\bar{K}} \bar{u}_n^{k})_i - (\bar{G}_n (\bar{u}_n^{k}))_i + (\bar{f})_i  \rs \label{eq:proofs:rates:NonFullyDiscreteLocal:evolutionTimeInterpolation:equqation1} \\
    &= - \mu \Delta_{p}^{\cI_n \bar{K}}(\cI_n \bar{u}_n^{k}) - \cA^*\cA(\cI_n \bar{u}_n^k) + \cI_n \bar{f}    \label{eq:proofs:rates:NonFullyDiscreteLocal:evolutionTimeInterpolation:equqation2}
\end{align}
where we used \eqref{eq:main:notation:discrete:nonlocalProblemFully} for \eqref{eq:proofs:rates:NonFullyDiscreteLocal:evolutionTimeInterpolation:equqation1} and Lemma \cite[Lemma 6.1]{ElBouchairi} as well as Assumption \ref{ass:main:assumptions:O4} for \eqref{eq:proofs:rates:NonFullyDiscreteLocal:evolutionTimeInterpolation:equqation2}. By noting that 
\[
\l \Delta_p^{\cI_n \bar{K}} \uTimeInject  \r (t,x) = \Delta_p^{\cI_n \bar{K}} \cI_n \bar{u}_n^k,
\]
and recalling linearity of $\cA^* \cA$ by Assumption \ref{ass:main:assumptions:O1}, this concludes our proof.

\end{proof}

\begin{proof}[Proof of Proposition \ref{prop:application:ratesDiscreteDeterministics}]
In the proof $C>0$ will denote a constant that can be arbitrarily large, (which might be) dependent on $\Omega$, $u_0$ or/and $\cA^*\ell$, that may change from line to line.

The existence of of $\{\bar{v}_n^k\}_{k=0}^N$ solving \eqref{eq:main:notation:discrete:nonlocalProblemFully}  with parameters $\bar{K}^{\eps_n}$, $\bar{f}$ and $\bar{u}_0$ follows from Corollary \ref{cor:proofs:rates:discreteNonlocalContinuumLocal:simplified}. Also, Assumption \ref{ass:main:assumptions:K1} and $\rho_n \ll \eps_n^{d+1}$ imply that $\rho_n \bar{K}^{\eps_n} \to 0$ so that we may assume without loss of generality that $\rho_n \bar{K}^{\eps_n} \leq 1$ and hence $\bar{\Lambda}_n$ is well-defined. By Corollary \ref{cor:application:wellPosedness}, we obtain, $\bbP$-a.e., a solution $\{\bar{u}_n^k\}_{k=0}^N$ solving \eqref{eq:main:notation:random:evolutionProblem} with parameters $\bar{\Lambda}_n$, $\bar{f}$ and $\bar{u}_0$.

Let $\alpha_i > 0$ for $1 \leq i \leq 2$ be such that $\mu(\alpha_1 + \alpha_2) = 1/2$. Let us define $\zeta(t) = \vTimeInter(t) - \uTimeInter(t)$ and start by verifying that $\zeta(t) \in \Lp{p}(\Omega)$. For that purpose, we note from the proof of Proposition \ref{prop:proofs:rates:NonFullyDiscreteLocal:L2rates} that $\Vert \vTimeInter \Vert_{\Lp{p}} \leq C(C + TC)$ and, $\bbP$-a.e., $\Vert \uTimeInter \Vert_{\Lp{p}} \leq C(C + TC)$ so that $\zeta(t) \in \Lp{p}(\Omega)$ uniformly in $t$. In order to proceed as in Proposition \ref{prop:proofs:rates:NonFullyDiscreteLocal:L2rates}, we will use Lemma \ref{lem:proofs:rates:NonFullyDiscreteLocal:evolutionTimeInterpolation} and note that $\bbP$-a.e., the results of the latter also apply to $\uTimeInter$ to deduce that
\begin{align}
    \frac{1}{2} \frac{\partial}{\partial t} &\Vert \zeta(t) \Vert_{\Lp{2}}^2 = - \mu \int_{\Omega} \l \Delta_p^{\cI_n \bar{K}^{\eps_n}} \vTimeInject - \Delta_p^{\cI_n \bar{\Lambda}_n} \vTimeInject \r \zeta \, \dd x \notag \\
    &- \mu \int_{\Omega} \l \Delta_p^{\cI_n \bar{\Lambda}_n} \vTimeInject - \Delta_p^{\cI_n \bar{\Lambda}_n} \uTimeInject \r (\vTimeInject - \uTimeInject) \, \dd x \notag \\
    &- \mu \int_{\Omega} \l \Delta_p^{\cI_n \bar{\Lambda}_n} \vTimeInject - \Delta_p^{\cI_n \bar{\Lambda}_n} \uTimeInject\r \ls (\vTimeInter - \vTimeInject) - (\uTimeInter -  \uTimeInject) \rs \, \dd x \label{eq:application:ratesDiscreteDeterministic:evolution} \\
    &- \int_\Omega \cA^*\cA(\vTimeInject- \uTimeInject) \zeta \, \dd x \notag \\
    &=: \mu T_1 + T_2 + \mu T_3 + T_4 \notag
\end{align}
where we used \eqref{eq:main:notation:discrete:nonlocalProblemFully} and \eqref{eq:main:notation:random:evolutionProblem} for \eqref{eq:application:ratesDiscreteDeterministic:evolution}. Arguing as in Proposition \ref{prop:proofs:wellPosedness:nonlocalProblem:completeAccretivityRangeCondition}, Proposition \ref{prop:proofs:nonlocalDiscreteContinuum:stability} or Proposition \ref{prop:proofs:rates:NonFullyDiscreteLocal:L2rates} (which relies on \cite[Lemma 2.3]{ANDREU2008201}), we obtain that $T_2 \leq 0$.

We continue our estimates by starting with an auxiliary result. In order to obtain \eqref{eq:proofs:rates:NonFullyDiscreteLocal:L2Rates:equation3}, we used a crude bound of the form $\Vert \Delta_p^{\cI_n \bar{K}} u \Vert_{\Lp{2}} \leq C \Vert \cI_n \bar{K} \Vert_{\Lp{\infty}} \Vert u \Vert_{\Lp{2(p-1)}}^{p-1} \leq C \Vert K \Vert_{\Lp{\infty}} \Vert u \Vert_{\Lp{2(p-1)}}^{p-1}$. A slightly finer analysis with Jensen's inequality yields $\Vert \Delta_p^{\cI_n \bar{K}} u \Vert_{\Lp{2}} \leq C \sup_{x \in \Omega
}\Vert \cI_n \bar{K}(x,\cdot) \Vert_{\Lp{1}} \Vert u \Vert_{\Lp{2(p-1)}}^{p-1}$. We use the latter two bounds and reason as we did for \eqref{eq:proofs:rates:NonFullyDiscreteLocal:L2Rates:differenceUs}, so that $\bbP$-a.e.:   
\begin{align}
    \Vert (\vTimeInter - \vTimeInject) - &(\uTimeInter -  \uTimeInject) \Vert_{\Lp{2}} \leq \Vert   \vTimeInter - \vTimeInject \Vert_{\Lp{2}} \notag \\
    &+ \Vert   \uTimeInter - \uTimeInject \Vert_{\Lp{2}} \notag \\
    &\leq \tau \l C + CT + (C+T^{p-1}C)\ls \sup_{x \in \Omega}\Vert \cI_n \bar{\Lambda}_n(x,\cdot) \Vert_{\Lp{1}} + \Vert \tilde{K}^{\eps_n} \Vert_{\Lp{\infty}}  \rs \r \label{eq:application:ratesDiscreteDeterminisitic:equation1}
\end{align}
where we used \eqref{eq:application:ratesDiscreteDeterministic:kHatBound} for \eqref{eq:application:ratesDiscreteDeterminisitic:equation1}. 

Using Young's inequality for products, similarly to \eqref{eq:proofs:rates:NonFullyDiscreteLocal:L2Rates:T4:4}, the latter allows us to compute, $\bbP$-a.e.:
\begin{align}
    \vert T_4 \vert &\leq \frac{1}{2} \Vert \zeta \Vert_{\Lp{2}}^2 +  \frac{1}{2} \Vert \cA^*\cA (\vTimeInject - \uTimeInject) \Vert_{\Lp{2}}^2  \notag \\
    &\leq \l \frac{1 + 3 C_{\mathrm{op}}^4}{2} \r \Vert \zeta \Vert_{\Lp{2}}^2 + \ls \tau \l C + CT + (C+T^{p-1}C)\ls \sup_{x \in \Omega}\Vert \cI_n \bar{\Lambda}_n(x,\cdot) \Vert_{\Lp{1}} + \Vert \tilde{K}^{\eps_n} \Vert_{\Lp{\infty}}  \rs \r \rs^2. \label{eq:application:ratesDiscreteDeterministic:T4}
\end{align}

For the $T_3$ term, we have
\[
\vert T_3 \vert \leq \Vert  \Delta_p^{\cI_n \bar{\Lambda}_n} \vTimeInject - \Delta_p^{\cI_n \bar{\Lambda}_n} \uTimeInject \Vert_{\Lp{2}} \Vert (\vTimeInter - \vTimeInject) - (\uTimeInter -  \uTimeInject) \Vert_{\Lp{2}}.
\] 
We now assume that $u_0 \in \Lp{2p - 2/(p-1))}(\Omega)$ and $\cA^*\ell \in \Lp{2p - 2/(p-1))}(\Omega) $. Analogously to how we obtained \eqref{eq:proofs:rates:NonFullyDiscreteLocal:L2Rates:contraction} and using Jensen's inequality as described for \eqref{eq:application:ratesDiscreteDeterminisitic:equation1}, $\bbP$-a.e., we have that:
\begin{align}
    &\Vert  \Delta_p^{\cI_n \bar{\Lambda}_n} \vTimeInject - \Delta_p^{\cI_n \bar{\Lambda}_n} \uTimeInject \Vert_{\Lp{2}} \leq (C + T^{(p-1)-1/p}) \sup_{x \in \Omega}\Vert \cI_n \bar{\Lambda}_n(x,\cdot) \Vert_{\Lp{1}} \notag \\
    &\times \Vert \vTimeInject - \uTimeInject \Vert_{\Lp{2}}^{1/p} \notag \\
    &\leq (C + CT^{(p-1)-1/p}) \sup_{x \in \Omega}\Vert \cI_n \bar{\Lambda}_n(x,\cdot) \Vert_{\Lp{1}} \notag \\
    &\times \ls \Vert \vTimeInject - \vTimeInter \Vert_{\Lp{2}} + \Vert \vTimeInter - \uTimeInter \Vert_{\Lp{2}} + \Vert \uTimeInter - \uTimeInject \Vert_{\Lp{2}} \rs^{1/p} \notag \\
    &\leq (C + CT^{(p-1)-1/p}) \sup_{x \in \Omega}\Vert \cI_n \bar{\Lambda}_n(x,\cdot) \Vert_{\Lp{1}} \notag \\
    &\times \l \ls \tau \l C + CT + (C+T^{p-1}C)\ls \sup_{x \in \Omega}\Vert \cI_n \bar{\Lambda}_n(x,\cdot) \Vert_{\Lp{1}} + \Vert \tilde{K}^{\eps_n} \Vert_{\Lp{\infty}}  \rs \r \rs^{1/p} + \Vert \zeta \Vert_{\Lp{2}}^{1/p} \r \label{eq:application:ratesDiscreteDeterminisitic:equation2}
\end{align}
where we used \eqref{eq:application:ratesDiscreteDeterminisitic:equation1} for \eqref{eq:application:ratesDiscreteDeterminisitic:equation2}. Combining \eqref{eq:application:ratesDiscreteDeterminisitic:equation1} and \eqref{eq:application:ratesDiscreteDeterminisitic:equation2}, we return to $T_3$:
\begin{align}
    &\vert T_3 \vert \leq \tau \sup_{x \in \Omega}\Vert \cI_n \bar{\Lambda}_n(x,\cdot) \Vert_{\Lp{1}} (C + CT^{(p-1)-1/p}) \notag \\
    &\times \l C + CT + (C+T^{p-1}C)\ls \sup_{x \in \Omega}\Vert \cI_n \bar{\Lambda}_n(x,\cdot) \Vert_{\Lp{1}}  + \Vert \tilde{K}^{\eps_n} \Vert_{\Lp{\infty}}  \rs \r \Vert \zeta \Vert_{\Lp{2}}^{1/p} \notag \\
    & + \Biggl[  \tau \l (C + T^{(p-1)-1/p}) \sup_{x \in \Omega}\Vert \cI_n \bar{\Lambda}_n(x,\cdot) \Vert_{\Lp{1}} \r^{p/(p+1)} \notag \\
    &\times \l C + CT + (C+T^{p-1}C)\ls \sup_{x \in \Omega}\Vert \cI_n \bar{\Lambda}_n(x,\cdot) \Vert_{\Lp{1}}  + \Vert \tilde{K}^{\eps_n} \Vert_{\Lp{\infty}}  \rs \r \Biggr]^{(p+1)/p} \notag \\
    &\leq \alpha_1 \Vert \zeta \Vert_{\Lp{2}}^2 +  \Biggl[ \tau \sup_{x \in \Omega}\Vert \cI_n \bar{\Lambda}_n(x,\cdot) \Vert_{\Lp{1}}  (C + CT^{(p-1)-1/p}) \label{eq:application:ratesDiscreteDeterminisitic:T3} \\
    &\times \l C + CT + (C+T^{p-1}C)\ls \sup_{x \in \Omega}\Vert \cI_n \bar{\Lambda}_n(x,\cdot) \Vert_{\Lp{1}}  + \Vert \tilde{K}^{\eps_n} \Vert_{\Lp{\infty}}  \rs \r \Biggr]^{2p/(2p-1)} \notag \\
    &+ \Biggl[  \tau \l (C + CT^{(p-1)-1/p}) \sup_{x \in \Omega}\Vert \cI_n \bar{\Lambda}_n(x,\cdot) \Vert_{\Lp{1}} \r^{p/(p+1)} \notag \\
    &\times \l C + CT + (C+T^{p-1}C)\ls \sup_{x \in \Omega}\Vert \cI_n \bar{\Lambda}_n(x,\cdot) \Vert_{\Lp{1}}  + \Vert \tilde{K}^{\eps_n} \Vert_{\Lp{\infty}}  \rs \r \Biggr]^{(p+1)/p} \notag 
\end{align}
where we used Young's inequality for products for \eqref{eq:application:ratesDiscreteDeterminisitic:T3}.

We now define the random variables \[
Z_i^k = n^{-1} \sum_{j=1}^n \l \l\bar{\Lambda}_n\r_{ij} - \l\bar{K}^{\eps_n}\r_{ij} \r \left\vert \l \bar{v}_n^k \r_j - \l \bar{v}_n^k \r_i  \right\vert^{p-2} \l \l \bar{v}_n^k \r_j - \l \bar{v}_n^k \r_i \r \]
for $1 \leq i \leq n$ and $1 \leq k \leq N$. Let $Z^k = (Z_1^k,\dots,Z_n^k) \in \bbR^{n}$. Then, for $t \in (t^{k-1},t^k]$: 
\[
\vert T_1 \vert \leq \Vert \zeta \Vert_{\Lp{2}} \Vert \Delta_p^{\cI_n \bar{K}^{\eps_n}} \vTimeInject - \Delta_p^{\cI_n \bar{\Lambda}_n} \vTimeInject \Vert_{\Lp{2}} = \Vert \zeta \Vert_{\Lp{2}} \Vert \cI_n Z^k \Vert_{\Lp{2}}.
\]
For $\theta > 0$, we will now bound $\Vert \cI_n Z_i^k \Vert_{\Lp{2}}$. We start by estimating:
\begin{align}
    &\bbP \l \Vert \cI_n Z_i^k \Vert_{\Lp{2}} \geq \theta \r 
    \leq \frac{1}{\theta^2 n} \sum_{i=1}^n \bbE \l \ls Z_i^k \rs^2 \r \label{eq:application:ratesDiscreteDeterministic:Markov} \\
    &= \frac{1}{\theta^2 n^3 \rho_n^2} \sum_{i=1}^n \sum_{j=1}^n \rho_n \l\bar{K}^{\eps_n}\r_{ij}(1 - \rho_n \l\bar{K}^{\eps_n}\r_{ij}) \ls \left\vert \l \bar{v}_n^k \r_j - \l \bar{v}_n^k \r_i  \right\vert^{p-2} \l \l \bar{v}_n^k \r_j - \l \bar{v}_n^k \r_i \r \rs^2 \label{eq:application:ratesDiscreteDeterministic:independence} \\
    &\leq \frac{1}{\theta^2 n \rho_n} \int_\Omega \int_{\Omega} \cI_n \bar{K}^{\eps_n}(x,y) \vert (\cI_n \bar{v}_n^k)(y) - (\cI_n \bar{v}_n^k)(x)  \vert^{2(p-1)} \, \dd y \dd x \notag \\
    &\leq \frac{C}{\theta^2 n \rho_n} \Vert \tilde{K}^{\eps_n} \Vert_{\Lp{\infty}} \Vert \cI_n \bar{v}_n^k \Vert_{\Lp{2(p-1)}}^{2(p-1)}  \label{eq:application:ratesDiscreteDeterministic:KBoundLpNorm}\\
    &\leq \frac{(C + T^{2(p-1)}C)}{\theta^2 n \rho_n } \Vert \tilde{K}^{\eps_n} \Vert_{\Lp{\infty}} \label{eq:application:ratesDiscreteDeterministic:contraction}
\end{align}
where we used Markov's inequality for \eqref{eq:application:ratesDiscreteDeterministic:Markov}, 
the fact that $\rho_n \Lambda_{ij}^n$ are $\text{Ber}(\rho_n \l\bar{K}^{\eps_n}\r_{ij})$ random variables for \eqref{eq:application:ratesDiscreteDeterministic:independence},
\eqref{eq:application:ratesDiscreteDeterministic:kHatBound} for \eqref{eq:application:ratesDiscreteDeterministic:KBoundLpNorm} and \eqref{eq:proofs:wellPosedness:discreteProblem:uniformBound} for \eqref{eq:application:ratesDiscreteDeterministic:contraction}. Returning to $T_1$ and using Young's inequality for products, we obtain:
\begin{equation} \label{eq:application:ratesDiscreteDeterministicT1}
    \vert T_1 \vert \leq  C\theta^2 + \alpha_2 \Vert \zeta \Vert_{\Lp{2}}^2 
\end{equation}
with probability larger than $1 - \frac{(C + T^{2(p-1)}C)}{\theta^2 n \rho_n } \Vert \tilde{K}^{\eps_n} \Vert_{\Lp{\infty}}$. 

We finally combine \eqref{eq:application:ratesDiscreteDeterministicT1}, \eqref{eq:application:ratesDiscreteDeterminisitic:T3}, \eqref{eq:application:ratesDiscreteDeterministic:T4} to obtain:
\begin{align}
    &\frac{\partial}{\partial t} \Vert \zeta \Vert_{\Lp{2}}^2 \leq 2 \l \l \frac{1 + 3 C_{\mathrm{op}}^4}{2} \r  + \mu(\alpha_1 + \alpha_2) \r \Vert \zeta \Vert_{\Lp{2}}^2 + C \theta^2 \notag \\
    & + \Biggl[ \tau \sup_{x \in \Omega}\Vert \cI_n \bar{\Lambda}_n(x,\cdot) \Vert_{\Lp{1}}  (C + CT^{(p-1)-1/p}) \notag \\
    &\times \l C + CT + (C+T^{p-1}C)\ls \sup_{x \in \Omega}\Vert \cI_n \bar{\Lambda}_n(x,\cdot) \Vert_{\Lp{1}}  + \Vert \tilde{K}^{\eps_n} \Vert_{\Lp{\infty}}  \rs \r \Biggr]^{2p/(2p-1)} \notag \\
    & + \Biggl[  \tau \l (C + CT^{(p-1)-1/p}) \sup_{x \in \Omega}\Vert \cI_n \bar{\Lambda}_n(x,\cdot) \Vert_{\Lp{1}} \r^{p/p+1} \notag \\
    & \times \l C + CT + (C+T^{p-1}C)\ls \sup_{x \in \Omega}\Vert \cI_n \bar{\Lambda}_n(x,\cdot) \Vert_{\Lp{1}} + \Vert \tilde{K}^{\eps_n} \Vert_{\Lp{\infty}}  \rs \r \Biggr]^{(p+1)/p} \notag \\
    & + \ls \tau \l C + CT + (C+T^{p-1}C)\ls \sup_{x \in \Omega}\Vert \cI_n \bar{\Lambda}_n(x,\cdot) \Vert_{\Lp{1}}  + \Vert \tilde{K}^{\eps_n} \Vert_{\Lp{\infty}}  \rs \r \rs^2 \notag \\
    &:= \l 2 + 3C_{\mathrm{op}}^4 \r \Vert \zeta \Vert_{\Lp{2}}^2 + T_7. \notag
\end{align}
We proceed analogously to \eqref{eq:proofs:rates:NonFullyDiscreteLocal:L2Rates:gronwall} to conclude that
$\Vert \zeta \Vert_{\Lp{2}} \leq C e^{\l \frac{2 + 3C_{\mathrm{op}}^4}{2} \r T}  \cdot T_7^{1/2}.$

Let us now assume that $u_0 \in \Lp{\infty}(\Omega)$ and $\cA^*\ell \in \Lp{\infty}(\Omega)$. We be modifying our estimates of $T_3$ and $T_1$ slightly. By following the computation that lead to \eqref{eq:proofs:rates:NonFullyDiscreteLocal:L2Rates:domain3} and using Jensen's inequality as we did for \eqref{eq:application:ratesDiscreteDeterminisitic:equation2}, we obtain:
\begin{align}
    &\Vert  \Delta_p^{\cI_n \bar{\Lambda}_n} \vTimeInject - \Delta_p^{\cI_n \bar{\Lambda}_n} \uTimeInject \Vert_{\Lp{2}} \leq (C + T^{p-2}C) \sup_{x \in \Omega}\Vert \cI_n \bar{\Lambda}_n(x,\cdot) \Vert_{\Lp{1}} \Vert \vTimeInject - \uTimeInject \Vert_{\Lp{2}} \notag \\
    &\leq (C + T^{p-2}C) \sup_{x \in \Omega}\Vert \cI_n \bar{\Lambda}_n(x,\cdot) \Vert_{\Lp{1}} \Vert \zeta \Vert_{\Lp{2}} \notag \\
    &+ (C + T^{p-2}C) \sup_{x \in \Omega}\Vert \cI_n \bar{\Lambda}_n(x,\cdot) \Vert_{\Lp{1}} \ls \tau \l C + CT + (C+T^{p-1}C)\ls \sup_{x \in \Omega}\Vert \cI_n \bar{\Lambda}_n(x,\cdot) \Vert_{\Lp{1}} + \Vert \tilde{K}^{\eps_n} \Vert_{\Lp{\infty}}  \rs \r \rs \label{eq:application:ratesDiscreteDeterminisitic:equation22}
\end{align}
where we used \eqref{eq:application:ratesDiscreteDeterminisitic:equation1} for \eqref{eq:application:ratesDiscreteDeterminisitic:equation22}, which in turn, using Young's inequality for inequalities, yields:
\begin{align}
    &\vert T_3 \vert
    \leq \alpha_2 \Vert \zeta \Vert_{\Lp{2}} \notag \\
    &+ \ls \tau (C + T^{p-2}C) \sup_{x \in \Omega}\Vert \cI_n \bar{\Lambda}_n(x,\cdot) \Vert_{\Lp{1}} \l C + CT + (C+T^{p-1}C)\ls \sup_{x \in \Omega}\Vert \cI_n \bar{\Lambda}_n(x,\cdot) \Vert_{\Lp{1}} + \Vert \tilde{K}^{\eps_n} \Vert_{\Lp{\infty}}  \rs \r \rs^2 \notag \\
    &+ \Biggl[ \tau \l (C + T^{p-2}C) \sup_{x \in \Omega}\Vert \cI_n \bar{\Lambda}_n(x,\cdot) \Vert_{\Lp{1}} \r^{1/2} \notag \\
    &\times \l C + CT + (C+T^{p-1}C)\ls \sup_{x \in \Omega}\Vert \cI_n \bar{\Lambda}_n(x,\cdot) \Vert_{\Lp{1}} + \Vert \tilde{K}^{\eps_n} \Vert_{\Lp{\infty}}  \rs \r \Biggr]^2. \notag
\end{align}
For the modification of $T_1$, we just take the $\Lp{\infty}$-norm in \eqref{eq:application:ratesDiscreteDeterministic:KBoundLpNorm}, but again using \eqref{eq:proofs:wellPosedness:discreteProblem:uniformBound}, we obtain the same estimate as in \eqref{eq:application:ratesDiscreteDeterministic:contraction}. We conclude as above.

\end{proof}

\end{document}